\numberwithin{equation}{subsection}
\def\NewTheorem#1#2{%
    \newaliascnt{#1}{equation}
    \newtheorem{#1}[#1]{#2}
    \aliascntresetthe{#1}
    \expandafter\def\csname #1autorefname\endcsname{#2}
}
\newtheoremstyle{example}{\topsep}{\topsep}%
{}
{}
{\bfseries}
{.}
{2pt}
{\thmname{#1}\thmnumber{ #2}\thmnote{ #3}}
\theoremstyle{example}
\newlist{assertions}{enumerate}{1}
\setlist[assertions]{label={\rm(\alph*)},  ref={\rm(\alph*)},leftmargin=0.8em, itemindent=1.5em,labelwidth=\itemindent,labelsep=0.8em,align=left}
\newlist{statements}{enumerate}{1}  
\setlist[statements]{topsep=0pt, beginpenalty=10000, label={\rm(\alph*)},  ref={\rm(\alph*)}, leftmargin=0.8em, itemindent=1.5em,labelwidth=\itemindent,labelsep=0.8em,align=left}
\newcommand{\highbox}[2]{\raisebox{0pt}[1.2\height]{$#1#2$}}
\newcommand{\highol}[1]{\ol{\mathpalette{\highbox}{#1}}}
\def\eps{{\varepsilon}}
\def\AAA{\mathbb{A}}
\def\CC{\mathbb{C}}
\def\DD{\mathbb{D}}
\def\GG{\mathbb{G}}
\def\JJ{{\mathbb{J}}}
\def\LL{{\mathbb{L}}}
\def\PP{\mathbb{P}}
 \def\QQ{{\mathbb{Q}}}
\def\RR{\mathbb{R}}
\def\ZZ{\mathbb{Z}}
\def\scrA{\mathscr{A}}
\def\scrB{\mathscr{B}}
\def\scrC{\mathscr{C}}
\def\scrF{\mathscr{F}}
\def\gen{\mathfrak{g}}
\def\len{\mathfrak{l}}
\def\wen{{\mathfrak {w}}}
\def\Cen{\mathfrak{C}}
\def\Sen{\mathfrak{S}}
\def\Uen{\mathfrak{U}}
\def\Ven{\mathfrak{V}}
\def\Ac{\mathcal{A}}
\def\Bc{\mathcal{B}}
\def\Cc{\mathcal{C}}
\def\Kc{\mathcal{K}}
\def\Dc{\mathcal{D}}
\def\Ec{\mathcal{E}}
\def\Fc{\mathcal{F}}
\def\Gc{\mathcal{G}}
\def\Ic{{\mathcal{I}}}
\def\Lc{\mathcal{L}}
\def\Mc{\mathcal{M}}
\def\Nc{\mathcal{N}}
\def\Oc{\mathcal{O}}
\def\Pc{\mathcal{P}}
\def\Qc{\mathcal{Q}}
\def\Rc{\mathcal{R}}
\def\Sc{\mathcal{S}}
\def\Uc{{\mathcal {U}}}
\def\Yc{\mathcal{Y}}
\def\Sen{\mathfrak{S}}
\def\Ad{{\on{Ad}}}
\def\AGS{{\on{AGS}_\k}}
\def\Alg{{\on{Alg}}}
\def\an{{\on{an}}}
\def\At{{\on{At}}}
\def\Aut{\operatorname{Aut}\nolimits}
\def\basic{{\on{basic}}}
\def\c{{\on{c}}}
\def\can{{\on{can}}}
\def\Cat{{\on{Cat}}}
\def\cdga{{\on{CDGA}}}
\def\CE{{\on{CE}}}
\def\CEW{{\on{CEW}}}
\def\Cob{{\on{Cob}}}
\def\Codec{{\on{Codec}}}
\def\Coh{{\on{Coh}}}
\newcommand{\corr}{\mathrm{corr}}
\def\Cosh{{\on{Cosh}}}
\def\dash{\text{-}}
 \newcommand{\Dcorr}{\Dc^\corr}
\def\Dec{{\on{Dec}}}
\def\Der{{\on{Der}}}
\def\dgMod{{\on{dgMod}}}
\def\dgVect{{\on{dgVect}}}
\def\Diff{{\on{Diff}}}
\def\dla{{\twoheadleftarrow}}
\def\DM {{\strict{\on{Mod}}_{[\![\Dc]\!]} }}
\def\DsM{{\strict{\on{Mod}}_\Dc^! }}
\def\DR{{\on{DR}}}
\def\dra{{\twoheadrightarrow}} 
\def\DS{{\on{DS}}}
\def\eb{{\mathbf {e}}}
\def\End{\operatorname{End}\nolimits}
\def\Eq{{\on{Eq}}} 
\def\ev{{\on{ev}}}
\def\FA{\mathrm{FA}}
\def\finsurj{{\mathscr{S}}}   
\def\Fun{{\on{Fun}}}
\def\gl{{\gen\len}}
\def\GL{{\GG\LL}}
\def\GR{{\on{GR}}}
\def\Groth{\on{G\highol{\mathrm{ro}}}}
\def\Grothco{\on{Gro}}
\def\hati{{\widehat{\imath}}}
\def\hatj{{\widehat{\jmath}}}
\def\hatU{{\widehat{U}}}
\def\hocofib{\on{hocofib}}
\DeclareMathOperator*{\hocolim}{\underrightarrow{\mathrm{holim}}}
\DeclareMathOperator*{\holim}{\underleftarrow{\mathrm{holim}}}
\def\hol{{\on{hol}}}
\def\Hol{{\on{Hol}}}
\def\Hom{\operatorname{Hom}\nolimits}
\def\hr{\mathrm{hr}}
\def\hra{{\hookrightarrow}}
\def\Id{\operatorname{Id}\nolimits}
\DeclareMathOperator*{\ilim}{``\underrightarrow{\mathrm{lim}}''}
\def\Im{\operatorname{Im}\nolimits}
\def\Ind{\on{Ind}}
\def\k{\mathbf k}
\def\Ker{\operatorname{Ker}\nolimits}
\DeclareMathOperator*{\laxlim}{\underleftarrow{\mathrm{lax}}}
\DeclareMathOperator*{\oplaxlim}{\underleftarrow{\mathrm{l\highol{\mathrm{ax}}}}}
\def\lc{{\on{lc}}}
\def\lday{{\circledast}}
\def\LDM {{\on{Mod}_{[\![\Dc]\!]} }}
\def\LDsM {{\on{Mod}^!_\Dc }} 
\def\Lie{{\operatorname{Lie}\nolimits}}
\def\lra{\longrightarrow}
\def\lla{\longleftarrow}
\def\Map{\operatorname{Map}}
\def\Mod{{\on{Mod}}}
\def\Mor{\operatorname{Mor}\nolimits} 
\def\Mult{\on{Mult}}
\def\Nm{{\on{Nm}}}
\def\Ob{\operatorname{Ob}\nolimits}
\def\ol{\overline}
\def\on{\operatorname}
\def\oo{{\infty}}
\def\op{{\on{op}}}
\def\Op{{\on{Op}}}
\def\Perf{\on{Perf}}
\DeclareMathOperator*{\plim}{``\underleftarrow{\mathrm{lim}}''}
\def\pol{{\on{pol}}}
\def\pr{{\on{pr}}}
\def\Pro{\on{Pro}}
\def\pt{{\on{pt}}}
\def\QCoh{{\on{QCoh}}}
\def\Ran{{\on{Ran}}}
\def\rday{{\mathbin{\highol{\circledast}}}}
\def\reg{{\on{reg}}}
\def\RHom{{\on{RHom}}}
\def\Sch{{\on{Sch}}}
\def\Sect{{\on{Sect}}}
\def\Set{{\Sc et}}
\def\Sing{{\on{Sing}}}
\def\Spec {\on{Spec}}
\def\Sh{{\on{Sh}}}
\def\sk{{\on{sk}}}
\def\sset{{\Delta^\circ \Sc et}}
\def\st{{\on{st}}}
\def\strict#1{\underline{\underline{#1}}}
\def\Sym{\operatorname{Sym}\nolimits}
\def\Th{{\on{Th}}}
\def\top{{\on{top}}}
\def\Top{{\on{Top}}}
\def\Tot{{\on{Tot}}}
\def\Tw{\mathrm{Tw}}
\def\TwS{\on{Tw}(\finsurj)}
\def\ul{\underline}
\newcommand{\UTwc}{U_\Tw^\mathrm{c}}
\def\Var{{\on{Var}}}
\def\Vect{\on{Vect}}
\def\wb{\widebar}
\def\wc{\widecheck}
\def\We{{\on{We}}}
\def\wh{ \widehat}
\def\wt{\widetilde}
\def\Zar{{\on{Zar}}}
\def\1{{\mathbf{1}}}
\def\lra{\longrightarrow}
\def\lla{\longleftarrow}
\def\(({(\hskip -1mm (}
\def\)){)\hskip -1mm )}
\def\be{\begin{equation}}
    \def\ee{\end{equation}}
\def\ed{\end{document}}
\def\-{{\, \setminus\, }}
\def\={{\,\simeq\,}}
\let\originalleft\left
\let\originalright\right
\renewcommand{\left}{\mathopen{}\mathclose\bgroup\originalleft}
\renewcommand{\right}{\aftergroup\egroup\originalright}
\title{Gelfand-Fuchs cohomology in algebraic geometry and factorization algebras}
\author{Benjamin Hennion, Mikhail Kapranov}
\begin{document}
    
 \maketitle

        \begin{abstract}
        Let $X$ be a smooth affine variety over a field $\k$ of characteristic $0$ and $T(X)$ be the Lie algebra
        of regular vector fields on $X$. We compute the Lie algebra cohomology of $T(X)$ with coefficients in $\k$.
        The answer is given in topological terms relative to any embedding $\k\subset\CC$ and is analogous to
        the classical Gelfand-Fuchs computation for smooth vector fields on a $C^\infty$-manifold. Unlike the
        $C^\infty$-case, our setup is purely algebraic: no topology on $T(X)$ is present.  The proof is based on
        the techniques of factorization algebras, both in algebro-geometric and topological contexts. 
    \end{abstract}
    
    \tableofcontents
    \addtocounter{section}{-1}
    
    \section*{Introduction}
    
    \paragraph{Description of the result.} 
    Let $\k$ be a field of characteristic $0$ and $X$ be a smooth affine algebraic variety over $\k$. 
    Denote by $T(X)=\Der\, \k[X]$  the Lie algebra of regular
    vector fields on $X$. In this paper we determine $H^\bullet_\Lie(T(X))$, the Lie algebra cohomology of $T(X)$ with coefficients in $\k$.
    
    \vskip .2cm

    Clearly, extending the  field of definition of $X$ from $\k$ to $\k'\supset \k$ results in extending the scalars in $H^\bullet_\Lie(T(X))$
    from $\k$ to $\k'$. Since any $X$ can be defined over a field $\k$ finitely generated over $\QQ$ and any such field embeds into the
    complex field $\CC$,  the problem  of finding  $H^\bullet_\Lie(T(X))$ reduces to the case $\k=\CC$, when we can speak about 
    $X_\an$, the space of complex points of $X$ with the transcendental topology. 
    In this case
    our main result, \autoref{thm:main}, implies that $H^\bullet_\Lie(T(X))$ is finite-dimensional in each degree and is an invariant
    of $\dim(X)$, of the rational homotopy type of $X_\an$ 
    and of its rational Chern classes. More precisely, it is identified with
    $H^\bullet_\top(\Sect(\ul Y_X/X_\an))$, the $\CC$-valued cohomology of the space of continuous sections of a natural fibration
    $\ul Y_X\to X_\an$ over $X_\an$. 
    This allows one to easily compute $H^\bullet_\Lie(T(X))$ in many examples, using elementary
    rational homotopy type theory, cf. \S \ref{subsec:explicit}. 
    \vskip .2cm
    
    The interest and importance of this problem stems from its relation to the algebro-geometric study of 
    higher-dimensional analogs of
    vertex algebras, in particular, of Kac-Moody \cite{HiKM} \cite{gwilliam-williams} and Virasoro algebras. While
    the full study eventually involves non-affine varieties  (see n$^\circ$ \ref{par:non-aff} below), the affine case
    already presents considerable difficulties which we address in this paper. Thus, 
    we learned that   \autoref{thm:main}  was conjectured by 
    B. L. Feigin  back in the 1980's but  there has been no proof even in the case of curves, despite some work 
    for holomorphic vector fields  and continuous cohomology \cite{kawazumi} \cite{wagemann}, \cite{wagemann-CMP}. 
    
    \vskip .2cm

    \paragraph{Relation to Gelfand-Fuchs theory.} 
    \autoref{thm:main} is an analog for algebraic varieties of the famous results by Gelfand-Fuchs \cite{gelfand-fuks} \cite{fuks},
    Haefliger \cite{haefliger-ens} and Bott-Segal \cite{bott-segal} on the cohomology of $\Vect(M)$, the Lie algebra
    of smooth vector fields on a $C^\infty$-manifold $M$. We recall the main features of that theory.
    
    \begin{itemize}
        \item[(1)] First, one considers $W_n= \Der \, \RR[\![ z_1, \dots, z_n]\!]$, the Lie algebra of formal vector fields on $\RR^n$,
        with its adic topology. Its cohomology is identified with the cohomology of a certain CW-complex $Y_n$ with
        action of $GL_n(\RR)$, see \cite{fuks} \S 2.2.
        
        \item[(2)] Given an $n$-dimensional $C^\infty$-manifold $M$, the tangent bundle of $M$ gives an associated fibration
        $\ul Y_M\to M$, and $H^\bullet_\Lie(\Vect(M))$ is identified with the cohomology of $\Sect(\ul Y_M/M)$,
        the space of continuous sections \cite{haefliger-ens}  \cite{bott-segal},  \cite{fuks} \S 2.4. 
    \end{itemize}
    
    \vskip .2cm
    
    We notice that $Y_n$ can be realized as a complex algebraic variety  acted upon by $GL_n(\CC)\supset GL_n(\RR)$
    and so any complex manifold $X$ carries the associated fibration $\ul Y_X$ with fiber $Y_n$ (even though the real
    dimension of $X$ is $2n$). It is this fibration that is used in  \autoref{thm:main}. 
    While in the $C^\infty$-theory $\Vect(M)$ is considered with its natural Fr\'echet topology and $H^\bullet_\Lie$
    is understood accordingly (continuous cochains), in our approach $T(X)$ is considered purely algebraically.

    \paragraph{Method of proof: factorization homology.}  In order to prove  \autoref{thm:main}, we use the theory of factorization algebras and factorization homology,
    both in topological \cite{lurie-ha} \cite{CG} \cite{ginot} and algebro-geometric \cite{BD} \cite{G} \cite{FG}  \cite{GL} 
    contexts. In particular, we work systematically with the algebro-geometric version of the Ran space  (\S \ref{subsec:Ran-AG}). 
    
    \vskip .2cm
    
    This theory provides, first of all, a simple treatment of the $C^\infty$-case. That is, the correspondence
    \[
    U \,\mapsto \,\Ac(U) \,=\,\CE^\bullet(\Vect(U))
    \]
    (Chevalley-Eilenberg complex of continuous cochains) is a locally constant factorization algebra $\Ac$ on $M$. As $\Ac$ is
    natural in $M$, it is, by Lurie's theorem \cite{lurie-ha}    \cite{ginot} \S 6.3, 
    determined by an algebra $A_n$ over the little disk operad $E_n$ with a homotopy
    action of $GL_n(\RR)$, so that $H^\bullet_\Lie(\Vect(M))$ is identified with $\int_M(A_n)$, the factorization
    homology of $M$ with coefficients in $A_n$. The Gelfand-Fuchs computation of $H^\bullet_\Lie(W_n)$ identifies $A_n$
    with $C^\bullet(Y_n)$, the cochain algebra of $Y_n$, and the identification with the cohomology of the
    space of sections follows from {\em non-abelian Poincar\'e duality}, see \cite{lurie-ha} \S 5.5.6,
    \cite{GL} \cite{ginot}
    and \autoref{prop:NAPD-1} and \autoref{thm:NAPD} below.
    
    \vskip .2cm
    
    Passing to the algebraic case, we find that $H^\bullet_\Lie(T(X))$ can also be interpreted as the factorization homology
    on the algebro-geometric Ran space (cf. \cite{BD} \S 4.8 for $n=1$ and  \cite{FG} Cor. 6.4.4 in general)
    but the corresponding factorization algebra $\wc \Cc^\bullet$
    is far from being locally constant. Already for $n=1$ it corresponds (after dualizing) to the vertex algebra $\on{Vir}_0$
    (the vacuum module over the Virasoro algebra with central charge $0$) which
    gives a holomorphic 
    but not at all locally constant factorization algebra. See \ref{subsec:CEFA} \ref{par:chiral-envelops} below.
    
    \vskip .2cm
    
    The crucial ingredient in our approach is the {\em covariant Verdier duality} of Gaitsgory and Lurie \cite{GL}
    which is a correspondence $\psi$ between (ordinary, or $*$-) sheaves and $!$-sheaves on the Ran space. 
    For a sheaf $\Fc$ its covariant Verdier dual $\psi(\Fc)$ is the collection $(i_p^!\Fc)_{p\geq 1}$
    where $i_p$ is the embedding of the $p^\mathrm{th}$ diagonal skeleton of the Ran space. In our case $\psi(\wc\Cc^\bullet)$ is
    the algebro-geometric analog of the {\em diagonal filtration} of Gelfand-Fuchs \cite{gelfand-fuks}. It turns out
    that $\psi(\wc\Cc^\bullet)$ is a locally constant factorization algebra even though $\wc\Cc^\bullet$ itself is not.
    This appearance of locally constant objects from holomorphic ones is perhaps the most surprising phenomenon that
    we came across in this work.

    \vskip .2cm
    
    By using non-abelian Poincar\'e duality we show that the factorization homology of $\psi(\wc\Cc^\bullet)$ is identified
    with $H^\bullet_\top(\Sect(\ul Y_X/X))$, and our main result follows from comparing the factorization homology of $\psi(\wc \Cc^\bullet)$
    and $\wc\Cc^\bullet$ for an affine $X$ (``completeness of the diagonal filtration"). 
    
    \vskip .2cm

    We find it remarkable that the classical Gelfand-Fuchs theory has anticipated, in many ways, the modern theory of
    factorization algebras. Thus, the Ran space appears  and is used 
    explicitly (under the name ``configuration space") in the 1977
    paper of Haefliger \cite{haefliger-ens} while the diagram of diagonal embeddings of Cartesian powers $M^I$ 
    is fundamental in the analysis of \cite{gelfand-fuks}.

    \paragraph{Non-affine varieties and future directions.} \label{par:non-aff}
    When $X$ is an arbitrary (not necessarily affine) smooth variety, we can understand $T(X)$ as a dg-Lie algebra
    $R\Gamma(X, T_X)$ and its Lie algebra cohomology is also of great interest. If $X$ is projective, $H^\bullet_\Lie(T(X))$
    plays a fundamental role on Derived Deformation theory (DDT), see  \cite{feigin-icm} \cite{hinich-schechtman} \cite{lurie-dagX}
    \cite{calaque-grivaux}. The corresponding Chevalley-Eilenberg complex is identified
    \[
    \CE^\bullet(T(X)) \,\simeq \, \bigl(\wh \Oc^\bullet_{\Mc, [X]}, d\bigr)
    \]
    with the commutative dg-algebra of functions on the formal germ of the derived moduli space $\Mc$ of complex structures
    on $X$ so $H^0_\Lie(T(X))$ is the space of formal functions on the usual moduli space. In particular, $H^\bullet_\Lie(T(X))$
    is no longer a topological invariant of $X$ and $c_i(T_X)$. 
    
    \vskip .2cm
    
    In the case of arbitrary $X$ we still have an interpretation of $H^\bullet_\Lie(T(X))$ as the factorization homology of $\wc\Cc^\bullet$
    and $\psi(\wc\Cc^\bullet)$ is still locally constant. So our analysis (\autoref{thm:top-approx})  gives a canonical map
    \[
    \tau_X: H^\bullet_\top(\Sect(\ul Y_X)/X) \lra H^\bullet_\Lie(T(X)).
    \]
    While $\tau_X$ may no longer be an isomorphism, it provides an interesting supply of ``topological" classes  in 
    $ H^\bullet_\Lie(T(X))$. For example, when $X$ is projective of dimension $n$, ``integration over the fundamental class of $X$"
    produces,  out of $\tau_X$,  a map
    \[
    H^{2n+1}(Y_n) \lra H^1_\Lie(T(X)),
    \]
    i.e., a supply of {\em characters}  ($1$-dimensional representations)
    of $T(X)$. Cohomology of $T(X)$ with coefficients in such representations
    should describe, by extending the standard DDT, formal sections of natural determinantal bundles on $\Mc$, cf. \cite{feigin-icm}. 
    We recall that
    \[
    H^{2n+1}(Y_n) \,\simeq \, \CC[x_1,\dots, x_n]^{S_n}_{\deg = n+1}, \quad \deg(x_i)=1
    \]
    is identified with the space of symmetric polynomials in $n$ variables of degree $n+1$,  which have the
    meaning of polynomials in the Chern classes.

    \vskip .2cm
    
    In a similar vein, for $X=\AAA^n-\{0\}$ (the ``$n$-dimensional punctured disk''), the space $H^{2n+1}(Y_n)$ maps to
    $H^2_\Lie(T(X))$, i. e., we get a supply of {\em central extensions} of $T(X)$, generalizing the classical
    Virasoro extension for $n=1$.

    \paragraph{The structure of the paper.}
    In Chapter \ref{sec:C-inf-GF} we reformulate, using the point of view of factorization algebras, the classical Gelfand-Fuchs theory.
    We first recall, in \S  \ref{subsec:fac-top}, the theory of factorization algebras  and factorization homology
    on  a $C^\infty$-manifold $M$, in the form given in \cite{CG}, i.e., as dealing with pre-cosheaves on $M$ itself, rather than on
    the Ran space or on an appropriate category of disks. In our case the factorization algebras carry
    additional structures of commutative dg-algebras (cdga's),  so the theory simplifies
    and reduces to cosheaves of cdga's  with no further structure. This simplification is due to 
    \cite{ginot} (Prop. 48), and
    in \S \ref{subsec:fact-cdga} we review  its applications. Unfortunately, we are not aware of
    a similar simplification   in the algebro-geometric setting.

    In \S \ref{subsec:equiv-cdga} we review the concept of factorization homology of $G$-structured manifolds
    in the setting of $G$-equivariant cdga's, where    a self-contained treatment is possible. In particular, 
    we review  non-abelian Poincar\'e duality which will be
    our main tool in relating global objects to the cohomology of the spaces of sections.  It allows us  to give a
    concise 
    proof of the Haefliger-Bott-Segal theorem  in \S \ref{subsec:clas-GF}. The identifications in   \S \ref{subsec:clas-GF}
    are formulated in such a way that they can be re-used later, in \S \ref{subsec:case-tan}, 
    with the full $GL_n(\CC)$-equivariance taken into account. 
    
    \vskip .2cm
    
    Chapter \ref {sec:Dmod} is dedicated to the formalism of $\Dc$-modules which we need in a form more flexible than it is
    usually done. More precisely, our factorization algebras, in their $\Dc$-module incarnation, are not holonomic, but we
    need functorialities that are traditionally available only for holonomic modules.  
    So in \S \ref{subsec:nonstand}
    we introduce  two ``non-standard"
    functorialities on the category of pro-objects. Thus, for a map $f: Z\to W$ of varieties we introduce the functor
    $f^{[\![*]\!]}$ (formal  inverse image), which for $f$ a closed embedding and an induced $\Dc$-module $\Fc\otimes_\Oc\Dc$
    on $W$ corresponds to restriction of sections of $\Fc$ to the formal neighborhood of $Z$. We also introduce the functor
    $f_{[\![!]\!]}$ (formal direct image with proper support) which for an induced $\Dc$-module on $Z$ corresponds to the
    functor $f_!$  on pro-coherent sheaves introduced by Deligne \cite{deligne}. With such definitions we have, for instance, 
    algebraic  Serre duality on non-proper algebraic varieties. 
    
    \vskip .2cm
    
    In Chapter \ref{sec:Ran} we review the algebro-geometric Ran space (\S \ref{subsec:Ran-AG}) and define, in 
    \S \ref{sec:laxmodules}, 
    two main types of $\Dc$-modules on it, corresponding to the concepts of  $*$-sheaves and $!$-sheaves.
    Since we understand the $*$-inverse image in the formal series sense (for not necessarily holonomic modules),
    this understanding propagates into the definition of $\Dc$-module analogs of
    $*$-sheaves, which we call $[\![\Dc]\!]$-modules. We also make a
    distinction between lax and strict modules of both types. In practice, lax modules are more easy to construct
    and are of more finite nature. They can be strictified which  usually produces
    much larger objects but with the
    same factorization homology. In \S \ref{subsec:coVerdieranddiag}  we adapt to our situation the
    concept of covariant Verdier duality from \cite{GL}.

    \vskip .2cm
    
    Chapter \ref{sec:FA} is devoted to the theory of factorization algebras in our algebro-geometric
    (and higher-dimensional) context. Here the main technical issue is to show that covariant Verdier
    duality preserves factorizable objects. This is not obvious in the standard setting when the Ran space is
    represented by the diagram of  the  $X^I$ for all nonempty finite sets $I$ and their surjections. 
    In fact,  this necessitates an
    alternative approach to factorization  algebras themselves:   defining them as collections of data not  on  the $X^I$ 
    (as it is usually done and as recalled in \S \ref{subsec:FA-finsurj}) but  
    on varieties labelled by all  surjective maps $I \twoheadrightarrow J$ of finite sets. This is done in Sections \ref{subsec:straightarrows}
    and \ref{subsec:twistedarrows} (we need, moreover, {\em two forms} of  such a definition, each one good for a particular
    class of properties).  This allows us to prove, in \S \ref {subsec:CVD-FA},  that covariant Verdier duality indeed
    preserves factorization algebras (\autoref{thm:coVerdier-fact}). 
    
    \vskip .2cm
    
    After these preparations, in Chapter \ref{sec:GFAG} we study the  factorization algebras 
    $\Cc_\bullet=\Cc_\bullet(X,L)$ and
    $\wc\Cc^\bullet =\wc\Cc^\bullet(X,L)$ associated to a local Lie algebra $L$ on a smooth
    variety $X$. They can be seen as localized versions of
      the (homological and cohomological)
    Chevalley-Eilenberg complexes of $R\Gamma(X, L)$. In other words, these complexes
    (of vector spaces)
    are obtained from $\Cc_\bullet$ and $\wc\Cc^\bullet$ by 
  applying the factorization (co)homology functor.      
     The factorization algebras $\Cc_\bullet$ and $\wc\Cc^\bullet$  are introduced  in \S \ref{subsec:CEFA}.  In \S \ref{subsec:diag-aff} we specialize to the case of affine $X$ and prove
    \autoref{thm:affinecase} and \autoref{cor:affine-L} which imply that $\psi(\wc\Cc^\bullet)$
    and  $\wc \Cc^\bullet$
    have the same factorization homology.

    \vskip .2cm
    
    Finally, in Chapter \ref{sec:top-pic} we compare the algebro-geometric theory with the topological one. 
    After recalling  in  \S \ref{subsec:C-oo-infty} the concept of $C^\oo$-factorization algebras in the  $\oo$-categorical context  \cite{lurie-ha} \cite{ginot}, we
     outline a general procedure of comparison in \S \ref{subsec:D!-e2n}. We make a particular emphasis the holonomic regular
    case, when we can pass between de Rham cohomology on the Zariski topology
    (which will eventually be related to the purely algebraic object $H^\bullet_\Lie(T(X))$) and
    the cohomology on the complex topology (which will be eventually related to
    the cohomology of $\Sect(\ul Y_X/X_\an)$). In \S \ref{subsec:case-tan} we specialize to the case of the tangent bundle
    where, as we show,  the factorization algebra $\psi(\wc\Cc^\bullet)$ is indeed holonomic regular.
    This allows us to identify the corresponding locally constant factorization algebra 
    and in \S \ref{subsec:main-res} we prove our main result, \autoref{thm:main}. 
    The final \S \ref{subsec:explicit} contains some explicit computations of $H^\bullet_\Lie(T(X))$
    following from  \autoref{thm:main}.

    \paragraph{Acknowledgements.}  We are grateful to D. Gaitsgory, B. L.  Feigin, J. Francis, A. Khoroshkin, D. Lejay,  L. Miaskiwskyi, F. Petit, M. Robalo and  E. Vasserot
    for useful discussions and correspondence.
     We also thank G. Ginot for his careful reading of the draft and his precious comments. Our further gratitude is due to the referees for
  their numerous remarks that helped us improve the exposition. 
    
     The research of the second author was supported by  World Premier International Research Center Initiative (WPI Initiative), MEXT, Japan and by the IAS School of Mathematics.
    
    
    \section{Notations and conventions} \label{sec:notations}
    
    \numberwithin{equation}{section}
    
    \paragraph{Basic notations.}

    $\k$:  a field of characteristic $0$,  specialized to $\RR$ or $\CC$ as needed. 
    
    \vskip .2cm
    
    $\dgVect$: the category of cochain complexes (dg-vector spaces) $V=\{V^i, d_i: V^i \to V^{i+1}\}$ 
    over $\k$, with its standard symmetric monoidal structure
    $\otimes_\k$ (tensor product over $\k$).  

    \vskip .2cm
    
    We use the abbreviation {\em cdga} for ``commutative dg-algebra with unit'',

    \vskip .2cm
    
    $\cdga$: the category of cdga's over $\k$.
    It is also symmetric monoidal with respect to $\otimes_\k$. 
    
    \vskip .2cm
    
    $\Top$: the category of compactly generated topological spaces, see, e.g., \cite[Def. 2.4.21]{hovey}. 
    
    \vskip .2cm
    
    $\Delta$: the standard simplex category with objects the finite nonempty
    ordinals $[p]=\{0,\cdots, p\}$ and morphisms being monotone maps. 
    
    \vskip .2cm
    
    $\Delta^p\subset \RR^{p+1}$: the standard $p$-simplex, i.e., the convex
    hull of the basis vectors. 
    
    \vskip .2cm
    
    $\Delta^\circ \Cc$ resp. $\Delta \Cc$: the category of simplicial,
    resp. cosimplicial objects in a category $\Cc$. In particular, we use the category $\sset$
    of simplicial sets. 
    
    \vskip .2cm
    
    $\Sing_\bullet(T)$: the singular simplicial set of a topological space $T$. 
    If $T$ is a $C^\oo$-manifold, then we have a weakly equivalent simplicial
    subset $\Sing_\bullet^{C^\oo}(T)\subset \Sing_\bullet(T)$ formed
    by  singular simplices $\sigma: \Delta^p\to T$ which are
    $C^\oo$-maps.

    \paragraph{Categorical language.}

    The categories $\dgVect$, $\cdga$, $\Top$, $\sset$ are symmetric monoidal {\em model categories}, see   \cite{hovey} \cite{lurie-htt}
    for background on model structures.  
    We denote by $W$ the classes of weak equivalences in these categories
    (thus $W$ consists of quasi-isomorphisms for $\dgVect$ and $\cdga$). 
    
    \vskip .2cm

    We will mostly use the weaker structure: that of a {\em homotopical category}
    \cite{DHKS} which is a category $\Cc$  with just one  class $W$ of  morphisms, called weak equivalences and satisfying suitable axioms. 
    A homotopical category $(\Cc, W)$ gives rise to a simplicially enriched category $L_W(\Cc)$ (Dwyer-Kan localization).
    Taking $\pi_0$ of the simplicial  Hom-sets in
    $L_W(\Cc)$ gives the usual localization $\Cc[W^{-1}]$. 
    We refer to $L_W(\Cc)$ as the {\em homotopy category} of $(\Cc, W)$ (often, this term is reserved for $\Cc[W^{-1}]$). 
    In  particular,  $(\Cc, W)$ has standard notions of homotopy limits and colimits which we denote
    $\holim$ and $\hocolim$. By an {\em equivalence of homotopical categories} $(\Cc, W) \to (\Cc', W')$
    we mean a functor $F: \Cc\to\Cc'$ such that:
    \begin{itemize}
        \item[(1)] $F(W)\subset W'$.
        
        \item[(2)] The induced functor of simplicially enriched
        categories $L_W(\Cc)\to L_{W'}(\Cc')$ is a {\em quasi-equivalence}, that is:
        \begin{itemize}
            \item[(2a)] It  gives an equivalence of the usual categories
            $\Cc[W^{-1}]\to \Cc'[(W')^{-1}]$.  
            
            \item[(2b)] It induces weak equivalences on the simplicial Hom-sets. 
            
        \end{itemize}  
    \end{itemize}
    \vskip .2cm

    We will freely use the language of $\infty$-categories  \cite{lurie-htt}. In particular, any  simplicially enriched category gives
    rise, in a standard way,  to an $\infty$-category with the same objects, and we will simply consider it as an $\infty$-category. This applies
    to the Dwyer-Kan localizations $L_W(\Cc)$ above. For instance, various derived categories
     will be 
    ``considered as $\infty$-categories'' when needed. 
    
    \paragraph{Thom-Sullivan cochains.} 
    We recall  the {\em Thom-Sullivan functor}
    \[ 
    \Th^\bullet:  \Delta \dgVect \lra \dgVect.
    \]
    Explicitly, for a cosimplicial dg-vector space $V^\bullet$, the dg-vector space
    $\Th^\bullet(V^\bullet)$ is defined as the end, in the sense of
    \cite{maclane}, of the simplicial-cosimplicial dg-vector space $\Omega^\bullet_\pol(\Delta^\bullet)\otimes V^\bullet$
    where $\Omega^\bullet_\pol(\Delta^\bullet)$ consists of polynomial differential forms on the standard simplices,
    see \cite{hinich-schechtman:thom} \cite{felix}. 
    \vskip .2cm
    
    Note that $\Th^\bullet(V^\bullet)$ is quasi-isomorphic to the naive total complex  
    \be\label{eq:Tot}
    \Tot(V^\bullet) \,=\, \left(\bigoplus V^n[-n],  \, d_V + \sum (-1)^i\delta_i\right), 
    \ee
    where $\delta_i$ are
    the coface maps of $V^\bullet$. The quasi-isomorphism is given by the Whitney forms on the 
    $\Delta^n$, see \cite{getzler} \S 3. 
    
    \vskip .2cm
    
   In the case $\k=\RR$ or $\CC$ we can  also use 
   the bigger simplicial-cosimplicial vector space
  $\Omega^\bullet_{C^\oo}(\Delta^\bullet)$ 
      formed by $\k$-valued $C^\oo$-differential formed on the $\Delta^n$. 
      This leads to the $C^\oo$-version of the Thom-Sullivan construction
       of $V^\bullet$ which we denote $\Th^\bullet_{C^\oo}(V^\bullet)$
      and which is quasi-isomorphic to $\Th^\bullet(V^\bullet)$.

    \vskip .2cm

    The functor
    $\Th^\bullet$ (as well as $\Th^\bullet_{C^\oo}$ for $\k=\RR,\CC$)
     is compatible with symmetric monoidal
    structures and so sends cosimplicial cdga's  to cdga's.
    It can, therefore, 
    be used to represent the homotopy limit of any 
    diagram of cdga's as an explicit cdga.
    Indeed, the limit of any diagram can be expressed in terms of
     finite products, cofiltered limits and cosimplicial limits. The first two cases are easy to deal with, and the third one is directly handled by the Thom-Sullivan complex.
    In particular, we have a cdga structure on the cohomology of
    any sheaf of cdga's,  a structure of a sheaf of cdga's in any 
    direct images of a sheaf of cdga's and so on. We note some particular cases. 
    
    \vskip .2cm

    Let $S_\bullet$ be a simplicial set. We write 
    $
    \Th^\bullet (S_\bullet,
    \k)=  \Th^\bullet(\k^{S_\bullet}),
    $
   where $\k^{S_\bullet}$ is the   cosimplicial cdga   $(\k^{S_p})_{p\geq 0}$ (simplicial cochains). The cdga
    $\Th^\bullet (S_\bullet,\k)$
    is called the {\em Thom-Sullivan cochain algebra} of $S_\bullet$. It consists of compatible systems of
    polynomial  differential forms on all the geometric simplices of $S_\bullet$.  
    If $\k=\RR$ or $\CC$, we have a bigger but quasi-isomorphic cdga
     $
     \Th^\bullet_{C^\oo} (S_\bullet,
    \k)=  \Th^\bullet_{C^\oo}(\k^{S_\bullet})
    $
    consisting of compatible systems of
    $C^\oo$  differential forms on all the geometric simplices of $S_\bullet$.
    
    \vskip .2cm
    
    Let $T$ be  a topological space. We write 
    $\Th^\bullet(T) = \Th^\bullet(\Sing_\bullet(T))$. 
    This is a cdga model for the singular cochain algebra of
     $T$ with coefficients in $\k$. 
      If $\k=\RR$ or $\CC$ and $T$ is a $C^\oo$-manifold, then we have
     a chain of quasi-isomorphisms starting  from  the standard $C^\oo$ de Rham
     complex of $T$:
     \be\label{eq:DR=Thom}
     \begin{gathered}
     \Omega^\bullet_{C^\oo}(T,\k) \to 
     \Th^\bullet_{C^\oo}(\Sing_\bullet^{C^\oo}\!(T),\k) \leftarrow \Th^\bullet(\Sing_\bullet^{C^\oo}\!(T), \k)  
     \leftarrow \Th^\bullet(\Sing_\bullet(T),\k). 
     \end{gathered}
     \ee


    \section{\texorpdfstring{$C^\infty$}{C∞} Gelfand-Fuchs cohomology and factorization algebras}\label{sec:C-inf-GF}
    
    \numberwithin{equation}{subsection}

    \subsection{Factorization algebras on \texorpdfstring{$C^\infty$}{C∞} manifolds}\label{subsec:fac-top}
    
    \paragraph{Factorization algebras.} \label{par:fact-alg}

    We follow the approach of \cite{CG} as developed by \cite{ginot}. 
    
    \begin{Defi} Let $M$ be a $C^\infty$-manifold and $(\Cc, \otimes ,\1)$ be a symmetric monoidal category. 
        A {\em pre-factorization algebra}  on $M$ with values in $\Cc$ is a rule   $\Ac$   associating:
        \begin{itemize}
            \item [(1)]  To each open subset $U\subset M$ an object $\Ac(U)\in\Cc$, with $\Ac(\emptyset)=\1$. 
            
            \item[(2)] To each finite family of open sets $U_0, U_1, \dots, U_r$, $r\geq 0$,
            such that $U_1,\dots, U_r$ are disjoint and contained in $U_0$, a permutation invariant morphism  in $\Cc$
            \[
            \mu_{U_1,\dots, U_r}^{U_0}:  \Ac(U_1) \otimes \cdots \otimes \Ac(U_r) \lra \Ac( U_0), 
            \]
            these morphisms satisfying the obvious associativity conditions.
        \end{itemize}
    \end{Defi}

    Taking $r=1$ in (2),  we see that a pre-factorization algebra defines an $\Cc$-valued pre-cosheaf on $M$,
    i.e., a covariant functor from the poset of opens in $M$ to $\Cc$.
    
    \vskip .2cm
    
    Let now $\Cc=\dgVect$ with its standard structure of 
      a symmetric monoidal homotopical category (weak equivalences are quasi-isomorphisms of complexes).

         Let $U\subset M$ be an open subset and $\Uen = (U_i)_{i\in I}$ be an open cover of $U$.
         Let us think of $\Uen$ as a set of open subsets in $M$. 
        Denote $P\Uen$ the set formed by all finite  collections (i.e., sets) of mutually
        disjoint opens $\alpha =\{U_{i_1}, \dots, U_{i_n}\}$ from $\Uen$. 
         Let $\Ac$ be a $\dgVect$-valued
        pre-factorization algebra on $M$.   Following \cite[\S 4.1]{ginot}, we define the
        {\em multiplicative \v Cech complex} of $\Ac$ with respect to $\Uen$ as the simplicial
        dg-vector space

        \be\label{eq:codescent}
           \xymatrix{
    \Cen_\bullet(\Uen, \Fc) \,\,=\,\,
        \biggl\{  \cdots     
          \ar@<.6ex>[r] \ar@<-.6ex>[r] \ar[r] & \displaystyle
          \bigoplus_{\alpha,\beta \in P\Uen} \biggl(  \bigotimes_{U\in\alpha, V\in\beta} \Fc(U\cap V) \biggr)
     \ar@<.4ex>[r] \ar@<-.4ex>[r] & \displaystyle
   \bigoplus_{\alpha\in P\Uen}  \biggl( \bigotimes_{U\in\alpha} \Fc(U) \biggr)
   \biggr\}
   }     
        \ee
    and a morphism
    \be\label{eq:gamma-u}
    \gamma_\Uc:   \hocolim \,  \Cen_\bullet(\Uen, \Fc) \, \lra \, \Fc(U). 
    \ee

    \begin{Defi}\label{Defi:factorizing-cover} Let $U\subset M$ be an open subset and $\Uen = (U_i)_{i\in I}$ be an open cover of $U$.
        We say that $\Uen$ is a
        {\em factorizing cover}, if for any finite subset $\{x_1, \dots, x_n\} \subset U$ there are pairwise
        disjoint opens $U_{i_1}, \dots, U_{i_n}\in\Uen$ s.t. $x_\nu\in U_{i_\nu}$.
       \end{Defi}
    
 Clearly, factorizing  covers are typically very large (consist of infinitely many opens). 
    
    \begin{Defi} \label{Defi:cinfty-factorization-algebra} 
        A pre-factorization algebra $\Ac$ on $M$ with values in $\dgVect$ is called a {\em factorization algebra},
         if the following descent condition holds:
        \begin{itemize}
   
        \item[(3)] For any open $U\subset M$ and any factorizing cover  $\Uen = (U_i)_{i\in I}$  of $U$, the 
            morphism $\gamma_\Uen$ is a  quasi-isomorphism. 
        \end{itemize}
        For a factorization algebra $\Ac$ the object of global cosections will be also denoted by
        \[
        \int_M\Ac \,=\, \Ac(M)
        \]
        and called the {\em factorization homology} of $M$ with coefficients in $\Ac$. 
    \end{Defi}
    
    As pointed out in \cite[Rem.20]{ginot}, a factorization algebra automatically satisfies the following
    property:
    \begin{itemize}
     \item[(4)] For any disjoint open $U_1,\dots, U_r\subset M$, the morphism $\mu_{U_1,\dots, U_r}^{U_1\cup \cdots \cup U_r}$ in (2) 
            is a  quasi-isomorphism.  
            \end{itemize}

    \begin{Defi}~
        \begin{statements}
            \item By a {\em disk} in $M$ we mean an open subset homeomorphic to 
            $\RR^n$ for $n=\dim (M)$.
            \item A (pre-)factorization algebra $\Ac$ with values in  $\Cc$ 
            is called {\em locally constant}, if for any embeddings $U_1 \subset U_0$ of disks in $M$   
            the morphism $\mu_{U_1}^{U_0}$ is a weak equivalence. 
        \end{statements}
    \end{Defi}
    
    
    \subsection{Factorization algebras of cdga's}\label{subsec:fact-cdga}

    \paragraph{Sheaves and cosheaves.} 
    
    We start with a more familiar sheaf-theoretic analog of the formalism of \S \ref{subsec:fac-top}.

    Let $T$ be a topological space. Denote by $\Op(T)$ the poset of open sets in $T$ considered as a
    category.   Let $U\in\Op(T)$ be open and $\Uen=(U_i)_{i\in I}$ be an open cover of $U$;
    we write $U_{ij}=U_i\cap U_j$ etc.

      Let $(\Cc,W)$  be a homotopical category.  Thus
    $\Cc$ has coproducts, denoted $\coprod$. A   $\Cc$-valued pre-cosheaf $\Ac: \Op(T)\to\Cc$  on $T$ gives then the usual \v Cech complex (or codescent diagram)
    which is the simplicial object 
    in $\Cc$ given by
    
        \be\label{eq:codescent2}
    \xymatrix{
        \Nc_\bullet(\Uen, \Fc) \,\,=\,\,
        \biggl\{  \cdots   \ar@<.9ex>[r]   \ar@<.3ex>[r]   \ar@<-.3ex>[r]   \ar@<-.9ex>[r] &
        \displaystyle \coprod_{i,j,k\in I} \Fc(U_{ijk})  \ar@<.6ex>[r] \ar@<-.6ex>[r] \ar[r] &
        \displaystyle \coprod_{i,j\in I} \Fc(U_{ij}) \ar@<.4ex>[r] \ar@<-.4ex>[r] &  
        \displaystyle \coprod_{i\in I} \Fc(U_i) 
        \biggr\} 
    }
    \ee
    and equipped with a canonical morphism
     \[
    \gamma_\Uc^{\text{\v Cech}}:   \hocolim \,  \Nc_\bullet(\Uen, \Fc) \, \lra \, \Fc(U). 
    \]

    \begin{Defi}~
        \begin{statements}
            \item Let $(\Cc, W)$ be a homotopical category. 
            A  $\Cc$-valued pre-cosheaf $\Ac: \Op(T)\to\Cc$  on $T$ is called 
            a {\em homotopy cosheaf} , if, for each
            $U\in\Op(T)$ and each cover $\Uen$ of $U$, 
            the canonical morphism $\gamma_\Uen^{\text{\v Cech}}$  
              is a 
            weak equivalence. 
            \item By a $\Cc$-valued {\em homotopy sheaf} on $T$ we mean a homotopy cosheaf with values in
            $\Cc^\circ$. 
        \end{statements}
    \end{Defi} 
    
    In the sequel we will drop the word ``homotopy" when discussing homotopy sheaves and cosheaves. 
    We denote by $\Sh_T(\Cc)$ and $\Cosh_T(\Cc)$ the categories of $\Cc$-valued sheaves and cosheaves. 

    \begin{Defi}
        Let $M$ be a $C^\infty$ manifold. A $\Cc$-valued cosheaf $\Ac$ on $\Mc$ is called {\em locally constant}
        if for any two disks $U_1\subset U_0\subset M$ the co-restriction map $\Ac(U_1)\to \Ac(U_0)$
        is a weak equivalence. $\Cc$-valued locally constant sheaves on $M$ are defined similarly. 
    \end{Defi}
    
    We denote by $\Sh^\lc_M(\Cc)$ and $\Cosh^\lc_M(\Cc)$ the
    homotopical  categories of locally constant $\Cc$-valued sheaves and cosheaves. 
    
    \begin{prop}\label{prop:sh-cosh-lc}
        The homotopical categories  $\Sh^\lc_M(\Cc)$ and $\Cosh^\lc_M(\Cc)$ are equivalent. 
    \end{prop}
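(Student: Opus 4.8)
The plan is to exhibit both homotopical categories as equivalent to a common third one, namely the category $\on{Fun}(\Sing_\bullet(M),\Cc)$ of $\Cc$-valued \emph{local systems} on $M$; the self-duality of $\infty$-groupoids will then supply the equivalence between them. Intuitively, a locally constant sheaf and a locally constant cosheaf carry exactly the same information on disks — the common value there being the stalk, respectively the costalk — and the two global objects are recovered from this disk-datum by two opposite Kan extensions.

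First I would reduce everything to disks. Fix a basis $\Bc$ of $M$ consisting of disks such that every nonempty finite intersection of members of $\Bc$ is again a disk (a good-cover basis, e.g.\ geodesically convex balls for an auxiliary Riemannian metric). Viewing $\Bc$ as a full subcategory of $\Op(M)$, I claim that restriction induces equivalences of homotopical categories
\[
\Sh^\lc_M(\Cc)\,\simeq\,\on{Fun}^\lc(\Bc^\op,\Cc),\qquad \Cosh^\lc_M(\Cc)\,\simeq\,\on{Fun}^\lc(\Bc,\Cc),
\]
where the superscript $\lc$ selects the functors sending every inclusion in $\Bc$ to a weak equivalence. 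The inverse equivalences are the homotopy right Kan extension (for sheaves) and the homotopy left Kan extension (for cosheaves): the (co)sheaf condition expresses the value on an open $U$ as the $\holim$, respectively the $\hocolim$, of the co-descent diagram $\Nc_\bullet(\Uen,\Fc)$ associated to a good cover $\Uen$ of $U$ by members of $\Bc$, and one checks that the extended object again satisfies descent for arbitrary covers and is again locally constant.

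Second I would observe that, since a locally constant functor inverts every morphism of $\Bc$, it factors uniquely up to coherent homotopy through the Dwyer-Kan localization $L_W(\Bc)$ at $W=$ all morphisms. This localization is an $\infty$-groupoid, and the nerve theorem for good covers identifies it with $\Sing_\bullet(M)$. As localization commutes with passing to opposites and an $\infty$-groupoid is canonically equivalent to its opposite via inversion of morphisms, one has $L_W(\Bc^\op)\simeq L_W(\Bc)^\op\simeq\Sing_\bullet(M)$ as well. Hence both $\on{Fun}^\lc(\Bc^\op,\Cc)$ and $\on{Fun}^\lc(\Bc,\Cc)$ are equivalent to $\on{Fun}(\Sing_\bullet(M),\Cc)$, and splicing this with the equivalences of the previous step yields $\Sh^\lc_M(\Cc)\simeq\Cosh^\lc_M(\Cc)$.

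The hard part will be the first step, the reduction to disks, since a general cover — and a Weiss cover in particular — is not by disks, so one must check that Kan-extending from $\Bc$ genuinely reproduces the (co)sheaf on all of $\Op(M)$ and stays within the locally constant subcategory. The delicate point is the cosheaf side: homotopy colimits do not interact with costalks as formally as homotopy limits interact with stalks, so one cannot simply invoke stalkwise criteria. I would resolve this by exploiting that on a good cover all transition maps in $\Nc_\bullet(\Uen,\Fc)$ are weak equivalences, which collapses the relevant $\hocolim$ to that of a diagram constant up to equivalence, i.e.\ to the homotopy type of the nerve of the cover; local constancy of the extension then follows from contractibility of the nerves of disk-covers of a disk. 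Once this descent statement is secured, the remaining assertions are the formal self-duality of $\infty$-groupoids recorded above.
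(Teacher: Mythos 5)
Your proposal is correct and follows essentially the same route as the paper: restrict to the poset of disks, identify locally constant sheaves and cosheaves with contravariant resp.\ covariant functors out of that poset inverting all morphisms (via homotopy right/left Kan extension), and conclude from the canonical identification of the Dwyer--Kan localization of the disk poset with its opposite, it being an $\infty$-groupoid. The detour through a good-cover basis and the identification of the localization with $\Sing_\bullet(M)$ are refinements the paper omits, but they do not change the argument.
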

    
    \begin{proof}
        Let $D(M)\subset \Op(M)$ be the poset of disks in $M$. 
        As disks form a basis of topology in $M$,
        any sheaf $\Bc$ on $M$ is determined by its values on $D(M)$. More precisely,
        $\Bc$ is the (homotopy)  right Kan extension   \cite{DHKS}  of $\Bc|_{D(M)}$. This implies that
        $\Sh^\lc_M(\Cc)$ is identified with the category formed by 
        contravariant functors, 
        $D(M)\to \Cc$ sending each morphism to a weak equivalence, i.e., with the category of
        simplicially enriched functors  $ L(D(M)^\circ) \lra L_W(\Cc)$. Here and below $L$ without
        subscript
        stands for  the Dwyer-Kan localization with respect to all morphisms.
        
        Dually, any cosheaf $\Ac$ on $M$ is the homotopy left Kan extension of $\Ac|_{D(M)}$. 
        This means that $\Cosh^\lc_M(\Cc)$  is identified with the category formed by covariant 
        functors
        $D(M)\to \Cc$ sending each morphism to a weak equivalence, i.e., with the category of
        simplicially enriched functors  $ L(D(M))\lra L_W(\Cc)$.
        Now notice that $L(D(M))$ and $L(D(M)^\circ)$ are canonically identified. Indeed, consider the functor $D(M) \to L(D(M)^\circ)$ which is the identity on objects and maps an open immersion $j$ of disks in $M$ to the inverse $j^{-1}$ in $L(D(M)^\circ)$. It factors through $L(D(M))$ and induces the announced equivalence $L(D(M)) \simeq L(D(M)^\circ)$.
    \end{proof}

    \begin{Defi}\label{Defi:inverse} For a locally constant sheaf $\Bc$ we will denote by $\Bc^{-1}$  and call
        the {\em inverse} of $\Bc$ the locally constant
        cosheaf corresponding to $\Bc$.
    \end{Defi}

    \paragraph{Cosheaves of cdga's as factorization algebras.}
    The category $\cdga$  of cdga's over $\k$ is a symmetric monoidal homotopical category, with monoidal operation
    $\otimes_\k$ (tensor product of cdga's) and weak equivalences being quasi-isomorphisms of cdga's.  
    
    Colimits in $\cdga$ follow the general pattern holding for algebraic structures of
    any kind. Let us note,  in particular:
    \begin{itemize}
 \item   [(1)] Filtering colimits in $\cdga$ are calculated at the level of 
  the underlying vector spaces, i.e., as colimits in
   $\Vect$  which, in this
    case, have natural cdga structures.
    
    \item[(2)] A finite   coproduct $\coprod_{i\in I} A_i$, $|I|<\oo$,  is the tensor product 
     $\bigotimes_{i\in I} A_i$ over $\k$. In general,  $\coprod_{i\in I} A_i$
    can be seen, applying (1),  as the restricted tensor product spanned by decomposable tensors
    whose almost all factors are equal to $1$. 
    \end{itemize}
    
 \noindent   The structure of a homotopical category on $\cdga$ gives the concept of
    homotopy colimits. In particular, we will speak about (homotopy) {\em cosheaves}
    of cdga's on a topological space $T$. Thus a precosheaf $\Fc$ of cdga's 
    on $T$ is a cosheaf if and only if, for any  open cover $(U_i)_{i\in I}$
    of an open $U\subset T$, the cdga $\Fc(U)$ is the homotopy colimit of the
    codescent diagram \eqref{eq:codescent2}, with $\coprod$ understood as the
    restricted tensor product.

        This means that
         a cosheaf  of cdga's is typically not a cosheaf of dg-vector spaces, as the coproduct in $\dgVect$ is
        $\oplus$, not $\otimes_\k$. On the other hand, a sheaf of cdga's is indeed a sheaf of dg-vector spaces, as the forgetful functor $\cdga \to \dgVect$ preserves homotopy limits.
  
  \vskip .2cm
  Let $M$ be a $C^\oo$-manifold. By a {\em factorization algebra on $M$ 
  enriched in cdga's} we will mean a factorization algebra $\Ac$ with
  values in $\dgVect$ together with a structure of a cdga on each $\Ac(U)$
  so that each $\mu_{U_1,\dots, U_n}^{U_0}$ is a morphism of cdga's. 

\pagebreak[2]
    \begin{prop}\cite[Prop. 48]{ginot}\label{prop:fact-cdga}
        \begin{statements}
            \item\label{ass:fact-cdga-a} Let $\Ac$ be a factorization algebra on $M$ 
            enriched in cdga's. Then $\Ac$ is a  
            cosheaf on $M$ with values in $\cdga$. 
            
            \item The construction in \ref{ass:fact-cdga-a} (i.e., forgetting all the $\mu_{U_1,\dots, U_r}^{U_0}$ for $r>1$)
            establishes an equivalence between the category of  factorization algebras on $M$  enriched in cdga's and the
            category of    $\Cosh_M(\cdga)$. Under this equivalences, locally constant
            factorization algebras correspond to locally constant cosheaves. \qed
        \end{statements}
    \end{prop}

    \paragraph{Non-abelian Poincar\'e duality I.}
    Let $p: Z\to M$ be a Serre fibration with base a $C^\infty$ manifold $M$ and fiber $Y$.
    We then have the  following pre-sheaf
    and pre-cosheaf of cdga's on $M$:
    \be
    \begin{gathered}
        Rp_*(\ul\k_Z): U\mapsto \Th^\bullet(p^{-1}(U), \k), \\
        p_*^\Sect(\ul\k_Z):  U\mapsto\Th^\bullet( \Sect(p^{-1}(U)/U), \k), 
    \end{gathered}
    \ee
    where  $\Sect(p^{-1}(U)/U)$ denotes the space of continuous sections of the fibration $p^{-1}(U)\to U$. 
    Thus  $Rp_*(\ul\k_Z)$ is in fact a   locally constant sheaf of cdga's, namely the direct image of the constant sheaf $\ul\k_Z$,
    made into a sheaf of cdga's by using Thom-Sullivan cochains. 
    
    The pre-cosheaf $p_*^\Sect(\ul\k_Z)$ is not, in general, a cosheaf of cdga's but we describe
    a class of situations in which it is very closed to being a cosheaf. For this, denote
    $Rp_*^\otimes(\ul\k_Z)$ the cosheaf of cdga's associated to the pre-cosheaf 
    $p_*^\Sect(\ul\k_Z)$. 
    
    Let $U\subset M$ be open. We will say that $U$ is {\em of finite type}, if $U$ admits
    a finite cover by disks in which each nonempty intersection is also a disk. 
    
    \begin{prop}\label{prop:NAPD-1}
    (a) $Rp_*^\otimes(\ul\k_Z)$ is inverse  to $Rp_*(\ul\k_Z)$  (see  \autoref{prop:sh-cosh-lc}).
    In particular, $Rp_*^\otimes(\ul\k_Z)$ is a
        locally constant  cosheaf of cdga's. 
    
    \vskip .2cm
    
    (b) Suppose $Y$ is $n$-connected, where $n=\dim(M)$.
    Then $p_*^\Sect(\ul\k_Z)(U) = Rp_*^\otimes(\ul\k_Z)(U)$ for any open $U\subset M$
    of finite type. 
     \end{prop}
    
    \begin{proof} (a) For a disk $U\subset X$ the space $\Sect(p^{-1}(U)/U)$ is
        homotopy equivalent to $p^{-1}(x)$ for any $x\in U$. Thus  the contravariant
        functor and the covariant functor from $D(M)$ to $\cdga$ given by  $p^{-1}(x)$
        and  $\Sect(p^{-1}(U)/U)$ send each embedding to an equivalence, and these
        equivalences are quasi-inverse to each other. So our statement follows from
        definition of $\Bc^{-1}$ and Proposition \ref{prop:sh-cosh-lc}.
        
        \vskip .2cm
        
        (b) This is a consequence of \cite{bott-segal} Cor. 5.4. 
             \end{proof}
    
    
    \subsection{The classifying space and Cartan-Weil approaches to equivariant cdga's}\label{subsec:equiv-cdga}
    
    We  compare two notions of a topological or Lie group acting on a cdga. 
    
    \paragraph{Classifying space approach.} 
    Let $(\Cc, W)$ be a homotopical category. 
    \begin{Defi}
        Let $T_\bullet$ be a simplicial topological space so that for each morphism $s: [p]\to[q]$ in 
        $\Delta$ we have a morphism of topological spaces $s^*: T_q\to T_p$.
        A $\Cc$-valued sheaf on $T_\bullet$ is a datum $\Fc$ consisting of sheaves $\Fc_p$ on $T_p$
        for each $p$ and of weak equivalences of sheaves $\alpha_s: (s^*)^{-1}(\Fc_p)\to \Fc_q$  given for each  $s: [p]\to[q]$
        and compatible with the compositions. We denote by $\Sh_{T_\bullet}(\Cc)$ the
        category of $\Cc$-valued sheaves on $T_\bullet$. 
    \end{Defi}

    Let $G$ be a topological group and $N_\bullet G\in\Delta^\circ\Top$ its simplicial nerve.
    Thus $N_\bullet G = G\backslash E_\bullet G$, where $E_\bullet G$ is the ``simplex"
    generated by $G$, i.e., the simplicial topological space with
    \be\label{eq:EG}
    E_q G= G^{q+1} = \Map_\Top([q], G)
    \ee
    and the simplicial structure given by composing maps $[q]\to G$ with
    monotone maps $s: [p]\to [q]$. 
    We denote $BG =  |N_\bullet G|$  the classifying space of $G$.  
    We further denote, following \cite{BL} Appendix B,
    \be
    D_G(\pt) = \Sh_{N_\bullet G}(\dgVect),\quad \cdga_G = \Sh_{N_\bullet G}(\cdga).
    \ee

    \begin{prop} For any object  $\Fc$ of $D_G(\pt)$ (resp. of $\cdga_G$) we have the following:
        \begin{assertions}
            \item\label{ass:lca}  Each $\Fc_p$ is a locally constant (in fact, constant) sheaf  of dg-vector
            spaces resp. cdga's on $N_pG$. 
            \vskip .2cm
            
            \item\label{ass:lcb}  $\Fc$   gives a locally constant sheaf $|\Fc|$ of dg-vector
            spaces (resp. cdga's) on $BG$.
        \end{assertions}
    \end{prop}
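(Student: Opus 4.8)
\emph{Part (a).} The plan is to read off constancy from the bottom of the nerve, where $N_0 G = \pt$. Fix $p$ and consider the structure map $\pi_p \colon N_p G \to N_0 G = \pt$ induced by a morphism $[0]\to[p]$ in $\Delta$; as a continuous map this is simply the projection $G^p \to \pt$. By hypothesis the associated comparison $\alpha$ is a weak equivalence, and it identifies $\Fc_p$ with the inverse image $\pi_p^{-1}\Fc_0$. But $\Fc_0$ is a sheaf on a point, i.e. an object $V$ of $\Cc$, and the inverse image of $V$ along $G^p\to\pt$ is by definition the constant sheaf $\ul V$ on $N_p G$. Hence $\Fc_p\simeq \ul V$ is constant, which is (a). I emphasize that no information is lost in this step: the genuine $G$-equivariance is stored not in the individual $\Fc_p$ but in the remaining $\alpha_s$; already the two maps $N_1 G = G \to \pt$ furnish two identifications of $\Fc_1$ with $\ul V$, whose discrepancy is exactly the homotopy action of $G$ on $V$. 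The same argument applies verbatim in the $\cdga$ case, since inverse image and the formation of constant sheaves are symmetric monoidal.

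\emph{Part (b).} Here I would first invoke the realization functor $\Fc \mapsto |\Fc|$ of \cite{BL}, Appendix B, which glues the cartesian system $(\Fc_p,\alpha_s)$ into a single sheaf on $BG = |N_\bullet G|$ by descent along the quotient map $\coprod_p N_p G\times\Delta^p \twoheadrightarrow BG$; over the $p$-th piece $|\Fc|$ is built from the external product of $\Fc_p$ with the constant sheaf $\ul\k_{\Delta^p}$. To prove that $|\Fc|$ is locally constant I would use the skeletal filtration of $BG$. On the stratum coming from $N_p G$ times the interior of $\Delta^p$, the sheaf $|\Fc|$ restricts to something constant, since $\Fc_p$ is constant by (a) and the open simplex is contractible; thus $|\Fc|$ is constructible with constant associated graded. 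To upgrade this to local constancy one checks that every specialization map between adjacent strata is a weak equivalence: such a map is governed by a face map of the nerve, whose $\alpha_s$ is a weak equivalence by hypothesis. A constructible sheaf all of whose specialization maps are equivalences is locally constant, giving (b).

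\emph{Main obstacle.} The immediate part is (a); the real work is in (b), specifically in the passage from ``constant on each simplicial piece $N_p G\times\Delta^p$'' to ``locally constant on the realization $BG$''. Since $G$ is an arbitrary topological group, $BG$ is the realization of a simplicial \emph{space} rather than a CW-complex with discrete cells, so one cannot argue naively cell by cell; the honest task is to produce an open cover of $BG$ trivializing $|\Fc|$ compatibly with the filtration. This gluing, together with the construction of $|\Fc|$ itself and the verification that the specialization maps are controlled by the $\alpha_s$, is exactly what is packaged in \cite{BL}, Appendix B, on which I would rely for the technical details.
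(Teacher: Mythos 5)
Your argument for (a) is exactly the paper's: pull back along the map $N_qG\to N_0G=\pt$ induced by a morphism $[0]\to[q]$ and use the structure equivalence $\alpha_s$ to identify $\Fc_q$ with the constant sheaf on the object $\Fc_0$. For (b) the paper writes only that it ``follows from (a)'', so your elaboration via the realization construction of \cite{BL}, Appendix B, and the weak equivalences $\alpha_s$ is a correct filling-in of that step rather than a genuinely different route.
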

    
    \begin{proof}
        \ref{ass:lca} Consider any morphism $s: [0]\to [q]$, so $s^*: N_qG\to N_0G=\pt$. 
        Then $\alpha_s$ identifies $\Fc_q$ with the constant sheaf $(s^*)^{-1} (\Fc_0)$. 
        Part \ref{ass:lcb} follows from \ref{ass:lca}.
    \end{proof}

    \begin{cor}\label{cor:hom-inv-D_G(pt)}
        
        A  morphism $ G'\to G$ of topological groups which is a homotopy equivalence,
        induces  equivalences of homotopical categories $  D_G(\pt)\to D_{G'}(pt)$ and
        $\cdga_{G}\to \cdga_{G'}$.
        
    \end{cor}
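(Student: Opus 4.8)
The plan is to use the preceding Proposition to identify both $D_G(\pt)$ and $\cdga_G$ with categories of locally constant sheaves on the classifying space $BG=|N_\bullet G|$, and then to combine the homotopy invariance of such categories with the fact that $G'\to G$ induces a homotopy equivalence $BG'\to BG$.

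First I would upgrade part (b) of the Proposition to an equivalence of homotopical categories
\[
|\cdot| : \Sh_{N_\bullet G}(\Cc) \xrightarrow{\ \sim\ } \Sh^\lc_{BG}(\Cc), \qquad \Cc\in\{\dgVect,\cdga\}.
\]
The key point, supplied by part (a), is that a $\Cc$-valued sheaf on the simplicial space $N_\bullet G$ whose structure maps $\alpha_s$ are all weak equivalences is nothing but a homotopy descent datum for a locally constant sheaf along the canonical map $N_\bullet G\to BG$: each $\Fc_p$ is constant on $N_pG$, hence recorded by a single object of $\Cc$, and the $\alpha_s$ encode the gluing. This is the standard descent statement for the realization of a simplicial space, which one phrases as: locally constant sheaves on $|T_\bullet|$ are the totalization $\holim_{\Delta}$ of the (constant) sheaves on the $T_p$.

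Second, everything in sight is functorial. A morphism $f:G'\to G$ induces a map of nerves $N_\bullet G'\to N_\bullet G$, a map $Bf:BG'\to BG$ of realizations, and pullback functors fitting into a square commuting, up to coherent homotopy, with the two realization equivalences above. Thus the Corollary reduces to showing that $Bf^*:\Sh^\lc_{BG}(\Cc)\to\Sh^\lc_{BG'}(\Cc)$ is an equivalence.

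Finally, since $N_pG=G^p$ and $N_pf=f^p$, a homotopy equivalence $f$ is a levelwise weak equivalence of nerves; as the nerves of (well-pointed) topological groups are good simplicial spaces, their realizations preserve such equivalences and $Bf$ is a weak equivalence, hence---inside $\Top$, by Whitehead---a homotopy equivalence. It then suffices to invoke homotopy invariance of locally constant sheaves: a weak equivalence $h:S\to S'$ in $\Top$ induces an equivalence $h^*$ on $\Cc$-valued locally constant sheaves, because these are precisely the simplicially enriched functors $\Sing_\bullet(S)\to L_W(\Cc)$, and $h$ induces a weak equivalence of the $\Sing_\bullet$. The step I expect to be the main obstacle is the first one: making the descent equivalence $|\cdot|$ precise---that Cartesian sheaves on $N_\bullet G$ are exactly, rather than merely map to, locally constant sheaves on $BG$---together with the mild properness (goodness) of $N_\bullet G$ needed for the realization to have the right homotopy type; for the intended applications the groups are Lie groups, where these technical points are harmless.
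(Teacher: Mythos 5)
Your proposal is correct and follows essentially the same route as the paper: the paper's proof is the one-line "Follows from homotopy invariance of locally constant sheaves," relying on the preceding Proposition's identification of objects of $D_G(\pt)$ and $\cdga_G$ with locally constant sheaves on $BG$, exactly as you spell out. The extra care you take with the descent equivalence onto $\Sh^\lc_{BG}(\Cc)$ and with $Bf$ being a weak equivalence fills in details the paper leaves implicit, but introduces no new idea.
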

    
    \begin{proof}
        Follows from homotopy invariance of locally constant sheaves.
    \end{proof}

    \vskip .2cm
    
    Given  $\Fc\in D_G(\pt)$, the component $V^\bullet = \Fc_0\in\Sh_{N_0 G}$ is just a dg-vector space,
    
    \begin{Defi}
        Let $V^\bullet$ be a dg-vector space (resp. a cdga). A  {\em 
        Bernstein-Lunts action} ({\em BL-action} for short) of $G$  on $V^\bullet $
        is an object $\Fc$ in $D_G(\pt)$ (resp. in $\cdga_G)$ together  with identification of dg-vector
        spaces (resp. of cdga's)  $\Fc_0 \simeq V^\bullet$. 
    \end{Defi}
    
    \vskip .2cm

    For a cdga $A$ we denote by $\dgMod_A$, resp. $\cdga_A$ the category of dg-modules over $A$
    resp. commutative differential graded $A$-algebras. 
    
    Let $H^\bullet(BG,\k)$ be the cohomology ring of $BG$ with coefficients in $\k$, i.e., the cohomology
    algebra of the cdga $\Th^\bullet(BG,\k)$. Any $\Fc \in D_G(\pt)$
    gives rise to the dg-space of cochains  $C^\bullet(BG, |\Fc|)$. We note that  $C^\bullet(BG, |\Fc|)$
    is  a kind of homotopy limit (dg-vector space associated to a cosimplicial dg-vector space)
    and so  we can define it using the Thom-Sullivan construction:
    \begin{equation}\label{eq:C(BG,-)}
     C^\bullet(BG, |\Fc|) := \Th^\bullet(C^\bullet(G^\bullet, \Fc_\bullet)),
    \end{equation}
    where $C^\bullet(G^\bullet, \Fc_\bullet)$ is the cosimplicial dg-vector space $[n] \mapsto C^\bullet(G^n, \Fc_{n})$.
    Thus defined,
    $C^\bullet(BG, |\Fc|)$ is a dg-module over $\Th^\bullet(BG, \k)= C^\bullet(BG, \ul\k_{BG})$. Further, if $\Fc$ is a sheaf of cdga's,
    then $C^\bullet(BG, |\Fc|)$ is a  cdga over $\Th^\bullet(BG, \k)$.

    \begin{prop}\label{prop:BL-dg-mod}
        Let $G$ be a connected compact Lie group. Then:
        \begin{assertions}
            \item\label{ass:liegpa} We have an isomorphism 
             $H^\bullet(BG, \k)\to \Th^\bullet(BG, \k)$ in the homotopy category of
             CDGA. 
            
            \item\label{ass:liegpb} The functor $\digamma: \Fc\mapsto C^\bullet(BG, |\Fc|)$ defines symmetric monoidal equivalences of homotopical categories
            \[
            \digamma: D^+_G(\pt) \stackrel{\sim}{\lra} \dgMod^+_{H^\bullet(BG)}, \quad \digamma: \cdga_G^{\geq 0}  \stackrel{\sim}{\lra} \cdga_{H^\bullet(BG)}^{\geq 0},
            \]
            where the subscript ``$+$''  signifies the subcategories formed by sheaves and dg-modules
            bounded below as complexes, and ``$\geq 0$'' signifies the subcategories formed by
            dg-algebras graded by $\ZZ_{\geq 0}$.
        \end{assertions}
    \end{prop}
    
    \begin{proof}
        Part \ref{ass:liegpa} is classical (inclusion of invariant forms on $BG$ 
        into the de Rham complex plus the quasi-isomorphisms \eqref{eq:DR=Thom}). 
         The first equivalence in \ref{ass:liegpb}
        is \cite{BL} Th. 12.7.2. The second equivalence follows from the first by passing to
        commutative algebra objects.
    \end{proof}
    
    \paragraph{Cartan-Weil approach.} 
    In this paper, all algebraic groups will be assumed connected.
    By a {\em proalgebraic group} over $\k$  we will mean a group $\k$-scheme $\GG$
    represented as a projective limit of affine algebraic groups
    \[
   \GG \,=\, \varprojlim{}^{\Sch_\k} \bigl\{ 
   \cdots\buildrel q_{43}\over\lra \GG_3
    \buildrel q_{32}\over\lra \GG_2\buildrel q_{21}\over\lra \GG_1\bigr\},
    \]
    with $\GG_1$ reductive and each $q_{d+1, d}$ surjective with unipotent kernel. 
    
    To such $\GG$ we associate the topological group $\GG(\k)=\varprojlim^\Top \GG_d(\k)$
    homotopy equivalent to $\GG_1(\k)$ and the pro-finite-dimensional Lie algebra
    $\gen = \Lie(\GG) = \varprojlim \gen_d$, $\gen_d=\Lie(\GG_d)$. We equip $\gen$
    with the projective limit topology (taking each $\gen_d$ discrete). 
    
    Besides finite-dimensional examples (such as $\GG=\GG_1$ reductive), the
    following will be used. 
    
    \begin{ex}\label{ex:group-J}
    Let $\JJ_n=\ul\Aut\, \k[\![z_1,\dots, z_n]\!]$ be the group scheme of automorphisms of
    the formal disk. It is a proalgebraic group in our sense, as $\JJ_n = \varprojlim_d \JJ_{n,d}$
    where $\JJ_{n,d}=\ul\Aut(\k[z_1,\dots, z_n]/(z_1, \dots, z_n)^{d+1})$ is the 
    algebraic group of $d$-jets of coordinate changes, so that $\JJ_{n,1}=\GL_{n,\k}$. 
    We have $\Lie(\JJ_n)=W_n^{\geq 1}\subset W_n$, the subalgebra of vector
    fields with zero constant term.
    \end{ex}
    
    For a proalgebraic group $\GG$ we have the coordinate algebra $\Oc(\GG)=\varinjlim 
    \Oc(\GG_d)$ which is a commutative Hopf algebra and the de Rham algebra
    of regular forms $\Omega^\bullet_\reg(\GG)=\varinjlim\Omega^\bullet_\reg(\GG_d)$
    which is a Hopf cdga with $\Omega^0_\reg(\GG)=\Oc(\GG)$.

    \begin{Defi}\label{def:G*}
        (cf.  \cite{guillemin-sternberg} Def.  2.3.1  and \cite{bott-segal} Def. 3.1.) 
        Let $\GG$ be a proalgebraic group over $\k$  with Lie algebra $\gen$ and $V^\bullet\in \dgVect$ be a cochain complex.   A $\GG^*$-{\em action} on  $V^\bullet$  is 
        a datum consisting of:
        \begin{itemize}
            \item[(1)] A regular action of $\GG$ on $V^\bullet$. 
            In particular, for each $\xi\in\gen$ we have the infinitesimal automorphism
            $L_\xi\in \End^0_\k (V^\bullet)$. 
            
            \item[(2)] A continuous $\k$-linear map $i: \gen\to \End^{-1}_\k (V^\bullet)$  such that;
            \begin{itemize}
                \item[(2a)] $i$ is  $\GG$-equivariant with respect to the adjoint action of $\GG$ on $\gen$ and
                the $\GG$-action on $\End^{-1}_\k(V^\bullet)$ coming from (1). 
                
                \item[(2b)] We have
                $[d, i(\xi)] = L_\xi$ for each $\xi\in\gen$. 
                
                \item[(2c)]   For any $\xi_1, \xi_2\in\gen$ we have  $[i(\xi_1), i(\xi_2)]=0$. 
            \end{itemize}
        \end{itemize} 
          The continuity in (2) is meant with respect to the projective limit topologies on $\gen$
    and $\End(V) = \varprojlim_{V'}  \Hom(V', V)$
    where $V'$ runs over finite-dimensional subspaces in $V$. 
    \end{Defi}

    We denote by $\GG^*\dash\dgVect$ the category of  cochain complexes with $\GG^*$-actions.
    This category has a symmetric monoidal structure $\otimes_\k$
    with the operators $i(\xi)$ defined on the tensor products  by the Leibniz rule.
    Commutative algebra objects in $\GG^*\dash\dgVect$ will be called $\GG^*$-{\em cdga's}. Such an algebra
    is a cdga with $\GG$ acting  regularly by automorphisms, so $\gen$ acts by derivations of
    degree $0$, and with $i(\xi)$ being derivations of degree $(-1)$. We denote by
    $\GG^*\dash\cdga$ the category of $\GG^*$-cdga's.

    \begin{prop}\label{prop:G*-coact}~
        \begin{statements}
            \item\label{ass:G*actiona} A $\GG^*$-action on a   cochain complex $V^\bullet$ is the same as a structure  of
            a comodule over the dg-coalgebra $\Omega^\bullet_\reg(\GG)$. This identification is compatible with tensor
            products.
            
            \item\label{ass:G*actionb} A structure of a $\GG^*$-cdga on a cdga $A$ is the same as a  coaction $A \to A\otimes \Omega^\bullet_\reg(\GG)$  from \ref{ass:G*actiona} which is a morphism of cdga's. 
        \end{statements}
    \end{prop}
    
    \begin{proof}
        We prove \ref{ass:G*actiona}, since \ref{ass:G*actionb} follows by passing to commutative
        algebra objects. 
        
        A regular action of $\GG$  is  by definition,  a coaction of the coalgebra
        $\Oc(\GG) =\Omega^0_\reg(\GG)$ of regular functions. More explicitly, the  
        coaction map $c_0: V^\bullet \to\Omega^0_\reg(\GG)\otimes V^\bullet $ is just the  
        action map
        $\rho: \GG\to \End_\k(V^\bullet )$ considered as an element of 
        $\Omega^0_\reg(\GG)\otimes\End_\k (V^\bullet)$. 
        The fact that $c_0$ is a coaction, i.e., that $\rho$ is multiplicative, is equivalent to the fact
        that $\rho$ is equivariant with respect to $\GG$ acting on itself by right translations and on $\End_\k(V^\bullet)$
        by the action induced by $\rho$.
        
        Let us view $\gen$ as consisting of left invariant vector fields on $\GG$. 
        The adjoint action of $\GG$ on $\gen$ is the action  on  such vector fields by right translations.  
        Let $\omega = g^{-1}dg \in \Omega^1_\reg(\GG)\otimes\gen$ be the canonical $\gen$-valued
        left invariant $1$-form on $\GG$. Composing $\omega$ with $i$ we get an element
        $
        c_1\in \Omega^1_\reg(\GG)\otimes\End^{-1}_\k(V^\bullet)
        $
        which is equivariant with respect to $\GG$ acting on $\Omega^1_\reg(\GG)$ by right translations and on
        $\End^{-1}_\k(V^\bullet)$ by the action induced by $\rho$. Further, for each $p\geq 1$ we define, using (2c):
        \[
        c_p\,=\,\Lambda^p(c_1) \,\in\,  \Omega^p_\reg(\GG) \otimes \End_\k^{-p}(V^\bullet). 
        \]
        We claim that the 
        \[
        c=\sum c_p \, \in\,  \bigoplus_p   \, \Omega^p_\reg(\GG) \otimes \End_\k^{-p}(V^\bullet) \,=\, 
        \Hom_\k^0(V^\bullet, \Omega^\bullet_\reg (\GG)\otimes V^\bullet)
        \]
        is a coaction of $\Omega^\bullet_\reg(\GG)$ on $V^\bullet$.  Indeed, 
        considering $c_1$ as a morphism of dg-vector space
        $ V^\bullet \to \Omega^1_\reg(\GG)\otimes V^\bullet $,
        we translate its equivariance property above into saying that
        $c_1$ is compatible with $c_0$. Further compatibilities follow since the $c_p$ are defined
        as exterior powers of $c_1$.
    \end{proof}

  \paragraph{Comparison.} 
     Given a $\GG^*$-action on $V^\bullet$, we form, in a standard way,  the  {\em cobar-construction}
    of the $\Omega^\bullet_\reg(\GG)$-comodule structure.  This is 
    the cosimplicial object
    \[
    \Cob (V^\bullet) = \biggl\{ 
    \xymatrix@C=1em{
        V^\bullet  \ar@<.4ex>[rr]^{\hskip -1cm c }\ar@<-.4ex>[rr]_{\hskip -1cm 1\otimes -} &&\Omega^\bullet_\reg(\GG)\otimes V^\bullet   \ar@<.7ex>[rr] \ar@<-.7ex>[rr]\ar[rr] && 
        \Omega^\bullet_\reg({\GG^2})
        \otimes V^\bullet   \ar@<.9ex>[r] \ar@<.3ex>[r] \ar@<-.3ex>[r] \ar@<-.9ex>[r] & \cdots
    }
    \biggr\}. 
    \]
    For example, the first two coface maps are given by the coaction $c$ and by multiplication with $1\in\Omega^\bullet_\reg(\GG)$
    respectively. 
    
    \vskip .2cm
    Recall that $\k=\RR$ or $\CC$ so we have the Lie groups $G_d=\GG_d(\k)$ and a topological group $G=\varprojlim G_d$ with the canonical projections $q_d: G\to G_d$. 
    For $p\geq 1$ let $\ul\Omega^\bullet_{G_d^p}$ be the sheaf of smooth $\k$-valued
    forms on $G_d^p$. Put 
    $\ul\Omega^\bullet_{G^p} = \varinjlim_d (q_d^p)^{-1} \ul\Omega^\bullet_{G_d^p}$.
    It is a sheaf of cdga's on $G^p$ quasi-isomorphic to $\ul\k_{G^p}$. 
     Thus $\Omega^\bullet_\reg(\GG^p)$ maps into the global sections of $\ul\Omega^\bullet_{G^p}$.
    This allows us to define a $\dgVect$-valued sheaf $\Sen(V^\bullet)$
    on the simplicial space $N_\bullet G$ by ``localizing $\Cob(V^\bullet)$". More precisely,
    we define  
    $\Sen(V^\bullet)_p =\ul \Omega^\bullet_{G^p}\otimes V^\bullet$ and the 
    compatibility maps $\alpha _s$, $s\in\Mor(\Delta)$
    are  induced by the corresponding maps of   $\Cob(V^\bullet)$. 
    This gives a symmetric monoidal functor
    \be\label{eq:sheafify}
    \Sen: \GG^*\dash\dgVect \lra D_G(\pt) = \Sh_{N_\bullet(G)}(\dgVect), \quad V^\bullet \mapsto  \Sen(V^\bullet)
    \ee
    which we call   the {\em sheafification functor}.

    \vskip .2cm
    
    We now recall    a version of the standard comparison between    the ``Weil model'' and the ``classifying space model''
    for equivariant cohomology, cf. \cite{guillemin-sternberg}. 
    An element $v\in V^\bullet$ is called {\em basic}, if $v$ is $\GG$-invariant
    and annihilated by the operators $i(\xi), \xi\in\gen$. Basic elements form a subcomplex $V^\bullet_\basic$ .
    Denote $I(\gen) = S^\bullet(\gen^*)^\gen$. 
    Let also 
    \be\label{eq:weil}
    \We(\gen) \, =\,\bigl( S^\bullet(\gen^*)\otimes \Lambda^\bullet (\gen^*), d\bigr)
    \ee
    be the {\em Weil algebra of} $\gen$. Here the first $\gen^*$ has degree $2$, while the second $\gen^*$ has
    degree $1$. With this grading, $\We(\gen)$
    is a  cdga with $\GG^*$-action,  quasi-isomorphic to $\k$. 
    Moreover, $I(\gen)$ with trivial differential, is a dg-subalgebra in $\We(\gen)$. 
    
    \begin{prop} \label{prop:weil-model}
        Let $\GG$ be reductive.  
        Then:
        \begin{assertions}
            \item\label{ass:isoI} We have an isomorphism 
            $I(\gen) \simeq H^\bullet(B\GG(\CC),\k)$.
            \item\label{ass:qisoModules} For a dg-vector-space (resp. cdga) with a $\GG^*$-action, we have a natural quasi-isomorphism of dg-modules (resp. cdga's) over $I(\gen)=H^\bullet(B\GG(\CC),\k)$
            \[
        \digamma(\Sen(V^\bullet)) \,\simeq \, \bigl(\We(\gen)\otimes_\k V^\bullet\bigr)_\basic. 
            \]
        \end{assertions}
        Here $\digamma(\Sen(V^\bullet))=C^\bullet(B\GG(\CC), |\Sen(V^\bullet)|)$
        is the dg-module (resp. cdga) associated to $\Sen(V^\bullet)$ by Proposition 
        \ref{prop:BL-dg-mod}(b). 
    \end{prop}
    
    \begin{proof}
   \ref{ass:isoI} is well known (it also follows from \ref{ass:qisoModules} using \cite[Thm. 3.2.2]{guillemin-sternberg}). To prove \ref{ass:qisoModules}, note, first, that we have a natural identification 
        \[
        \digamma(\Sen(V^\bullet)) \simeq \Th^\bullet(\Cob(V^\bullet)),
        \]
        compatible with the symmetric monoidal structures. Indeed, this follows from 
        the definition in \eqref{eq:C(BG,-)}, from the identifications 
        \eqref{eq:DR=Thom} and from the fact that for any $p$, the inclusion map 
        $r_p: \Omega^\bullet_\reg(\GG^p) \to \Omega_{C^\oo}^\bullet(\GG(\CC)^p)$ is a quasi-isomorphism. 
        Indeed, note that the  target of $r_p$ calculates 
        $H^\bullet(\GG(\CC)^p, \k)$  
       by the usual de Rham theorem while the source calculates 
  the same by  Grothendieck's algebro-geometric version. 
        
        Note next that for any dg-vector space $E^\bullet$ with $\GG^*$-action
        \be\label{basic=cobar}
        E^\bullet_\basic \,=\, \on{Eq} \bigl\{ 
        \xymatrix@C=1em{
            E^\bullet  \ar@<.4ex>[rr]^{\hskip -1cm c }\ar@<-.4ex>[rr]_{\hskip -1cm 1\otimes -} &&\Omega^\bullet_\reg(\GG)\otimes E^\bullet 
        }
        \bigr\}
        \ee
        is the equalizer of the first two cofaces in $\Cob(E^\bullet)$.

        We apply this to $E^\bullet = \We(\gen)\otimes V^\bullet$ which is quasi-isomorphic to $V^\bullet$. 
        For a dg-vector space $F^\bullet$ let $F^\sharp$ denote the graded vector space obtained from $F^\bullet$
        by forgetting the differential. 
        So to prove part \ref{ass:qisoModules},
        it is enough to show the following acyclicity statement:
        the embedding of the constant cosimplicial graded vector space associated to $E^\sharp_\basic$, 
        into the 
        cosimplicial  graded vector space $\Cob(E^\sharp)$  
        is a weak equivalence.  This is equivalent to saying that the complex
        \[
        E^\sharp\lra \Omega^\bullet_\reg(\GG)\otimes E^\sharp \lra \Omega^\bullet_\reg(\GG^2)\otimes E^\sharp
        \lra\cdots
        \]
        with differential $\sum (-1)^i\delta_i$, is exact everywhere except the leftmost term. 
        But this  complex   calculates
        $\on{Cotor}^\bullet_{\Omega^\sharp(\GG)}
        (\k, E^\sharp)$, i.e., the derived functors of the functor $\beta: E \mapsto E_\basic$ on the category of
        graded $\Omega^\sharp(\GG)$-comodules.  So it is enough to show that for $E=(\We(\gen)\otimes V^\bullet)^\sharp$, 
        the higher derived functors $R^k\beta(E)$ vanish for $k > 0$. 
        
        Now, $\beta(E)$ is obtained by first, taking invariants with respect to the abelian Lie superalgebra
        $i (\gen)$ and then  taking $\GG$-invariants. Since $\GG$ is reductive, taking $\GG$-invariants is an exact
        functor. So  vanishing of $R^{>0}\beta(E)$ will be assured if $\Lambda^\bullet(\gen)$, the enveloping algebra of  $i(\gen)$, 
        acts on $E$ freely.
        This is the case for $E = (\We(\gen)\otimes V^\bullet)^\sharp$.
    \end{proof}
    

    \subsection{Equivariant cdga's and factorization homology}

    \paragraph{Factorization algebras from equivariant cdga's.}
   Let $G\subset GL(n,\RR)$ be   a closed subgroup.  
    We present a simplified instance of a result  due to Lurie \cite{lurie-ha}
        which relates $G$-equivariant $E_n$-algebras with locally constant factorization algebras on $G$-structured
        manifolds, see also \cite{AF-survey}, Prop. 
        3.14  and  \cite{ginot} \S 6.3.  It corresponds, via \autoref{prop:fact-cdga}, to
        factorization algebras enriched in cdga's, for which the $E_n$-structure reduces to
        a commutative one.  In this case, the (co)sheaf language  leads to   a simple direct construction.  
    
    \begin{Defi}\label{def:G-str}
        Let $M$ be a $C^\infty$-manifold of dimension $n$. By a {\em $G$-structure} on $M$ we will mean a reduction of structure
        group of the tangent bundle $T_M$ from $GL_n(\RR)$ to $G$ in the homotopy sense, i.e., a homotopy class of maps
        $\gamma$ making the diagram
        \[
        \xymatrix{
            & BG
            \ar[d]
            \\
            M \ar[ur]^\gamma
            \ar[r]_{\hskip-.5cm \gamma_{TM}}& BGL_n(\RR)
        }
        \]
        homotopy commutative. Here $\gamma_{TM}$ is the map classifying the tangent bundle $TM$. 
    \end{Defi}

    \begin{prop}\label{prop:assoc-alg}
        Let $A$ be a cdga with a BL-action of $G$. 
        One can associate with $A$ the following data:
        \begin{itemize}
       \item[(1)]    For any $n$-dimensional manifold $M$ with
        $G$-structure,  a  locally constant cosheaf of cdga's $\ul A_M$ on $M$.
        
        \item[(2)] For any local diffeomorphism $\phi: M\to N$ of $n$-dimensional
        manifolds with $G$-structures, an isomorphism $\ul A_\phi: \phi^*\ul A_N\to \ul A_M$,
   the $\ul A_\phi$ being compatible with composition of local diffeomorphisms.
   
   \item[(3)] For $M=\RR^n$ with trivial $G$-structure,     $\ul A_{\RR^n}$ 
   is the constant cosheaf corresponding to $A$. 
        
        \end{itemize}
  
    \end{prop}
    
    \begin{proof}
        We note first that by \autoref{prop:sh-cosh-lc}   it is enough to associate to $A$ a locally constant
        {\em sheaf} of cdga's $[A]_M$ on $M$, so that $\ul A_M$ will be defined as the inverse cosheaf $([A]_M)^{-1}$. 
        But a BL-action on $A$ is, by definition, a sheaf $\Bc$ on $N_\bullet(G)$ which gives a locally constant
        sheaf $|\Bc|$ on $BG$, and we define $[A]_M= \gamma^{-1} |\Bc|$, where $\gamma$
        is the $G$-structure on $M$. 
    \end{proof}
    
    We will refer to   $\ul A_M$ as the  {\em cosheaf of cdga's associated to}  $A$ on a $G$-structured manifold $M$.

    \begin{Defi}
        Let $A$ be a  cdga with a BL-action of $G$,  and $M$ be a $G$-structured manifold. The
        {\em factorization homology} of $M$ with coefficients in $A$ is defined as the space of global cosections of
        the cosheaf $\ul A_M$ and denoted by 
        \[
        \int_M(A) \,=\, \ul A_M(M). 
        \]
    \end{Defi}

    \paragraph{Non-abelian Poincare's duality II.} 
    
    Let $G\subset GL_n(\RR)$ be a closed subgroup, $M$ be a $n$-dimensional manifold with $G$-structure.
    Let $Y$ be a CW-complex with $G$-action. This $G$-action gives rise to the {\em associated fibration}
    \be\label{eq:YM}
    \ul Y_M \,=\, P_M \times_G Y \buildrel p \over\lra M
    \ee
    with fiber $Y$. 
    Here $P_M$ is the principal $GL_n(\RR)$-bundle of frames in the tangent bundle $TM$.

    At the same time, the $G$-action on $Y$ gives the 
    fibration 
    \be\label{eq:Y//G}
  (Y/\!/ G)_\bullet  =  G\bigl\backslash (Y\times E_\bullet G) \buildrel\pi\over\lra N_\bullet G
    \ee
    with fiber $Y$. Here $E_\bullet G$ is as in \eqref{eq:EG}. 
     We obtain therefore a
    BL-action of  $G$ on the cdga $\Th^\bullet(Y,\k)$. This action is given by the sheaf
    $R\pi_*(\ul \k)$ of cdga's on $N_\bullet G$.

    \begin{thm}[Non-abelian Poincar\'e duality]\label{thm:NAPD}
        Suppose $Y$ is $n$-connected. 
        Then, the cosheaf of cdga's $Rp_*^\otimes (\ul\k_{\ul Y_M})$
         is identified with
        $\ul{\Th^\bullet(Y,\k)}_M$, the  cosheaf associated to the 
        $G$-equivariant cdga $\Th^\bullet(Y,\k)$. 
        Therefore
        \[
        \Th^\bullet (\Sect(Y_M/M), \k ) \,\simeq \, \int_M (\Th^\bullet(Y, \k)). 
        \]
    \end{thm}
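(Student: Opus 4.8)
The plan is to deduce the theorem from non-abelian Poincar\'e duality in its sheaf-theoretic form, Proposition \ref{prop:NAPD-1}, together with the dictionary between $\cdga$-valued cosheaves and factorization algebras (Proposition \ref{prop:fact-cdga}) and the inverse-of-a-locally-constant-sheaf formalism of Propositions \ref{prop:sh-cosh-lc} and \ref{thm:assoc-alg}. First I would prove that $\Th^\bullet\Sect_{\ul Y_M}$ is a factorization algebra. The projection $p:\ul Y_M\to M$ is a fiber bundle with fiber $Y$, hence a Serre fibration, and $p^{-1}(U)=\ul Y_U$, so the underlying pre-cosheaf of $\Th^\bullet\Sect_{\ul Y_M}$ is exactly $Rp_*^\otimes(\ul\k_{\ul Y_M})$ of Proposition \ref{prop:NAPD-1}. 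Since $Y$ is $n$-connected with $n=\dim M$, that proposition gives that this pre-cosheaf is a locally constant cosheaf of cdga's; in particular it satisfies the Weiss-descent axiom. The remaining multiplicativity axiom for disjoint opens $U_1,\dots,U_r$ holds because $\Sect(\ul Y_{U_1\sqcup\cdots\sqcup U_r}/(U_1\sqcup\cdots\sqcup U_r))=\prod_i\Sect(\ul Y_{U_i}/U_i)$, and the Thom-Sullivan functor converts the (Eilenberg--Zilber) product of singular sets into the tensor product of cdga's, so that $\mu^{U_1\cup\cdots\cup U_r}_{U_1,\dots,U_r}$ is the Künneth quasi-isomorphism. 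By Proposition \ref{prop:fact-cdga} a cosheaf of cdga's is the same datum as a $\cdga$-valued factorization algebra, which establishes the first assertion.

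Next I would identify this factorization algebra with $\ul{\Th^\bullet(Y)}_M$. Both are locally constant cosheaves of cdga's, so by Proposition \ref{prop:sh-cosh-lc} it suffices to identify their inverse locally constant sheaves (Definition \ref{Defi:inverse}). By the second assertion of Proposition \ref{prop:NAPD-1} the inverse of $\Th^\bullet\Sect_{\ul Y_M}$ is $Rp_*(\ul\k_{\ul Y_M}):U\mapsto\Th^\bullet(\ul Y_U)$, while by construction (proof of Proposition \ref{thm:assoc-alg}) the inverse of $\ul{\Th^\bullet(Y)}_M$ is $[\Th^\bullet(Y)]_M=\gamma^{-1}|\Bc|$, where $\Bc$ is the sheaf on $N_\bullet G$ encoding the BL-action of $G$ on $\Th^\bullet(Y)$. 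Now the $G$-equivariant cdga $\Th^\bullet(Y)$ is, by the construction preceding the theorem, the fiberwise Thom-Sullivan cochain algebra of the fibration over $N_\bullet G$ with fiber $Y$ induced by the $G$-action; hence $|\Bc|$ is the fiberwise-cochain sheaf $Rq_*(\ul\k)$ of the universal fibration $q:EG\times_G Y\to BG$. Since the $G$-structure $\gamma$ exhibits $\ul Y_M=P_M\times_G Y$ as the (homotopy) pullback of $q$ along $\gamma:M\to BG$, homotopy invariance of locally constant sheaves together with base change for the fiberwise-cochain construction gives $Rp_*(\ul\k_{\ul Y_M})\simeq\gamma^{-1}Rq_*(\ul\k)=\gamma^{-1}|\Bc|$, compatibly with the cdga structures. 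This identifies the two sheaves, hence the two cosheaves.

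Finally, evaluating on $M$: the global cosections of $\Th^\bullet\Sect_{\ul Y_M}$ are $\Th^\bullet(\Sect(\ul Y_M/M))$ by definition, and those of $\ul{\Th^\bullet(Y)}_M$ are $\int_M(\Th^\bullet(Y))$ by the definition of factorization homology, so the identification of cosheaves yields the asserted quasi-isomorphism. I expect the main obstacle to be the middle step: carefully matching the sheaf $\gamma^{-1}|\Bc|$ coming from the abstract BL-action with the honest fiberwise cochains $Rp_*(\ul\k_{\ul Y_M})$ of the associated bundle, i.e.\ verifying that the universal fiberwise-cochain sheaf on $BG$ really is $|\Bc|$ and that its pullback computes $Rp_*$, all as a statement about locally constant \emph{cdga}-valued sheaves and naturally in $G$. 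The $n$-connectivity of $Y$ enters only through Proposition \ref{prop:NAPD-1}, ensuring that on a disk $U$ the section space $\Sect(\ul Y_U/U)$ retracts onto a single fiber $Y$ and that the cosheaf gluing axiom holds.
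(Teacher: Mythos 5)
Your proposal is correct and follows essentially the same route as the paper: the paper's proof is the two-line observation that the statement is a direct consequence of Proposition \ref{prop:NAPD-1} once one notes that the inverse sheaf $[\Th^\bullet(Y)]_M=(\ul{\Th^\bullet(Y)}_M)^{-1}$ is, by construction of the BL-action via the fibration over $N_\bullet G$, exactly $Rp_*(\ul\k_{\ul Y_M})$. You simply spell out the details the paper leaves implicit (the K\"unneth/multiplicativity axiom via Proposition \ref{prop:fact-cdga}, and the base-change identification of $\gamma^{-1}|\Bc|$ with the fiberwise cochains of the associated bundle), which is exactly where the content of ``by construction'' lives.
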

    
    \begin{proof}
        Direct consequence of \autoref {prop:NAPD-1}.
        Indeed, the locally constant  sheaf $[\Th^\bullet(Y, \k)]_M =
         (\ul{\Th^\bullet(Y,\k)}_M)^{-1}$ is,
        by construction,   $R p_*(\ul \k_{Y_M})$.
    \end{proof}
    
    \begin{rem}
        As with \autoref{prop:assoc-alg}, \autoref{thm:NAPD} is an adaptation of a
        result of Lurie   about $G$-equivariant $E_n$-algebras  
        to the much simpler case of $E_n$-algebras being commutative.  It is often formulated
        in a ``dual''  version involving  the non-commutative $G$-equivariant  $E_n$-algebra $C_\bullet(\Omega^n(Y,y))$,
        the singular chain complex of the $n$-fold loop space of $Y$ at a point $y$ (assumed $G$-invariant).
        In this case  $y$ gives rise to a distinguished section
        $\ul y$ of $\ul Y_M$ and we have
        \[
        \int_M \bigl( C_\bullet(\Omega^n(Y, y))\bigr) \,\simeq \, C_\bullet( \Sect_c(\ul Y_M/M)),
        \]
        where $\Sect_c$ stands for sections with compact support (those which coincide with $\ul y$
        outside of a compact subset of $M$).
        The relation between this formulation and \autoref{thm:NAPD} comes from a Koszul duality quasi-isomorphism over the (Koszul self-dual, up to a shift)
        operad $\eb_n$ (singular chains on $E_n$):
        \[
        \Th^\bullet(Y)^!_{\eb_n} \,\simeq \, C_\bullet(\Omega^n(Y, y_0)),
        \]
        see \cite{francis-poincare} \cite{ayala-francis}. Here the notation $ \Th^\bullet(Y)^!_{\eb_n} $
        means the Koszul dual to $\Th^\bullet(Y)$ as a $\eb_n$-algebra.

    \end{rem}


    \subsection {Classical Gelfand-Fuchs theory: \texorpdfstring{$\CE^\bullet (W_n)$ and $C^\bullet(Y_n)$ as  $GL_n(\CC)$}{CE*(Wn) and C*(Yn) as GLn(?)}-cdga's}\label{subsec:clas-GF}
    
    \paragraph{The Gelfand-Fuchs skeleton.} 
    
    Let $\k$ be either $\RR$ or $\CC$. We denote by $W_n(\k)=\Der \, \k[\![z_1,\dots, z_n]\!]$ be the $\k$-Lie algebra of formal vector fields on $\k^n$, equipped with its natural adic topology. By $\CE^\bullet(W_n(\k))$ we denote the $\k$-linear Chevalley-Eilenberg cochain complex of $W_n(\k)$ formed by cochains $\Lambda^p(W_n(\k))\to\k$ which are continuous with respect to the adic topology. 
    By $H^\bullet_\Lie(W_n(\k))$ we denote the cohomology of $\CE^\bullet(W_n(\k))$, i.e., the continuous Lie algebra cohomology of $W_n(\k)$.
    We recall the classical calculation of $H^\bullet_\Lie(W_n(\k))$, see \cite{gelfand-fuks} \cite{fuks}. 
    
    \vskip .2cm
    
    Consider the infinite Grassmannian
    \[
    G(n, \CC^\infty) \,=\, \varinjlim_{N\geq n} G(n, \CC^N) \,\simeq\, BGL_n(\CC)
    \]
    as a CW-complex (union of projective algebraic varieties over $\CC$). For any $N \geq n$ let 
    \[
    E(n, \CC^N) \,=\,\bigl\{ (e_1, \dots, e_n) \in (\CC^n)^N \, \bigl| e_1, \dots, e_n \text{ are linearly independent} \bigr\}
    \]
    be the Stiefel variety formed by partial ($n$-element) frames in $\CC^N$. There is a natural projection
    (principal $GL_n(\CC)$-bundle) 
    \[
    \rho: E(n, \CC^N)\lra G(n, \CC^N), \quad (e_1, \dots, e_n)\, \mapsto\,  \CC e_1 + \cdots + \CC e_n,
    \]
    which associates to each frame the $n$-dimensional subspace spanned by it. Then the union
    \[
    E(n, \CC^\infty) \,=\, \varinjlim_{N\geq n} E(n, \CC^N) \,\simeq\, EGL_n(\CC)
    \]
    is the universal bundle over the classifying space of $GL_n(\CC)$. 
    
    The {\em Gelfand-Fuchs skeleton} $Y_n$ is defined as the fiber product
    \[
    \xymatrix{
        Y_n 
        \ar[d]_\rho
        \ar[r] & EGL_n(\CC)
        \ar[d]^\rho
        \\
        \sk_{2n} BGL_n(\CC) \ar[r]& BGL_n(\CC),
    }
    \]
    where $\sk_{2n}BGL_n(\CC) \subset G(n, \CC^{2n})$ is\footnote{Note that the $2n$-skeleton in $G(n, \CC^{2n})$ agrees with the $2n$-skeleton in $G(n, \CC^{N})$ for any $N \geq 2n$.}  the $2n$-dimensional skeleton with respect to
    the standard Schubert cell decomposition. Thus $Y_n$ is a quasi-projective algebraic variety over $\CC$
    with a free $GL_n(\CC)$-action which makes it a principal $GL_n(\CC)$-bundle over $\sk_{2n}BGL_n(\CC)$.
    More explicitly, $Y_n$ is a closed subvariety in $E(n, \CC^{2n})$ which is, in its turn, a Zariski open subset
    in the affine space of matrices $\on{Mat}(n, 2n)(\CC)$. The following is the classical result of Gelfand-Fuchs
    ( \cite{fuks} Th. 2.2.4). 
    
    \begin{thm}\label{thm:GF-Wn}
     Recall that $\k$ is either $\RR$ or $\CC$.
        \begin{assertions}
            \item We have an isomorphism $H^\bullet_\Lie(W_n,\k) \simeq H^\bullet(Y_n,\k)$ with the cohomology of the topological space $Y_n$ with coefficients in 
            (the constant sheaf) $\k$.
            Further, the cup-product on both sides is equal to zero, as well as all the higher Massey operations. 
            \item The space $Y_n$ is $2n$-connected: its first non-trivial cohomology space is $H^{2n+1}(Y_n,\k)$ (besides $H^0$). 
        \end{assertions}
    \end{thm}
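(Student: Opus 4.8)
The plan is to compute the continuous cohomology of $W_n$ by reducing it to a \emph{relative} Lie algebra cohomology with respect to the reductive subalgebra $\gl_n\subset W_n$ of linear vector fields (those $\sum a_{ij}z_j\partial_i$), and then to read off the answer topologically. First I observe that the continuous Chevalley--Eilenberg complex $\CE^\bullet(W_n)$ is naturally a $GL_n(\CC)^*$-cdga in the sense of Definition \ref{def:G*}: the group $GL_n(\CC)$ acts by linear changes of coordinates (the adjoint action on $W_n$), and the inclusion $\gl_n\hookrightarrow W_n$ supplies the contraction (interior multiplication) operators $i(\xi)$, which satisfy the Cartan formula $[d,i(\xi)]=L_\xi$, anticommute among themselves, and are $GL_n(\CC)$-equivariant. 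Grading $W_n$ by assigning $z^\alpha\partial_i$ the weight $|\alpha|-1$ makes $W_n=\prod_{k\ge-1}(W_n)_k$ a graded pro-finite-dimensional Lie algebra with $(W_n)_0=\gl_n$; the induced internal weight on $\CE^\bullet(W_n)$ is preserved by the differential, and since the Euler field $\sum z_i\partial_i$ lies in $\gl_n$ it acts trivially on $H^\bullet_\Lie(W_n)$, so the cohomology is concentrated in weight zero, and reductivity of $\gl_n$ lets me replace $\CE^\bullet(W_n)$ by its $\gl_n$-invariants without changing the cohomology.

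The technical heart of the argument --- and the step I expect to be the main obstacle --- is the computation of the relative cohomology $H^\bullet_\Lie(W_n,\gl_n)$, i.e.\ the cohomology of the basic subcomplex $\CE^\bullet(W_n)_\basic$ (equivalently, by Proposition \ref{prop:weil-model} applied to $V^\bullet=\CE^\bullet(W_n)$, of $(\We(\gl_n)\otimes\CE^\bullet(W_n))_\basic$). Using reductivity to pass to $\gl_n$-invariants reduces this to a finite invariant-theoretic complex built from $S^\bullet(\gl_n^*)\otimes\Lambda^\bullet((W_n/\gl_n)^*)$, on which the Chern--Weil homomorphism $I(\gl_n)=\k[c_1,\dots,c_n]\to H^\bullet_\Lie(W_n,\gl_n)$ is defined ($\deg c_i=2i$). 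I would then show this map is surjective and that its kernel is exactly the ideal of elements of polynomial degree $>2n$; the crucial truncation at $2n$ is where the dimension $n$ enters, reflecting that the Chern-type cocycles built from the curvature of the canonical connection on jets become cohomologically trivial once their total degree exceeds $2\dim=2n$. This identifies
\[
H^\bullet_\Lie(W_n,\gl_n)\;\cong\;\k[c_1,\dots,c_n]_{\deg\le 2n}\;\cong\;H^\bullet\bigl(\sk_{2n}BGL_n(\CC)\bigr),
\]
the last isomorphism because $BGL_n(\CC)$ has only even-dimensional Schubert cells, so its $2n$-skeleton realizes precisely the truncated polynomial ring.

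To pass from the relative to the absolute cohomology I use the principal bundle $GL_n(\CC)\to Y_n\to\sk_{2n}BGL_n(\CC)$. Its Serre spectral sequence has $E_2=H^\bullet(\sk_{2n}BGL_n(\CC))\otimes\Lambda(x_1,x_3,\dots,x_{2n-1})$, with $H^\bullet(GL_n(\CC))=\Lambda(x_1,\dots,x_{2n-1})$ ($\deg x_{2i-1}=2i-1$) and transgression $x_{2i-1}\mapsto c_i$. This is identical, differential by differential, to the Hochschild--Serre-type spectral sequence relating $H^\bullet_\Lie(W_n,\gl_n)$ and $H^\bullet_\Lie(\gl_n)\cong H^\bullet(GL_n(\CC))$ to $H^\bullet_\Lie(W_n)$, since the transgressions match (both send the primitive generator $x_{2i-1}$ to $c_i$). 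Comparing the two spectral sequences yields the isomorphism $H^\bullet_\Lie(W_n)\cong H^\bullet(Y_n)$ of part (a). The triviality of the cup product is then visible from the multiplicative structure of this spectral sequence: every class of positive degree in $H^\bullet(Y_n)$ is transgressive, and the product of two such classes lands in a bidegree that vanishes (the polynomial part exceeds the truncation, or the exterior part squares to zero); the vanishing of all higher Massey products is the corresponding refinement, reflecting the formality of the rational homotopy type of $Y_n$.

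Finally, part (b) is a connectivity reading of the same Serre spectral sequence. For total degree $\le 2n$ the differentials $x_{2i-1}\mapsto c_i$ cancel the base and fiber contributions exactly as in the contractible universal bundle $EGL_n(\CC)$, so $\tilde H^k(Y_n)=0$ for $k\le 2n$, i.e.\ $Y_n$ is $2n$-connected. The first discrepancy with $EGL_n(\CC)$ occurs at the lowest monomial of Chern-degree $>2n$, namely in total degree $2n+1$, where a surviving class appears; tracking the surviving generators identifies $H^{2n+1}(Y_n)$ with the space of symmetric polynomials in $x_1,\dots,x_n$ of degree $n+1$, the polynomials in the Chern classes of that weight.
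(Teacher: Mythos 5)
Your outline follows the classical Gelfand--Fuchs route (weight-zero reduction via the Euler field, passage to the relative cohomology $H^\bullet_\Lie(W_n,\gl_n)$, then comparison of the Hochschild--Serre spectral sequence with the Serre spectral sequence of $GL_n(\CC)\to Y_n\to\sk_{2n}BGL_n(\CC)$). Note that the paper does not prove this theorem at all: it is quoted as classical, and later (in the proof of Theorem 2.4.9) the paper uses exactly the relative statement $\CE^\bullet(W_n,\gl_n)\simeq \CC[e_1,\dots,e_n]/(\deg>2n)$ as a black box, citing the original computation. So your surrounding reductions are the standard and correct ones.

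The genuine gap is that the step you yourself flag as ``the technical heart'' --- the identification $H^\bullet_\Lie(W_n,\gl_n)\cong \k[c_1,\dots,c_n]/(\deg>2n)$ --- is asserted, not proved, and it is where essentially all of the content of the theorem lives. Concretely, after restricting to weight zero and passing to the basic subcomplex, one must compute the cohomology of the $GL_n$-invariants of $\Lambda^\bullet\bigl(\bigoplus_{k\geq 1}(\Sym^{k+1}(\CC^n)\otimes(\CC^n)^*)^*\bigr)$ in internal weight zero; surjectivity of the Chern--Weil map is the easy half, but showing that \emph{nothing else} survives and that the kernel is exactly the ideal in degree $>2n$ requires the full invariant-theoretic analysis of Gelfand--Fuchs (first fundamental theorem for $GL_n$, reduction to a combinatorially defined complex, and a nontrivial acyclicity argument). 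Your heuristic that ``Chern-type cocycles become cohomologically trivial once their total degree exceeds $2n$'' is a restatement of what must be proved, not an argument for it. Secondary, smaller gaps of the same kind: the vanishing of all cup products and Massey products in $H^\bullet(Y_n)$ is attributed to formality and to bidegree reasons in the spectral sequence, but a multiplicative spectral-sequence argument only bounds products by the associated graded and does not by itself kill them; one needs the explicit dg-model $\k[c]/(\deg>2n)\otimes\Lambda(x_1,\dots,x_{2n-1})$, $dx_{2i-1}=c_i$, and a direct verification there. With the relative computation supplied (e.g.\ by citation to Fuks, Th.\ 2.2.4, as the paper does), the rest of your argument goes through.
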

    
    \paragraph{The result for smooth vector fields.} 
    
    Let now $M$ be an $n$-dimensional $C^\infty$-manifold. The group $GL_n(\CC)$ acts on $Y_n$.
    In particular, the action of $GL_n(\RR)\subset GL_n(\CC)$ gives rise to 
    the {\em Gelfand-Fuchs fibration}
    \[
    p:  \ul Y_M \lra M
    \]
    with fiber $Y_n$,  associated to the tangent bundle of $M$, as in
    \eqref{eq:YM}.
    
    \vskip .2cm
    
    We denote by $\Vect_\k(M)$ the Lie algebra of $\k$-valued smooth vector fields on $M$ equipped with the standard
    Fr\'echet topology (as for the space of $C^\infty$-sections of any smooth vector bundle).
    As before, we denote by $\CE^\bullet(\Vect_\k(M))$ and $H^\bullet_\Lie(\Vect_\k(M))$ the $\k$-linear Chevalley-Eilenberg complex of continuous cochains of $\Vect_\k(M)$ and its cohomology. The following is also classical
    (\cite{fuks}  Lemma 1 p. 152). 
    
    \begin{thm}\label{thm:approx}
        Let $M=D\subset \RR^n$ be the standard unit ball. Then the homomorphism $\Vect_\k(M)\to W_n(\k)$
        given by the Taylor series expansion at $0$ induces an isomorphism $H^\bullet_\Lie(W_n(\k)) \to H^\bullet_\Lie(\Vect_\k(D))$. 
    \end{thm}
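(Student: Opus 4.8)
The plan is to show that the Taylor expansion $\phi\colon \Vect_\k(D)\to W_n(\k)$ becomes a quasi-isomorphism after applying continuous Chevalley--Eilenberg cochains, by comparing the two $\CE^\bullet$-complexes through the filtration by order of vanishing at the origin. First I would record the basic algebraic features of $\phi$: it is a continuous surjection of topological Lie algebras (surjectivity is Borel's lemma, every formal series being a Taylor series), its kernel is the ideal $\mathfrak{I}\subset\Vect_\k(D)$ of fields that are flat at $0$, and it is strictly compatible with the decreasing filtrations $\Vect_\k(D)=V_{-1}\supseteq V_0\supseteq V_1\supseteq\cdots$ and $W_n=L_{-1}\supseteq L_0\supseteq L_1\supseteq\cdots$, where $V_k$ (resp. $L_k$) consists of fields whose $k$-jet at $0$ vanishes. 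These are filtrations by bracket-compatible subspaces ($[V_i,V_j]\subseteq V_{i+j}$, and likewise for $L$), and the induced map on associated graded Lie algebras $\on{gr}\phi\colon \on{gr}\Vect_\k(D)\to \on{gr}W_n$ is an isomorphism onto the graded Lie algebra $\bigoplus_{k\geq -1}S^{k+1}((\k^n)^*)\otimes\k^n$ of polynomial vector fields, because $V_k/V_{k+1}\cong L_k/L_{k+1}$ is exactly the space of homogeneous degree-$(k+1)$ fields (Taylor's theorem).

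Next I would dualize the filtration to the cochain level, obtaining on each $\CE^\bullet$ a decreasing filtration whose associated graded complex is built from $\CE^\bullet(\on{gr}\,\cdot\,)$; since $\on{gr}\phi$ is an isomorphism, the two associated graded complexes are canonically identified, so the induced map of spectral sequences is an isomorphism from the $E_1$-page onward. On the formal side this immediately yields the result: the filtration on $W_n$ is complete and separated, a continuous cochain factors through some finite quotient $W_n/L_k$, and the scaling (Euler) field $e=\sum z_i\partial_i$ makes $\CE^\bullet(W_n)$ a direct sum of weight eigenspaces, the weight-zero part being finite-dimensional in each degree and computing the whole cohomology (via the contraction homotopy $\iota_e/w$ on the weight-$w$ summand, using $L_e=d\iota_e+\iota_e d$). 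Hence the spectral sequence for $W_n$ converges.

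The remaining, and hardest, point is to run the same argument on the smooth side, where the filtration is \emph{not} separated: $\bigcap_k V_k=\mathfrak{I}\neq 0$, and a continuous cochain on the Fr\'echet algebra $\Vect_\k(D)$ need not be a function of the jet at $0$ (it may, a priori, pair with the whole field). To control this I would use the radial vector field $e=\sum x_i\partial_i\in\Vect_\k(D)$ and the associated scaling action $h_t\colon x\mapsto tx$, $t\in(0,1]$: the identity $L_e=d\iota_e+\iota_e d$ together with the $h_t$-pullbacks provides a chain homotopy on $\CE^\bullet(\Vect_\k(D))$ whose effect, as $t\to 0$, concentrates any cochain at the origin, thereby showing that up to homotopy every continuous cochain is equivalent to one depending only on a finite jet and that the flat ideal $\mathfrak{I}$ contributes nothing to cohomology. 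This is precisely where the non-separatedness must be defeated: the main obstacle is to prove rigorously that this scaling homotopy converges and that the filtration on $\CE^\bullet(\Vect_\k(D))$ is exhaustive with convergent spectral sequence, so that its $E_\infty$-page is the one dictated by the common associated graded. Granting this, the isomorphism of $E_1$-pages upgrades to an isomorphism $\phi^*\colon H^\bullet_\Lie(W_n(\k))\xrightarrow{\ \sim\ }H^\bullet_\Lie(\Vect_\k(D))$, which is the assertion of the theorem.
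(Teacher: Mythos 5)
The paper does not actually prove this statement: it is quoted as classical, with a pointer to \cite{fuks}, Lemma 1, p.~152. The strategy you outline --- Taylor expansion, the filtration by order of vanishing at $0$, the Euler field $e=\sum x_i\partial_i$, the Cartan identity $L_e=d\iota_e+\iota_e d$ implemented by the rescalings $h_t(x)=tx$ --- is essentially the classical one, and your treatment of the formal side (the weight decomposition of $\CE^\bullet(W_n(\k))$ under $e$ and the contraction $\iota_e/w$ on nonzero weights) is complete and correct. The problem is that the argument stops exactly where the content of the theorem begins, and the step you propose to ``grant'' is not a routine convergence check: it \emph{is} the theorem.

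Concretely: (i) the filtration on $\CE^\bullet(\Vect_\k(D))$ induced by the $V_k$ is not exhaustive --- a continuous cochain such as $\xi\mapsto\int_D\rho\cdot\on{div}(\xi)\,dx$ vanishes on no $V_k$ --- so the associated graded of the cochain complex is \emph{not} $\CE^\bullet(\on{gr}\Vect_\k(D))$, and the $E_1$-page comparison of your second paragraph cannot even be set up on the smooth side until one already knows that every cocycle is cohomologous to one factoring through a finite jet at $0$, which is precisely the assertion to be proved. (ii) The scaling operators $(\Phi_t\xi)(x)=t^{-1}\xi(tx)$ do act as the identity on cohomology for each fixed $t\in(0,1]$ by the Cartan formula, but they have no limit as $t\to 0$: already $\Phi_t\xi$ diverges like $t^{-1}\xi(0)$, so ``the effect, as $t\to 0$, concentrates any cochain at the origin'' is not something obtained by a naive limit. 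The classical proof must pass to the distributional kernels of continuous cochains on $D^q$ (Schwartz kernel theorem), control the supports and orders of these kernels under the homotopy built from $\iota_e$ and $\Phi_s^*$, and then use the structure theorem for distributions supported at a point to see that the resulting cochains factor through finite jets, i.e.\ come from $W_n(\k)$. Until that analysis is carried out, the proposal is a correct road map with its central step missing, and the spectral-sequence packaging obscures rather than supplies it.
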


    The  following theorem  was
    conjectured by Gelfand-Fuchs and proved by Haefliger \cite{haefliger-lnm} \cite{haefliger-ens} and Bott-Segal 
    \cite{bott-segal}.
    
    \begin{thm}\label{thm:GF-Y}
        For any $M$ we have an isomorphism $H^\bullet_\Lie(\Vect_\k(M))\simeq H^\bullet(\Sect(\ul Y_M/M), \k)$. 
    \end{thm}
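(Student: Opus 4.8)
The plan is to realize the continuous Chevalley--Eilenberg cochains of $\Vect_\k$ as a locally constant factorization algebra of cdga's on $M$, and then to evaluate its factorization homology by means of the local model $W_n$ and non-abelian Poincar\'e duality (Theorem~\ref{thm:NAPD}). Concretely, I would set $\Ac(U)=\CE^\bullet(\Vect_\k(U))$; since the Chevalley--Eilenberg complex of a topological Lie algebra is a cdga under cup product, this is a $\cdga$-valued pre-cosheaf on $M$, covariant in $U$ via restriction of vector fields $\Vect_\k(U_0)\to\Vect_\k(U_1)$ for $U_1\subset U_0$. By Proposition~\ref{prop:fact-cdga} it then suffices to check that $\Ac$ is a factorization algebra. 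The multiplicativity axiom is straightforward: for disjoint opens one has $\Vect_\k(U_1\sqcup\cdots\sqcup U_r)=\prod_i\Vect_\k(U_i)$, and continuous cochains on a product of Lie algebras form the (completed) tensor product of the individual cochain complexes, which is exactly the coproduct $\otimes_\k$ in $\cdga$.

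The genuinely hard step is the Weiss-descent axiom: that for every Weiss cover $\Uen$ of an open $U$ the canonical morphism $\gamma_\Uen$ is a quasi-isomorphism. This is the local-to-global property of the Chevalley--Eilenberg complex of the sheaf of Lie algebras $U\mapsto\Vect_\k(U)$, and it is where the diagonal/locality structure underlying Gelfand--Fuchs theory really enters. I would expect to establish it either by the general machinery for factorization algebras attached to local Lie algebras (Costello--Gwilliam) or by a direct \v{C}ech-type argument exploiting that a continuous cochain on $\Vect_\k(U)$ is, in the appropriate sense, supported on finite subsets of $U$.

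Granting that $\Ac$ is a factorization algebra, I would next show it is locally constant. For disks $U_1\subset U_0$ containing a common point $x$, Taylor expansion at $x$ produces a commuting triangle of Lie-algebra maps $\Vect_\k(U_0)\to\Vect_\k(U_1)\to W_n(\k)$, and by Theorem~\ref{thm:approx} (applied at $x$) both maps to $W_n(\k)$ induce isomorphisms on $H^\bullet_\Lie$; hence the corestriction $\Ac(U_1)\to\Ac(U_0)$ is a quasi-isomorphism. Thus $\Ac$ is locally constant with local value the $GL_n(\RR)$-equivariant cdga $A=\Ac(\RR^n)\simeq\CE^\bullet(W_n(\k))$, the equivariance coming from linear coordinate changes. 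By the equivariant comparison of \S\ref{subsec:clas-GF}, $A$ is quasi-isomorphic, as a $GL_n(\CC)$- (hence $GL_n(\RR)$-) equivariant cdga, to $\Th^\bullet(Y_n)$. Since $\Ac$ is natural in $M$ and assembled from disk data glued by the transition cocycle of the tangent bundle, Proposition~\ref{thm:assoc-alg} identifies it with the associated cosheaf $\ul{A}_M=\ul{\Th^\bullet(Y_n)}_M$ on the $GL_n(\RR)$-structured manifold $M$.

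Finally I would invoke non-abelian Poincar\'e duality. The Gelfand--Fuchs skeleton $Y_n$ is $2n$-connected, in particular $n$-connected, so Theorem~\ref{thm:NAPD} applies and gives a quasi-isomorphism $\int_M\bigl(\Th^\bullet(Y_n)\bigr)\simeq\Th^\bullet\bigl(\Sect(\ul Y_M/M)\bigr)$, where $\ul Y_M$ is precisely the Gelfand--Fuchs fibration associated to the tangent bundle. Combining the identifications, and using that the global cosections of a cosheaf are its value on $M$,
\[
\CE^\bullet(\Vect_\k(M))=\Ac(M)\simeq\ul{A}_M(M)=\int_M(A)\simeq\Th^\bullet\bigl(\Sect(\ul Y_M/M)\bigr),
\]
so that passing to cohomology yields $H^\bullet_\Lie(\Vect_\k(M))\simeq H^\bullet(\Sect(\ul Y_M/M),\k)$. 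The main obstacle throughout is the Weiss-descent verification of the first step; every later step is essentially formal given the equivariant Gelfand--Fuchs computation and the non-abelian Poincar\'e duality already established.
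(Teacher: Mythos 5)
Your proposal follows essentially the same route as the paper: realize $U\mapsto\CE^\bullet(\Vect_\k(U))$ as a locally constant factorization algebra of cdga's (citing Costello--Gwilliam for Weiss descent, exactly as the paper does in Theorem~\ref{thm:CE-fact}), identify its local value equivariantly with $\CE^\bullet(W_n(\k))\simeq\Th^\bullet(Y_n)$ via Theorem~\ref{thm:approx} and the Weil-model computation, and conclude by non-abelian Poincar\'e duality. The only point where the paper is more careful is the identification of $\CE^\bullet(T^\k_M)$ with the associated cosheaf $\ul{\CE^\bullet(W_n(\k))}_M$, which requires the contractible family of formal identifications $(T_xM,0)\simeq(M,x)$ of Proposition~\ref{prop:TM-Wn} to make the disk-level quasi-isomorphisms coherent.
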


    \paragraph{Approach via factorization homology.} 
    From the modern point of view, \autoref{thm:GF-Y} can be seen as  a textbook application of the techniques of  factorization homology. 
    In the remainder of this section we give its proof using these techniques, as a precursor to the study of the algebro-geometric case.
    We first formulate a version, in the formalism of \cite{ginot}, of  \cite[Th.6.6.1]{CG}:
    
    \begin{thm}\label{thm:CE-fact}
        Let $\Lc$ be a $C^\infty$ local Lie algebra on $M$, i.e., a smooth $\k$-vector bundle with a Lie bracket on sections given
        by a bi-differential operator.  For an open $U\subset M$ let $\Lc(U)$ be the space of smooth sections of $\Lc$ over $U$,
        considered as a Lie algebra with its Fr\'echet topology, and $\CE^\bullet(\Lc(U))$ be its Chevalley-Eilenberg complex
        of continuous sections. 
        Then 
        \[
        \CE^\bullet(\Lc): U \,\mapsto \, \CE^\bullet(\Lc(U))
        \]
        is a factorization algebra on $M$. 
          \end{thm}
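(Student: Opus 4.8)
The plan is to verify the factorization-algebra axioms (3) and (4) directly, organising the argument around the word-length filtration of the Chevalley--Eilenberg complex; throughout I work with the underlying $\dgVect$-valued object (coproduct $\oplus$, so that $\gamma_\Uen$ is the usual \v{C}ech complex), the cdga structure being recorded separately and matched with the cosheaf picture via Proposition \ref{prop:fact-cdga}. First I record the pre-factorization structure. For $U_1\subset U_0$ restriction of sections $\Lc(U_0)\to\Lc(U_1)$ is a continuous Lie algebra map, and since $\CE^\bullet$ is contravariant in its argument this induces $\CE^\bullet(\Lc(U_1))\to\CE^\bullet(\Lc(U_0))$, making $U\mapsto\CE^\bullet(\Lc(U))$ a pre-cosheaf. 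For disjoint $U_1,\dots,U_r\subset U_0$ locality of the bracket gives an isomorphism of topological Lie algebras $\Lc(U_1\sqcup\cdots\sqcup U_r)\cong\prod_i\Lc(U_i)$, and $\mu^{U_0}_{U_1,\dots,U_r}$ is the composite of $\bigotimes_i\CE^\bullet(\Lc(U_i))\to\CE^\bullet(\prod_i\Lc(U_i))$ with corestriction along $\sqcup_iU_i\subset U_0$; associativity and permutation invariance are immediate.

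Axiom (3) asks that for disjoint opens $\bigotimes_i\CE^\bullet(\Lc(U_i))\to\CE^\bullet(\prod_i\Lc(U_i))$ be a weak equivalence. In fact it is an isomorphism: the continuous Chevalley--Eilenberg complex of a finite product of nuclear Fr\'echet Lie algebras is the (completed) tensor product of the factors, because the continuous dual of $\widehat\Lambda^\bullet$ of a finite direct sum splits as the corresponding tensor product of exterior powers. This is the first place where the symmetric monoidal category of topological vector spaces must be pinned down so that nuclearity makes completed duals and tensor products behave, exactly as in \cite{CG}.

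The essential point is axiom (4). I would filter $\CE^\bullet(\Lc(V))=\prod_q(\widehat\Lambda^q\Lc(V))^*$ by word length, $F^{\geq q}=\prod_{p\geq q}(\widehat\Lambda^p\Lc(V))^*$. The differential raises word length by one, so it preserves this complete, exhaustive filtration and induces the zero differential on
\[
\on{gr}^q\CE^\bullet(\Lc(V))\;=\;\bigl(\widehat\Lambda^q\Lc(V)\bigr)^*,
\]
concentrated in cohomological degree $q$. By nuclearity this is the space $\Ec'_q(V^q)$ of $S_q$-antisymmetric compactly supported distributional sections on $V^q$ of the relevant density-twisted external power bundle, and compactly supported distributions form a cosheaf (extension by zero along open inclusions, with \v{C}ech co-descent). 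The Weiss hypothesis now enters at exactly the right place: since $(A\cap B)^q=A^q\cap B^q$, the $q$-th Cartesian power of the \v{C}ech nerve of a cover $\Uen=(U_i)$ of $V$ is the \v{C}ech nerve of the family $(U_i^q)$, and the Weiss condition says precisely that every $(x_1,\dots,x_q)\in V^q$ has all coordinates in a common $U_i$, i.e. that $(U_i^q)_i$ is an ordinary open cover of $V^q$. Applying the cosheaf property of $\Ec'_q$ to this cover shows that $\on{gr}^q\gamma_\Uen$ is a weak equivalence for every $q$.

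It remains to pass from the associated graded to the full complex. Since the filtration is complete and exhaustive and $\gamma_\Uen$ is a filtered map inducing weak equivalences on each $\on{gr}^q$, a complete-filtration comparison argument (Eilenberg--Moore) upgrades this to a weak equivalence of the homotopy colimits, giving axiom (4); the values being cdga's, this is by Proposition \ref{prop:fact-cdga} equivalently a cosheaf of cdga's. I expect the main obstacle to be functional-analytic rather than combinatorial: making precise the identification $(\widehat\Lambda^q\Lc(V))^*\cong\Ec'_q(V^q)$ and the cosheaf property of $\Ec'_q$ requires a symmetric monoidal category of topological vector spaces in which nuclearity lets completed duals, completed exterior powers and the relevant (co)limits commute. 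The second delicate point is the comparison step itself: because $\CE^\bullet$ is an infinite product and $\gamma_\Uen$ is a simplicial realization over a typically infinite Weiss cover, one must check that the filtration is genuinely complete on both sides, so that a graded-level equivalence forces an equivalence of the totalizations.
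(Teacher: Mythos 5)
The paper gives no proof of this statement: it is quoted directly from Costello--Gwilliam \cite{CG} (the \qed follows the statement immediately), and your argument is precisely the standard proof from that reference --- word-length filtration, identification of $\on{gr}^q$ with antisymmetric compactly supported distributional sections on $V^q$, and the observation that a Weiss cover of $V$ yields an ordinary cover of $V^q$ by $q$-th powers. Your two flagged worries are genuine but mild: nuclearity of the Fr\'echet spaces of sections handles the duality and completed tensor products, and since $\Lc$ is a bundle concentrated in degree $0$ the filtration is finite in each cohomological degree, so completeness on both sides of $\gamma_\Uen$ is automatic.
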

        
        \begin{proof} This is an adaptation of the  proof of   \cite[Th.6.6.1]{CG}. 
         Given disjoint $U_1, \dots, U_n\subset U$, we have the restriction map
        \[
        \Lc(U) \lra \Lc(U_1\amalg\cdots\amalg U_n) \,=\, \Lc(U_1)\oplus\cdots\oplus\Lc(U_n).
        \]
      It is a morphism of topological Lie algebras and so induces a morphism of complexes
      \[
     \mu_{U_1,\dots, U_n}^U:  \CE^\bullet(\Lc(U_1))\otimes\cdots \otimes\CE^\bullet(\Lc(U_n)) \lra \CE^\bullet(\Lc(U)).
      \]  
      This defines a pre-factorization algebra structure on 
        $\CE^\bullet(\Lc)$. It is further clear that this structure satisfies the
        additional property (4) of \S \ref{subsec:fac-top}\ref{par:fact-alg}
        In fact $\mu_{U_1,\dots, U_n}^U$ is an isomorphism
        (not just a quasi-isomorphism), whenever $U=U_1\amalg\cdots\amalg U_n$. 
        
        It remains to prove that $\CE^\bullet(\Lc)$ satisfies multiplicative
        codescent in  factorizing coverings. Given a covering $\Uen$ of an open $U$
        and a set $\alpha=\{U_1,\dots, U_n\} \in P\Uen$, we denote by
        $U_\alpha = U_1\amalg\cdots\amalg U_n$ the corresponding (typically disconnected)
        open in $U$. We also write $U_{\alpha\beta}=U_\alpha\cap U_\beta$ etc. 
        With this notation and taking into account the property (4) above, the
        $p^\mathrm{th}$ term   $\Cen_p(\Uen, \CE^\bullet(\Lc))$ of the multiplicative \v Cech complex
        is written simply as 
        \[
        \bigoplus_{\alpha_0,\dots,\alpha_p\in P\Uen} \CE^\bullet(\Lc(U_{\alpha_1,\dots, \alpha_p}))
        \,=\, \Nc_p(\Ven, \CE^\bullet(\Lc))
        \]
i.e., as the $p^\mathrm{th}$ term  of the ordinary \v Cech complex of $\CE^\bullet(\Lc)$ with respect to the 
covering $\Ven$ formed by the $U_\alpha$, $\alpha\in P\Uen$.       
So we need to prove that if $\Uen$ is a factorizing covering, then  $\CE^\bullet(\Lc)$,
considered  
as a pre-cosheaf of dg-vector spaces, satisfies codescent over $\Ven$. 
For this it suffices to prove that for any $q\geq 0$ the cosheaf $\CE^p(\Lc)$ of ordinary vector
spaces satisfies codescent over $\Ven$. For this, let us interpret each $\CE^q(\Lc(U))$.

Let $\Lc^{\boxtimes q}$ be the $q^\mathrm{th}$ external tensor power of $\Lc$, a local Lie algebra on $M^q$. 
The manifold $M^q$ is acted upon by the symmetric group $S_q$, and $\Lc^{\boxtimes q}$
is $S_q$-equivariant. 
Now, $\CE^q(\Lc(U))$, being the space
of continuous antisymmetric $q$-linear functionals on $\Lc(U)$, is the topological dual
of $\Lc^{\boxtimes q} (U^n)_{-S_q}$, where the subscript ``$-S_q$'' means the space
of anti-coinvariants, i.e., of coinvariants  under the standard action tensored with the
sign character.

So it suffices to show that the pre-sheaf  of topological 
vector spaces $U\mapsto \Lc^{\boxtimes q} (U^q)$ satisfies descent with respect to
the covering $\Ven$ associated to a factorizing covering $\Uen$. 
But if $\Uen$ is factorizing, then the open sets $U_\alpha^q$, $\alpha\in P\Uen$,
cover $U^q$, and our statement follows from the fact that $\Lc^{\boxtimes q}$ is a sheaf
on $M^q$.
\end{proof}
    
    Since $\CE^\bullet(\Lc)$ consists of cdga's, \autoref{prop:fact-cdga} implies:
    
    \begin{cor}
        In the situation of \autoref{thm:CE-fact}, $\CE^\bullet(\Lc)$ is a cosheaf of cdga's on $M$. \qed
    \end{cor}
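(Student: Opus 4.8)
The plan is to observe that Theorem \ref{thm:CE-fact} already produces $\CE^\bullet(\Lc)$ as a factorization algebra, and that this factorization algebra in fact takes values in the symmetric monoidal homotopical category $\cdga$; the corollary then follows immediately from Proposition \ref{prop:fact-cdga}(a). So the whole task reduces to upgrading the factorization structure of Theorem \ref{thm:CE-fact} from $\dgVect$ to $\cdga$.

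First I would record that for any topological Lie algebra $\ben$ the continuous Chevalley--Eilenberg complex $\CE^\bullet(\ben)$ is naturally a cdga. As a graded algebra it is the (continuous) free graded-commutative algebra $\Sym(\ben^*[-1])$ on the continuous dual placed in degree $1$, the product being the exterior/shuffle product of cochains, and the Chevalley--Eilenberg differential is the unique derivation of this product extending the dual $\ben^*\to\Lambda^2\ben^*$ of the bracket. Thus each $\CE^\bullet(\Lc(U))$ is an object of $\cdga$.

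Next I would check that all the structure morphisms of the factorization algebra of Theorem \ref{thm:CE-fact} are morphisms of cdga's. Two facts are used. For a continuous homomorphism of Lie algebras $f\colon\gen\to\hen$, the pullback of cochains $f^*\colon\CE^\bullet(\hen)\to\CE^\bullet(\gen)$ is a morphism of cdga's, since it is the map of free graded-commutative algebras induced by the dual $\hen^*\to\gen^*$ and it commutes with the differentials. Applying this to the restriction-of-sections homomorphism $\Lc(U_0)\to\Lc(U_1)$ for $U_1\subset U_0$ shows that the co-restriction maps $\CE^\bullet(\Lc(U_1))\to\CE^\bullet(\Lc(U_0))$ making $\CE^\bullet(\Lc)$ a pre-cosheaf are cdga morphisms. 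For the higher multiplication maps $\mu^{U_0}_{U_1,\dots,U_r}$ I would use the canonical isomorphism of cdga's
\[
\CE^\bullet(\gen_1\oplus\cdots\oplus\gen_r)\;\cong\;\CE^\bullet(\gen_1)\otimes_\k\cdots\otimes_\k\CE^\bullet(\gen_r),
\]
which identifies the source $\CE^\bullet(\Lc(U_1))\otimes\cdots\otimes\CE^\bullet(\Lc(U_r))$ of $\mu$ with $\CE^\bullet(\Lc(U_1)\oplus\cdots\oplus\Lc(U_r))$; the map $\mu$ is then the pullback along the restriction homomorphism $\Lc(U_0)\to\Lc(U_1)\oplus\cdots\oplus\Lc(U_r)$ (restrict a section of $U_0$ to the disjoint pieces $U_i$), hence again a cdga morphism by the previous point. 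This exhibits $\CE^\bullet(\Lc)$ as a factorization algebra with values in $\cdga$, and Proposition \ref{prop:fact-cdga}(a) then turns it into a cosheaf of cdga's, proving the corollary.

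I do not anticipate a genuine obstacle here: the only point requiring care is the continuity and completion bookkeeping, namely verifying that the continuous dual and continuous symmetric algebra are well enough behaved that the displayed isomorphism $\CE^\bullet(\gen_1\oplus\gen_2)\cong\CE^\bullet(\gen_1)\otimes_\k\CE^\bullet(\gen_2)$ holds on the nose for the relevant Fr\'echet Lie algebras of sections. This is, however, exactly the same bookkeeping already implicit in the statement of Theorem \ref{thm:CE-fact}, so no new analytic input is needed.
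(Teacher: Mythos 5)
Your proposal is correct and follows exactly the route the paper takes: the paper's entire justification is the one-line observation that $\CE^\bullet(\Lc)$ consists of cdga's, so Proposition \ref{prop:fact-cdga} applies. The details you supply (the cdga structure on each $\CE^\bullet(\Lc(U))$, and the fact that the structure maps $\mu^{U_0}_{U_1,\dots,U_r}$ are cdga morphisms via the K\"unneth isomorphism and pullback along restriction) are exactly the verifications the paper leaves implicit.
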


    We apply this to $\Lc= T_M^\k$ being the tangent bundle of $M$ for $\k = \RR$ or its complexification for $\k = \CC$.  
    
    \vskip .2cm
    
    Consider the algebraic group $\GL_{n,\k}$ over $\k$. It acts on $\k[\![z_1, \dots, z_n]\!]$ and thus on $W_n(\k)$
    in a natural way. Moreover, 
    the cdga $\CE^\bullet(W_n(\k))$ has a natural structure of
    $\GL_{n,\k}^*$-algebra (see \autoref{def:G*}).  Indeed, any $\xi\in\gl_n(\k)$ gives a linear vector field on $\k^n$, also denoted $\xi$,
    which we can consider as an element of $W_n(\k)$. The derivation $i(\xi)$ is given by contraction of the cochains
    with $\xi$. Now, the sheafification functor \eqref{eq:sheafify}
    gives a BL-action of $GL_n(\k)$
    on
    $\CE^*(W_n(\k))$.
    Therefore,
    we have the cosheaf of cdga's $\ul{\CE^\bullet(W_n(\k))}_M$ on $M$ associated with the  BL-action of
    $GL_n(\RR)\subset GL_n(\k)$ on the cdga $\CE^\bullet(W_n(\k))$. 
    
    \begin{prop}\label{prop:TM-Wn}
        The cosheaves of cdga's 
        $\CE^\bullet(T_M^\k)$ and $\ul{\CE^\bullet(W_n(\k))}_M$ are weakly equivalent. 
    \end{prop}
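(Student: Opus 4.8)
The plan is to compare two locally constant cosheaves of cdga's on $M$ by checking that they agree locally, i.e., on disks, together with their $GL_n$-equivariance data, and then invoking the rigidity of locally constant objects. Both cosheaves are locally constant: for $\CE^\bullet(T_M^\k)$ this was observed just before the statement (via Theorem \ref{thm:approx}), and for $\ul{\CE^\bullet(W_n(\k))}_M$ it holds by construction (Proposition \ref{thm:assoc-alg}). By Proposition \ref{prop:sh-cosh-lc} it is equivalent to compare the corresponding locally constant \emph{sheaves}, and by the Kan-extension description in its proof, a locally constant sheaf on $M$ is the same thing as a simplicially enriched functor out of $L(D(M)^\circ)$; so the whole comparison is controlled by the value on a single disk together with the coherent action of the structure group.

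First I would fix the tangent $GL_n(\RR)$-structure on $M$ and recall that $\ul{\CE^\bullet(W_n(\k))}_M$ is, by the construction in Proposition \ref{thm:assoc-alg}, obtained as $(\gamma^{-1}|\Bc|)^{-1}$, where $\Bc$ is the sheaf on $N_\bullet G$ encoding the BL-action and $\gamma$ classifies $TM$. On a disk $U$ this cosheaf evaluates (up to weak equivalence) to the cdga $\CE^\bullet(W_n(\k))$ itself. On the other side, for $U=D$ the standard ball, Theorem \ref{thm:approx} provides a canonical quasi-isomorphism $\CE^\bullet(W_n(\k))\to\CE^\bullet(\Vect_\k(D))=\CE^\bullet(T_M^\k)(D)$, induced by Taylor expansion at the center. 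So the two cosheaves have identified values on disks; the content of the proposition is that this identification is natural in the disk \emph{and} compatible with the $GL_n(\RR)$-action, hence globalizes.

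The key step, then, is to upgrade the Taylor-expansion quasi-isomorphism to a map of $GL_n(\RR)$-equivariant objects, i.e., to promote it from a quasi-isomorphism of cdga's to a weak equivalence in $D_G(\pt)=\Sh_{N_\bullet G}(\dgVect)$ (in fact in $\cdga_G$). Concretely I would produce a morphism of the relevant BL-actions: the $GL_n$ acts on $\Vect_\k(D)$ through its linear action on $D\subset\RR^n$ (automorphisms fixing $0$ linearly), the Taylor map at $0$ is manifestly $GL_n(\RR)$-equivariant with respect to this action, and it intertwines the $\GG^*$-structures of Definition \ref{def:G*} — both the infinitesimal operators $L_\xi$ and the contraction operators $i(\xi)$ are defined by the same recipe (the linear vector field $\xi$ on $\k^n$, contraction of cochains with $\xi$) on $W_n(\k)$ and on $\Vect_\k(D)$. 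Thus the map of Theorem \ref{thm:approx} is a morphism of $\GG^*$-cdga's, hence via the sheafification functor \eqref{eq:sheafify} a weak equivalence of BL-actions, hence $\gamma^{-1}$ of it is a weak equivalence of locally constant sheaves on $M$, and finally its inverse cosheaf is the desired weak equivalence $\CE^\bullet(T_M^\k)\simeq\ul{\CE^\bullet(W_n(\k))}_M$.

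The main obstacle I anticipate is the coherence of the disk-to-disk comparison under the full poset $D(M)$ of disks together with the structure group, rather than at one disk in isolation: one must check that the family of Taylor-expansion equivalences is compatible with co-restriction maps $\Ac(U_1)\to\Ac(U_0)$ for nested disks and with the transition data $\alpha_s$ of the sheaf on $N_\bullet G$. This is exactly the point where the equivariant, local-constancy formalism does the work: because both sides are locally constant, all co-restriction maps are weak equivalences and the Kan-extension description reduces the coherence to the single equivalence of $GL_n(\RR)$-equivariant cdga's on a disk, so no further homotopy-coherence bookkeeping over $D(M)$ is needed beyond verifying the $\GG^*$-equivariance of the Taylor map. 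Showing that this equivariance is strict (not merely up to homotopy), so that the comparison lands cleanly in $\cdga_G$, is the one computation I would carry out in detail.
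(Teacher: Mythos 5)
Your reduction to comparing locally constant sheaves and your use of the Taylor-expansion quasi-isomorphism of Theorem \ref{thm:approx} on a disk both match the paper's strategy, but the globalization step has a genuine gap. Knowing that $\CE^\bullet(T^\k_M)$ is locally constant only tells you that its inverse sheaf is a functor out of $L(D(M)^\circ)$, i.e.\ a local system on $M$ with stalk $\CE^\bullet(W_n(\k))$; it does \emph{not} tell you that this local system is pulled back along the tangent classifier $\gamma\colon M\to BGL_n(\RR)$ from a $G$-equivariant cdga. That factorization holds for $\ul{\CE^\bullet(W_n(\k))}_M$ by construction, but for the vector-fields side it is essentially the statement being proved (it is the content of Lurie's classification of locally constant factorization algebras natural in $G$-structured manifolds, which the paper deliberately avoids invoking). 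So a single equivalence of $G$-equivariant cdga's on one disk is not enough data: two functors out of $L(D(M))$ with equivalent values at a point need not be equivalent, and your claim that ``no further homotopy-coherence bookkeeping over $D(M)$ is needed'' is exactly where the argument is missing.

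The paper supplies this coherence by a pointwise construction that makes no use of equivariance at all: for each disk $U$, each $x\in U$, and each formal identification $\phi\colon(T_xM,0)\to(M,x)$ inducing the identity on tangent spaces, one gets a chain of quasi-isomorphisms
\[
[\CE^\bullet(W_n(\k))]_M(U)\lra \CE^\bullet(W_{T_xM}(\k))\buildrel\phi_*\over\lra\CE^\bullet(W_x(\k))\buildrel r_x\over\lra\CE^\bullet(\Vect_\k(U));
\]
the space of choices $(x,\phi)$ is contractible, and for nested disks $U_1\subset U_0$ one uses the same $(x,\phi)$ for both, so the comparison square commutes on the nose. There is also a secondary problem with the ``one computation'' you propose to carry out: $\Vect_\k(D)$ does not carry a \emph{regular} action of the algebraic group $\GL_n$ (it is not a union of finite-dimensional algebraic representations, and linear maps do not even preserve the unit ball), so you cannot endow $\CE^\bullet(\Vect_\k(D))$ with a strict $\GG^*$-structure in the sense of Definition \ref{def:G*} and speak of the Taylor map as a strict morphism of $\GG^*$-cdga's; at best one has a homotopy-coherent action, which returns you to the coherence problem above.
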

    
 \paragraph{Proof of Proposition \ref{prop:TM-Wn}.}\label{par:proof-TM-Wn}
  For $x\in M$ let $W_x(\k)$ be the Lie algebra of formal $\k$-valued
    vector fields on $M$ near $x$ and $W_x(\k)^{\geq d}\subset W_x(\k)$ be the subalgebra
    of fields whose Taylor expansion begins with terms of degree $\geq d$.
   Thus $W_x(\k)$ is isomorphic to $W_n(\k)$ but not canonically. 
   
   If $U\ni x$ is a disk,  Theorem \ref{thm:approx} implies that the Taylor expansion at $x$ gives a quasi-isomorphism
   $\CE^\bullet(W_x(\k))\to \CE^\bullet(\Vect_\k(U))$. This further implies that  the  cosheaf of cdga's $\CE^\bullet(T_M^\k)$ is
    locally constant  and its costalk at any $x\in M$ is identified with $\CE^\bullet(W_x(\k))$. To identify it
    with $\ul{\CE^\bullet(W_n(\k))}_M$ we compare it with two intermediate objects constructed via Gelfand-Kazhdan-Fuchs
    formal geometry \cite{gelfand-kazhdan, gelfand-kazhdan-fuks}. 
    
    First, each cdga $\CE^\bullet(W_x(\k))$ is, as a vector space, the union of finite-dimensional subspaces
    $\Lambda^\bullet(W_n(\k)/W_x^{\geq d} (\k))^*$. For $d$ fixed, these subspaces form the fibers of
    a natural vector bundle on $M$; for $d$ varying, these bundles form an inductive system.
    We obtain therefore an infinite-dimensional bundle (the inductive limit of the finite-dimensional ones above)
    which we denote $\ul\CEW^\bullet_M$ and whose fibers are the cdga's $\CE^\bullet(W_x(\k))$. 
    
    Alternatively, consider the proalgebraic group $\JJ_n$ over $\RR$ from Example \ref{ex:group-J}.
     We have a principal $\JJ_n(\RR)$-bundle $SM\to M$ formed by formal coordinate systems on
    $M$, see  \cite{gelfand-kazhdan, gelfand-kazhdan-fuks}. 
    It is the projective limit of finite-dimensional principal $\JJ_{n,d}(\RR)$-bundles $S_d M$
    of $d$-jets of coordinate systems, so we will consider the  dg-sheaf 
    $\ul\Omega^\bullet_{SM}$ of forms on $SM$ as the inductive limit 
    of pullbacks of the $\ul\Omega^\bullet_{S_dM}$. 
     The group $\JJ_n(\RR)$ acts on $W_n(\k)$
    and therefore on the cdga $\CE^\bullet(W_n(\k))$. The 
    bundle  associated to $SM$ via this action  
    is $\ul\CEW^\bullet_M$. 
    
    The bundle  $\ul\CEW^\bullet_M$
 carries a natural formally flat connection induced by the standard connection in the
 $\oo$-jet bundle of $TM$ (whose fiber at $x\in M$ is $W_x$). 
  Therefore we have the $C^\oo$ de Rham complex 
    $\ul\Omega^\bullet_M \otimes \ul\CEW^\bullet_M$, a sheaf of cdga's on $M$.
    Since each
    $\CE^\bullet(W_x(\k))$ has finite-dimensional cohomology   (Theorem \ref{thm:GF-Wn}),
    the spectral sequence argument shows that 
    $\ul\Omega^\bullet_M \otimes \ul\CEW^\bullet_M$
     is  locally constant, with
      stalk at any $x\in X$  identified with $\CE^\bullet(W_x(\k))$. 
    
    \begin{prop}\label{prop:CEW-CE}
    The locally constant sheaf of cdga's $\ul\Omega^\bullet_M \otimes \ul\CEW^\bullet_M$ and
    cosheaf of cdga's $\CE^\bullet(T_M^\k)$ are inverse (\autoref{Defi:inverse}) to each other. 
    \end{prop}
    
    \begin{proof}
    Let $U'\subset U$ be two disks. We have the commutative diagram of cdga's
    \[
    \xymatrix{
    (\ul\Omega^\bullet_M\otimes\ul\CEW^\bullet_M)(U)
    \ar[dd]_r \ar[r]^{a_U} & \CE^\bullet(\Vect_\k(U), \Omega^\bullet(U))
    \ar[d]^e &
    \ar[l]_{\hskip 0.7cm b_U} \CE^\bullet(\Vect_\k(U), \k)
    \\
    & \CE^\bullet(\Vect_\k(U), \Omega^\bullet(U'))& 
    \\
     (\ul\Omega^\bullet_M\otimes\ul\CEW^\bullet_M)(U') 
      \ar[r]^{a_{U'}} & \CE^\bullet(\Vect_\k(U'), \Omega^\bullet(U')) \ar[u]^f&
    \ar[l]_{\hskip .7cm b_{U'}} \CE^\bullet(\Vect_\k(U'), \k)
    \ar[uu]_c
    }
    \]
    with the quasi-isomorphisms $r, c$ being the restriction for the sheaf and the corestriction of the cosheaf. 
    The arrows $a_U, a_{U'}$ are given by pointwise Taylor expansion and are quasi-isomorphisms by 
    Theorem \ref{thm:approx}. The arrows $b_U, b_{U'}$ are quasi-isomorphisms by the de Rham theorem for $U, U'$.  
    Similarly for $e,f$. These diagrams of quasi-isomorphisms, considered for all pairs $U'\subset U$ of disks,
    show that the sheaf and the cosheaf are inverse to each other. 
     \end{proof}
     
     Next,  the $\GL_{n,\k}^*$-action on $\CE^\bullet(W_n(\k))$ used to define 
     $\ul{\CE^\bullet(W_n(\k))}_M$ is in fact the restriction of a natural
     $\JJ_n^*$-action, where $\JJ_n$ is considered as a proalgebraic group over $\k$. 
     Indeed, $\JJ_n$ itself acts by formal coordinate changes, and  for any 
     $\xi\in \Lie(\JJ_n) = W_n^{\geq 1}(\k)\subset W_n(\k)$ we have the contraction $i(\xi)$
     with $\xi$ in $\CE^\bullet(W_n(\k))$. So  the sheafification functor \eqref{eq:sheafify}
    gives a BL-action of $J_n = \JJ_n(\k)$ on $\CE^\bullet(W_n(\k))$  given by a locally constant
    sheaf $\Sigma = \Sen(\CE^\bullet(W_n(\k)))$ on $N_\bullet J_n$ and
    so a locally constant sheaf $|\Sigma|$ on $BJ_n$. The principal $\JJ_n(\RR)$-bundle
    $SM\to M$ gives a classifying map 
    \[
    \gamma_{SM}: M\lra B\JJ_n(\RR)\subset B \JJ_n(\k)=BJ_n
    \]
    and therefore the locally constant sheaf of cdga's $\gamma_{SM}^{-1}|\Sigma|$ on $M$.
    
    \begin{prop}\label{prop:gamma_SM=CEW}
    The sheaves of cdga's $\Ac_1:= \gamma_{SM}^{-1}|\Sigma|$  and 
     $\Ac_2=\ul\Omega^\bullet_M \otimes \ul\CEW^\bullet_M$ are weakly equivalent. 
    \end{prop}
    
    \begin{proof}
    Denote  $A=\CE^\bullet(W_n(\k))$, $G=\JJ_n(\RR)$
    and $p: S:=SM\to M$ the projection of the principal $G$-bundle. As $S/G=M$
    (free action), the homotopy quotient, i.e., the realization of the simplicial
    space
    \[
    S_\bullet \,: =\, S/\!/ G \,:=\,\bigl\{ 
    \xymatrix@C=1em{
        S && \ar@<.4ex>[ll]^-{\pr}\ar@<-.4ex>[ll]_-{\text{action}}  G\times S &&  \ar@<.7ex>[ll] \ar@<-.7ex>[ll]\ar[ll] 
     G\times G\times S
         &&  \ar@<.9ex>[ll] \ar@<.3ex>[ll] \ar@<-.3ex>[ll] \ar@<-.9ex>[ll] \cdots
    }
    \bigr\},
    \]
   is weakly equivalent to $M$. The equivalence $q$ consists of projections
   $G^r\times S \to S\buildrel p\over\to M$.  The morphism $\gamma: S_\bullet\to N_\bullet G$ consisting of the projections $\pr_{G^r}: G^r\times S\to G^r$, represents
   $\gamma_{SM}: M\to BG$ via the equivalence $q$. By definition, the sheaf
   $\gamma^{-1}\Sigma$ on $S_\bullet $ consists of sheaves 
   $\pr_{G^r}^{-1}(\ul\Omega^\bullet_{G^r}\otimes A)$ on $G^r\times S$. 
   We can replace these pullback sheaves as relative de Rham complexes along the
   fibers of the $\pr_{G^r}$, getting $\ul\Omega^\bullet_{G^r}\otimes\Omega^\bullet_S\otimes A$. Taking pushforward along the weak equivalence
   $q$, we represent $\Ac_1= \gamma_{SM}^{-1}|\Sigma|$ as
   (the Thom-Sullivan construction of) the cosimplicial sheaf of cdga's on $M$
    \[
    \xymatrix@C=1em{
     p_*(\ul\Omega_S^\bullet\otimes A)  \ar@<.4ex>[rr]^{\hskip -1cm c }\ar@<-.4ex>[rr]_{\hskip -1cm 1\otimes -} &&\Omega^\bullet(G)\otimes  p_*(\ul\Omega_S^\bullet \otimes A)   \ar@<.7ex>[rr] \ar@<-.7ex>[rr]\ar[rr] && 
        \Omega^\bullet({G^2})
        \otimes  p_*(\ul\Omega_S^\bullet \otimes A)  \ar@<.9ex>[r] \ar@<.3ex>[r] \ar@<-.3ex>[r] \ar@<-.9ex>[r] & \cdots
    }
    \]
   which is the cobar-construction of the sheaf $p_*(\ul\Omega_S^\bullet \otimes A)$
   of $\Omega^\bullet(G)$-comodules. Because $G$ acts on $S$ freely, 
   $p_*\ul\Omega^\bullet_S$ is locally cofree over $\Omega^\bullet(G)$.
   Further, $H^\bullet(A)$ is finite-dimensional, with $\Omega^\bullet(G)$ coacting trivially.
   Therefore $p_*(\ul\Omega_S^\bullet \otimes A)$ has a filtration (cohomological truncation of $A$)
    with locally
  cofree quotients. 
  This implies   that the Thom-Sullivan construction, i.e., $\Ac_1$, is identified with the equalizer
  of the first two cofaces, i.e., $\Ac_1 = (p_*(\ul \Omega^\bullet_S\otimes A))_\basic$,
  see \eqref{basic=cobar}. That is, $\Ac_1$ so it is defined inside 
  $p_*(\ul \Omega^\bullet_S\otimes A)$ by the conditions of, first, $G$-invariance
  and second, annihilation by the contractions $i_\xi, \xi\in W_n^{\geq 1}$.
  These contractions are defined by the Leibniz rule, i.e., $i_\xi= i_\xi^\Omega + i_\xi^A$
 with the $i_\xi^\Omega$ acting on $\ul\Omega^\bullet_S$ and $i_\xi^A$ on $A$. 
 The differential in $p_*(\ul \Omega^\bullet_S\otimes A)$ is $d_1=d_\DR + d_A$.

  On the other hand,  the connection $\nabla$ in 
  $\ul\CEW^\bullet_M= (p_*(\ul\Omega^0_S\otimes A))^G$ can be described
  as follows \cite[\S 17.1]{FBZ}. The manifold $S$ is acted upon by all of   $W_n(\RR)$,
  not just $W_n^{\geq 1}(\RR)$. This action is simply transitive, i.e., for each $s\in S$
  the action map $a: W_n(\RR)\to T_sS$ is an isomorphism. For $v\in T_sS$ let $\xi(v)\in W_n(\RR)$
  be the corresponding vector. This defines a Maurer-Cartan form 
  $\omega\in\ul\Omega^1_S\otimes W_n(\RR)$, $v\mapsto \xi(v)$. Since $A$ is also a $W_n(\RR)$-module, we have
  a connection form $D\in\ul\Omega^1_S\otimes\End(A)$ induced by evaluating $\omega$
  so that $d_2= d_1+D$ is a flat connection on the trivial bundle $S\times A$.
  This connection is $G$-invariant and so descends to a connection in the associated
  bundle $(p_*(\ul\Omega^0_S\otimes A))^G$ which is $\nabla$. This means that
  the differential $d_2$ in $p_*(\ul\Omega^\bullet_S\otimes A)^G$  
  preserves the subsheaf $\ul \Omega^\bullet_M\otimes  (p_*(\ul\Omega^0_S\otimes A))^G
  =\Ac_2$. This subsheaf is given by vanishing of the contractions $i_\xi^\Omega$,
  $\xi\in W_n^{\geq 1}(\RR)$ (action in one factor only). 
  
  To compare $\Ac_1$ and $\Ac_2$, note that $D=[d_A, h]$, where $h\in\ul\Omega^1_S
  \otimes\End(A)$ takes $v\in T_sS$ to the contraction $i_{\xi(v)}$. Further, $[d_\DR, h]
  \in\ul\Omega_S^2\otimes \End(A)$ is found by evaluation of $d_\DR\omega = -[\omega, \omega]/2 \in \ul\Omega_S^2\otimes W_n(\RR)$ via $W_n(\RR)\to\End(A)$, $\xi\mapsto i_\xi$. 
 This form takes $v,w\in T_sS$ into $-i_{[\xi(v), \xi(w)]}$ which is equal to
 $-[L_{\xi(v)}, i_{\xi(w)}]$ and so $[d_\DR,h] = -[D,h]$. 
 In other words, $d_1=d_2 -[d_2,h]$. This means that $d_1$ is obtained from $d_2$ by
 conjugation with $e^{-h}$, i.e., $d_1(a) = e^h d_2(e^{-h} a)$. Now $h^2=0$ since different
 $i_\xi$ anticommute so $e^{-h}=1-h$. 
 
 The element $h$ is invariant under $G$-action so conjugation by $1-h$ takes
 $(p_*(\ul\Omega^\bullet_S\otimes A), d_2)^G$ to  $(p_*(\ul\Omega^\bullet_S\otimes A), d_1)^G$. Further, if  a section $x$ of $\ul\Omega^\bullet_S\otimes A$ is such that 
 $i_\xi^\Omega (x)=0$, then $y= (1-h)x=x-hx$ satisfies $(i_\xi^\Omega+ i_\xi^A)(y)=0$,
 as follows from the Leibniz rule and the fact that $h$ anticommutes with $i_\xi^A$
 (since it consists of contractions which anticommute with each other). 
  This means that conjugation by $1-h$ identifies $\Ac_2$ with $\Ac_1$. 
 \end{proof}

    Now  the embedding (homotopy equivalence)
    $GL_n(\RR) \hra \JJ_n(\RR)$ gives a homotopy commutative diagram
    \[
    \xymatrix{
    & B\JJ_n(\RR) \ar@{^{(}->}[r]& BJ_n
    \\
    M \ar[ur]^{\gamma_{SM}} \ar[r]_{\hskip -0.5cm \gamma_{TM}}& BGL_n(\RR) \ar@{^{(}->}[u] &
    }
    \]
      so
     we have the first of
     the claimed chain of equivalences below
    \begin{multline*}
    \ul{\CE^\bullet(W_n(\k))}_M\,  \= \, \bigl(\gamma_{TM}^{-1}(|\Sigma|_{BGL_n(\RR)}|)\bigr)^{-1} 
   \buildrel \eqref{cor:hom-inv-D_G(pt)}\over  \= \bigl( \gamma_{SM}^{-1} (|\Sigma|_{B\JJ_n(\RR)}|)\bigr)^{-1} 
    \\
  \buildrel \eqref{prop:gamma_SM=CEW}\over\=  (\ul\Omega^\bullet_M\otimes \ul\CEW^\bullet_M)^{-1}\buildrel\eqref{prop:CEW-CE}\over \= \CE^\bullet(T_M^\k). 
    \end{multline*}
    The second equivalence follows from homotopy invariance of BL-actions
    (Corollary  \ref{cor:hom-inv-D_G(pt)}) while the other two express
    Propositions \ref{prop:gamma_SM=CEW} and \ref{prop:CEW-CE}. 
    Proposition \ref{prop:TM-Wn} is proved. 
    

\paragraph{Comparison of local models.} 
 Proposition \ref{prop:TM-Wn} gives the first of the two quasi-isomorphisms  below,
 while the second is obtained by specializing Theorem \ref{thm:NAPD}  to  $Y=Y_n$:
\be\label{eq:CE=,TH=}
\begin{gathered}
\CE^\bullet(\Vect_\k(M)) \= \int_M \bigl(\CE^\bullet(W_n(\k)\bigr), \quad
\Th^\bullet(\Sect(Y_M/M),\k) \= \int_M \bigl(\Th^\bullet(Y_n, \k)\bigr).
\end{gathered}
\ee
Here the BL-actions of $GL_n(\RR)$ on the $\k$-cdga's 
$CE^\bullet(W_n(\k))$ and $\Th^\bullet(Y_n, \k)$
were described before Proposition  \ref{prop:TM-Wn} and Theorem
 \ref{thm:NAPD} respectively. So Theorem  \ref{thm:GF-Y} 
would follow from an identification (an isomorphism in the
homotopy category) of these cdga's with BL-action of $GL_n(\RR)$.

If $\k=\CC$, then in both cases the BL-action of $GL_n(\RR)$ appears as
the restriction of a natural BL-action of $GL_n(\CC)$. So we
start with 
 the following statement. 

\begin{thm}\label{thm:W-Y}
    Let $\k=\CC$. 
    There is an identification (isomorphism in the homotopy category)
    \[
    \CE^\bullet (W_n(\CC)) \,\simeq \,  \Th^\bullet(Y_n, \CC)
    \]
    as cdga's with a BL-action of   $GL_n(\CC)$. 
\end{thm}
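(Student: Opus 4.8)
The plan is to run the statement through the equivariant-cochains dictionary of \S\ref{subsec:equiv-cdga} and thereby reduce it to the classical \emph{relative} Gelfand--Fuchs computation, now read off at the chain level. Since BL-actions are invariant under homotopy equivalences of groups (\S\ref{subsec:equiv-cdga}) and $U_n\hookrightarrow GL_n(\CC)$ is such an equivalence, a BL-action of $GL_n(\CC)$ is the same datum as a BL-action of the compact connected group $U_n$. By Proposition~\ref{prop:BL-dg-mod} the functor $\Fc\mapsto C^\bullet(BU_n,|\Fc|)$ is an equivalence from $\cdga_{U_n}^{\geq 0}$ onto $\cdga^{\geq 0}_{H^\bullet(BGL_n(\CC))}$, where $H^\bullet(BGL_n(\CC))=\CC[c_1,\dots,c_n]$ with $\deg c_i=2i$. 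Both $\CE^\bullet(W_n(\CC))$ and $\Th^\bullet(Y_n)$ are concentrated in non-negative degrees, so it is enough to produce an equivalence of cdga's over $R:=\CC[c_1,\dots,c_n]$ between their images under this functor; because the functor is an equivalence of homotopical categories, such an equivalence lifts back to the desired identification of BL-actions.

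Next I would identify the two images. For $Y_n$, the BL-action is the one attached to the $GL_n(\CC)$-variety $Y_n$, and applying $C^\bullet(BGL_n(\CC),|{-}|)$ yields the Borel equivariant cochain algebra of $Y_n$. As $GL_n(\CC)$ acts freely on $Y_n$ with quotient $\sk_{2n}BGL_n(\CC)$, the Borel construction $Y_n\times_{GL_n(\CC)}EGL_n(\CC)$ is homotopy equivalent to $\sk_{2n}BGL_n(\CC)$, so this image is $\Th^\bullet(\sk_{2n}BGL_n(\CC))$, made into an $R$-cdga through restriction along $\sk_{2n}BGL_n(\CC)\hookrightarrow BGL_n(\CC)$. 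For $W_n(\CC)$, Proposition~\ref{prop:weil-model}(b) applied to the $GL_n^*$-cdga $\CE^\bullet(W_n(\CC))$ identifies the image with the relative Cartan--Weil complex $\bigl(\We(\gl_n)\otimes\CE^\bullet(W_n(\CC))\bigr)_\basic$, again as an $R=I(\gl_n)$-cdga. Thus the theorem is reduced to an equivalence of $R$-cdga's
\[
\bigl(\We(\gl_n)\otimes\CE^\bullet(W_n(\CC))\bigr)_\basic \;\simeq\; \Th^\bullet\bigl(\sk_{2n}BGL_n(\CC)\bigr).
\]

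To build this equivalence I would use the truncated Weil algebra $\We_{2n}(\gl_n)$, the quotient of $\We(\gl_n)$ by the differential ideal generated by the invariant polynomials of degree $>2n$, as an explicit common model: it is an $R$-cdga through $R=I(\gl_n)$ and is a standard Cartan model of $\sk_{2n}BGL_n(\CC)$, accounting for the right-hand side. On the left, a $\gl_n$-equivariant splitting $W_n(\CC)\twoheadrightarrow\gl_n$ (a ``formal connection'') induces, by the universal property of the Weil algebra, a map of $GL_n^*$-cdga's $\We(\gl_n)\to\CE^\bullet(W_n(\CC))$ carrying the curvature generators to the characteristic cochains. Passing to the Cartan/basic models, this gives the characteristic homomorphism $R\to\bigl(\We(\gl_n)\otimes\CE^\bullet(W_n(\CC))\bigr)_\basic$, and the assertion that it realizes $R/R^{>2n}$ (the truncation in degrees $\leq 2n$) quasi-isomorphically is precisely the content of the classical relative Gelfand--Fuchs calculation, which computes $H^\bullet_\Lie(W_n(\CC),\gl_n)$ to be the truncated polynomial algebra $R/R^{>2n}$ as an $R$-algebra; the same algebra is $H^\bullet(\sk_{2n}BGL_n(\CC))$.

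The main obstacle is exactly the passage from this \emph{additive} classical statement to one that is simultaneously \emph{chain-level} and \emph{$R$-equivariant}. The vanishing of all cup products and higher Massey products on $H^\bullet_\Lie(W_n)$ and on $H^\bullet(Y_n)$ gives formality of the two \emph{absolute} cdga's, but it records nothing about compatibility with the $GL_n(\CC)$-action, i.e.\ with the $R$-module structure encoding the characteristic classes. Controlling this equivariance --- equivalently, arranging the characteristic cochains of degree $>2n$ so that the truncated Weil model $\We_{2n}(\gl_n)$ maps quasi-isomorphically into the relative complex over $R$ --- is the delicate step, and it is here that the explicit realization of $Y_n$ as a free $GL_n(\CC)$-variety over $\sk_{2n}BGL_n(\CC)$, together with Chern--Weil theory, does the real work.
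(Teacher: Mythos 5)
Your proposal is correct and follows essentially the same route as the paper: reduce via Proposition~\ref{prop:BL-dg-mod} to cdga's over $H^\bullet(BGL_n(\CC))$, identify the $Y_n$ side with $\Th^\bullet(\sk_{2n}BGL_n(\CC))\simeq \CC[e_1,\dots,e_n]/(\deg>2n)$, pass on the $W_n$ side through the Weil/basic model to the relative complex $\CE^\bullet(W_n(\CC),\gl_n(\CC))$ using the connection $\nabla$ (Propositions~\ref{prop:weil-model} and~\ref{prop:koszul2}), and invoke the classical Gelfand--Fuchs computation of the relative cohomology as a truncated polynomial algebra over $H^\bullet(BG,\CC)$. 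Your use of the truncated Weil algebra as an explicit common model is only a mild repackaging of the same final step.
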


\begin{proof}
Denote for short $G=GL_n(\CC)$.
   Invoking Proposition
    \ref{prop:BL-dg-mod}(b), we see that is suffices to construct an identification
    (isomorphism in the homotopy category)
  \[
  \digamma( \CE^\bullet (W_n(\CC))) \,\simeq \,
  \digamma(\Th^\bullet(Y_n, \CC))
    \]
   of  cdga's over
    \[
    H^\bullet(BG, \CC)\,=\, \CC[e_1, \dots, e_n], \quad \deg(e_i)=2i.
    \] 
   Here and later in the proof, the  value of $\digamma$ on a cdga with a BL-action of $G$ stands, for shortness,  for the value of $\digamma$ on the sheaf on $N_\bullet G$ representing the
   BL-action.  Now, 
  \[
  \digamma( \Th^\bullet(Y_n, \CC)) \= \Th^\bullet (|(Y_n/\!/G)_\bullet|, \CC)
  \]  
    is the cochain algebra
    of the fibration over $BG$ (homotopy quotient)
    corresponding to the $G$-space $Y_n$,
    see \eqref{eq:Y//G}. Because  $G$ acts freely on $Y_n$
    with quotient  $\sk_{2n}(BG)$, the space 
    $|(Y_n/\!/G)_\bullet|$
  is homotopy equivalent to  $\sk_{2n}(BG)$, so
  \[
   \Th^\bullet (|(Y_n/\!/G)_\bullet|, \k) \= 
   \Th^\bullet(\sk_{2n}(BG), \CC)
  \]
  is identified with the Thom cochain complex  of $\sk_{2n}(BG)$
  as a cdga over \hfill\break $\Th^\bullet(BG, \CC)\= H^\bullet(BG,\CC)$. 
    Since the Schubert cells in the infinite Grassmannian are all of even dimensions,
     the Thom cochain complex cohomology of the $2n$-skeleton is 
    quasi-isomorphic to the truncation of the cohomology of $BG$, i.e., we have
    a quasi-isomorphism
   \[
     \Th^\bullet(\sk_{2n}(BG), \CC) \=   H^\bullet (\sk_{2n}(BG), \CC) \,=\,  \CC[e_1, \dots, e_n]/(\deg >2n),
     \]
      where the RHS has trivial differential and is given the standard structure of a
      $\CC[e_1,\cdots, e_n]$-algebra  by its presentation as a quotient. 
      
      \vskip .2cm
      
    We now identify the cdga $\digamma(\CE^\bullet(W_n(\CC)))$. 
    For this we first recall the standard material on relative Lie algebra cohomology \cite{fuks} Ch.1. 
    
    \vskip.2cm
    
    Let $\gen$ be a Lie subalgebra of a Lie algebra $\wen$. The {\em relative Chevalley-Eilenberg complex}
    of $\wen$ modulo $\gen$ (with trivial coefficients) is defined as
    \[
    \CE^\bullet(\wen,\gen) \,=\, \bigl( \Lambda^\bullet((\wen/\gen)^*)\bigr)^\gen. 
    \]
    We apply this to $\wen = W_n(\CC)$ and $\gen = \gl_n(\CC)$, 
    so $\gen$ is the Lie algebra of the algebraic group 
    $\GG=\GL_{n, \CC}$. 
     Let $I(\gen)$ and $\We(\gen)$ be as in \autoref{prop:weil-model}. In particular $I(\gen) \simeq  H^\bullet(BG,\CC)$.
    Further,  as a $\GG$-representation, the topological dual
    $W_n(\CC)^*$ splits as the direct sum 
    \[
   \bigoplus_{k\geq -1}  V_{n, k}, \quad V_{n,k}  = \Sym^{k+1}(\CC^n) \otimes (\CC^n)^*. 
    \]
    This gives
    a $\GG$-equivariant projection $q: W_n(\CC) \to \gl_n(\CC) = V_{n,0}^*$. The projection $q$ induces a canonical
    $\GG$-equivariant morphism of dg-algebras 
    (a so-called connection)  $\nabla: \We(\gl_n(\CC)) \to \CE^\bullet(W_n(\CC))$,
    see \cite[Thm. 3.3.1]{guillemin-sternberg}. In particular, $\CE^\bullet(W_n(\CC), \gl_n(\CC))$ becomes an algebra over $I(\gl_n(\CC)) \simeq H^\bullet(BG,\CC)$.

    \begin{prop}\label{prop:koszul2}
        We have a canonical quasi-isomorphism of $I(\gen)$-cdga's
        \[
        ( \We(\gen)\otimes \CE^\bullet(\wen))_\basic \,  \simeq \,  \CE^\bullet(\wen)_\basic \, =  \, \CE^\bullet(\wen, \gen). 
        \]
    \end{prop}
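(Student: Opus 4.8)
The plan is to treat Proposition \ref{prop:koszul2} as a comparison between the \emph{naive} and the \emph{derived} basic-subcomplex functors, the point being that $\CE^\bullet(\wen)$ is a \emph{free} (connection-carrying) $\GG^*$-algebra. Write $\beta$ for the functor $E\mapsto E_\basic$ on $\GG^*$-complexes. As recalled in the proof of Proposition \ref{prop:weil-model}, $\beta$ is the composite of passing to $i(\gen)$-invariants (equivalently $\Ext^\bullet_{\Lambda^\bullet(\gen)}(\k,-)$ for the contraction action of the enveloping algebra $\Lambda^\bullet(\gen)$ of the abelian odd Lie algebra $i(\gen)$) followed by $\GG$-invariants, and since $\GG$ is reductive the second step is exact; thus $R\beta$ is computed on any object on which $\Lambda^\bullet(\gen)$ acts freely. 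The object $\We(\gen)\otimes\CE^\bullet(\wen)$ is of this kind — the exterior factor $\Lambda^\bullet(\gen^*)$ of $\We(\gen)$ already makes $\Lambda^\bullet(\gen)$ act freely, exactly the acyclicity established at the end of the proof of Proposition \ref{prop:weil-model} — and it maps by a quasi-isomorphism of $\GG^*$-cdga's to $\CE^\bullet(\wen)$ via the augmentation $\We(\gen)\simeq\k$. Hence $(\We(\gen)\otimes\CE^\bullet(\wen))_\basic$ computes $R\beta(\CE^\bullet(\wen))$.

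First I would dispatch the asserted equality $\CE^\bullet(\wen)_\basic=\CE^\bullet(\wen,\gen)$ by unwinding definitions. A $\gen$-equivariant splitting $\wen\simeq\gen\oplus(\wen/\gen)$, available since $\gen$ is reductive (the $\gen$-module $\wen^*$ decomposes into finite-dimensional isotypic pieces), dualizes to $\wen^*\simeq\gen^*\oplus(\wen/\gen)^*$, so $\CE^\bullet(\wen)^\sharp=\Lambda^\bullet(\wen^*)\simeq\Lambda^\bullet(\gen^*)\otimes\Lambda^\bullet((\wen/\gen)^*)$. The contractions $i(\xi)$, $\xi\in\gen$, act only on the first factor, whence the common kernel $\bigcap_\xi\Ker\, i(\xi)$ equals $\Lambda^\bullet((\wen/\gen)^*)$; intersecting with $\GG$-invariants yields $(\Lambda^\bullet((\wen/\gen)^*))^\gen=\CE^\bullet(\wen,\gen)$ by definition. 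One checks that the ambient Chevalley--Eilenberg differential restricts to the relative one, since it preserves the basic subcomplex ($[d,i(\xi)]=L_\xi$ vanishes on $\GG$-invariants).

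The heart of the argument is that the \emph{naive} basic part of $\CE^\bullet(\wen)$ already computes $R\beta$, i.e. $\CE^\bullet(\wen)$ is itself $\beta$-acyclic. This is where the connection $\nabla\colon\We(\gen)\to\CE^\bullet(\wen)$ (built from the equivariant projection $\wen\to\gen$) enters: with the splitting above, $\Lambda^\bullet(\gen)$ acts on $\CE^\bullet(\wen)^\sharp=\Lambda^\bullet(\gen^*)\otimes\Lambda^\bullet((\wen/\gen)^*)$ through its contraction action on $\Lambda^\bullet(\gen^*)$, and $\Lambda^\bullet(\gen^*)$ is a free $\Lambda^\bullet(\gen)$-module of rank one under contraction (generated by a top form; Koszul self-duality of the exterior algebra). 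Therefore $\CE^\bullet(\wen)^\sharp$ is free over $\Lambda^\bullet(\gen)$ and the higher derived functors of $i(\gen)$-invariants vanish on it, by the same freeness criterion used in Proposition \ref{prop:weil-model}. Consequently $\CE^\bullet(\wen)_\basic=\beta(\CE^\bullet(\wen))$ also computes $R\beta(\CE^\bullet(\wen))$. Since both $\CE^\bullet(\wen)$ and $\We(\gen)\otimes\CE^\bullet(\wen)$ are $\beta$-acyclic and the unit $a\mapsto 1\otimes a$ is a quasi-isomorphism between them (split by the augmentation), the induced map realizes a canonical quasi-isomorphism $\CE^\bullet(\wen)_\basic\xrightarrow{\ \sim\ }(\We(\gen)\otimes\CE^\bullet(\wen))_\basic$, compatible with the $I(\gen)$-cdga structures by naturality in the $\GG^*$-algebra.

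I expect the essentially only nontrivial point — and the main obstacle — to be the freeness of the $\Lambda^\bullet(\gen)$-action on $\CE^\bullet(\wen)$, which is precisely what separates this situation from a generic $\GG^*$-algebra, whose derived basic part genuinely differs from the naive one. Everything rests on the inclusion $\gen\hookrightarrow\wen$: it is what yields the connection and, dually, the free contraction action on $\Lambda^\bullet(\gen^*)$. A secondary and purely formal point is to confirm that the comparison map respects differentials and the multiplicative $I(\gen)$-cdga structure, but this is automatic once the quasi-isomorphism is identified with the unit map.
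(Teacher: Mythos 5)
Your argument is correct, but it takes a different route from the paper: the paper's proof of Proposition \ref{prop:koszul2} is a two-line citation (``the second equality is by definition; the first quasi-isomorphism follows from the existence of $\nabla$ by [Guillemin--Sternberg], Thm.\ 4.3.1''), whereas you actually reprove that cited theorem in the case at hand. Moreover, your mechanism differs from the one in the cited source: Guillemin--Sternberg establish their Theorem 4.3.1 via the Mathai--Quillen/Kalkman twist of $\We(\gen)\otimes A$, while you recycle the Cotor-vanishing criterion that the paper itself sets up at the end of the proof of Proposition \ref{prop:weil-model} --- namely that $R^{>0}\beta$ vanishes on any $\GG^*$-complex on which $\Lambda^\bullet(\gen)$ acts freely under contraction --- and observe that the connection makes $\CE^\bullet(\wen)^\sharp\simeq\Lambda^\bullet(\gen^*)\otimes\Lambda^\bullet((\wen/\gen)^*)$ itself free (rank-one freeness of $\Lambda^\bullet(\gen^*)$ over $\Lambda^\bullet(\gen)$), so that the unit map $\CE^\bullet(\wen)\to\We(\gen)\otimes\CE^\bullet(\wen)$ is a quasi-isomorphism of $\beta$-acyclic objects. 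This correctly isolates the role of $\nabla$ and has the advantage of keeping the whole argument internal to the machinery of \S\ref{subsec:equiv-cdga}; what the paper's citation buys instead is brevity and an explicit multiplicative identification at the cochain level. Two small points of hygiene: the splitting you use should be the $\GG$-equivariant projection $q\colon\wen\to\gen$ already constructed in the text (your parenthetical appeal to isotypic decomposition of the infinite-dimensional $\wen^*$ is looser than needed, since $q$ is given explicitly for $\wen=W_n$, $\gen=\gl_n$); and note that the identification $\bigcap_\xi\Ker\,i(\xi)=\Lambda^\bullet((\wen/\gen)^*)$ is canonical (annihilator of $\gen$) and does not actually require the splitting --- only the freeness step does.
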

    
    \begin{proof}
        The second equality is by definition. The first quasi-isomorphism follows from the existence of $\nabla$ by \cite[Thm. 4.3.1]{guillemin-sternberg}.
    \end{proof}
    
      \vskip .2cm
    
     Combining Propositions  \ref{prop:koszul2}  and  \ref{prop:weil-model}, we get an identification
     \[
     \digamma(\CE^\bullet(W_n(\CC))) \= \CE^\bullet(W_n(\CC), \gl_n(\CC))
     \]
   as a cdga over $\CC[e_1,\cdots, e_n]= I (\gl_n(\CC))$.  
    
    \vskip .2cm
    
    \autoref{thm:W-Y}  now follows from the above identifications and 
      from the  quasi-isomorphism of cdga's over 
    $\CC[e_1,\cdots, e_n]$
    \[
    \CE^\bullet(W_n(\CC),\gl_n(\CC)) \,\simeq \, \CC[e_1, \dots, e_n]/(\deg > 2n). 
    \]
    which is  the original  computation of Gelfand-Fuchs, see \cite[proof of Thm. 2.2.4]{fuks}.
\end{proof}

 \paragraph{Proof of  Theorem \ref{thm:GF-Y}. }
 The case $\k=\CC$ follows directly
from \eqref{eq:CE=,TH=} and   \autoref{thm:W-Y}. Indeed, an identification
of cdga's with a BL-action of $GL_n(\CC)$ given in  \autoref{thm:W-Y}
implies their identification
as cdga's with a BL-action of $GL_n(\RR)$ and so an identification
of the corresponding factorization homology. 

The case $\k=\RR$ is treated by Galois descent.  
Let  $\sigma: \CC\to\CC$ be  the complex conjugation
  An $\RR$-structure on  a cdga  $A$  is
 thus a  $\sigma$-antilinear involution $\sigma_A$ on it, and the
 corresponding subalgebra   of $\RR$-points is the fixed point subalgebra of
 $\sigma_A$. 
The cdga $\CE^\bullet(W_n(\CC))$ has a natural $\RR$-structure
$\sigma_\CE$
  coming from the action of $\sigma$ on $\CC^n$.
 The corresponding cdga of $\RR$-points 
  is $\CE^\bullet(W_n(\RR))$. 
The cdga $\Th^\bullet(Y_n, \CC)$ has an $\RR$-structure $\sigma_\Th$
induced by the action
of $\sigma$ on the coefficients $\CC$, and the corresponding cdga of  
$\RR$-points
is $\Th^\bullet(Y_n,\RR)$. 
Now,  the BL-action of $GL_n(\CC)$ on $\CE^\bullet(W_n(\CC))$
comes from the $\GL_{n,\CC}^*$-action and so it does not respect
 $\sigma_\CE$ but does so after twist with the $\sigma$-action on $GL_n(\CC)$
 itself (whose fixed point locus is $GL_n(\RR)$). 
 The BL-action of $GL_n(\CC)$ on  $\Th^\bullet(Y_n, \CC)$
 does respect $\sigma_\Th$. Analysing the  arguments of 
  \autoref{thm:W-Y}, we see that 
     after restricting  
  from $GL_n(\CC)$ to $GL_n(\RR)$  the identification
 will be compatible  both with  the  BL-actions and with 
  the $\RR$-structures. 
Theorem \ref{thm:GF-Y} is proved.

\section{\texorpdfstring{$\Dc$}{D}-modules and extended functoriality}\label{sec:Dmod}

\subsection {\texorpdfstring{$\Dc$}{D}-modules}\label{subsec:Dmod-dif}

\paragraph{Generalities on $\Dc$-modules.} For general background see \cite{borel} \cite{hotta}.

Let $Z$  a smooth algebraic variety over $\k$. Put $n=\dim(Z)$. 
We denote by 
$\Coh_Z = \Coh_{\Oc_Z}$  and $\QCoh_Z = \QCoh_{\Oc_Z}$ the categories of coherent and quasi-coherent sheaves of $\Oc_Z$-modules.

\vskip .2cm

We denote by $\Dc_Z$ the sheaf of rings of differential operators from $\Oc_Z$ to $\Oc_Z$. 
By  $\Coh_{\Dc_Z} \subset \QCoh_{\Dc_Z}$  we denote the categories of coherent and quasi-coherent sheaves of right $\Dc_Z$-modules.
By $_{\Dc_Z} \Coh \subset {}_{\Dc_Z} \QCoh$  we denote similar categories for  left $\Dc_Z$-modules. 
We will be mostly interested in right $\Dc$-modules.  

\vskip .2cm

Let $D(\Coh_{\Dc_Z})$,    resp. $D^b(\Coh_{\Dc_Z})$  denote  the full (unbounded), resp. bounded  derived $\infty$-categories of 
coherent right $\Dc_Z$-modules.
We consider them as dg-categories and then as stable $\infty$-categories in the standard way. Similarly for $\QCoh_{\Dc_Z}$ etc. 
Since $\Dc_Z$ has finite homological dimension (equal to $2\dim(Z)$), we have the  identification
\[
D^b\Coh_{\Dc_Z} \,\simeq \, \Perf_{\Dc_Z},
\]
where on the right-hand side, we have the category of complexes that are perfect over $\Dc_Z$. 

\vskip .2cm

By $\omega_Z$ we denote the sheaf of volume forms on $Z$, a right $\Dc_Z$-module. 
We have the standard  equivalence ({\em volume twist})
\[
_{\Dc_Z} \QCoh \lra \QCoh_{\Dc_Z}, \quad \Nc \mapsto \Nc\otimes_{\Oc_Z} \omega_Z. 
\]
We call the {\em Verdier duality} the anti-equivalence
\[
D^b(\Coh_{\Dc_Z})^\op \to D^b(\Coh_{\Dc_Z}), \quad \Mc \mapsto  \Mc^\vee \,=\,   \ul\RHom_{\Dc_Z}(\Mc, \Dc_Z) \otimes_{\Oc_Z}\omega_Z[n]. 
\]
For a right $\Dc_Z$-module $\Mc\in \QCoh_{\Dc_Z}$ we have the {\em de Rham complex}
\[
\DR(\Mc) \,=\, \Mc \otimes^L_{\Dc_Z} \Oc_Z. 
\]
For a coherent sheaf $\Fc\in\Coh_Z$ we have the {\em induced right $\Dc_Z$-module}
 $\Fc\otimes_{\Oc_Z} \Dc_Z$. 
It comes with canonical identification
\be\label{eq:DR-Induced}
\DR(\Fc\otimes_{\Oc_Z} \Dc_Z) \,\simeq \, \Fc. 
\ee


For a vector bundle $E$ on $Z$ we denote by $E^\vee = \omega_Z \otimes E^*$ the {\em Serre dual vector bundle}.
Thus the Verdier dual of the induced $\Dc$-module $E\otimes_\Oc\Dc$ is given by
\[
\bigl(E\otimes_{\Oc_Z} \Dc_Z\bigr)^\vee \,\,=\,\, (E^\vee)\otimes_{\Oc_Z}\Dc_Z[n]. 
\]

\paragraph{Analytic de Rham complex.} 
Let $\k=\CC$. We denote $Z_\an$ the space $Z(\CC)$ with its standard analytic
topology and sheaf $\Oc_{Z_\an}$ of analytic functions. 

For any $\Mc\in\Coh_{\Dc_Z}$, we define the analytic de Rham complex as
\[
\DR(\Mc)_\an := \Mc \otimes^L_{\Dc_Z} \Oc_{Z_{\an}}. 
\]
This extends straightforwardly to objects of $D(\Coh_{\Dc_Z})$.
The following are standard features of the Riemann-Hilbert correspondence between holonomic $\Dc$-modules and
constructible complexes, see \cite{hotta} for instance. 

\begin{prop}\label{prop:deRham-duality}
    Let $\Mc^\bullet\in D(\Coh_{\Dc_Z})$ be a complex with holonomic regular cohomology modules. Then
    $\DR(\Mc^\bullet)_\an$ is a constructible complex on $Z_\an$ and:
    
    \begin{assertions}
        \item We have 
        \[
        R\Gamma(Z_\Zar, \DR(\Mc^\bullet)) \,\,\simeq \,\, R\Gamma (Z_\an, \DR(\Mc^\bullet)_\an). 
        \]
        \item We also have
        \[
        \DR(M^\vee)_\an \,\,\simeq \,\, \DD_Z(\DR(\Mc^\bullet)_\an),
        \]
        where $\DD_Z$ is the Verdier duality on the derived category of constructible complexes on $Z_\an$, see \cite {KS-sheaves}.
    \end{assertions}
\end{prop}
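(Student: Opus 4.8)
The plan is to treat both statements as standard manifestations of the Riemann--Hilbert correspondence, following \cite{hotta}, and to obtain each by constructing a natural comparison morphism and then reducing, via a d\'evissage, to the case of a single regular holonomic module. For (a) I would first record the underlying map: analytification is a morphism of ringed sites $\epsilon\colon (Z_\an,\Oc_{Z_\an})\to(Z_\Zar,\Oc_Z)$, and the composite $\DR(\Mc^\bullet)\to R\epsilon_*\epsilon^{-1}\DR(\Mc^\bullet)\to R\epsilon_*\bigl(\DR(\Mc^\bullet)_\an\bigr)$ yields, after applying $R\Gamma(Z_\Zar,-)$ and using $R\Gamma(Z_\Zar,R\epsilon_*-)=R\Gamma(Z_\an,-)$, a canonical comparison map from the algebraic (Zariski) side to the analytic side. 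Since $\DR$ and analytification are exact functors of triangulated categories, the hypercohomology spectral sequence attached to the ($t$-structure) truncations of $\Mc^\bullet$ lets me replace $\Mc^\bullet$ by its cohomology modules; so it suffices to prove both (a) and (b) for a single regular holonomic $\Mc$, regarding the two sides as functors that I must show commute with the basic operations.

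For (a) the base case is $\Mc=\omega_Z$ (equivalently $\Oc_Z$ as a left module via the volume twist), where the assertion is exactly Grothendieck's algebraic de Rham theorem identifying $R\Gamma(Z_\Zar,\Omega^\bullet_{Z})$ with $H^\bullet(Z_\an,\CC)$. The general regular holonomic case then follows by d\'evissage: using Kashiwara's equivalence and resolution of singularities one expresses $\Mc$ through direct images along maps from smooth varieties of regular integrable connections, and one checks that the comparison morphism is compatible with these direct images. Properness is handled by relative GAGA, while the non-proper (open immersion) step is precisely where regularity is indispensable --- it is Deligne's theorem that for a regular connection the algebraic and analytic direct images along an affine open immersion agree. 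I expect this compatibility-with-pushforward in the non-proper case to be the main obstacle, since it is where the global comparison is actually earned rather than bookkept.

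For (b) I would first note that $M^\vee$, as defined in the excerpt by $\Mc^\vee=\ul\RHom_{\Dc_Z}(\Mc,\Dc_Z)\otimes_{\Oc_Z}\omega_Z[n]$, is the holonomic (Verdier) dual; the content is then the compatibility $\DR(\Mc^\vee)_\an\simeq\DD_Z\,\DR(\Mc)_\an$ of the analytic de Rham functor with duality, with $\DD_Z$ the constructible Verdier duality of \cite{KS-sheaves}. To prove it I would pass via the volume twist to the left-module picture and through the solution functor $\Sol(\Mc)=\ul\RHom_{\Dc_Z}(\Mc,\Oc)$, using the two standard isomorphisms (each up to the same shift by $\dim Z$): $\DR(\Mc^\vee)_\an\simeq\Sol(\Mc)_\an$, coming from the self-duality of $\Dc_Z$ as a bimodule, and $\Sol(\Mc)_\an\simeq\DD_Z\,\DR(\Mc)_\an$, expressing that constructible Verdier duality exchanges the de Rham and solution functors. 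Both isomorphisms are local on $Z$ and reduce, after the same d\'evissage to a single regular holonomic module, to local duality for $\Dc_Z$ together with compatibility with the six operations; constructing this duality isomorphism and verifying its compatibility with pushforward is the crux here, exactly parallel to the obstacle in (a).
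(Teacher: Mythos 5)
The paper offers no proof of this proposition at all: it is stated as a ``standard feature of the Riemann--Hilbert correspondence'' with a pointer to \cite{hotta}, so there is nothing in the text to compare your argument against line by line. Your sketch is, in substance, the standard textbook proof of these facts (as in \cite{hotta} or \cite{borel}): d\'evissage to a single regular holonomic module (harmless here despite the unbounded category, since $\Dc_Z$ has finite homological dimension, so one is really in $\Perf_{\Dc_Z}$), Grothendieck's algebraic de Rham theorem as the base case of (a), relative GAGA for the proper step and Deligne's comparison theorem for regular connections at the open-immersion step, and for (b) the factorization through the solution functor together with the local duality theorem $\Sol(\Mc)_\an\simeq\DD_Z\,\DR(\Mc)_\an$ of Kashiwara--Mebkhout. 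You correctly identify the open-immersion step as the place where regularity is actually used in (a). Two small refinements: the local duality isomorphism in (b) holds for arbitrary holonomic modules (regularity is not needed there, only for (a)), and it is usually obtained directly from Kashiwara's constructibility theorem plus biduality rather than by rerunning the d\'evissage; either route is fine. Your proposal is correct and is exactly the argument the paper is implicitly delegating to the references.
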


%
%


\subsection{The standard functorialities}

We now review the standard functorialities on quasi-coherent $\Dc$-modules. Our eventual interest
is always in {\em right} modules. 

\paragraph{Inverse image $f^!$.} 

Let $f: Z\to W$ be a morphism of smooth algebraic varieties. We then have the {\em transfer bimodule}
\[
\Dc_{Z\to W} \,\,=\,\, \Oc_Z \otimes_{f^{-1}\Oc_W}  f^{-1} \Dc_W.
\]
It can be viewed as consisting of differential operators from $f^{-1}\Oc_W$ to $\Oc_Z$. 
It is a left $\Dc_Z$-module (quasi-coherent but not, in general, coherent) and a right $f^{-1}\Dc_W$-module.

The inverse image is most easily defined on left $\Dc$-modules in which case it is given by $f^*$, the usual
(derived) inverse image for underlying $\Oc$-modules. That is, we have the functor
\begin{gather*}
f^*: D({}_{\Dc_W}\QCoh) \lra D({}_{\Dc_Z}\QCoh),\\
f^*\Nc \,= \, \Oc_Z\otimes^L_{f^{-1}\Oc_W} f^{-1}\Nc
\,=\, \Dc_{Z\to W}\otimes^L_{f^{-1}\Dc_W} f^{-1}\Nc. 
\end{gather*}
The corresponding functor on right $\Dc$-modules is denoted
\[
f^!: D(\QCoh_{\Dc_W}) \lra D(\QCoh_{\Dc_Z}), \quad f^!\Mc \,=\, \omega_Z \otimes_{\Oc_Z}\bigl( f^*(\omega^{-1}_W \otimes_{\Oc_W}\Mc)\bigr). 
\]

\paragraph{Compatibility of $f^!$  with $\DR$ on complex topology.} 
Let $\k=\CC$ .

\begin{prop}[see {\cite[VIII.\@ Thm 14.4]{borel}}]\label{prop:f^!-an}
    Let $\Mc^\bullet$ be a bounded complex of quasi-coherent right $\Dc_W$-modules with cohomology modules
    $\ul H^j(\Mc^\bullet)$ being holonomic regular. Then 
    \[
    \DR(f^!\Mc^\bullet)_\an \,\,\simeq \,\, f^!(\DR(\Mc^\bullet)_\an),
    \]
    where in the right hand side we have the usual topological functor $f^!$ on constructible complexes. \qed
\end{prop}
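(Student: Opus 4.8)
The plan is to deduce the isomorphism from the Riemann--Hilbert formalism after two reductions that make both sides computable. Since $\DR$, the algebraic $f^!$ (i.e.\ $\omega_Z\otimes_{\Oc_Z}f^*(\omega_W^{-1}\otimes_{\Oc_W}-)$), analytification, and the topological $f^!$ are all exact functors of triangulated categories, the finite Postnikov tower assembling $\Mc^\bullet$ out of its cohomology modules $\ul H^j(\Mc^\bullet)$ reduces the claim to the case of a single holonomic regular right $\Dc_W$-module $\Mc$ placed in degree $0$. Next I would factor $f$ through its graph, $f=p\circ\gamma_f$, with $\gamma_f\colon Z\hookrightarrow Z\times W$ the closed graph embedding and $p\colon Z\times W\to W$ the smooth projection. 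Because the $!$-inverse image is compatible with composition both for $\Dc$-modules and for constructible complexes on $W_\an$ (see \cite{hotta}, \cite{KS-sheaves}), it suffices to establish the asserted isomorphism separately for a smooth morphism and for a closed embedding.

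For a smooth morphism $f$ of relative dimension $m$, Example \ref{ex:f!}(a) identifies $f^!$ with the flat pullback $f^*$ twisted by $\omega_{Z/W}$ and shifted by $m$. The de Rham complex is compatible with flat pullback, computed locally by the relative de Rham (Koszul) resolution, so $\DR(f^!\Mc)$ is the $f^*$-pullback of $\DR(\Mc)$ up to the relative-dimension shift; after analytification this matches the topological $f^!=f^*[2m]$ for a complex submersion of fibre dimension $m$, the discrepancy between $m$ and $2m$ being absorbed by the shift conventions built into $\DR$. This case is an explicit local computation and uses no regularity.

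The closed-embedding case is the crux. For $i\colon Z\hookrightarrow W$ of codimension $m$, Example \ref{ex:f!}(b) together with Proposition \ref{prop:loc-coh-dif} describes the algebraic $i^!$ on induced modules through the pole-order-filtered local cohomology $\ul H^m_Z(-)$, and the assertion becomes precisely that $\DR_\an$ intertwines the algebraic $i^!$ with the topological $i^!$ along a closed embedding. Rather than analyze the local cohomology directly, I would pass to a $*$-type inverse image by duality: on the topological side $i^!\simeq\DD_Z\,i^*\,\DD_W$, while the $\Dc$-module Verdier duality $\Mc\mapsto\Mc^\vee$ likewise exchanges $i^!$ with the $*$-type inverse image; feeding in the analytic Verdier-duality isomorphism $\DR(\Mc^\vee)_\an\simeq\DD_Z\bigl(\DR(\Mc)_\an\bigr)$ established above reduces the $!$-statement to the compatibility of $\DR_\an$ with $*$-pullback along $i$, which is Kashiwara's theorem (equivalently, one may run the localization triangle $i_*i^!\to\Id\to Rj_*\,j^*$ for the open complement $j$ of $Z$). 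The regularity hypothesis enters decisively here: it guarantees that $\DR(\Mc)_\an$ is constructible and that $\DR_\an$ commutes with $\DD$, so that the Riemann--Hilbert dictionary applies. I expect this closed-embedding comparison to be the main obstacle, since the transfer bimodule $\Dc_{Z\to W}$ is not coherent and the naive local cohomology lives only as a differential bundle on infinitesimal neighbourhoods; holonomicity and regularity are exactly what tame these phenomena and force the analytic de Rham complex to agree with the topological $i^!$.
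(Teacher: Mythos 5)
The paper offers no argument for this proposition: it is stated without proof, as one of the ``standard features of the Riemann--Hilbert correspondence'' with an implicit pointer to \cite{hotta}, so there is no internal proof to compare yours against. Your sketch is essentially the standard textbook argument and is sound in outline: the d\'evissage to a single regular holonomic module, the graph factorization $f=p\circ\gamma_f$, and the explicit check in the smooth case (where the shift bookkeeping between $m$ and $2m$ works out exactly as you say) are all correct, and the closed-embedding case via the localization triangle $i_*i^!\to\Id\to Rj_*j^*$ is the right route. Two points deserve sharpening. First, ``Kashiwara's theorem'' is not what gives the compatibility of the analytic de Rham functor with the $*$-pullback along $i$; Kashiwara's equivalence concerns $i_*$ and $i^!$ between modules on $Z$ and modules supported on $Z$. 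Second, and more substantively, you locate the role of regularity in the constructibility of $\DR(\Mc)_\an$ and in the commutation of $\DR(-)_\an$ with $\DD$ --- but the latter holds for arbitrary holonomic modules (local duality). Where regularity genuinely enters, on either of your two routes for the closed embedding, is in the comparison $\DR(j_*j^{*}\Mc)_\an\simeq Rj_*j^{-1}\DR(\Mc)_\an$ for the open complement $j$, i.e.\ the compatibility of $\DR$ with a non-proper direct image: this is the Grothendieck--Deligne comparison theorem for regular holonomic modules and fails without regularity. So your reduction does not eliminate the ``main obstacle'' so much as relocate it to the open-direct-image comparison (Proposition \ref{prop:propertiespushforward}(d) for an open immersion), which is where the algebraic-versus-analytic content of the whole package actually lives; with that input granted, your argument closes.
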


Because of the above compatibilities, we use the same notation $f^!$ for the functor on right $\Dc$-modules
as well as for the corresponding functors on the de Rham complexes.

\paragraph{Direct image $f_*$.} 
The direct image of right $\Dc$-modules is the functor
\[
f_*: D(\QCoh_{\Dc_Z}) \lra D(\QCoh_{\Dc_W}), \quad f_*\Mc \,=\, Rf_\bullet\bigl( \Mc\otimes_{\Dc_Z} \Dc_{Z\to W}\bigr),
\]
where $Rf_\bullet$ is the usual topological derived direct image functor on sheaves on the Zariski topology.
In the particular case when $f=p: Z\to\pt$ is the projection to the point, $\Dc_{Z\to\pt}=\Oc_Z$, and we will use the
following notation:
\[
R\Gamma_\DR(Z,\Mc) \,=\,  p_*\Mc  \,=\, R\Gamma(Z, \DR(\Mc)) \,\,\in \,\, D(\Vect_\k). 
\]

Here are the standard properties of $f_*$ (see \cite{borel}):
\begin{prop}\label{prop:propertiespushforward}~
    \begin{statements}
        \item\label{ass:f*a} If $f$ is \'etale, then $f_*$ is right adjoint to $f^!$.
        \item\label{ass:f*b} If $f$ is proper, then $f_*$ takes $D(\Coh_{\Dc_Z})$ to $D(\Coh_{\Dc_W})$ as well as $D^b(\Coh_{\Dc_Z})$ to $D^b(\Coh_{\Dc_W})$ etc. 
        In this case $f_*$ is left adjoint to $f^!$. 

        \item\label{ass:f*d} Let $\k=\CC$ and $\Mc^\bullet$ be a bounded complex of quasicoherent right $\Dc_Z$-modules with holonomic regular
        cohomology modules. Then
        \[
        \DR(f_*\Mc^\bullet)_\an \,\,\simeq \,\, Rf_\bullet(\DR(\Mc^\bullet)_\an),
        \]
        where in the right hand side $Rf_\bullet$ is the topological direct image of sheaves on the complex topology.
    \end{statements}
\end{prop}

\begin{prop}
    Let  $j: Z\hookrightarrow W$ be an open embedding, with $i: K\hookrightarrow W$ be the closed embedding of the complement.
    Let  $\Mc\in\QCoh_{\Dc_W}$.
    We have canonical quasi-isomorphisms
    \[
    i_* i^!\Mc \,\simeq \,  \ul{R\Gamma}_K(\Mc), \quad j_* j^!\Mc \,\simeq Rj_\bullet j^{-1}\Mc
    \]  
    where on the right hand sides we have purely sheaf-theoretical operations for sheaves on the Zariski topology. We further have 
    the canonical triangle in $D(\QCoh_{\Dc_W})$
    \[
    i_* i^!\Mc\lra \Mc \lra j_* j^! \Mc\to i_* i^!\Mc[1]. 
    \]
\end{prop}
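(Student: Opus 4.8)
The plan is to treat the open and closed contributions separately and then read off the triangle. For the open embedding $j\colon Z\hookrightarrow W$ I would start directly from the definitions of $f^!$ and $f_*$. Since $j$ is an open immersion we have $\Oc_Z=j^{-1}\Oc_W$, $\Dc_Z=j^{-1}\Dc_W$ and $\omega_Z=j^{-1}\omega_W$, so every twist in the formula for $j^!$ cancels and $j^!\Mc=j^{-1}\Mc$ (relative dimension $0$, no shift, as in Example \ref{ex:f!}(a)). Likewise the transfer bimodule degenerates, $\Dc_{Z\to W}=\Dc_Z$, whence $j_*\Nc=Rj_\bullet(\Nc\otimes_{\Dc_Z}\Dc_Z)=Rj_\bullet\Nc$, the plain sheaf-theoretic pushforward; composing gives the second identification $j_*j^!\Mc\simeq Rj_\bullet j^{-1}\Mc$. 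I would then invoke the fundamental local-cohomology triangle of sheaf theory,
\[
\ul{R\Gamma}_K(\Mc)\lra \Mc \lra Rj_\bullet j^{-1}\Mc \lra \ul{R\Gamma}_K(\Mc)[1],
\]
and check only that it lives in $D(\QCoh_{\Dc_W})$ rather than merely among $\Oc_W$-modules. This is immediate: $\Dc_W$ is a sheaf of rings and all three functors are computed at the sheaf level and preserve $\Dc_W$-module structure — for $\ul{R\Gamma}_K$ because a differential operator cannot enlarge supports, so sections supported on $K$ form a $\Dc_W$-submodule, and for $Rj_\bullet j^{-1}$ through the ring map $\Dc_W\to j_\bullet\Dc_Z$ — and the connecting map is $\Dc_W$-linear. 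Combined with the open computation this already yields the triangle of part (2), with $\ul{R\Gamma}_K(\Mc)$ in the role of the left term.

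It then remains to identify this left term with $i_*i^!\Mc$, i.e.\ the first quasi-isomorphism of part (1). Here I would use the adjunctions of Proposition \ref{prop:propertiespushforward}: $i$ is proper, so $i_*$ is left adjoint to $i^!$ with counit $\epsilon\colon i_*i^!\Mc\to\Mc$; $j$ is \'etale, so $j_*$ is right adjoint to $j^!=j^{-1}$ with unit $\eta\colon\Mc\to j_*j^!\Mc$. Since $i_*\Nc$ is set-theoretically supported on $K$ we have $j^!i_*=j^{-1}i_*=0$, so by adjunction $\Hom(i_*i^!\Mc,\,j_*j^!\Mc)\simeq\Hom(j^!i_*i^!\Mc,\,j^!\Mc)=0$; hence $\eta\circ\epsilon=0$ and $\epsilon$ factors through the fiber of $\eta$, giving a canonical map $\theta\colon i_*i^!\Mc\to\mathrm{fib}(\eta)\simeq\ul{R\Gamma}_K(\Mc)$. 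To see $\theta$ is an equivalence I would apply the jointly conservative pair $(j^!,i^!)$: applying $j^!$ annihilates both sides, using $j^!i_*=0$ and $j^!j_*\simeq\Id$ (so $j^!\mathrm{fib}(\eta)=0$); applying $i^!$ gives $i^!\Mc\to i^!\Mc$ on both sides, using Kashiwara's full faithfulness of $i_*$ in the form $i^!i_*\simeq\Id$ together with the recollement vanishing $i^!j_*=0$ (itself formal from $j^!i_*=0$ and the two adjunctions, since $\Hom(\Mc_K,i^!j_*\Nc)\simeq\Hom(j^!i_*\Mc_K,\Nc)=0$). As both restrictions of $\theta$ are equivalences, $\theta$ is one.

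The only non-formal inputs are the standard $\Dc$-module facts — Kashiwara's equivalence and the recollement relations, for which I would cite \cite{hotta} — together with the observation that all six functors here are computed purely at the sheaf level, so that the abstract definitions of $i_*i^!$ and $j_*j^!$ agree on underlying complexes with the concrete operations $\ul{R\Gamma}_K$ and $Rj_\bullet j^{-1}$. I expect the main obstacle to be exactly this last matching: confirming that $i_*i^!\Mc$, built from the transfer-bimodule definitions, genuinely computes sheaf-theoretic local cohomology along $K$. The recollement argument above settles this cleanly, and as a consistency check one can recover it by hand on induced modules $\Mc=E\otimes_{\Oc_W}\Dc_W$, where $i^!$ reduces, via Example \ref{ex:f!}(b), to the filtered local-cohomology sheaf $\ul H^m_K(E)$ and $i_*$ merely pushes it back onto $W$.
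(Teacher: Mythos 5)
Your proposal is correct, and it is structurally the same reduction the paper makes: compute $j_*j^!$ directly from the definitions (where the volume twist and the transfer bimodule both trivialize for an open immersion), invoke the standard sheaf-theoretic local cohomology triangle, and then identify its left-hand term with $i_*i^!\Mc$. The difference is in how that last identification is handled: the paper simply cites Saito for $i_*i^!\Mc\simeq\ul{R\Gamma}_K(\Mc)$ and gives no argument, whereas you supply a self-contained recollement proof — factoring the counit $i_*i^!\Mc\to\Mc$ through the fiber of $\Mc\to Rj_\bullet j^{-1}\Mc$ (legitimate because the whole mapping spectrum $\Hom(i_*i^!\Mc,\,j_*j^!\Mc[\ast])$ vanishes, not just $\Hom$ in degree $0$) and then checking the comparison map on the jointly conservative pair $(j^!,i^!)$ via $j^!i_*=0$, $j^!j_*\simeq\Id$, $i^!i_*\simeq\Id$ and $i^!j_*=0$. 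Your version is more informative and, usefully, works uniformly in the paper's formalism for possibly singular $K$ (where $i^!$ is by construction $\ul{R\Gamma}_K$ followed by the identification of $\QCoh_{\Dc,K}$ with modules supported on $K$, so joint conservativity is immediate); what the citation to Saito buys is only brevity. One small point of care: Kashiwara's equivalence in the form $i^!i_*\simeq\Id$ is exactly what you need, and for singular $K$ this is built into the paper's definitions rather than being a theorem, so your appeal to \cite{hotta} should be read as covering the smooth case only.
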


\begin{proof}
    For the first identification, see \cite{saito}. The second one is obvious since $j$ is an open embedding.
    After this, the triangle in question is just the standard sheaf-theoretic triangle
    \[
    \ul{R\Gamma}_K(\Mc) \lra \Mc \lra Rj_\bullet j^{-1}\Mc \lra \ul{R\Gamma}_K(\Mc)[1]. 
    \]
    See also \cite[Prop. 1.7.1]{hotta}.
\end{proof}

\paragraph{$\Dc$-modules on singular varieties.} 
The above formalism is extended, in a standard way, to right $\Dc$-modules on possibly singular varieties. Let us briefly recall
this procedure, following the treatment of \cite{saito} for the case of analytic varieties.

Let $i: Z\to \wt Z$ be a closed embedding of a (possibly singular) variety $Z$ into a smooth variety $\wt Z$. We define the categories
\[
\QCoh_{Z, \Dc_{\wt Z}} \subset \QCoh_{\Dc_{\wt Z}},  \quad \Coh_{Z, \Dc_{\wt Z}}\subset \Coh_{\Dc_{\wt Z}}
\]
to be the full subcategories of quasi-coherent and  coherent $\Dc_{\wt Z}$-modules which are, sheaf-theoretically, supported on $Z$. 
If $Z$ is smooth, then, as well known (Kashiwara's lemma, see \cite[Thm. 1.6.1]{hotta}), the functors
\[
i_*: \Coh_{\Dc_Z} \longleftrightarrow \Coh_{Z, \Dc_{\wt Z}}: i^!
\]
are mutually quasi-inverse equivalences, and similarly for $\QCoh$. This implies that the categories $\QCoh_{Z, \Dc_{\wt Z}}$ and $\Coh_{Z, \Dc_{\wt Z}}$ are canonically (up to equivalence which is unique up to a unique isomorphism)  independent on the choice of an embedding $i$ and we denote them
\[
\QCoh_{\Dc, Z} \,: = \, \QCoh_{Z, \Dc_{\wt Z}}, \quad \Coh_{\Dc, Z} \, := \, \Coh_{Z, \Dc_{\wt Z}}, \quad \forall \,\, Z\buildrel i\over \to\wt Z.
\]
We note that
\[
D^b(\Coh_{Z, \Dc_{\wt Z}}) \,\simeq \, \Perf_{Z, \Dc_{\wt Z}}
\]
is identified with the category of perfect complexes of right $\Dc_{\wt Z}$-modules which are exact outside of $Z$.
We thus define
\[
\Perf_{Z,\Dc} := D^b(\Coh_{Z, \Dc_{\wt Z}}) \simeq \Perf_{Z, \Dc_{\wt Z}}.
\]
In particular, we have $D(\QCoh_{Z,\Dc}) \simeq \Ind(\Perf_{Z,\Dc})$.

\vskip .2cm

Given a morphism $f: Z\to W$ of possibly singular varieties, we can extend it to a commutative diagram
\[
\xymatrix{
    Z
    \ar[d]_{ i}
    \ar[r]^f&W
    \ar[d]^{i'}
    \\
    \wt Z \ar[r]_{\wt f} &\wt W
}
\]
with $i, i'$ being closed embeddings into smooth varieties. After this, the functor $f_*: D(\QCoh_{\Dc, Z})\lra  D(\QCoh_{\Dc, W})$
is defined as the $\infty$-functor
\[
\wt f_*: D(\QCoh_{Z, \Dc_{\wt Z}})\lra  D(\QCoh_{W, \Dc_{\wt W}}). 
\]
Further, the functor $f^!:  D(\QCoh_{\Dc, W})\lra  D(\QCoh_{\Dc, Z})$ is defined as
\[
\ul{R\Gamma}_Z\circ \wt f^!:  D(\QCoh_{W, \Dc_{\wt W}})\lra   D(\QCoh_{Z, \Dc_{\wt Z}}). 
\]
Finally, the  Verdier duality
\[
D(\QCoh_{\Dc, Z})^\op \lra D(\QCoh_{\Dc, Z}), \quad \Mc\mapsto \Mc^\vee
\]
is given by the $\infty$-functor (using Verdier duality on $\wt Z$)
\[
D(\QCoh_{Z, \Dc_{\wt Z}})^\op \to D(\QCoh_{Z, \Dc_{\wt Z}}), \quad \Mc \mapsto \omega_{\wt Z}\otimes _{\Oc_{\wt Z}}
\ul{\RHom}_{\Dc_{\wt Z}}(\Mc, \Dc_{\wt Z}).
\]

\begin{prop}\label{prop:base1}~
    \begin{statements}
        \item These definitions are canonically independent on the choices.
        \item If $f$ is proper, then the $\infty$-functor $f_*$ is left adjoint to $f^!$, and there is a canonical equivalence $f_*(\Mc^\vee) \simeq (f_*\Mc)^\vee$.
        \item The $\infty$-functor $f_*$ is right adjoint to $f^!$ when $f$ is \'etale.
        \item\label{ass:base1b} For any Cartesian square of varieties
        \[
        \xymatrix{
            Z_{12}
            \ar[d]_{g_1}
            \ar[r]^{g_2} & Z_1
            \ar[d]^{f_1}
            \\
            Z_2 \ar[r]_{f_2}& Z
        }
        \]
        we have a canonical base change identification
        \[
        f_2^! \circ (f_{1})_* \,\simeq \, (g_1)_* \circ g_2^!.
        \]
    \end{statements}
\end{prop}
\begin{proof}
 This follows from \cite[Vol.\@ II Chap.\@ 4]{GR}. More precisely, the case of crystals is carried out in \S 2.1.3 and Corollary 2.2.7 and a comparison between crystals and D-modules is given in \S 4 therein.
\end{proof}

\vskip .2cm

For $\Mc^\bullet\in D(\QCoh_{\Dc, Z})$ we have the canonically defined de Rham complex $\DR(\Mc)$ of sheaves
on $Z$. If $\Mc$ is represented by a complex $\wt\Mc^\bullet\in D(\QCoh_{Z, \Dc_{\wt Z}})$, then
$\DR(\Mc^\bullet)$ is represented by the de Rham complex $\DR(\wt\Mc^\bullet)$ which is canonically
independent on the choices. 

\vskip .2cm

Further, we have a well defined concept of holonomic (resp. holonomic regular) objects of $\Coh_{\Dc, Z}$.
We denote by $\Hol_{\Dc,Z}$ the category of holonomic objects of $\Coh_{\Dc, Z}$. The corresponding derived $\infty$-category
will be denoted by 
\[
D^b_\hol\QCoh_{\Dc, Z} \,\,\simeq \,\, D^b \Hol_{\Dc, Z}.
\]
Here the LHS means the $\infty$-category of complexes with holonomic cohomology, the RHS the $\infty$-category of complexes consisting of holonomic modules and the equivalence between two derived categories thus defined follows from the result of Beilinson \cite[Th. 1.3]{beilinson} (which is formulated
 for classical triangulated categories but adapts immediately to the $\oo$-categorical setting). 
More precisely, Beilinson proves this type of result in several contexts including, besides
holonomic modules,  also perverse sheaves
and holonomic regular modules, each time without  requiring $Z$ to be smooth.


\subsection{The nonstandard functorialities}\label{subsec:nonstand}

\paragraph{Reminder on ind- and pro-$\Dc$-modules.} 
Let $Z$ be a (possibly singular) variety over $\k$. 
As in \cite{deligne}, we have identifications
\[
\QCoh_Z \,\,\simeq \,\, \Ind (\Coh_Z), \quad \QCoh_{\Dc, Z} \,\,\simeq \,\, \Ind (\Coh_{\Dc, Z}).
\]
For the derived $\infty$-categories we have identification (see \cite{neeman}):
\[
D(\QCoh_Z) \,\,\simeq \,\, \Ind(\Perf_Z), \quad D(\QCoh_{\Dc, Z}) \,\,\simeq \,\, \Ind(\Perf_{\Dc, Z}).
\]
where $\Perf_Z$ is the category of perfect complexes of $\Oc_Z$-modules.

Therefore the Verdier duality gives an anti-equivalence
\[
D(\QCoh_{\Dc, Z}) \overset\sim\lra \Pro(\Perf_{\Dc, Z}), \quad \Mc = \ilim \Mc_\nu \,\mapsto \, \Mc^\vee  = \plim   \Mc_\nu^\vee. 
\]

\paragraph{Formal inverse image $f^{[\![*]\!]}$.}  We keep the notation of the previous section. 

Define the functor of {\em formal inverse image} 
\[
f^{[\![*]\!]}: \Pro (\Perf_{\Dc, W}) \lra  \Pro(\Perf_{\Dc, Z})
\]
by putting, for  $\Mc^\bullet\in \Perf_{\Dc, W}$
\[
f^{[\![*]\!]}\Mc \,=\, \bigl(f^!(\Mc^\vee)\bigr)^\vee 
\]
and then extend to pro-objects in a standard way. If $Z$ and $W$ are smooth, then 

\be\label{eq;F^*-Hom}
f^{[\![*]\!]}\Mc \,=\,\, \ul\RHom_{f^{-1}\Dc_W} (\Dc_{Z\to W}, f^{-1}\Mc),
\ee
where  we notice that $\Dc_{Z\to W}$ is a quasi-coherent, i.e., ind-coherent right $\Dc_Z$-module,
so taking $\ul\RHom$ from it produces a pro-object. 

\begin{ex}\label{ex:f^*=completion}
Let $f: \{w\}\hra W$ be the embedding of a $\k$-point. Then $\Perf_{\Dc,\{w\}}$
is the derived ($\oo$-)category of finite complexes of finite-dimensional
$\k$-vector spaces. The category $\Pro(\Perf_{\Dc,\{w\}})$ is
the derived ($\oo$-)category of complexes of pro-finite-dimensional
(or, equivalently, linearly compact linearly topological) $\k$-vector spaces. 
Given a coherent sheaf $E$ on $W$, 
\[
f^{[\![*]\!]}(E\otimes_{\Oc_W} \Dc_W) \,=\, \wh E_w \,=\, \varprojlim\nolimits_n E\otimes (\Oc_W/I_w^n)
\]
is the formal completion of $E$ at $w$. This follows  from
\eqref{eq;F^*-Hom} once we notice that $\Dc_{\{w\}\to W} = \delta_w\cdot\Dc_W$,
the $\Dc$-module of distributions supported at $w$,
 is the topological dual of $\wh\Oc_{W,w}$. For a more general situation
 see Example \ref{exas:diff-bundles}(c). 
\end{ex}

\begin{prop}~
    \begin{statements}
        \item\label{ass:f[[*]]leftadjoint} Suppose $f$ is proper, so that $f_*$ takes $\Perf_{\Dc, Z}$ to $\Perf_{\Dc, W}$ and therefore extends to a functor
        \[
        f_*: \Pro(\Perf_{\Dc, Z})\lra \Pro(\Perf_{\Dc, W})
        \]
        denoted by the same symbol. Then
        the functor $f^{[\![*]\!]}$ is left adjoint to $f_*$ thus defined.
        \item\label{ass:f[[*]]holonomic} The functor $f^{[\![*]\!]}$ takes $D^b_\hol(\Coh_{\Dc,W})$ to $D^b_\hol(\Coh_{\Dc, Z})$
        (no pro-objects needed). 
    \end{statements}
\end{prop}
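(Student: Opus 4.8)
The plan is to derive both parts formally from the definition $f^{[[*]]}\Mc = \bigl(f^!(\Mc^\vee)\bigr)^\vee$, feeding in only the standard behaviour of $f^!$, $f_*$ and Verdier duality recorded above. The entire content is a chase of adjunctions: the one genuinely non-formal input is the classical fact that \emph{proper pushforward commutes with Verdier duality}, $f_*(\Mc^\vee)\simeq (f_*\Mc)^\vee$ (which rests on $f_*\simeq f_!$ for proper maps together with the interchange of $f_!$ and $f_*$ under $(-)^\vee$; see \cite{borel} \cite{hotta}). Since this holds already on perfect complexes, where $f_*$ preserves coherence by Proposition \ref{prop:propertiespushforward}(b), it propagates to the $\Ind$/$\Pro$-completions.

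For part (a), I would first establish the adjunction on perfect complexes and then extend by the universal property of $\Pro(\Perf)$. Write $\Hom_Z$ and $\Hom_W$ for mapping objects in the $Z$- resp.\ $W$-categories. Fix $\Mc\in\Perf_{\Dc,W}$ and $\Nc\in\Perf_{\Dc,Z}$. Using that $(-)^\vee$ is an anti-equivalence (so $(\Ac^\vee)^\vee\simeq\Ac$ and $\Hom(\Ac,\Bc)\simeq\Hom(\Bc^\vee,\Ac^\vee)$) together with the identity $(f^{[[*]]}\Mc)^\vee\simeq f^!(\Mc^\vee)$, I would run
\begin{align*}
\Hom_Z(f^{[[*]]}\Mc,\Nc)
&\simeq \Hom_Z(\Nc^\vee, f^!(\Mc^\vee))
\simeq \Hom_W(f_*(\Nc^\vee),\Mc^\vee) \\
&\simeq \Hom_W((f_*\Nc)^\vee,\Mc^\vee)
\simeq \Hom_W(\Mc, f_*\Nc),
\end{align*}
where the second isomorphism is the proper adjunction $f_*\dashv f^!$ of Proposition \ref{prop:propertiespushforward}(b) (equivalently Proposition \ref{prop:base1}(a)), and the third is the commutation of $f_*$ with duality noted above. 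Naturality in both variables is evident, so this exhibits $f^{[[*]]}$ as left adjoint to $f_*$ on perfect complexes; since all four functors $f^{[[*]]}$, $f_*$, $f^!$, $(-)^\vee$ are defined by extension from $\Perf$, the adjunction descends to $\Pro(\Perf_{\Dc,W})$ and $\Pro(\Perf_{\Dc,Z})$.

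For part (b), the point is that on the holonomic locus $f^!$ never leaves the coherent world, so no pro-objects intervene. If $\Mc\in D^b_\hol(\Coh_{\Dc,W})$, then $\Mc^\vee\in D^b_\hol(\Coh_{\Dc,W})$ since Verdier duality preserves holonomicity; next $f^!(\Mc^\vee)\in D^b_\hol(\Coh_{\Dc,Z})$ since $f^!$ preserves bounded holonomic complexes (this is precisely where holonomicity, rather than mere coherence, is used—for general coherent input $f^!$ produces the non-coherent local-cohomology of Example \ref{ex:f!}(b), which is exactly what forces the pro-completion in the definition of $f^{[[*]]}$); finally $f^{[[*]]}\Mc=\bigl(f^!(\Mc^\vee)\bigr)^\vee\in D^b_\hol(\Coh_{\Dc,Z})$, again by preservation of holonomicity under $(-)^\vee$.

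The main obstacle is not a deep difficulty but the bookkeeping around the pro-completion: one must check that the adjunction, verified on the compact generators $\Perf_{\Dc,Z}$, survives the simultaneous passage through the $\Ind$-side (on which $f^!$ lives) and the $\Pro$-side (on which $f^{[[*]]}$ and $f_*$ live), and that the three functors $f_*$, $f^!$, $(-)^\vee$ are mutually compatible with these completions. I expect the cleanest route is to phrase everything as an identity of functors $f^{[[*]]}\simeq (-)^\vee\circ f^!\circ(-)^\vee$ between the relevant $\infty$-categories and invoke that Verdier duality intertwines the $\Ind$- and $\Pro$-completions, reducing all compatibilities to the single statement $f_*(-)^\vee\simeq(-)^\vee f_*$ for proper $f$.
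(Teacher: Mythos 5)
Your argument is correct and is essentially the paper's own proof: for (a) the paper handles the general case precisely by conjugating the proper adjunction $f_*\dashv f^!$ of Proposition \ref{prop:propertiespushforward}(b) through Verdier duality (it only adds, for smooth $Z$ and $W$, the alternative direct route via the formula $f^{[[*]]}\Mc=\ul\RHom_{f^{-1}\Dc_W}(\Dc_{Z\to W},f^{-1}\Mc)$ and the $\Hom$--$\otimes$ adjunction), and for (b) it simply cites \cite{borel} for the standard preservation of holonomicity that you spell out. Your chain of isomorphisms and the extension to the $\Pro$-completions fill in exactly the details the paper leaves implicit.
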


We note that defining the $*$-inverse image on holonomic $\Dc$-modules by conjugating $f^!$ with the  Verdier duality
is a standard procedure. The corresponding functor is usually denoted by $f^*$, see \cite{borel}. We use the notation
$f^{[\![*]\!]}$ to emphasize the pro-object structure in the general (non-holonomic) case. 

\begin{proof}
    \ref{ass:f[[*]]leftadjoint} In the case when $Z$ and $W$ are smooth, the statement follows from 
    \eqref{eq;F^*-Hom} and from the adjunction between $\Hom$ and $\otimes$. The general case is dual to \autoref{prop:propertiespushforward} \ref{ass:f*b}.
    
    \vskip .2cm
    
    Part \ref{ass:f[[*]]holonomic} is standard, see \cite{borel}.
\end{proof}


\paragraph{Compatibility of $f^{[\![*]\!]}$ with $\DR$ on complex topology.} 
Let $\k=\CC$. 
We extend the definition of $\DR$ to $\Pro(\Perf_{\Dc, W})$ using Verdier duality. By \autoref{prop:deRham-duality} (b), this functor coincides with the previously defined one on perfect $\Dc$-modules we regular holonomic cohomology.

\begin{prop}
    Let $\Mc^\bullet \in \Perf_{\Dc, W}$ be a complex with holonomic regular cohomology. 
    Then
    \[
    \DR(f^{[\![*]\!]}\Mc)_\an \,\,\simeq \,\, f^{-1} \bigl( \DR(\Mc^\bullet)_\an\bigr),
    \]
    where on the right we have the usual inverse image of constructible complexes on the complex topology. 
\end{prop}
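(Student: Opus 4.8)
The plan is to unwind the definition $f^{[[*]]}\Mc = \bigl(f^!(\Mc^\vee)\bigr)^\vee$ and reduce the statement to the compatibilities with analytification that have already been established for $f^!$ (Proposition \ref{prop:f^!-an}) and for Verdier duality (the Riemann--Hilbert statement, part (b), giving $\DR(\Mc^\vee)_\an \simeq \DD_Z(\DR(\Mc^\bullet)_\an)$), combined with the purely topological six-functor identity $\DD_Z \circ f^! \circ \DD_W \simeq f^{-1}$ for constructible complexes. First I would check that every object in the computation is again holonomic regular, so that these compatibilities apply at each stage: $\Mc^\bullet$ is holonomic regular by hypothesis, Verdier duality preserves holonomic regularity so $\Mc^\vee$ is holonomic regular, the functor $f^!$ preserves holonomic regularity so $f^!(\Mc^\vee)$ lies in $D^b_\hol(\Coh_{\Dc,Z})$ (with no pro-objects needed, by part (b) of the proposition defining $f^{[[*]]}$), and hence so does its dual $f^{[[*]]}\Mc$. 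In particular $\DR(f^{[[*]]}\Mc)_\an$ is a genuine constructible complex on $Z_\an$.

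The main step is then a chain of natural isomorphisms. Applying the Verdier-duality compatibility to the holonomic regular object $f^!(\Mc^\vee)$ gives
\[
\DR\bigl((f^!(\Mc^\vee))^\vee\bigr)_\an \simeq \DD_Z\bigl(\DR(f^!(\Mc^\vee))_\an\bigr).
\]
Next, Proposition \ref{prop:f^!-an} yields $\DR(f^!(\Mc^\vee))_\an \simeq f^!\bigl(\DR(\Mc^\vee)_\an\bigr)$, and a second use of the Verdier-duality compatibility gives $\DR(\Mc^\vee)_\an \simeq \DD_W\bigl(\DR(\Mc^\bullet)_\an\bigr)$. Writing $K = \DR(\Mc^\bullet)_\an$ and substituting, I obtain
\[
\DR(f^{[[*]]}\Mc)_\an \simeq \DD_Z\bigl(f^!(\DD_W(K))\bigr).
\]
Finally, the standard identity $f^! \simeq \DD_Z \circ f^{-1} \circ \DD_W$ on constructible complexes, equivalently $\DD_Z \circ f^! \circ \DD_W \simeq f^{-1}$ since $\DD_Z^2 \simeq \mathrm{id}$, shows that $\DD_Z(f^!(\DD_W(K))) \simeq f^{-1}(K)$, which is exactly the asserted $f^{-1}(\DR(\Mc^\bullet)_\an)$.

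Once the three compatibilities are in hand the argument is formal bookkeeping, and the whole content lies in that bookkeeping. The step I expect to be the main obstacle is the coherence of the identifications: one must verify that the algebraic Verdier duality $(\cdot)^\vee$ really matches the topological $\DD$ after analytification \emph{compatibly} with $f^!$, so that the composite isomorphism is the intended one and does not differ by an automorphism of the functors involved. In the possibly singular setting I would reduce to the smooth case along the fixed closed embeddings $Z \hookrightarrow \wt Z$ and $W \hookrightarrow \wt W$ used to define $f^!$, $(\cdot)^\vee$ and $f^{[[*]]}$, where Proposition \ref{prop:f^!-an} and the Riemann--Hilbert compatibilities are directly available, and check that the local-cohomology correction $\ul{R\Gamma}_Z$ entering the definition of $f^!$ on singular targets is compatible with analytification; this is where I would spend the most care.
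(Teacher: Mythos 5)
Your proof is correct and is exactly the paper's argument: the paper's entire proof reads ``Follows from Proposition \ref{prop:f^!-an} by Verdier duality,'' which is precisely the chain of identifications you spell out (unwind $f^{[[*]]} = ((-)^\vee \circ f^! \circ (-)^\vee)$, apply the Riemann--Hilbert compatibility of $\DR(-)_\an$ with Verdier duality on both sides, apply Proposition \ref{prop:f^!-an} in the middle, and conclude via $\DD_Z \circ f^! \circ \DD_W \simeq f^{-1}$ on constructible complexes). Your additional care about holonomic regularity being preserved at each stage and about coherence of the identifications is sound but goes beyond what the paper records.
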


\begin{proof}
    Follows from \autoref{prop:f^!-an} by Verdier duality.
\end{proof}

Because of these compatibilities we use the same notation $f^{[\![*]\!]}$ for the formal inverse image functor
on $\Dc$-modules and differential sheaves. 

\paragraph{The formal compactly supported direct image $f_{[\![!]\!]}$.}
We define the functor  of {\em formal compactly supported direct image}
\[
f_{[\![!]\!]}: \Pro(\Perf_{\Dc, Z}) \lra  \Pro(\Perf_{\Dc, W}) 
\]
by putting, for $\Mc^\bullet\in \Perf_{\Dc, Z}$
\[
f_{[\![!]\!]}\Mc \,=\, (f_* \Mc^\vee)^ \vee \,\,\in \,\,  \Pro(\Perf_{\Dc, W}) 
\]
and then extending to pro-objects in the standard way. 
In the particular case when $f=p: Z\to \pt$ is the projection to the point, we will
use the notation
\[
R\Gamma_\DR^{[\![\c]\!]}(Z, \Mc) \,\,:=\,\, p_{[\![!]\!]}\Mc \,\,\in \,\, \Pro(\Perf_\k). 
\]

\begin{prop}~
    \begin{statements}
        \item If $f$ is proper then $f_{[\![!]\!]} \simeq f_*$ and thus $f_{[\![!]\!]}$ is right adjoint to $f^{[\![*]\!]}$.
        If $f$ is \'etale then $f_{[\![!]\!]}$ is left adjoint to $f^{[\![*]\!]}$.
        \item For any Cartesian square of varieties as in \autoref{prop:base1}\ref{ass:base1b}, we have a canonical identification (base change)
        \[
        f_2^{[\![*]\!]} \circ (f_1)_{[\![!]\!]} \,\,\simeq \,\, (g_1)_{[\![!]\!]} \circ g_2^{[\![*]\!]}. 
        \]
        \item Let $j: Z\hookrightarrow W$ be an open embedding, with $i: K\hookrightarrow W$ be the closed embedding of the complement.
        Then
        \begin{assertions}[label=$\thestatementsi_\arabic*$]
         \item For any $\Mc \in \Perf_{\Dc, W}$, we have $j^{[\![*]\!]}\Mc \simeq j^!\Mc \in \Perf_{\Dc, Z}$;
         \item For any $\Mc' \in \Perf_{\Dc, K}$, we have $i_{[\![!]\!]}\Mc' \simeq i_*\Mc' \in \Perf_{\Dc, W}$;
         \item For any $\Nc\in \Pro(\Perf_{\Dc, W})$ we have the canonical triangle
        \[
        j_{[\![!]\!]} j^{[\![*]\!]} \Nc \lra \Nc \lra  i_{[\![!]\!]} i^{[\![*]\!]}\Nc \lra j_{[\![!]\!]} j^{[\![*]\!]}\Nc[1].
        \]
        \end{assertions}
        \item The functor $f_{[\![!]\!]}$ takes $D^b_\hol(\Coh_{\Dc,Z})$ to $D^b_\hol(\Coh_{\Dc, W})$
        (no pro-objects are needed).
    \end{statements}
\end{prop}

As with $f^{[\![*]\!]}$, we note that defining the $!$-direct image on holonomic $\Dc$-modules by conjugating $f_*$ with the Verdier duality
is a standard procedure. The corresponding functor is usually denoted by $f_!$, see \cite{borel}. We use the notation
$f_{[\![!]\!]}$ to emphasize the pro-object structure in the general (non-holonomic) case. 

\begin{proof}
    Follows from the corresponding properties for $f^!$ and $f_*$ of \autoref{prop:base1} by applying Verdier duality.
\end{proof}

\paragraph{Compatibility of $f_{[\![!]\!]}$ with $\DR$ on complex topology.}

\begin{prop}\label{prop:Grothendieck[[!]]}
    Let $\k=\CC$ and let $\Mc^\bullet \in \Perf_{\Dc, Z}$ be a complex of right $\Dc_Z$-modules
    whose cohomology modules are holonomic regular. Then we have a quasi-isomorphism
    \[
    \DR(f_{[\![!]\!]} \Mc^\bullet)_\an\,\,\simeq \,\, f_! (\DR(\Mc^\bullet)_\an),
    \]
    where $f_!$ is the usual functor of direct image with proper support for sheaves on the complex topology. 
\end{prop}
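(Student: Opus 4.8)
The plan is to deduce this exactly as the dual statement for $f^{[[*]]}$ was deduced (``Follows from Proposition \ref{prop:f^!-an} by Verdier duality''): that is, to conjugate the analytic comparison for the ordinary direct image, Proposition \ref{prop:propertiespushforward}(d), by the Riemann--Hilbert compatibility of $\DR$ with Verdier duality. First I would record the bookkeeping that keeps us inside the holonomic regular world, where all the analytic comparison results apply. Since $\Mc^\bullet$ has holonomic regular cohomology, so does its Verdier dual $(\Mc^\bullet)^\vee$, because Verdier duality preserves $D^b_\hol\QCoh_{\Dc,Z}$. Hence $f_*\bigl((\Mc^\bullet)^\vee\bigr)$ is again holonomic regular (the algebraic direct image preserves holonomicity and regularity, even for non-proper $f$), and therefore $f_{[[!]]}\Mc^\bullet=\bigl(f_*\bigl((\Mc^\bullet)^\vee\bigr)\bigr)^\vee$ lands in $D^b_\hol\QCoh_{\Dc,W}$ with no pro-objects appearing; this is precisely what was asserted in part (d) of the preceding proposition on $f_{[[!]]}$.

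Next I would assemble the chain of quasi-isomorphisms on the analytic side, writing $\DD_Z,\DD_W$ for the topological Verdier dualities. Applying $\DR(-)_\an$ to the definition of $f_{[[!]]}$ and using the compatibility $\DR(\Nc^\vee)_\an\simeq\DD\bigl(\DR(\Nc)_\an\bigr)$ from \S\ref{subsec:Dmod-dif}, applied to the holonomic regular object $\Nc=f_*\bigl((\Mc^\bullet)^\vee\bigr)$, gives
\[
\DR\bigl(f_{[[!]]}\Mc^\bullet\bigr)_\an \simeq \DR\Bigl(\bigl(f_*\bigl((\Mc^\bullet)^\vee\bigr)\bigr)^\vee\Bigr)_\an \simeq \DD_W\,\DR\bigl(f_*\bigl((\Mc^\bullet)^\vee\bigr)\bigr)_\an.
\]
Proposition \ref{prop:propertiespushforward}(d) then converts the algebraic direct image into the topological one,
\[
\DR\bigl(f_*\bigl((\Mc^\bullet)^\vee\bigr)\bigr)_\an \simeq Rf_\bullet\,\DR\bigl((\Mc^\bullet)^\vee\bigr)_\an,
\]
and a second application of the Verdier compatibility gives $\DR\bigl((\Mc^\bullet)^\vee\bigr)_\an\simeq\DD_Z\,\DR(\Mc^\bullet)_\an$. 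Combining the three identifications yields
\[
\DR\bigl(f_{[[!]]}\Mc^\bullet\bigr)_\an \simeq \DD_W\,Rf_\bullet\,\DD_Z\bigl(\DR(\Mc^\bullet)_\an\bigr).
\]

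Finally I would invoke the standard topological identity $f_!\simeq\DD_W\circ Rf_\bullet\circ\DD_Z$ on constructible complexes (the defining relation between $f_!$ and $Rf_\bullet$ through Verdier duality, see \cite{KS-sheaves}), which rewrites the right-hand side as $f_!\bigl(\DR(\Mc^\bullet)_\an\bigr)$ and completes the proof. I expect the genuine difficulties to be entirely of a bookkeeping nature rather than conceptual: the main point to verify is that holonomicity and regularity are preserved at both intermediate steps — under $(-)^\vee$ and under the possibly non-proper $f_*$ — so that the analytic comparison propositions, which are stated only in the holonomic regular case, are legitimately applicable; and, relatedly, that the topological Verdier duality appearing in the compatibility of \S\ref{subsec:Dmod-dif} is genuinely involutive on the constructible complexes at hand, so that the two occurrences of $\DD_Z$ and $\DD_W$ compose correctly with $Rf_\bullet$ to reproduce $f_!$.
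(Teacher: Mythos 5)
Your proposal is correct and follows exactly the route the paper takes: its entire proof reads ``Follows from the similar statement about the functor $f_*$ by applying Verdier duality,'' i.e.\ conjugating Proposition \ref{prop:propertiespushforward}(d) by the Riemann--Hilbert compatibility of $\DR$ with Verdier duality and using $f_!\simeq\DD_W\circ Rf_\bullet\circ\DD_Z$. You have simply written out the bookkeeping (preservation of holonomic regularity under $(-)^\vee$ and $f_*$) that the paper leaves implicit.
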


\begin{proof}
    Follows from the similar statement about the functor $f_*$ by applying Verdier duality.
\end{proof}

\paragraph{Compatibility with $\DR$ on the Zariski topology.} 
Let $Z$ be a smooth variety over $\k$. The category of induced $\Dc_Z$-modules
is equivalent to the category formed by coherent $\Oc_Z$-modules and
differential operators  between them. More precisely, we an
 identification of Zariski sheaves
\be\label{eq:diff(E,F)}
\ul\Hom_{\Dc_Z}(E\otimes \Dc_Z, F\otimes \Dc_Z) \, \= \,  \Diff(E,F), \quad
E,F\in\Coh_{\Oc_Z},
\ee
compatible with \eqref{eq:DR-Induced}. 

More generally, call a $\Dc_Z$-module   {\em locally induced} if it is locally on $Z_\Zar$
isomorphic to an induced $\Dc_Z$-module. By the above, the category of locally induced
$\Dc_Z$-modules is identified (via the functor $\DR$) with the category of objects which we call
{\em differential sheaves}. They are
 Zariski sheaves glued out of coherent $\Oc_Z$-modules with
transition functions being invertible differential operators.
They form a category whose morphisms 
  are {\em differential operators between differential sheaves},
a concept that makes sense in terms of gluing.

 Replacing here ``coherent
$\Oc_Z$-modules'' with ``vector bundles'', we obtain the concept of a
{\em differential bundle}. The category of differential bundles  and differential operators
between them is identified
with that of $\Dc_Z$-modules which are locally free (of finite rank).

Let us illustrate the effect
of our functorialities on induced $\Dc_Z$-modules is some cases. Doing so amounts to
describing their compatibility with the functor $\DR$
in the induced situation. 
This  will not be formally used in the sequel, so we omit the (elementary) proofs. 
We assume that $f: Z\to W$ is a morphism of smooth varieties.

\begin{exas}\label{exas:diff-bundles}
(a) The functor $f_*$   on induced $\Dc$-modules corresponds to the usual  Zariski sheaf-theoretic derived direct image of coherent $\Oc$-modules:
        \[
         f_*(E \otimes_{\Oc_Z}\Dc_Z) \,\,\simeq \,\, Rf_\bullet(E) \otimes_{\Oc_W}\Dc_W
        \quad E\in \Coh_{\Oc_Z}.
        \]
        Here $Rf_\bullet$ is the topological direct image of sheaves on the Zariski topology,
        so that $Rf_\bullet(E)$ is a complex of quasi-coherent 
        (i.e., ind-coherent) $\Oc_W$-modules.

\vskip .2cm

(b) The functor $f^!$ 
 for a closed embedding  of  codimension $d$ corresponds
to the usual 
cohomology with support $\ul H^\bullet_Z(-)$ on Zariski topology applied to coherent
$\Oc_W$-modules. Assume for simplicity that $E$ is 
a vector
bundle on $W$. Then the sheaf $\ul H^j_Z(E)$ is zero for $j\neq d$, and
$\ul H^d_Z(E)$ has a natural  filtration $F_n$ by ``order of poles''
with $F_n/F_{n-1}$ canonically identified with the vector bundle
$E|_Z \otimes \det N_{Z/W} \otimes S^n N_{Z/W}$. The $F_n$ themselves,
while not $\Oc_Z$-modules,  are naturally
differential bundles on $Z$, so $\ul H^d_Z(E)$  is an ind-differential bundle. 
The quasi-coherent $\Dc_Z$-module
$f^! (E\otimes_{\Oc_W}\Dc_W)$ is the inductive limit of the locally free $\Dc_Z$-modules
corresponding to the $F_n$ (and shifted by $d$ in the derived category).

\vskip .2cm

(c) The functor $f^{[\![*]\!]}$ for a closed embedding corresponds to
formal completion of coherent $\Oc_W$-modules. Given (for simplicity) a vector bundle
$E$ on $W$, the formal completion $\wh E_Z$ is the projective limit of the sheaves
$E\otimes(\Oc_W/I_Z^n)$ which, while not $\Oc_Z$-modules, 
 are naturally differential bundles
on $Z$, so  $\wh E_Z$ is a pro-differential bundle. The pro-object  $f^{[\![*]\!]}(E\otimes_{\Oc_W} \Dc_W)$ is the formal projective limit of the locally free
$\Dc_Z$-modules corresponding to the $E\otimes(\Oc_W/I_Z^n)$. 
This generalizes Example \ref{ex:f^*=completion}.

\vskip .2cm

(d) The functor $f_{[\![!]\!]}$ corresponds to the algebraic
direct image with proper supports as defined by Deligne 
\cite{deligne, hartshorne-annalen}, 
 for coherent $\Oc_Z$-modules.   That is, we compactify $f$ by factoring it as
 $Z \buildrel j\over \hra \ol Z \buildrel \ol f\over\to W$ with $j$ open,  $\ol f$
 proper, $\ol Z$ smooth and and  $D=\ol Z\- Z$ a divisor. Then for a coherent $\Oc_Z$-module  $E$ 
Deligne defines 
\[
  f_!^{\on{alg}}(E)\,=\, \, \text{``}\varprojlim_n\!\text{''} \, R\ol f_* (E(-nD)), 
\]
a pro-object in the  classical derived category of coherent $\Oc_W$-modules. It is
 easily upgraded to a pro-object in the $\oo$-categorical enhancement. 
 The pro-object $  f_{[\![!]\!]}(E\otimes_{\Oc_Z} \Dc_Z)$ is identified with
  the formal projective
 limit of the complexes of induced $\Dc_W$-modules corresponding to the
 $ R\ol f_* (E(-nD))$. 
 
\end{exas}


\subsection{Correspondences and base change}\label{subsec:corr}

\begin{Defi}
    We denote by $\Var_\k^\corr$ the following $2$-category, called the category of correspondences between $\k$-varieties.
    Its objects are objects of $\Var_\k$. A morphism in $\Var_\k^\corr$ from $X$ to $Y$ is a correspondence: a third variety $Z$ with two morphisms of varieties $X \leftarrow Z \to Y$.
    The composition of two correspondences $X \leftarrow Z \to Y$ and $Y \leftarrow Z' \to Y'$ is the correspondence $X \leftarrow Z \times_Y Z' \to Y'$.
    Finally, a transformation (i.e.,  a $2$-morphism) between two correspondences $X \leftarrow Z \to Y$ and $X \leftarrow Z' \to Y$ is the datum of a \emph{proper} morphism $Z \to Z'$ commuting with the maps to $X$ and $Y$. The vertical composition is the obvious one.
\end{Defi}

It follows from \cite[Vol.\@ II Chap.\@ 4 Thm.\@ 2.1.2]{GR} that base change between lower-$*$ and upper-! functors can be encoded as an $(\infty,2)$-functor
\[
\Dcorr \colon \Var_\k^{\on{corr}} \lra \Cat_\infty
\]
mapping a variety $X$ to $D(\QCoh_{\Dc,X})$ and a correspondence $X 
\overset{f}\leftarrow Z \overset{g}\to Y$ to the functor $g_*f^!$. It finally 
maps a (proper) transformation $f \colon Z \to Z'$ between two correspondences 
$X \overset{g}\leftarrow Z \overset{h}\to Y$ and $X \overset u\leftarrow Z' 
\overset v \to Y$ to the natural transformation
\[
h_* g^! = v_* f_* f^! u^! \Rightarrow v_* u^!
\]
induced by the adjunction counit.

Similarly, we have an $(\infty,2)$-functor
\[
\Dc^{[\![\corr]\!]} \colon \Var_\k^\corr \lra \Cat_\infty
\]
that maps a variety $X$ to $\Pro(\Perf_{\Dc,X})$, a correspondence  $X 
\overset{f}\leftarrow Z \overset{g}\to Y$ to the functor $g_{[\![!]\!]}f^{[\![*]\!]}$. It finally 
maps a (proper) transformation $f \colon Z \to Z'$ between two correspondences 
$X \overset{g}\leftarrow Z \overset{h}\to Y$ and $X \overset u\leftarrow Z' 
\overset v \to Y$ to the natural transformation
\[
h_{[\![!]\!]} g^{[\![*]\!]} = v_{[\![!]\!]} f_{[\![!]\!]} f^{[\![*]\!]} u^{[\![*]\!]} \Rightarrow v_{[\![!]\!]} u^{[\![*]\!]}
\]
induced by the adjunction counit.


\section{\texorpdfstring{$\Dc$}{D}-modules on the Ran space}\label{sec:Ran}

\subsection{The Ran space in algebraic geometry}\label{subsec:Ran-AG}

Throughout this section, we will denote by $\Var_\k$ the category of varieties over $\k$. Let $X \in \Var_\k$.

\paragraph{Ran diagram and Ran space.}

\begin{Defi}~
    \begin{statements}
        \item Let $\finsurj$ denote the category of non-empty finite sets with surjective maps between them.
        We define the {\em Ran diagram} of $X$ as the contravariant functor $X^\finsurj \colon \finsurj \to \Var_\k$ defined by:
        \[
        X^\finsurj \colon I \mapsto X^I, \quad (g \colon I \twoheadrightarrow J) \,\mapsto \,  (\delta_g  \colon X^J \to X^I),
        \]
        where $\delta_g$ is the diagonal embedding  associated to $g$. 
        
        \item By an {\em  (algebro-geometric) space}  over $\k$ we will mean a sheaf of sets on the big \'etale site (affine $\k$-schemes with \'etale
        coverings). The category of such will be denoted $\AGS$. We have the standard embedding $\Var_\k\hookrightarrow  \AGS$ (representable functors).
        
        \item The {\em Ran space} of $X$ is defined as the colimit in $\AGS$:
        \[
        \Ran(X) \,\,=\,\,\varinjlim X^\finsurj \,\,=\,\, \varinjlim _{I\in\finsurj} X^I.
        \]
    \end{statements}
\end{Defi}

\noindent The category $\finsurj$ is not filtering so $\Ran(X)$ is not an ind-variety in the standard sense.

\paragraph{Diagonal skeleta.} 

\begin{Defi}\label{def:skel}~
    \begin{statements}
        \item For any   $I\in \finsurj $ and  $q>0$, we call the {\em $q$-fold diagonal} and  denote by
        \[
        X^I_{q} := \bigcup_{\substack{f \colon I \twoheadrightarrow Q\\ |Q| = q}} \delta_f(X^Q)
        \]
        the closed subvariety of $X^I$ whose closed points are families of at most $q$ different points of $X$.   
        \item 
        We denote by $X^\finsurj_{q}$ the functor $\finsurj \to \Var_\k$ given by $I \mapsto X^I_{ q}$.
        We also denote 
        \[
        \Ran_{q}(X)\,\, =\,\, \varinjlim \, X^\finsurj_{q}\,\,=\,\, \varinjlim_{I\in\finsurj}\,  X^I_q
        \] the space corresponding to $X^\finsurj_{ q}$. 
    \end{statements}
\end{Defi}

Let  $S_q$ be  the symmetric group on $q$ symbols . We denote by
\[
\Sym^q(X) \,\,=\,\, X^q/S_q, \quad \Sym^q_{\neq}(X) = (X^q-X^q_{q-1})/S_q
\]
the $q^\mathrm{th}$ symmetric power of $X$ (as a singular variety) and its open part given by complement of all the diagonals. 

\begin{prop}~
    \begin{statements}
        \item We have
        \[
        X^\finsurj = \varinjlim_q \, X^\finsurj_q, \quad \Ran(X) = \varinjlim_q \, \Ran_q(X)
        \]
        (colimit in the category of functors resp. in $\AGS$). 
        
        \item We have an identification
        \[
        \Ran_q(X) - \Ran_{q-1}(X) \,\,=\,\,\Sym^q_\neq (X). 
        \]
        In particular, $\Ran_1(X)=X$.
    \end{statements}
\end{prop}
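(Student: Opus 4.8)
The plan is to handle the two assertions separately, reducing both to one elementary property of the diagonals $\delta_g$ together with the interchange of colimits in the cocomplete category $\AGS$. For part (a), I would first fix a finite set $I$ and note that the $X^I_q$ form an increasing chain of closed subvarieties of $X^I$. Since any family in $X^I$ has at most $|I|$ distinct coordinates, this chain stabilizes once $q\geq|I|$, with $X^I_{|I|}=X^I$ (the surjections $I\twoheadrightarrow Q$ with $|Q|=|I|$ being bijections); hence $\varinjlim_q X^I_q=X^I$. The key compatibility is that for a surjection $g\colon I\twoheadrightarrow J$ the diagonal $\delta_g$ sends a family with $k$ distinct coordinates to a family with \emph{exactly} $k$ distinct coordinates — surjectivity of $g$ guarantees that no value is lost — so $\delta_g(X^J_q)\subset X^I_q$ and the inclusions $X^\finsurj_q\hookrightarrow X^\finsurj_{q+1}$ are natural in $I$. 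As colimits of functors are computed objectwise, this gives $X^\finsurj=\varinjlim_q X^\finsurj_q$. The assertion for $\Ran(X)$ then follows by interchanging the two colimits,
\[
\varinjlim_q \Ran_q(X) \;=\; \varinjlim_q \varinjlim_{I} X^I_q \;=\; \varinjlim_{I}\varinjlim_q X^I_q \;=\;\varinjlim_I X^I \;=\;\Ran(X),
\]
which is legitimate because $\AGS$ is cocomplete.

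For part (b), the same observation — that $\delta_g$ preserves the number of distinct coordinates — shows that $\delta_g$ restricts to the locally closed strata $U^I_q:=X^I_q - X^I_{q-1}$ of families with exactly $q$ distinct coordinates, so $I\mapsto U^I_q$ is a subdiagram of $X^\finsurj_q$ and one expects $\Ran_q(X)-\Ran_{q-1}(X)=\varinjlim_I U^I_q$. To compute this colimit I would exhibit the single object $[q]=\{1,\dots,q\}$, carrying the $S_q=\Aut_{\finsurj}([q])$-action via the transition maps $\delta_\sigma$, as dominant: for any $I$ the stratum $U^I_q$ decomposes (on points) as a disjoint union, over the partitions of $I$ into $q$ blocks, of copies of $U^{[q]}_q=X^q - X^q_{q-1}$ embedded by the diagonals $\delta_f$ attached to the surjections $f\colon I\twoheadrightarrow[q]$ realizing each partition. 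Distinct partitions give disjoint images, and each $\delta_f$ is an isomorphism onto its image, so in the colimit every such copy is identified with $U^{[q]}_q$, the only residual identifications being those from replacing $f$ by $\sigma\circ f$, i.e. the $S_q$-action. Mutually inverse maps $\varinjlim_I U^I_q \rightleftarrows U^{[q]}_q/S_q$ then yield
\[
\Ran_q(X)-\Ran_{q-1}(X)\;=\;\bigl(X^q - X^q_{q-1}\bigr)/S_q\;=\;\Sym^q_\neq(X).
\]
Specializing to $q=1$, where $U^I_1=X^I_1$ is the full diagonal $\cong X$ (all transition maps being the canonical isomorphisms) and $S_1$ is trivial, recovers $\Ran_1(X)=\Sym^1_\neq(X)=X$.

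The hard part will be making rigorous sense of the \emph{difference} $\Ran_q(X)-\Ran_{q-1}(X)$ inside $\AGS$ and justifying its identification with $\varinjlim_I U^I_q$: the decomposition $X^I_q = X^I_{q-1}\sqcup U^I_q$ is merely a stratification into locally closed pieces, which holds on points but does \emph{not} split the functor of points (a general $S$-point of $X^I_q$ can meet both strata), so the stated equality has to be read at the level of the underlying (reduced) spaces, with $\Ran_{q-1}(X)\hookrightarrow\Ran_q(X)$ regarded as the colimit of the closed immersions $X^I_{q-1}\hookrightarrow X^I_q$. The remaining technical point is the cofinality/quotient step identifying $\varinjlim_I U^I_q$ with $U^{[q]}_q/S_q$; this is where the disjointness of the images $\delta_f(U^{[q]}_q)$ over distinct $q$-block partitions, and the fact that changing $f$ within a fixed partition amounts precisely to the $S_q$-action, must be checked carefully so that the colimit genuinely collapses to the single symmetric-power quotient. \qed
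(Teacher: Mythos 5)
The paper states this proposition with no proof at all (it is treated as an immediate consequence of Definition \ref{def:skel}), so there is no argument of the authors' to compare yours against; what you have written supplies the missing details and is correct in substance. Part (a) is exactly right: the two observations that carry it are the stabilization $X^I_q=X^I$ for $q\geq |I|$ (surjections onto sets of size $|I|$ are bijections) and the fact that $\delta_g$ preserves the set of values of a family, hence maps $X^J_q$ into $X^I_q$; the rest is commutation of colimits. For part (b), the two difficulties you flag are genuinely the only ones, and the paper itself leaves them implicit: the "difference" is meant at the level of reduced locally closed strata, and the cleanest way to make both it and the quotient step precise is to evaluate on strictly Henselian local schemes $S$, exactly as the paper does later when identifying $\Delta(I,J)$ with $X^I\times_{\Ran(X)}X^J$. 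On such $S$-points your decomposition reads $U^I_q\cong \on{Surj}(I,[q])\times_{S_q}X^q_{\neq}$, and since $\on{Surj}(-,[q])=\Hom_{\finsurj}(-,[q])$ is representable, its colimit over $\finsurj^{\op}$ is a single point (the category of elements has the terminal object $\on{id}_{[q]}$); as $-\times X^q_{\neq}$ and $(-)/S_q$ both commute with colimits, the colimit collapses to $X^q_{\neq}/S_q=\Sym^q_{\neq}(X)$, which is the quotient sheaf in $\AGS$ because the $S_q$-action on $X^q_{\neq}$ is free and the quotient map is finite \'etale. This tightens the "mutually inverse maps" step of your argument into a formal cofinality computation; otherwise your proof is the one the authors evidently had in mind.
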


\paragraph{Varieties $\Delta(I,J)$.} We will use the following construction 
from \cite{GL} (9.4.12). 

\begin{Defi}
    Let $I,J$ be two nonempty finite sets. We denote by $\Delta(I,J)\subset X^I\times X^J$
    the closed algebraic subvariety whose $\ol \k$-points are pairs of tuples $\bigl( (x_i)_{i\in I}, (y_j)_{j\in J}\bigr)$,
    $x_i, y_j\in X(\ol\k)$ such that the corresponding unordered subsets
    \[
    \{x_i\}_{i\in I} \,=\, \bigcup_{i\in I}\,  \{x_i\}, \quad \{y_j\}_{j\in J} \,=\, \bigcup_{j\in J}\, \{y_j\}
    \]
    coincide. We denote 
    \be\label{eq:p-q-IJ}
    X^I \buildrel p_{IJ}\over\lla \Delta(I,J)\buildrel q_{IJ}\over\lra X^J
    \ee
    the natural projections. They are finite morphisms. 
\end{Defi}

The following is obvious, 

\begin{prop}\label{prop:Delta-colim}
    We have 
    \[
    \Delta(I,J) \,=\,\varinjlim{}^\Var_{\{ I\buildrel u \over \twoheadrightarrow Q \buildrel v\over\twoheadleftarrow J\}} 
    \Im \bigl\{ (\delta_u, \delta_v): X^Q \lra X^I\times X^J\bigr\},
    \]
    where the colimit is taken over the category whose objects are pairs of surjections $I\buildrel u\over\dra Q\buildrel v\over\dla J$
    and morphisms are surjections $Q \twoheadrightarrow Q'$ commuting with the arrows. The colimit reduces to the union inside $X^I\times X^J$. 
    \qed
\end{prop}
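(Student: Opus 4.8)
The plan is to prove the equality on $\ol\k$-points, to note that every variety in sight is reduced, and to observe that the transition maps in the diagram are closed immersions, so that the colimit collapses to a union exactly as for $\Ran(X)$ itself. First I would unwind the maps: for surjections $u\colon I\twoheadrightarrow Q$ and $v\colon J\twoheadrightarrow Q$ the morphism $(\delta_u,\delta_v)$ sends a tuple $(z_q)_{q\in Q}$ to $\bigl((z_{u(i)})_{i\in I},(z_{v(j)})_{j\in J}\bigr)$; since $u$ and $v$ are surjective, both underlying sets equal $\{z_q\}_{q\in Q}$, so $\Im(\delta_u,\delta_v)\subseteq\Delta(I,J)$. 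Moreover $(\delta_u,\delta_v)$ is a closed immersion (it factors through the diagonal of $X^Q$ followed by $\delta_u\times\delta_v$), so its image is a smooth closed subvariety of $X^I\times X^J$ isomorphic to $X^Q$.

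A morphism $w\colon Q\twoheadrightarrow Q'$ of the indexing category satisfies $wu=u'$ and $wv=v'$, whence $(\delta_{u'},\delta_{v'})=(\delta_u,\delta_v)\circ\delta_w$ and therefore an inclusion of closed subvarieties $\Im(\delta_{u'},\delta_{v'})\hookrightarrow\Im(\delta_u,\delta_v)$ inside $X^I\times X^J$. Thus the diagram consists of closed subvarieties of the fixed variety $X^I\times X^J$ connected by closed immersions, and — since $I$ and $J$ are finite, so only finitely many distinct images occur — its colimit is their union, the join in the lattice of closed subvarieties of $X^I\times X^J$; this is the same mechanism by which $\Ran(X)$ is assembled from the $X^I$. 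It then remains to identify this union with $\Delta(I,J)$ on $\ol\k$-points: the inclusion of the union into $\Delta(I,J)$ is the computation above, and conversely, given a point $\bigl((x_i)_{i\in I},(y_j)_{j\in J}\bigr)$ of $\Delta(I,J)$ with common underlying set $Z=\{x_i\}=\{y_j\}$, I would take $Q=Z$ with $u(i)=x_i$, $v(j)=y_j$, so that the point is the image under $(\delta_u,\delta_v)$ of the tautological tuple $(z)_{z\in Z}$. Since each $\Im(\delta_u,\delta_v)$ is reduced (isomorphic to the smooth $X^Q$) and $\Delta(I,J)$ is reduced by definition, two reduced closed subvarieties with the same $\ol\k$-points coincide, giving the proposition.

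The only step deserving care is the reduction of the colimit to a union: one must know that for a finite diagram of closed immersions into a fixed variety the colimit is the union of the images. This is standard and is precisely the phenomenon already invoked in the construction of the Ran space, so I would simply appeal to it; everything else is a direct matching of combinatorial coincidence patterns, which is why the statement is flagged as obvious.
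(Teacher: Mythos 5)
The paper offers no proof of this statement --- it is introduced with ``The following is obvious'' and closed immediately with a \emph{qed} --- so there is nothing to compare against; your write-up simply supplies the missing details, and it is correct. The inclusion $\Im(\delta_u,\delta_v)\subseteq\Delta(I,J)$, the fact that each $(\delta_u,\delta_v)$ is a closed immersion, the finiteness of the diagram, the surjectivity on $\ol\k$-points via the tautological object $Q=Z$, and the reducedness argument are all fine. One small caveat: your appeal to ``the same mechanism by which $\Ran(X)$ is assembled'' is not quite the right analogy. The colimit defining $\Ran(X)$ is taken in $\AGS$, is not filtered, and is emphatically \emph{not} a union of the $X^I$ inside any ambient object (the paper stresses that $\Ran(X)$ is not an ind-variety); so the general principle you invoke is not what makes that construction work. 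What makes the present colimit collapse to the union is rather the combination of (i) all terms being closed subvarieties of the fixed ambient $X^I\times X^J$, and (ii) the existence, for each point $z\in\Delta(I,J)$ with underlying set $Z$, of the object $(Z,u_z,v_z)$ through which every other parametrization of $z$ factors via a morphism of the indexing category (define $c\colon Q\to Z$ by $c(q)=z_q$), so that all preimages of $z$ are identified in the colimit and no extra non-separated points are created. That is exactly the second half of your own argument, so the proof is self-contained; I would just drop or rephrase the Ran-space analogy and make point (ii) the stated reason the colimit reduces to the union.
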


Any surjection $g: I\twoheadrightarrow J$ induces, for each finite nonempty $T$,  a natural morphism
(closed embedding)  $\Delta(g,T): \Delta(J,T)\to\Delta(I,T)$. The following is clear by definition. 

\begin{prop}\label{prop:Delta-cart}
    For any $f: I\twoheadrightarrow J$ and $T$ as above we have a commutative diagram with the square being Cartesian
    \[
    \xymatrix{
        X^I 
        & \ar[l]_{p_{IT}} \Delta(I,T) \ar[r]^{q_{IT}}& X^T 
        \\
        X^J
        \ar[u]^{\delta_g}
        & \ar[l]^{p_{JT}} \Delta(J,T). 
        \ar[u]^{\Delta_{f,T}}
        \ar[ur]_{q_{JT}}& 
    }
    \]
    \qed
\end{prop}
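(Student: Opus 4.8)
The plan is to verify the two asserted morphism identities directly on the functor of points, and then to realise $\Delta(J,K)$ as the fibre product by producing the universal map and checking that it is an isomorphism of varieties. First I would record the explicit formula for the maps: identifying a $\ol\k$-point of $X^J$ with a tuple $(y_j)_{j\in J}$, the embedding $\delta_g$ sends it to $(y_{g(i)})_{i\in I}$, and $\Delta(g,K)$ sends $\bigl((y_j)_{j\in J},(z_k)_{k\in K}\bigr)$ to $\bigl((y_{g(i)})_{i\in I},(z_k)_{k\in K}\bigr)$; this lands in $\Delta(I,K)$ precisely because surjectivity of $g$ gives $\{y_{g(i)}\}_{i\in I}=\{y_j\}_{j\in J}=\{z_k\}_{k\in K}$. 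With this description the commutativity of the triangle, $q_{IK}\circ\Delta(g,K)=q_{JK}$, and of the square, $p_{IK}\circ\Delta(g,K)=\delta_g\circ p_{JK}$, are immediate, since $p_{JK},q_{JK}$ are the two projections and $\delta_g$ records exactly the passage $(y_j)_j\mapsto(y_{g(i)})_i$.

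Next, commutativity of the square yields, by the universal property of the fibre product, a canonical morphism $\Phi\colon \Delta(J,K)\to F:=X^J\times_{X^I}\Delta(I,K)$ whose composites with the two projections are $p_{JK}$ and $\Delta(g,K)$. Since $\delta_g$ is a closed embedding, so is its base change $\mathrm{pr}_{\Delta(I,K)}\colon F\to\Delta(I,K)$; concretely, viewing $\Delta(I,K)\subset X^I\times X^K$, one has $F=(\delta_g\times\mathrm{id}_{X^K})^{-1}\Delta(I,K)\subset X^J\times X^K$. Because $\Delta(g,K)=\mathrm{pr}_{\Delta(I,K)}\circ\Phi$ is a closed embedding and $\mathrm{pr}_{\Delta(I,K)}$ is separated, $\Phi$ is itself a closed embedding. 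I would then check bijectivity on $\ol\k$-points: a point of $F$ is a pair $\bigl((y_j)_j,(z_k)_k\bigr)$ with $\{y_{g(i)}\}_{i\in I}=\{z_k\}_{k\in K}$, and surjectivity of $g$ rewrites this as $\{y_j\}_{j\in J}=\{z_k\}_{k\in K}$, which is exactly the defining condition of $\Delta(J,K)$. Hence $\Phi$ is a surjective closed embedding, and since $\Delta(J,K)$ is reduced (being, by Proposition \ref{prop:Delta-colim}, a union of reduced subvarieties), $\Phi$ identifies it with the unique reduced closed subscheme of full support, i.e. with $F_{\mathrm{red}}$; as $F_{\mathrm{red}}$ is the fibre product taken in $\Var_\k$, the square is Cartesian.

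The one point that genuinely needs care — and the reason I would phrase the conclusion in $\Var_\k$ rather than in schemes — is the scheme structure on $F$. The scheme-theoretic fibre product need not be reduced: already for $X=\AAA^1$ with $|I|=2$, $|J|=1$, $|K|=2$ and $g$ the unique surjection, eliminating variables turns the two equations cutting out $F$ into a single square, so $F$ is a non-reduced doubled line while $\Delta(J,K)$ is the reduced line, and $\Phi$ is a nilpotent thickening rather than a scheme isomorphism. Thus the content of the statement is that, after reduction, the fibre product comes out correctly, and this is forced by the surjectivity of $g$, which is exactly what collapses $\{y_{g(i)}\}_{i\in I}$ to $\{y_j\}_{j\in J}$; over the reduced varieties $\Var_\k$ the square is Cartesian on the nose, which is all that the functoriality statements of Section \ref{subsec:nonstand} require.
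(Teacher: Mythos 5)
Your proof is correct, and it is necessarily a different route from the paper's, because the paper offers no argument at all: the proposition is introduced with ``the following is clear by definition'' and closed immediately with \qed, with an alternative derivation sketched afterwards via the identification $\Delta(I,J)\simeq X^I\times_{\Ran(X)}X^J$ in $\AGS$. Your direct verification --- commutativity on points, the universal map $\Phi\colon\Delta(J,K)\to F:=X^J\times_{X^I}\Delta(I,K)$, and the identification of $\Delta(J,K)$ with $F_{\mathrm{red}}$ via a surjective closed immersion from a reduced source --- is sound, and the subtlety you isolate is genuine: in your example the ideal of $\Delta(I,K)=V(x_1-z_1,x_2-z_2)\cup V(x_1-z_2,x_2-z_1)$ is $(x_1+x_2-z_1-z_2,\,(x_1-z_1)(x_1-z_2))$, and pulling back along $x_1=x_2=y$ and eliminating $z_2=2y-z_1$ indeed leaves $(y-z_1)^2$, so $F$ is a doubled line while $\Delta(J,K)$ is the reduced one. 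Thus ``Cartesian'' must be read in $\Var_\k$ (equivalently, up to nilpotents), which is exactly what the later uses of this square require: it only ever enters through base change $\delta_g^!\,(p_{IK})_*\simeq(p_{JK})_*\,\Delta(g,K)^!$ and its $f^{[[*]]}$/$f_{[[!]]}$ variants, and these functors are insensitive to nilpotent thickenings by Kashiwara's lemma, so base change along the scheme-theoretic square descends to the reduced one. One further observation worth recording: your example also puts pressure on the paper's ``alternative'' route, since an isomorphism $\Delta(I,K)\to X^I\times_{\Ran(X)}X^K$ of sheaves on all test schemes would force the square to be Cartesian in schemes, contradicting your computation; the point $\bigl((\eps,-\eps),(0,0)\bigr)$ over $\Spec\k[\eps]/\eps^2$ lies on the reduced variety $\Delta(I,K)$ but its two projections are not identified in $\Ran(\AAA^1)$, so that proposition too should be read with the same reduced-variety caveat. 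Your formulation is the one that should be carried forward.
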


\begin{cor}\label{cor:Delta-natural-transformation}
 Consider the functors $F_1, F_2 \colon \finsurj \times \finsurj^\op \to \Var_\k^{\on{corr}}$:
 \begin{align*}
  F_2 &\colon (S,I) \mapsto X^I ,\hspace{1em} (g,I \overset{f}{\twoheadrightarrow} J) \mapsto (X^J \overset{=}{\leftarrow} X^J \overset{\delta_f}{\to} X^I)
  \\
  F_1 &\colon  (S,I)  \mapsto X^S ,\hspace{1em} (S \overset{g}{\twoheadrightarrow} T,f) \mapsto (X^S \overset{\delta_g}{\leftarrow} X^T \overset{=}{\to} X^T).
 \end{align*}
 The data of the varieties $\Delta(I,S)$ assembles into a natural transformation
 \[
  \underline \Delta \colon F_2 \Rightarrow F_1.
 \]
\end{cor}

\begin{proof}
 The natural transformation $\underline \Delta$ maps a pair $(S,I) \in \finsurj \times \finsurj^\op$ to the correspondence $X^I \leftarrow \Delta(I,S) \to X^S$. For $f \colon I \twoheadrightarrow J$ and $g \colon S \twoheadrightarrow T$, we have to show the corresponding diagram in $\Var_\k^{\on{corr}}$ commutes. This follows from the following commutative diagram, where both squares (a) and (b) are pullback squares:
 \[
  \begin{tikzcd}
   X^T & X^T \ar{l}[swap]{=} \ar{r}{\delta_g} & X^S \\
   \Delta(I,T) \ar{u}{p_{IT}} \ar{d}[swap]{q_{IT}} & \Delta(J,T) \ar{u}{p_{JT}} \ar{d}{q_{JT}} \ar{l}[swap]{\Delta_{f,T}} \ar{r}[swap]{\Delta_{J,g}} \ar[phantom, start anchor=center, end anchor=center]{ur}{\mathrm{(a)}} \ar[phantom, start anchor=center, end anchor=center]{dl}{\mathrm{(b)}} & \Delta(J,S) \ar{u}[swap]{p_{JS}} \ar{d}{q_{JS}} \\
   X^I & X^J \ar{l}{\delta_f} \ar{r}[swap]{=} & X^J.
  \end{tikzcd}
 \]
\end{proof}

An alternative way to arrive at \autoref{prop:Delta-cart} is via the 
next statement. 

\begin {prop}
The natural morphism in $\AGS$
\[
(p_{IJ}, q_{IJ}): \Delta(I,J) \lra X^I \times_{\Ran(X)} X^J
\]
is an isomorphism. 
\end{prop}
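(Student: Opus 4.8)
The plan is to show that the comparison morphism $\mu := (p_{IJ}, q_{IJ}) \colon \Delta(I,J) \to X^I \times_{\Ran(X)} X^J$ is simultaneously a monomorphism and an epimorphism in the topos $\AGS$ of \'etale sheaves of sets; since in any topos a morphism that is both mono and epi is an isomorphism, this will suffice. First I would check that $\mu$ is well defined. Writing $a_I \colon X^I \to \Ran(X)$ for the cocone maps of the colimit, the compatibility $a_I \circ \delta_u = a_Q = a_J \circ \delta_v$ holds for every span $I \xrightarrow{u} Q \xleftarrow{v} J$; by Proposition \ref{prop:Delta-colim} the variety $\Delta(I,J)$ is the union of the images $\Im(\delta_u,\delta_v)$ over such spans, so on all of $\Delta(I,J)$ the two legs $a_I \circ p_{IJ}$ and $a_J \circ q_{IJ}$ agree and $\mu$ exists. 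The monomorphism claim is then formal: the diagonal $\Ran(X) \to \Ran(X) \times \Ran(X)$ is a mono, so $X^I \times_{\Ran(X)} X^J \hookrightarrow X^I \times X^J$ is a mono; since the closed immersion $\Delta(I,J) \hookrightarrow X^I \times X^J$ factors through it and is itself a mono, $\mu$ is a mono.

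The epimorphism is the crux, and here I would exploit that the sheafification functor $a$ is left exact. Let $\Ran^{\mathrm{pre}}(X)$ denote the presheaf colimit $S \mapsto \varinjlim_{I\in\finsurj} X(S)^I$, the transition maps being the coordinate-repetition maps induced by the $\delta_g$, so that $\Ran(X) = a(\Ran^{\mathrm{pre}}(X))$. Left exactness then gives $X^I \times_{\Ran(X)} X^J = a\big( X^I \times_{\Ran^{\mathrm{pre}}(X)} X^J \big)$, and the presheaf fibre product is computed sectionwise: its $S$-sections are the pairs $(x,y) \in X(S)^I \times X(S)^J$ whose classes agree in $\varinjlim_{I} X(S)^I$. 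The key combinatorial observation is that the assignment $x \mapsto \mathrm{im}(x) \subseteq X(S)$ identifies $\varinjlim_I X(S)^I$ with the set of nonempty finite subsets of $X(S)$: every transition map preserves the image, and every tuple is equivalent to the injective tuple indexing its own image, so two tuples become equal in the colimit precisely when they have the same image. Hence the $S$-sections of $X^I \times_{\Ran^{\mathrm{pre}}(X)} X^J$ are exactly the pairs $(x,y)$ with $\mathrm{im}(x) = \mathrm{im}(y)$.

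Finally I would observe that any such pair is realised by a single span. Setting $B = \mathrm{im}(x) = \mathrm{im}(y)$, taking $u \colon I \twoheadrightarrow B$ and $v \colon J \twoheadrightarrow B$ to be the corestrictions of $x$ and $y$, and $z = (b)_{b\in B} \in X(S)^B$ the tautological tuple, one computes $\delta_u(z) = x$ and $\delta_v(z) = y$, so $(x,y) = (\delta_u,\delta_v)(z) \in \Im(\delta_u,\delta_v)(S) \subseteq \Delta(I,J)(S)$. Thus every section of the presheaf fibre product lifts through $\mu$, and after applying $a$ every section of $X^I \times_{\Ran(X)} X^J$ lifts \'etale-locally to $\Delta(I,J)$, which is exactly the assertion that $\mu$ is an epimorphism. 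Combined with the monomorphism this proves $\mu$ is an isomorphism. I expect the main obstacle to lie in this epimorphism step, specifically in the clean identification of the unsheafified Ran presheaf with the poset of finite subsets and in checking that the coincidence $\mathrm{im}(x)=\mathrm{im}(y)$ is witnessed by an honest $S$-point $z$ of some $X^Q$, so that the resulting lift to $\Delta(I,J)$ is genuine; the monomorphism claim and the topos-theoretic packaging (left exactness of $a$, and mono $+$ epi $\Rightarrow$ iso) are formal.
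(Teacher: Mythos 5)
Your mono-plus-epi packaging, your identification of the unsheafified Ran presheaf $S\mapsto\varinjlim_I X(S)^I$ with the set of nonempty finite subsets of $X(S)$, and your observation that any pair $(x,y)$ with $\mathrm{im}(x)=\mathrm{im}(y)$ is hit by an honest $S$-point of a single $X^Q$ are all correct; in substance this reproduces the paper's own argument, which tests on strictly Henselian local $S$ (where sheafification is invisible) and handles the equivalence relation via pushouts of finite sets rather than via your cleaner ``image'' invariant. The genuine gap is in the step you explicitly wave through as formal: the existence of $\mu$, i.e.\ the claim that $a_I\circ p_{IJ}=a_J\circ q_{IJ}$ on all of $\Delta(I,J)$ because the two legs agree on each $\Im(\delta_u,\delta_v)$. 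The variety $\Delta(I,J)$ is the reduced scheme-theoretic union of these closed subvarieties; that union is a colimit in $\Var_\k$, not in $\AGS$, and an $S$-point of $\Delta(I,J)$ need not factor, even \'etale-locally, through any single piece. So two maps to a sheaf that agree on every piece of this closed cover can still differ on $\Delta(I,J)$ --- and here they genuinely do.

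Concretely, take $I=J=\{1,2\}$ and $X=\AAA^1$, so that $\Delta(I,J)$ is the union of the two planes $V_1=\{x_1=y_1,\ x_2=y_2\}$ and $V_2=\{x_1=y_2,\ x_2=y_1\}$ crossing along the small diagonal. Let $S=\Spec \Oc^{\mathrm{sh}}_{\Delta(I,J),t_0}$ at a crossing point $t_0$. Every \'etale cover of such an $S$ splits, so $\Ran(X)(S)$ is exactly the set of nonempty finite subsets of $X(S)$ by your own computation; the tautological $S$-point of $\Delta(I,J)$ then yields the two subsets $\{x_1,x_2\}$ and $\{y_1,y_2\}$ of $X(S)$, and these are distinct because $x_1-y_1$ and $x_1-y_2$ each vanish on only one branch and are therefore nonzero in the local ring at $t_0$, hence in its (faithfully flat) strict henselization. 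Thus the tautological point of $\Delta(I,J)$ does not land in $X^I\times_{\Ran(X)}X^J$, and $\mu$ does not exist as a map into the fibre product. What your epimorphism step (and, in essence, the paper's proof, which also only argues that points of the fibre product factor through $\Delta(I,J)$) actually establishes is the one containment $X^I\times_{\Ran(X)}X^J\subseteq\Delta(I,J)$ inside $X^I\times X^J$, together with the fact that the fibre product is the subsheaf generated by the images of the maps $(\delta_u,\delta_v)\colon X^Q\to X^I\times X^J$. The reverse containment --- which is precisely the well-definedness of $\mu$ --- is the delicate point, and for the reduced union it fails as soon as $|I|,|J|\geq 2$; to get a true statement one must either replace $\Delta(I,J)$ by the sheaf-theoretic union of the $\Im(\delta_u,\delta_v)$ in $\AGS$, or restrict the claimed bijection to field-valued points, where the two notions of union coincide.
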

\begin{proof}
    To say that $(p_{IJ}, q_{IJ})$ is an isomorphism of sheaves on the big \'etale site,
    means that it induces a bijection on $S$-points for any scheme $S$ which is the spectrum of a strictly Henselian
    local ring. Let $S$ be given and $(p,q): S\to  X^I \times_{\Ran(X)} X^J$ be a morphism, i.e., $p: S\to X^I$
    and $q: S\to X^J$ are morphisms of schemes which become equal after map to $\Ran(X)$. We need to show
    that $(p,q)$ considered as a morphism $S\to X^I\times X^J$ factors through $\Delta(I,J)$.
    
    By definition of $\Ran(X)$ and our assumptions on $S$, 
    \[
    \Hom(S, \Ran(X)) \,=\,\varinjlim{}^\Set_{I\in\finsurj} \, \Hom(S, X^I)\,\,=\,\,\bigsqcup_{I\in\finsurj} \Hom(S, X^I) \biggl/ \equiv
    \]
    where $\equiv$ is the equivalence relation generated by the following relation $\equiv_0$.
    We say that $p: S\to X^I$ and $q: S\to X^J$ satisfy $p\equiv_0 q$, 
    if there is a diagram of surjections $I\buildrel a\over\dla L \buildrel b\over\dra J$
    such that $\delta_a p = \delta_b q$  in $\Hom(S,  X^L)$.
    
    We treat the case $p\equiv_0 q$, the general case follows easily.  Let $Q$ be the coproduct, fitting in the coCartesian square  of sets on the left:
    \[
    \xymatrix{
        L\ar@{->>}[d]_b
        \ar@{->>}[r]^a &I \ar@{->>}[d]^c
        \\
        J\ar@{->>}[r]_d&Q
    }
    \quad\quad\quad\quad
    \xymatrix{
        X^L& \ar[l]_{\delta_a} X^I
        \\
        X^J \ar[u]^{\delta_b}&\ar[l]^{\delta_d} X^Q
        \ar[u]_{\delta_c}
    }
    \]
    This square induces a Cartesian square of varieties (above right). So $p,q$ comes from a morphism $S\to X^Q$
    and our statement follows from \autoref {prop:Delta-colim}.
\end{proof}


\subsection{\texorpdfstring{$[\![\Dc]\!]$}{[[D]]}-modules and \texorpdfstring{$\Dc^!$}{D!}-modules}\label{sec:laxmodules}

\paragraph{Reminder on lax limits.} \label{par:laxlim}
Let $\scrA$ be a small $\infty$-category and $\Pc\colon \scrA \to\Cat_\infty$ an $\infty$-functor. In particular, for each object  $a\in\scrA$ we have an $\infty$-category $\Pc_a$ and for any morphism $g\colon a\to a'$ in $\scrA$ we have  an $\infty$-functor $p_g\colon \Pc_a\to\Pc_{a'}$. In this setting we have the $\infty$-category known as the {\em Cartesian Grothendieck construction}
(or {\em relative nerve} or \emph{unstraightening}) $\Groth(\Pc)$. See \cite{lurie-htt} Def. 3.2.5.2. Thus, in particular, 
\begin{itemize}
    \item[($\widebar\GR0$)] Objects ($0$-simplices) of $\Groth(\Pc)$  are pairs
    $(a,x)$ where $a\in \Ob(\scrA)$ and $x$ is an object ($0$-simplex) in $\Pc_a$. 
    \item[($\widebar\GR1$)] Morphisms ($1$-simplices) in $\Groth(\Pc)$ 
    from $(a,x)$ to $(a', x')$ are pairs $(g,\alpha)$ where $g\colon a'\to a$ is a morphism in $\scrA$ and
    $\alpha \colon x \to p_g(x')$ is a morphism in $\Pc_{a}$,  
\end{itemize}   
and so on, see {\em loc. cit.} for details.   Note that there is a ``dual" version, called the coCartesian Grothendieck construction $\Grothco(\Pc)$ with the same objects but with (higher) morphisms defined in a partially dualized way, for example,
\begin{itemize}
    \item[(${\GR1}$)] Morphisms ($1$-simplices) in $\Grothco(\Pc)$ 
    from $(a,x)$ to $(a', x')$ are  pairs $(g,\alpha)$ where $g\colon a\to a'$ is a morphism in $\scrA$ and
    $\alpha\colon p_g(x) \to x'$ is a morphism in $\Pc_{a'}$, 
\end{itemize}
and so on. In other words,
\[
\Grothco(\Pc) \,=\,\Groth(\Pc^\circ)^\op, \quad \Pc^\circ = (\Pc_a^\op, \, 
p_g^\op \colon \Pc_a^\op\to \Pc_{a'}^\op). 
\]
We have the natural projections
\[
\ol q \colon \Groth(\Pc) \lra \scrA^\op,
\quad
 q \colon \Grothco(\Pc) \lra \scrA.
\]

\begin{Defi}\label{def:lax-oplax-lim}
    The {\em lax limit}  and  {\em op-lax limit} of $\Pc$ are the $\infty$-categories
    \[
    \laxlim (\Pc) \,\,=\,\,\Sect(\Grothco(\Pc)/\scrA), \quad \oplaxlim(\Pc) \,\,=\,\,\Sect(\Groth(\Pc)/\scrA)
    \]
    formed by sections of $\ol q$ and $ q$, i.e., by $\infty$-functors 
    (morphisms of simplicial sets)
    $s\colon \scrA \to \Grothco(\Pc)$ (resp. $\ol s \colon \scrA^\op \to \Groth(\Pc)$)
    such that $q s=\Id$ (resp. $\ol q \ol s=\Id$). Thus  
    $\laxlim(\Pc) \,=\, \oplaxlim(\Pc^\circ)^\op$. 
\end{Defi}

\begin{rem}
 For a more intrinsic approach to lax and oplax limits in the context of $\infty$-categories, see \cite{ghn-lax-colim}.
\end{rem}

\begin{exas}~
    \begin{statements}
        \item Thus, an object of $\laxlim(\Pc)$ is a following set of data: 
        \begin{itemize}
            \item[(0)] For each $a\in\Ob(\scrA)$, an object $x_a\in \Pc_a$.
            
            \item[(1)] For each morphism $g: a_0\to a_1$ in $\Ac$, a morphism (not necessarily an equivalence)
            $\gamma_g\colon p_g(x_{a_0})\to x_{a_1}$.
            
            \item[(2)] For each composable pair $a_0\buildrel g_0\over\to a_1\buildrel g_1\over\to a_2$ in $\scrA$,
            a homotopy  (necessarily invertible, as we work in an $(\infty, 1)$-category)
            $\gamma_{g_1} \circ p_{g_1}(\gamma_{g_0})\Rightarrow \gamma_{g_1g_0}$.
            
            \item[(p)] Similar homotopies for composable chains in $\scrA$ of length $p$ for any $p$. 
        \end{itemize}
        
        \item Similarly, an  object of $\oplaxlim(\Pc)$ is a set of data with part (0) identical to the above, part (1)  replaced by morphisms
        $\beta_g\colon x_{a_1} \to p_g(x_{a_0})$ and so on. 
    \end{statements}
\end{exas}

\begin{Defi}
    A morphism $(a_0, x_{a_0}) \to (a_1,x_{a_1})$ in $\Groth(\Pc)$ is called Cartesian if the corresponding morphism $x_{a_0} \to p_g(x_{a_1})$ is an equivalence.
    Dually, a morphism $(a_0, x_{a_0}) \to (a_1,x_{a_1})$ in $\Grothco(\Pc)$ is called coCartesian if the corresponding morphism $p_g(x_{a_0}) \to x_{a_1}$ is an equivalence.
\end{Defi}

In particular, the full sub-$\infty$-category of $\oplaxlim \Pc$ spanned by sections $s$ such that for any map $g$ in $\scrA$, $s(g)$ is Cartesian is equivalent to the limit $\holim \Pc$ (see \cite[Cor. 3.3.3.2]{lurie-htt}).
Dually, the full sub-$\infty$-category of $\laxlim \Pc$ spanned by sections mapping every arrow to a coCartesian one is equivalent to $\holim \Pc$.

Let now $\Qc \colon \scrB \to \scrA$ be another $\infty$-functor. There are pullback diagrams
\[
\xymatrix{
    \Groth(\Pc \circ\Qc) \ar[r] \ar[d] & \Groth(\Pc) \ar[d]
    \\
    \scrB^\op \ar[r]_{\Qc} & \scrA^\op
}
\hspace{1cm}
\xymatrix{
    \Grothco(\Pc \circ\Qc) \ar[r] \ar[d] & \Grothco(\Pc) \ar[d]
    \\
    \scrB \ar[r]_{\Qc} & \scrA.
}
\]
In particular, pulling back sections defines projections
\[
\oplaxlim \Pc \lra \oplaxlim \Pc \circ \Qc \hspace{1cm} \text{and} \hspace{1cm} \laxlim \Pc \lra \laxlim \Pc \circ \Qc,
\]
both compatible with the projection $\holim \Pc \to \holim \Pc \circ \Qc$.

\paragraph{Lax and strict $[\![\Dc]\!]$- and $\Dc^!$-modules on diagrams.}\label{par:modulesondiagrams}
By a {\em diagram} of varieties of finite type over $\k$ we mean a datum of a small category $\scrA$ and
a functor $\Yc\colon \scrA \to \Var_\k$. That is, for each object $a \in \scrA$ we have a (possibly singular) variety $Y_a$ and for each
morphism $g\colon a \to b$ in $\scrA$ we have a morphism of varieties $\xi_g: Y_a\to Y_b$. 

Given a diagram $\Yc$, we have two $\infty$-functors $\scrA^\op\to \Cat_\infty^\st$:
\[
\begin{gathered}
    \Dc^{[\![*]\!]}_\Yc\colon a\mapsto \Pro(\Perf_{\Dc, Y_a}), \quad (g\colon a\to a')\mapsto \xi_g^{[\![*]\!]} 
    \\
    \Dc^!_\Yc\colon a\mapsto D(\QCoh_{\Dc, Y_a}), \quad (g\colon a\to a')\mapsto \xi_g^{!}. 
\end{gathered}
\]

\begin{Defi}\label{def:D-mod-diagram}
We define the $\infty$-categories
    \[
    \Mod_{[\![\Dc]\!]}(\Yc) \,\,=\,\,\laxlim (\Dc^{[\![*]\!]}_\Yc), \quad\quad \Mod_\Dc^!(\Yc) \,\,=\,\,\oplaxlim(\Dc_\Yc^!)
    \]
    whose objects will be called {\em lax $[\![\Dc]\!]$-modules} and {\em lax $\Dc^!$-modules} on $\Yc$. 
\end{Defi}

\begin{rems} \label{rems:*!-D-modules}  ~
    \begin{statements}
        \item Thus, a lax $[\![\Dc]\!]$-module $F$  on $\Yc$ can be viewed as  a family $(F_a)$ where $F_a$ is a 
        pro-coherent complex of left $\Dc$-modules over $Y_a$, together with {\em transition (compatibility)
            maps} 
        \[
        \gamma_g\colon \xi_g^{[\![*]\!]} F_{a'} \lra F_a, 
        \]
        given  for any $g \colon a\to a'$  in $\scrA$ and further compatible
        under compositions of the $g$'s.  Because of the adjunction between $\xi_g^{[\![*]\!]}$ and $(\xi_g)_*$,
        we can write transition maps of a lax $[\![\Dc]\!]$-module in the {\em dual form},
        as morphisms
        \[
        \gamma_g^\dagger\colon F_{a'}  \lra (\xi_g)_* F_a. 
        \]
        Since  $(\xi_g)_*$ preserves coherent $\Dc$-modules, this allows us to deal with some  lax 
        $[\![\Dc]\!]$-modules without using pro-objects. 
        
        \item Similarly, we will view a   lax $\Dc^!$-module  on $\Yc$ as a family $(E^{(a)})$ where $E^{(a)}$ is a quasicoherent 
        (i.e., ind-coherent) complex of 
        right $\Dc$-modules on $Y_a$, together with transition maps  
        \[
        \beta_g\colon E^{(a)}  \lra \xi^!_g  E^{(a')}
        \]
        given  for any   $g \colon a\to a'$ in $\scrA$
        and further compatible under composition of the $g$'s. As before, we can define the structure maps of a lax $\Dc^!-$module in
        {\em the dual form}, as morphisms
        \[
        \beta_g^\dagger\colon (\xi_g)_* E^{(a')} \lra E^{(a)},
        \]
        using the adjunction between  $(\xi_g)_*$ and  $\xi_g^!$. 
    \end{statements}
\end{rems}

\begin{Defi}~
    \begin{statements}
        \item A lax $[\![\Dc]\!]$-module $F$ is called {\em strict}, if all transition maps $\gamma_g$ are equivalences. We denote
        by $\strict{\Mod}_{[\![\Dc]\!]}(\Yc)$ the full $\infty$-category of strict $[\![\Dc]\!]$-modules on $\Yc$.  
        It 
        embeds fully faithfully into  $\LDM(\Yc)$.
        
        \item A lax  $\Dc^!$-module $E$ on $\Yc$ is called {\em strict}, if 
        the transition maps  $\beta_g$ are equivalences. Let 
        $\DsM(\Yc)$ be the $\infty$-category of  $\Dc^!$-modules on $\Yc$. It 
        embeds fully faithfully into $\LDsM(\Yc)$.
    \end{statements}
\end{Defi}

In other words
\[
\strict{\Mod}_{[\![\Dc]\!]}(\Yc) \,=\,\varprojlim \, (\Dc^{[\![*]\!]}_\Yc), \quad\quad \DsM(\Yc) \,=\,
\varprojlim (\Dc_\Yc^!)
\]
are the strict ($\infty$-categorical) limits of the same $\infty$-functors as above, cf. \cite{FG} \S 2.1. 
The strict limit consists of the Cartesian sections of the Grothendieck construction inside
all sections.

\paragraph{$[\![\Dc]\!]$- and $\Dc^!$-modules on the Ran diagram.} 
We now specialize to the case when $\Ac=\finsurj^\op$ and $\Yc = X^\finsurj$ is the Ran diagram. 
Thus, a lax $[\![\Dc]\!]$-module $F$ on $X^\finsurj$ consists of pro-coherent complexes of  $\Dc$-modules $F_I$ on $X^I$, 
morphisms $\gamma_g\colon \delta_g^{[\![*]\!]} F_J \to F_I$ plus coherent higher compatibilities for the $\gamma_g$. 
A lax $\Dc^!$-module $E$ on $X^\finsurj$ consists of quasi-coherent complexes of $\Dc$-modules $E^{(I)}$ on $X^I$,
morphisms $\beta_g\colon E^{(J)} \to \delta_g^! E^{(I)}$ plus  coherent higher compatibilities for the $\beta_g$.

Recall also, the Ran space $\Ran(X) = \varinjlim X^\finsurj$. 
It is clear that strict  modules 
can be defined invariantly in terms of 
the space
$\Ran(X)$, while the concept of  a lax module is tied to the specific diagram $X^\finsurj$
representing $\Ran(X)$. Nevertheless, most of our constructions can and will be performed
directly on $X^\finsurj$.

\paragraph{Strictification of lax $\Dc^!$-modules.} 
We have the full embedding
\[
\DsM(X^\finsurj) \hookrightarrow \LDsM(X^\finsurj).
\]
The left adjoint $\infty$-functor to this embedding will be called the {\em strictification} and will be denoted $E \mapsto \strict{E}$.
Its existence can be guaranteed on general grounds,
see \cite{GL} (5.2). Here we give an explicit formula for it.

\begin{Defi}\label{def:!-strictif}
    Let $E = \bigl( E^{(I)}, \beta_g\colon E^{(J)}\to\delta_g^! E^{(I)}\bigr)$ be  a lax $\Dc^!$-module $E$  on $X^\finsurj$. 
    Its {\em strictification}  is the strict $\Dc^!$-module $\strict{E}$ on $X^\finsurj$ defined by
    \[
    \strict E^{(I)} \,\,=\,\,\hocolim_{K\in\finsurj} (p_{IK})_*\, q_{IK}^! E^{(K)}, 
    \]
    where $p_{IK}$ and $q_{IK}$ are the canonical projections of the variety $\Delta(I,K)$, see
    \eqref{eq:p-q-IJ}. The structure map
    \[
    \strict{\beta}{}_g: \strict{E}^{(J)}\buildrel\simeq \over \lra \delta_g^! \strict{E}^{(I)}, \quad g: I\dra J
    \]
    comes from identification of the target with
    \begin{multline*}
        \hocolim_K \, \delta_g^! (p_{IK})_* q_{IK}^! \, E^{(K)} \,\,\simeq \,\, \hocolim_K \, (p_{JK})_*\,  \Delta(g,K)^! \, q_{IK}^! \, E^{(K)}
        \\
        \simeq \,\, \hocolim_K \,  (p_{JK})_*\, q_{JK}^! \, E^{(K)} \,\,=\,\, \strict{E}^{(J)},
    \end{multline*}
    where we used the base change theorem for the Cartesian square in   \autoref{prop:Delta-cart} as well as the commutativity of the
    triangle there. 
\end{Defi}

\paragraph{Strictification of lax $[\![\Dc]\!]$-modules.} 
The theory here is parallel to the $\Dc^!$-module case.  

\begin{Defi}
    Let $F= \bigl( F_I, \gamma_g: \delta_g^{[\![*]\!]} F_I\to F_J\bigr)$ be a lax $[\![\Dc]\!]$-module on $X^\finsurj$. 
    Its {\em strictification} is the strict $[\![\Dc]\!]$-module $\strict F$ on $X^\finsurj$ defined by
    \[
    \strict {F}{} _I \,=\,\holim_{K\in\finsurj} (p_{IK})_{[\![!]\!]} \,\,q_{IK}^{[\![*]\!]} \,\,  F_K,
    \]
    with the structure map
    \[
    \strict{\gamma}{}_g :  \delta_g^{[\![*]\!]} \strict{F}{}_I \buildrel\simeq \over \lra \strict{F}{}_J, \quad g: I\dra J
    \]
    induced by the base change in the square of \autoref{prop:Delta-cart}
\end{Defi}

The $\infty$-functor $F\mapsto\strict F$ is right adjoint to the embedding 
$\DM(X^\finsurj) \hookrightarrow \LDM(X^\finsurj)$.

\paragraph{Factorization homology.} 
\begin{Defi}
    We call the {\em  factorization homology}  
    (resp.~{\em compactly supported factorization homology}) of a (lax)  $\Dc^!$-module $E$ on $X^\finsurj$
    the complex of $\k$-vector spaces
    \begin{align*}
        &\int_X E\,\, :=\,\, \hocolim_{I\in\finsurj} \,\,  R\Gamma_{\DR}(X^I, E^{(I)})\,\,\in \,\, \Ind(\Perf_\k) = C(\k), 
        \\
        &\int_X^{[\![\c]\!]}  E\,\, :=\,\, \hocolim_{I\in\finsurj} \,\,  R\Gamma_\DR^{[\![\c]\!]}(X^I,  E^{(I)})\,\,\in \,\,\Ind(\Pro(\Perf_\k)). 
    \end{align*}
\end{Defi}

Thus $\int_X E$ is just a (possibly infinite-dimensional) complex of $\k$-vector spaces,
as $\Ind(\Perf_\k) = C(\k)$ is the $\infty$-category of all chain complexes over $\k$. On the other hand,  
$\int_X^{[\![\c]\!]} E$ is an ind-pro-finite-dimensional complex. 

\begin{Defi}
    We call the {\em compactly supported factorization cohomology} (resp.~{\em factorization homology})  of a (lax) $[\![\Dc]\!]$-module $F$ 
    on $X^\finsurj$ the complexes
    \begin{align*}
        &\oint_X^{[\![\c]\!]} F\,\, :=\,\, \holim_{I\in \finsurj} \,\,  R\Gamma_\DR^{[\![\c]\!]} (X^I, F_I) \,\,\in \,\, \Pro(\Perf_\k),
        \\
        &\oint_X F\,\, :=\,\, \holim_{I\in \finsurj} \,\,  R\Gamma_\DR(X^I, F_I) 
        \,\,\in \,\, \Pro(\Ind(\Perf_\k)). 
    \end{align*}
\end{Defi}

Those constructions define exact $\infty$-functors between stable $\infty$-categories:
\begin{align*}
    &\int_X \colon  \LDsM(X^\finsurj)  \lra C(\k),
    \\ &\int_X^{[\![\c]\!]} \colon  \LDsM(X^\finsurj)  \lra \Ind(\Pro(\Perf_\k)),
    \\
    &\oint_X^{[\![\c]\!]} \colon \LDM(X^\finsurj) \lra \Pro(\Perf_\k), 
    \\
    &\oint_X \colon \LDM(X^\finsurj) \lra \Pro(\Ind(\Perf_\k)). 
\end{align*}

\begin{prop}
    All four types of factorization homology are unchanged under strictification, i.e., we have 
    \[
    \begin{gathered}
        \int_X E \,\,\simeq \,\,\int_X  \strict{E}, \quad \quad \int^{[\![\c]\!]}_X E \,\,\simeq \,\,\int^{[\![\c]\!]}_X  \strict{E}, \quad \quad 
        E\in \LDsM(X^\finsurj),
        \\
        \oint_X^{[\![\c]\!]} F \,\,\simeq \,\,\oint^{[\![\c]\!]}_X  \strict{F}, \quad \quad \oint_X F \,\,\simeq \,\,\oint_X  \strict{F}, 
        \quad\quad
        F\in\LDM(X^\finsurj). 
    \end{gathered} 
    \]
    
\end{prop}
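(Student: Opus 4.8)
The plan is to deduce all four equivalences from a single module-level descent statement, after first using Verdier duality to halve the work. The anti-equivalence $\Mc\mapsto\Mc^\vee$ interchanges $f^!$ with $f^{[[*]]}$ and $f_*$ with $f_{[[!]]}$, hence carries strict $\Dc^!$-modules to strict $[[\Dc]]$-modules and intertwines the two strictification functors (the formula for $\strict F$ is term-by-term Verdier dual to Definition \ref{def:!-strictif}). Since it also exchanges $\int_X$ with $\oint_X$ and $\int_X^{[[\c]]}$ with $\oint_X^{[[\c]]}$, the two $[[\Dc]]$-statements follow formally from the two $\Dc^!$-statements, and I would only prove the latter.

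For the $\Dc^!$-case I would expand the strictification. Writing $R\Gamma_\DR(X^I,-)=(a_{X^I})_*$, which preserves colimits and satisfies $(a_{X^I})_*(p_{IK})_*=R\Gamma_\DR(\Delta(I,K),-)$, substitution of $\strict E^{(I)}=\hocolim_K(p_{IK})_* q_{IK}^! E^{(K)}$ gives
\[
\int_X\strict E \;\simeq\; \hocolim_{I}\hocolim_{K} R\Gamma_\DR\bigl(\Delta(I,K),\,q_{IK}^! E^{(K)}\bigr).
\]
The compactly supported version is identical: because each $p_{IK}$ is finite, hence proper, one has $(p_{IK})_*\simeq(p_{IK})_{[[!]]}$, so $R\Gamma_\DR^{[[\c]]}(X^I,(p_{IK})_*-)\simeq R\Gamma_\DR^{[[\c]]}(\Delta(I,K),-)$ and the same double colimit appears with $R\Gamma_\DR$ replaced by $R\Gamma_\DR^{[[\c]]}$. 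Thus both $\Dc^!$-statements reduce to showing that the inner colimit over $I$ collapses the diagonal varieties back onto $X^K$, i.e. to the module-level claim
\[
(\star)\qquad \hocolim_{I\in\finsurj}\,(q_{IK})_* \, q_{IK}^! E^{(K)} \;\simeq\; E^{(K)}\qquad(K\in\finsurj),
\]
after which one applies $R\Gamma_\DR(X^K,-)$ (resp. $R\Gamma_\DR^{[[\c]]}(X^K,-)$), uses Fubini for homotopy colimits, and is left with $\hocolim_K R\Gamma_\DR(X^K,E^{(K)})=\int_X E$ (resp. its $[[\c]]$-analogue).

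I would prove $(\star)$ as an instance of proper ($!$-)co-descent. The identification $\Delta(I,K)\simeq X^I\times_{\Ran(X)}X^K$ together with $\Ran(X)=\varinjlim_{I\in\finsurj}X^I$ exhibits $X^K$ as the colimit $\varinjlim_I\Delta(I,K)$ along the finite maps $q_{IK}$, and the transition maps in the diagram $I\mapsto(q_{IK})_* q_{IK}^! E^{(K)}$ are supplied by the base-change identities of Proposition \ref{prop:Delta-cart}. For such a colimit presentation by proper maps the counit $\hocolim_I(q_{IK})_* q_{IK}^! \to \mathrm{id}$ is an equivalence, which yields $(\star)$; one then checks that the resulting comparison $\int_X E\to\int_X\strict E$ is exactly the map induced by the unit $E\to\strict E$ of the strictification adjunction.

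The main obstacle is precisely $(\star)$: one must verify that $\{q_{IK}\}_I$ genuinely realizes $X^K$ as a colimit in the relevant variance (here the non-filtered combinatorics of $\finsurj$ matter, the singleton being the \emph{initial} object of $\finsurj^\op$ so that no finite stage is cofinal and the diagonal strata are built up correctly), and that $!$-pullback followed by proper pushforward satisfies descent along this presentation. Once $(\star)$ is established, the remaining points — commuting global de Rham (co)homology past colimits and proper pushforwards, the identity $(p_{IK})_*\simeq(p_{IK})_{[[!]]}$ for finite maps, and the passage to the $[[\Dc]]$-statements by duality — are all routine.
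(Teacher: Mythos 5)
Your architecture is sound and correctly isolates the real content of the statement: the Verdier-duality reduction works (the two strictification formulas are term-by-term dual and the four homology functors are exchanged in pairs by Proposition \ref{prop:Verdierduality}), and expanding $\strict{E}^{(I)}$, commuting $R\Gamma_\DR$ (resp.\ $R\Gamma^{[[\c]]}_\DR$, using $(p_{IK})_*\simeq (p_{IK})_{[[!]]}$ for the finite map $p_{IK}$) past the colimits and applying Fubini does reduce everything to your claim $(\star)$, which is moreover true. The gap is that $(\star)$ is the \emph{entire} non-formal content of the proposition, and the principle you offer for it is false. A $1$-categorical colimit presentation of a variety by proper (even finite) maps does not imply that $\hocolim_\alpha\, (i_\alpha)_*\, i_\alpha^!\to\Id$ is an equivalence: take the parallel-pair diagram $W\rightrightarrows W$ with both arrows the identity; its colimit of sheaves of sets is $W$ and all structure maps are isomorphisms, yet the homotopy colimit of the induced diagram of $\Dc$-modules is $\Mc\otimes_\k C_\bullet(S^1)\simeq\Mc\oplus\Mc[1]$, because the nerve of the index category is a circle. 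So ``$X^K=\varinjlim_I\Delta(I,K)$ with $q_{IK}$ finite'' yields nothing by itself; the homotopy type of the indexing combinatorics must enter, and you explicitly leave exactly this verification — the only step that is not routine — undone.

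What actually proves $(\star)$ is a d\'evissage along the diagonal stratification of $X^K$ combined with the contractibility of slice categories of $\finsurj$. Using the triangle $i_*i^!\to\Id\to j_*j^!$ and base change, it suffices to check the counit $\hocolim_I\,(q_{IK})_*\,q_{IK}^!\Mc\to\Mc$ after $!$-restriction to each locally closed stratum of $X^K$. Over the stratum where the coordinates realize exactly the partition with quotient $Q=K/P$, the map $q_{IK}$ restricts to a split finite \'etale cover with fiber $\on{Surj}(I,Q)$, so the restricted diagram is $I\mapsto \Nc^{\oplus\on{Surj}(I,Q)}$ with transition maps given by precomposition $\phi\mapsto\phi\circ g$. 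Its colimit is $\Nc\otimes_\k\hocolim_{I\in\finsurj^\op}\k[\on{Surj}(I,Q)]$, and the category of elements of $I\mapsto\on{Surj}(I,Q)$ is $(\finsurj_{/Q})^\op$, whose nerve is contractible because $\finsurj_{/Q}$ has the terminal object $\Id_Q$; hence the colimit is $\Nc$ and the counit is an equivalence stratum by stratum. This is precisely where the non-filtered combinatorics of $\finsurj$ that you allude to actually do their work, and it is the ingredient your proposal names but does not supply.
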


\begin{prop}\label{prop:Verdierduality}
    Verdier duality (on each $X^I$)  induces equivalences
    \[
    (-)^\vee \colon \LDsM(X^\finsurj)^\op \simeq \LDM(X^\finsurj) \quad\text{and}\quad 
    \strict{\Mod}^!_\Dc(X^{\finsurj})^\op\simeq \strict{\Mod}_{[\![\Dc]\!]}(X^{\finsurj}), 
    \]
    compatible with the inclusion $\infty$-functors. Moreover, there are  natural equivalences
    \[
    \oint_X^{[\![\c]\!]} (-)^\vee \,\,\simeq\,\,  \left(\int_X -\right)^*, \quad \quad 
    \int_X^{[\![\c]\!]} (-)^\vee \,\,\simeq\,\,  \left(\oint_X -\right)^*.
    \]
\end{prop}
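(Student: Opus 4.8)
\noindent\emph{Proof proposal.} The plan is to deduce everything from a single levelwise input --- Verdier duality on each $X^I$ --- together with the formal behaviour of lax limits under passage to opposite categories. First I would record the key compatibility at the level of one surjection $g\colon I\twoheadrightarrow J$. By the very definition of the formal inverse image, $\delta_g^{[[*]]}\Nc = (\delta_g^!(\Nc^\vee))^\vee$, and since Verdier duality is involutive this is equivalent to the intertwining
\[
(\delta_g^!\,\Mc)^\vee \,\simeq\, \delta_g^{[[*]]}(\Mc^\vee), \qquad \Mc\in D(\QCoh_{\Dc,X^I}).
\]
Thus the anti-equivalences $D(\QCoh_{\Dc,X^I}) \simeq \Pro(D^b\Coh_{\Dc,X^I})$, viewed as equivalences $D(\QCoh_{\Dc,X^I})^\op \xrightarrow{\sim} \Pro(D^b\Coh_{\Dc,X^I})$, assemble into a natural equivalence of functors $\finsurj\to\Cat_\infty$ between $(\Dc^!_{X^\finsurj})^\op$ and $\Dc^{[[*]]}_{X^\finsurj}$, where $(-)^\op$ denotes postcomposition with the involution of $\Cat_\infty$. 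The step requiring care is that this is not merely a family of commuting squares but a \emph{coherent} natural transformation; I would obtain the higher coherences for free from the two $(\infty,2)$-functors $\Dcorr$ and $\Dc^{[[\corr]]}$ of \S\ref{subsec:Dmod-dif}, between which levelwise Verdier duality is precisely the equivalence encoding all base-change and composition data.

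Next I would invoke the general identity relating the two flavours of lax limit with opposites. Writing $\Grothco(\Pc)=\Groth(\Pc^\op)^\op$ and taking opposites of the corresponding section categories gives, for any $\Pc\colon\scrB\to\Cat_\infty$, the equivalence $\laxlim(\Pc)^\op \simeq \oplaxlim(\Pc^\op)$. Applying this with $\Pc=\Dc^!_{X^\finsurj}$ and feeding in the natural equivalence above,
\[
\LDsM(X^\finsurj)^\op \,=\, \laxlim(\Dc^!_{X^\finsurj})^\op \,\simeq\, \oplaxlim\bigl((\Dc^!_{X^\finsurj})^\op\bigr)\,\simeq\,\oplaxlim(\Dc^{[[*]]}_{X^\finsurj})\,=\,\LDM(X^\finsurj),
\]
which is the first asserted equivalence. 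Since this equivalence sends the transition maps $\beta_g$ of a lax $\Dc^!$-module $E$ to the transition maps $\gamma_g$ of its dual (with $(E^\vee)_I=(E^{(I)})^\vee$) and carries equivalences to equivalences, it identifies the full subcategory where every $\beta_g$ is an equivalence with the one where every $\gamma_g$ is, giving $\strict{\Mod}^!_\Dc(X^\finsurj)^\op\simeq\strict{\Mod}_{[[\Dc]]}(X^\finsurj)$; compatibility with the two inclusion functors is then automatic.

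Finally, for the factorization-homology identities I would reduce to the pointwise case. For $p\colon X^I\to\pt$ the relation $p_{[[!]]}\Nc = (p_*(\Nc^\vee))^\vee$, together with the fact that Verdier duality over a point is $\k$-linear duality, yields the Serre-duality identity
\[
R\Gamma^{[[\c]]}_\DR\bigl(X^I,(E^{(I)})^\vee\bigr) \,\simeq\, \bigl(R\Gamma_\DR(X^I,E^{(I)})\bigr)^*.
\]
Since $(-)^*$ is contravariant and continuous, it exchanges $\hocolim_{I\in\finsurj}$ with $\holim_{I\in\finsurj}$, whence
\[
\oint^{[[\c]]}_X(E^\vee) \,=\, \holim_I R\Gamma^{[[\c]]}_\DR\bigl(X^I,(E^{(I)})^\vee\bigr)\,\simeq\,\holim_I\bigl(R\Gamma_\DR(X^I,E^{(I)})\bigr)^* \,\simeq\,\Bigl(\hocolim_I R\Gamma_\DR(X^I,E^{(I)})\Bigr)^*\,=\,\Bigl(\int_X E\Bigr)^*,
\]
and the second identity follows identically, with the roles of $\holim$ and $\hocolim$ interchanged. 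The only genuine obstacle throughout is the first one: upgrading levelwise Verdier duality to a coherent equivalence of diagrams indexed by $\finsurj$, which is exactly what the correspondence formalism of \S\ref{subsec:Dmod-dif} is set up to provide.
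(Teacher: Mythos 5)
Your argument is correct and is essentially the (omitted) formal proof the paper intends: since $f^{[[*]]}$ and $f_{[[!]]}$ are \emph{defined} by conjugating $f^!$ and $f_*$ with Verdier duality, the levelwise anti-equivalences $D(\QCoh_{\Dc,X^I})^\op\simeq\Pro(\Perf_{\Dc,X^I})$ coherently intertwine $\Dc^!_{X^\finsurj}$ with $\Dc^{[[*]]}_{X^\finsurj}$, and the identity $\laxlim(\Pc)^\op\simeq\oplaxlim(\Pc^\op)$ (immediate from $\Grothco(\Pc)=\Groth(\Pc^\op)^\op$) then yields the stated equivalences, with the homology identities reducing to Serre duality over a point and the fact that a contravariant equivalence exchanges $\hocolim$ and $\holim$. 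The only slip is bibliographic: the coherence data you invoke lives in the subsection on correspondences and base change (the $(\infty,2)$-functors $\Dcorr$ and $\Dc^{[[\corr]]}$), not in the subsection on differential sheaves that your reference points to.
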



\subsection{Covariant Verdier duality and the diagonal filtration}\label{subsec:coVerdieranddiag}

The content of this section is inspired from \cite{GL}.

\paragraph{For (lax) $\Dc^!$-modules.}
We denote by $\Coh^!_\Dc (X^\finsurj)$   the full sub-$\infty$-category of  $\LDsM(X^\finsurj)$  
spanned by lax $\Dc^!$-modules  $E=(E^{(I)} )$ such that each $E^{(I)} $ is coherent as a complex of $\Dc$-modules over $X^I$ (i.e. belongs to $\Perf_{\Dc, X^I}$).

Since for any $f \colon I \twoheadrightarrow J$, the diagonal embedding $\delta_f \colon X^J \to X^I$ is closed (hence proper), the associated pullback functor $\delta_f^!$ admits a left adjoint $\delta_{f*}$ by \autoref{prop:base1}. We get an $\infty$-functor $\Dc_*^{X^\finsurj} \colon \finsurj^\op \to \Cat_\oo$ such that $\Dc_*^{X^\finsurj}(f) = \delta_{f*}$. We get
\[
 \LDsM(X^\finsurj) := \oplaxlim \Dc^!_{X^\finsurj} \simeq \laxlim \Dc_*^{X^\finsurj}.
\]
Since each $\delta_{f*}$ preserves coherent $\Dc$-modules, $\Dc_*^{X^\finsurj}$ admits a sub-functor $\Dc_*^{\Coh} \colon $ and we find
\[
 \Coh^!_\Dc (X^\finsurj) \simeq \laxlim \Dc_*^{\Coh} \subset \laxlim \Dc_*^{X^\finsurj}.
\]
Similarly, consider the $\infty$-functor $\Dc_{[\![!]\!]}^{X^\finsurj} \colon \finsurj^\op \to \Cat_\oo$ that maps $I$ to the $\infty$-category $\Pro(\Perf_{\Dc, X^I})$ and $f \colon I \twoheadrightarrow J$ to $\delta_{f[\![!]\!]}$. Since each $\delta_f$ is proper, the functor $\delta_{f[\![!]\!]}$ preserves perfect $\Dc$-modules and coincides with $\delta_{f*}$ on such $\Dc$-modules. In particular, $\Dc_*^{\Coh}$ identifies as a sub-functor of $\Dc_{[\![!]\!]}^{X^\finsurj}$. We get
\[
 \Coh^!_\Dc (X^\finsurj) \simeq \laxlim \Dc_*^{\Coh} \subset \laxlim \Dc_{[\![!]\!]}^{X^\finsurj}.
\]

 Using the natural transformation $\underline \Delta$ of \autoref{cor:Delta-natural-transformation}, we find a diagram
\[
\begin{tikzcd}[row sep=tiny]
& \finsurj^\op \ar{dr}
  \ar[bend left=15]{rrrd}{\Dc_{[\![!]\!]}^{X^\finsurj}} \\
 \finsurj \times \finsurj^\op \ar{ur}{\pi_2} \ar{dr}[swap]{\pi_1} &
 \big\Downarrow {\ul\Delta}
 & \Var_\k^{\on{corr}} \ar{rr}
 {\Dc^{[\![\corr]\!]}} && \Cat_\oo. \\
& \finsurj \ar{ur} \ar[bend right=15]{urrr}[swap]{\Dc^{[\![*]\!]}_{X^\finsurj}}
\end{tikzcd}
\]
where $\pi_1$ and $\pi_2$ are the projections. We find a sequence of $\infty$-functors
\begin{equation}\label{eq:building-coVerdier}
 \Coh^!_\Dc (X^\finsurj) \hookrightarrow \laxlim \Dc_{[\![!]\!]}^{X^\finsurj} \to 
\laxlim \left(\Dc_{[\![!]\!]}^{X^\finsurj} \circ \pi_2 \right)  \to \laxlim \left(\Dc^{[\![*]\!]}_{X^\finsurj} \circ \pi_1 \right).
\end{equation}
Since each $\delta_f^{[\![*]\!]}$ is a left adjoint and thus preserves colimits, we have a relative left Kan extension functor (see \cite[\S 10]{shah}) along the projection $\pi_1$:
\[
 \laxlim \left(\Dc^{[\![*]\!]}_{X^\finsurj} \circ \pi_1 \right) \lra \laxlim \Dc^{[\![*]\!]}_{X^\finsurj}.
\]
We now define an $\oo$-functor $\phi$ as the composite
\[
 \Coh^!_\Dc (X^\finsurj) \lra \laxlim \left(\Dc^{[\![*]\!]}_{X^\finsurj} \circ \pi_1 \right) 
 \buildrel  \eqref{eq:building-coVerdier}\over\lra
  \laxlim \Dc^{[\![*]\!]}_{X^\finsurj},
\]
where the second arrow   is the composition of the functors
in Eq.  \eqref{eq:building-coVerdier}.
A more explicit form of the values of $\phi$ on objects is given in the proof of the next lemma.

\begin{lem}
 For any $E \in \Coh^!_\Dc (X^\finsurj)$, we have $\phi(E) \in \strict{\Mod}_{[\![\Dc]\!]}(X^\finsurj)$.
\end{lem}

\begin{proof}
 We use the notations of \autoref{prop:Delta-cart}. Unravelling the above construction, for $E \in \Coh^!_\Dc (X^\finsurj)$ and $I \in \finsurj$, we have
 \[
  \phi(E)_I = \hocolim_{T\in\finsurj} \,  (p_{IT})_{*}\,  q_{IT}^{[\![*]\!]}\, \,  E^{(T)}.
 \]
The transition morphism associated to $g \colon I \twoheadrightarrow J$ is then the composite
 \begin{multline*}
  \delta_f^{[\![*]\!]} (\phi(E)_I) = \delta_f^{[\![*]\!]} \left( \hocolim_{T \in \finsurj}\, (p_{IT})_* \, q_{IT}^{[\![*]\!]}~E^{(T)} \right) \simeq \hocolim_{T \in \finsurj} \delta_f^{[\![*]\!]}\, (p_{IT})_* \, q_{IT}^{[\![*]\!]}~E^{(T)} 
  \\
  \underset{\mathrm{BC}}{\overset\sim\longrightarrow} \hocolim_{T \in \finsurj} (p_{JT})_*\, \Delta_{f,T}^{[\![*]\!]}\, q_{IT}^{[\![*]\!]}(E^{(T)}) \simeq (\phi(E))_J
 \end{multline*}
 where BC is the base change equivalence. The result follows.
\end{proof}

\begin{Defi}
    We call the {\em covariant Verdier duality} the $\infty$-functor defined above
    \[
    \phi:  \Coh^!_\Dc (X^\finsurj) \lra \strict{\Mod}_{[\![\Dc]\!]}(X^\finsurj), \quad 
    (\phi(E))_I = \hocolim_{K\in\finsurj} \,  (p_{IK})_*\,\,  q_{IK}^{[\![*]\!]}\, \,  E^{(K)},
    \]
    where $p_{IK}$ and $q_{IK}$ are the projections of the subvariety $\Delta(I,K)\subset X^I\times X^K$. 
\end{Defi}

\begin{rems}~
    \begin{statements}
        \item We restrict to coherent !-sheaves so that $q_{IK}^{[\![*]\!]} E^{(K)}$ is defined in pro-coherent $\Dc$-modules. Without the coherence assumption, we would have to work with ind-pro- or pro-ind-coherent $\Dc$-modules.
        \item Note the similarity with \autoref {def:!-strictif} of  $\strict{E}$, the strictification of  $E$. 
    \end{statements}
\end{rems}

\paragraph{Topological approximation and diagonal filtration.}
Recall the following general fact.
Let $\Ic, \Kc$ be two small categories and $(A_{IK})_{I\in\Ic,K\in\Kc}$ be a bi-diagram in an $\infty$-category $\scrC$,
i.e. an $\infty$-functor $\Ic \times\Kc\to\scrC$. Then there is a canonical morphism
\be\label{eq:can-twolimits}
\can: \hocolim_{K\in\Kc} \,\holim_{I\in\Ic} \, A_{IK} \,\lra \,  \holim_{I\in\Ic } \, \hocolim_{K\in\Kc}  \, A_{IK}. 
\ee
We apply this to the case when $\Ic = \Kc = \finsurj$ and $\scrC = \Pro(C(k))$.

\vskip .2cm
Let $E\in \Coh^!_\Dc(X^\finsurj)$ be a coherent lax $\Dc^!$-module. We then have  natural maps
\be
\tau: \int_X E \,\, \lra \,\, \, \oint_X \phi(E), \quad\quad  \tau_c: \int_X^{[\![\c]\!]} E \,\, \lra \,\, \, \oint^{[\![\c]\!]}_X \phi(E),
\ee
which we call the {\em topological approximation} maps and which are defined as follows.
Using the standard map
\be\label{eq:map-sigma}
\sigma: R\Gamma_\DR(X^K, E^{(K)}) \lra \,\, \holim_I \, R\Gamma_\DR(\Delta(I,K), q_{IK}^{[\![*]\!]} E^{(K)}), 
\ee
we first map
\[
\int_X E \,\,=\,\,\hocolim_K R\Gamma_\DR(X^K, E^{(K)}) \to \hocolim_K\,  \holim_I\,  R\Gamma_\DR(\Delta(I,K), q_{IK}^{[\![*]\!]} E^{(K)} )
\]
and then map the target by the canonical map \eqref{eq:can-twolimits} to
\begin{align*}
    \holim_I\, &\hocolim_K\, R\Gamma_\DR(\Delta(I,K), q_{IK}^{[\![*]\!]} E^{(K)})
    \\
    & \simeq \,\,  \holim_I\, \hocolim_K\, R\Gamma_\DR(X^I, (p_{IK})_*\,\,  q_{IK}^{[\![*]\!]} E^{(K)})
    \\
    &  \simeq \,\,  \holim_I\, R\Gamma_\DR(X^I, \, \phi(E)_I) \,\,=\,\, \oint_X \phi(E).
\end{align*}
This gives $\tau$. The map $\tau_c$ is defined similarly by noticing that $q_{IK}$ being finite, we have a map
$\sigma_c$  analogous to $\sigma$ and featuring compactly supported de Rham cohomology. 

\vskip .2cm

For any positive integer $d$, we denote by $i_d$ the pointwise closed embedding $X_{d}^\finsurj \to X^\finsurj$,
see \autoref{def:skel}.

\begin{Defi}
    We define
    \begin{align*}
        &\int_X^{\leq d} \, E ~:=~ \holim_I R\Gamma_\DR(X^I_d, i_d^{[\![*]\!]} E^{(I)}) ~ \in ~ \Pro (C(\k)),
        \\ 
        &\int_X^{[\![\c]\!], \leq d} \, E ~:=~ \holim_I R\Gamma_\DR^{[\![\c]\!]}(X^I_d, i_d^{[\![*]\!]} E^{(I)}) ~ \in ~ \Pro (\Perf_\k),
    \end{align*}
    and call them  the {\em factorization homology} (resp. {\em compactly supported factorization homology})
    {\em  of arity at most} $d$ of $E$.  
\end{Defi}

Remark that $\int_X^{\leq d} \, E$ (and similarly for its compactly supported analog) can be seen as the factorization homology of a lax pro-$\Dc^!$-module $(i_d)_* i_d^{[\![*]\!]} E$.

As $d$ varies, they fit into sequences
\[
\xymatrix{
    \displaystyle \int_X E  & \cdots \ar[r] & \displaystyle \int_X^{\leq d} E \ar[r] & \cdots \ar[r] & \displaystyle \int_X^{\leq 1} E,
    \\
    \displaystyle \int_X^{[\![\c]\!]} E & \cdots \ar[r] & \displaystyle \int_X^{[\![\c]\!], \leq d} E \ar[r] & \cdots \ar[r] & \displaystyle \int_X^{[\![\c]\!], \leq 1} E. 
}
\]

\begin{lem}
    For   $E\in\Coh^!_\Dc(X^\finsurj)$, there are  canonical equivalences
    \[
    \oint_X \, \phi(E)\,\, \simeq\,\,  \holim_d  \int^{\leq d}_X E,\quad \quad 
    \oint^{[\![\c]\!]}_X \, \phi(E)\,\, \simeq\,\,  \holim_d  \int^{[\![\c]\!], \leq d}_X E. 
    \]
\end{lem}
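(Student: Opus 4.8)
The plan is to rewrite both sides as iterated (co)limits of de Rham cohomology over the incidence varieties $\Delta(I,K)$ and over the diagonal skeleta $X^I_d$, and then to show that the resulting comparison map of \eqref{eq:can-twolimits} is an equivalence. First I would unwind the left-hand side. By the definition of $\oint_X$ and of covariant Verdier duality,
\[
\oint_X \phi(E) \,=\, \holim_{I} R\Gamma_\DR\Bigl(X^I, \hocolim_{K} (p_{IK})_* \, q_{IK}^{[[*]]} E^{(K)}\Bigr).
\]
Since pushing forward to a point is functorial, $R\Gamma_\DR(X^I, (p_{IK})_* \mathcal M) \simeq R\Gamma_\DR(\Delta(I,K), \mathcal M)$, and since the de Rham pushforward commutes with the homotopy colimit over $K$ used to build $\phi(E)_I$, this becomes
\[
\oint_X \phi(E) \,\simeq\, \holim_{I} \, \hocolim_{K} \, R\Gamma_\DR\bigl(\Delta(I,K), \, q_{IK}^{[[*]]} E^{(K)}\bigr).
\]

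Next I would analyze the right-hand side through the diagonal filtration. The geometric input is that the finite map $p_{IK}$ factors through the skeleton $X^I_{|K|}$ (a point of $\Delta(I,K)$ has at most $|K|$ distinct coordinates in its $X^I$-component), so the colimit over $K$ carries a filtration by the cardinality $|K|$ whose $d$-th stage is supported on $X^I_d$. Using the Cartesian squares and base change of Proposition~\ref{prop:Delta-cart}, together with the description of $i_d^{[[*]]}$ as the formal completion along $X^I_d$, I would identify this $d$-th stage, after applying $R\Gamma_\DR(X^I_d, -)$, with $R\Gamma_\DR(X^I_d, i_d^{[[*]]} E^{(I)})$, hence with the $I$-term of $\int^{\leq d}_X E$. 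For fixed $I$ the tower in $d$ stabilizes once $d \geq |I|$ (as then $X^I_d = X^I$ and $i_d^{[[*]]} = \mathrm{id}$), which controls the passage to the limit over $d$ and keeps it within $\Pro(\Perf_\k)$ in the compactly supported case.

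The crux, and the step I expect to be the main obstacle, is the passage between the colimit over $K$ (appearing in $\oint_X \phi(E)$) and the tower-limit over $d$ (appearing in $\holim_d \int^{\leq d}_X E$): one must prove that the canonical comparison map \eqref{eq:can-twolimits} relating $\holim_I \hocolim_K$ to $\holim_d \holim_I$ is an equivalence. This is exactly the "completeness of the diagonal filtration", and converting a colimit into a limit of truncations is where the real content lies. It is here that the coherence hypothesis $E \in \Coh^!_\Dc(X^\finsurj)$ and the pro-coherence of $q_{IK}^{[[*]]}$ and $i_d^{[[*]]}$ are indispensable, since they guarantee that every intermediate term is a genuine pro-coherent $\Dc$-module, so the comparison can be analyzed via the cofinality properties of the cardinality stratification. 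I would first establish the compactly supported identity $\oint_X^{[[\c]]} \phi(E) \simeq \holim_d \int^{[[\c]], \leq d}_X E$, where the finiteness of $p_{IK}$ and $q_{IK}$ makes all objects pro-finite-dimensional and the interchange cleanest, and then transport the statement to the non-compact case by the same reorganization of (co)limits.
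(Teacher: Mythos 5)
There is a genuine gap, and it sits exactly where you locate the main work. The identification in your second step --- that the $|K|\leq d$ stage of the colimit $\phi(E)_I=\hocolim_K\,(p_{IK})_*\,q_{IK}^{[[*]]}E^{(K)}$, after taking global de Rham sections, gives $R\Gamma_\DR(X^I_d,i_d^{[[*]]}E^{(I)})$ --- is false, because you have crossed the two indices. Test it at $d=1$: the only $K$ with $|K|=1$ is a point, $\Delta(I,\pt)$ is the small diagonal $X\subset X^I$ with $p_{I\pt}=\delta_I$ and $q_{I\pt}=\Id_X$, so that stage is $(\delta_I)_*E^{(\pt)}$ and its cohomology is $R\Gamma_\DR(X,E^{(\pt)})$ --- not $R\Gamma_\DR(X,\delta_I^{[[*]]}E^{(I)})$, which is the $I$-term of $\int_X^{\leq 1}E$; for a general lax $\Dc^!$-module (e.g.\ the Chevalley--Eilenberg one) these are quite different, and there is not even a natural map between them. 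The relevant geometric fact is the \emph{other} factorization: $q_{IK}$ factors through $X^K_{|I|}\subset X^K$, because the object being restricted inside $\phi(E)_I$ is $E^{(K)}$, which lives on $X^K$, not $E^{(I)}$. Consequently the correct matching pairs the \emph{outer} limit over $I$ in $\oint_X\phi(E)\simeq\holim_I\hocolim_K R\Gamma_\DR\bigl(\Delta(I,K),q_{IK}^{[[*]]}E^{(K)}\bigr)$ with the outer limit over $d$ (via $d=|I|$), and the inner colimit over $K$ with the inner colimit over the index set in $\int_X^{\leq d}$. (Beware also that the displayed definition of $\int_X^{\leq d}$ must be read with $\hocolim_I$, as it is used in the proof of Theorem \ref{thm:affinecase}; with a literal $\holim_I$, your own observation that the $d$-tower stabilizes for fixed $I$ would collapse the right-hand side to $\holim_I R\Gamma_\DR(X^I,E^{(I)})$ and the lemma would be false.)

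The mis-indexing then forces your third step, which is also off target. Once the indices are matched as above, \emph{both} sides are an outer homotopy limit of an inner homotopy colimit, so no interchange of a limit with a colimit is needed; what remains is a base-change and cofinality argument along $I\mapsto|I|$, using the strictness of $\phi(E)$ and the finiteness of $p_{IK}$, $q_{IK}$ --- this is the sense in which the paper can dismiss the lemma as ``a straightforward formal computation on limits.'' By contrast, asserting that the canonical map \eqref{eq:can-twolimits} is an equivalence amounts to asserting that the topological approximation map $\tau\colon\int_XE\to\oint_X\phi(E)$ is an equivalence. That is \emph{not} what this lemma says: it is false in general (see the elliptic-curve example in \S\ref{subsec:main-res}), and its validity for $\Cc$ on affine $X$ is precisely Theorem \ref{thm:affinecase}, whose proof \emph{uses} the present lemma. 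Identifying the crux of this lemma with ``completeness of the diagonal filtration'' would therefore make the argument both incorrect and circular.
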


\begin{proof}
    This is a straightforward formal computation on limits. 
\end{proof}


\paragraph{For (lax)  $[\![\Dc]\!]$-modules.}
The above constructions admit a dual version, for $[\![\Dc]\!]$-modules. We sketch it briefly, formulating
statements but omitting details. 

\vskip .2cm

We denote by $\Coh_{[\![\Dc]\!]}(X^\finsurj)$ the $\infty$-category of lax $[\![\Dc]\!]$-modules 
\[
F= \left(F_I, (\gamma_g: \delta_g^{[\![*]\!]} F_I\lra F_J)_{ g: 
    I\twoheadrightarrow J}\right)_I
\]
on $X^\finsurj$ such that each $F_I$ is coherent on $X^I$. 

\begin{Defi}\label{def:coVerdier-psi}
    We call the {\em covariant Verdier duality} for $[\![\Dc]\!]$-modules the $\infty$-functor
    \[
    \psi: \Coh_{[\![\Dc]\!]}(X^\finsurj) \lra \DsM(X^\finsurj)
    \]
    defined by 
    \[
    \psi(F)^{(I)} = \holim_K (p_{IK})_*\,\, q_{IK} ^!\,  F_K.
    \]
\end{Defi}
We have canonical {\em topological approximation}  maps
\be
\tau: \int_X \psi(F) \lra \oint_X F,\quad\quad  \tau_c: \int^{[\![\c]\!]}_X \psi(F) \lra \oint^{[\![\c]\!]}_X F.
\ee

For any lax coherent $[\![\Dc]\!]$-module $F$ on $X^\finsurj$, we denote by $\underline{R\Gamma}_{X^\finsurj_d}(F)$ the lax ind-$[\![\Dc]\!]$-module
$(i_d)_* i_d^! F$ on $X^\finsurj$. 

\begin{Defi}
    We define the {\em (compactly supported) factorization cohomology of $F$ with $d$-fold support}  as the 
    (compactly supported) factorization cohomology
    of $\underline{R\Gamma}_ {X_d^\finsurj}(F)$. 
    We denote these cohomologies by
    \begin{align*}
        &R\Gamma_{X_d^\finsurj\!,\,\DR}( X^\finsurj, F)\,\, := \,\, \oint_X \underline{R\Gamma}_{ X_d^\finsurj}(F) \in C(\k),
        \\
        &R\Gamma^{[\![\c]\!]}_{X_d^\finsurj\!,\,\DR}( X^\finsurj, F)\,\, := \,\, \oint^{[\![\c]\!]}_X \underline{R\Gamma}_{ X_d^\finsurj}(F) \in \Ind (\Pro(\Perf_\k)). 
    \end{align*}
\end{Defi}

We call the sequences
\[
\begin{gathered}
    R\Gamma_{X_1^\finsurj\!,\,\DR}( X^\finsurj, F) \to \cdots \to R\Gamma_{X_d^\finsurj\!,\,\DR}( X^\finsurj, F) \to \cdots, 
    \\
    R\Gamma^{[\![\c]\!]}_{X_1^\finsurj\!,\,\DR}( X^\finsurj, F) \to \cdots \to R\Gamma^{[\![\c]\!]}_{X_d^\finsurj\!,\,\DR}( X^\finsurj, F) \to \cdots, 
\end{gathered}
\]
the {\em diagonal filtration}  on the (compactly supported) factorization cohomology of $F$.

\begin{lem}
    There are canonical equivalences
    \begin{align*}
        &\hocolim_d R\Gamma_{X_d^\finsurj\!,\,\DR}( X^\finsurj, F) \simeq \int_X \psi(F),
        \\
        &\hocolim_d R\Gamma^{[\![\c]\!]}_{X_d^\finsurj\!,\,\DR}( X^\finsurj, F) \simeq \int_X^{[\![\c]\!]} \psi(F). 
    \end{align*}
\end{lem}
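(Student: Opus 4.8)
The plan is to treat this as the formal dual of the preceding lemma (the one identifying $\oint_X \phi(E)$ with $\holim_d \int_X^{\leq d} E$): the computation is the same, with every homotopy limit over the skeletal parameter turned into a colimit and every $*$-pushforward into a $!$-restriction. It is the formal Verdier-dual statement and could also be obtained from it through Proposition \ref{prop:Verdierduality}, but I will sketch the direct argument, which is more transparent. First I unwind the two sides. By definition $\int_X \psi(F) = \hocolim_I R\Gamma_\DR(X^I, \psi(F)^{(I)})$, while, since $i_d$ is a closed embedding and $R\Gamma_\DR(X^I, (i_d)_* (-)) \simeq R\Gamma_\DR(X^I_d, -)$,
\[
R\Gamma_{X_d^\finsurj\!,\,\DR}(X^\finsurj, F) \;=\; \oint_X \underline{R\Gamma}_{X_d^\finsurj}(F) \;=\; \holim_I R\Gamma_\DR\bigl(X^I_d,\, i_d^! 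F_I\bigr).
\]

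Second, I insert the diagonal filtration into $\int_X \psi(F)$. For each $I$ the tower of local cohomologies along the skeleta exhausts $X^I$ and stabilizes once $d \geq |I|$ (where $X^I_d = X^I$), so $\psi(F)^{(I)} \simeq \hocolim_d \underline{R\Gamma}_{X^I_d}(\psi(F)^{(I)})$. As $R\Gamma_\DR(X^I, -)$ preserves this colimit and the two colimits commute, I obtain
\[
\int_X \psi(F) \;\simeq\; \hocolim_d \, \hocolim_I R\Gamma_\DR\bigl(X^I_d,\, i_d^! \psi(F)^{(I)}\bigr).
\]
Because $\psi(F)$ is a \emph{strict} $\Dc^!$-module, the inner colimit is the (ordinary) factorization homology of the $!$-restriction of $\psi(F)$ to the $d$-th skeleton $\Ran_d(X) = \hocolim_I X^I_d$. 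Everything thus reduces to the stratum-wise comparison, for each fixed $d$ and compatibly in $d$,
\[
\hocolim_I R\Gamma_\DR\bigl(X^I_d,\, i_d^! \psi(F)^{(I)}\bigr) \;\simeq\; \holim_I R\Gamma_\DR\bigl(X^I_d,\, i_d^! F_I\bigr).
\]

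Third, and this is the crux, I establish this comparison. Substituting $\psi(F)^{(I)} = \holim_K (p_{IK})_* q_{IK}^! F_K$ and applying base change along the Cartesian squares obtained by pulling $p_{IK}\colon \Delta(I,K) \to X^I$ back to $X^I_d$ (Proposition \ref{prop:Delta-cart}), the restriction $i_d^! \psi(F)^{(I)}$ is rewritten as a limit over $K$ of $!$-restrictions of the $F_K$ to the locus of $\Delta(I,K)$ lying over $X^I_d$. The essential input is then the homological contractibility of the Ran diagram: $\finsurj$ has the one-point set as a terminal object, so its nerve, and more to the point the comma categories governing the $d$-th stratum, are contractible. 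This collapses the outer $\hocolim_I$, while the $\holim_K$ coming from the definition of $\psi$ survives and, after renaming $K$ to $I$, is exactly the $\holim_I$ on the right. The mechanism is transparent when $d = 1$: there $\Delta(I,K)$ meets the small diagonal of $X^I$ in a copy of $X$ mapping to $X^K$ by the small diagonal $\delta_K$, so $i_1^! \psi(F)^{(I)} \simeq \holim_K \delta_K^! F_K$ is independent of $I$, and contractibility of $\finsurj$ gives $\hocolim_I R\Gamma_\DR(X, \holim_K \delta_K^! F_K) \simeq \holim_K R\Gamma_\DR(X, \delta_K^! F_K)$, which is the right-hand side. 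The general $d$ proceeds by the same mechanism stratum by stratum.

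Finally, assembling the stratum-wise equivalences over the colimit in $d$ yields the first stated equivalence. The compactly supported statement is proven verbatim with $R\Gamma_\DR$ replaced throughout by $R\Gamma_\DR^{[[\c]]}$; the only additional point is that $p_{IK}$ and $q_{IK}$ are finite, so that $*$- and $!$-pushforwards along them agree and the base-change manipulations carry over unchanged. I expect the main obstacle to be precisely the third step, namely converting the $*$-colimit over the Ran diagram into the $!$-limit at each diagonal stratum: this is the genuine content of covariant Verdier duality, and everything else is a formal reshuffling of homotopy (co)limits exactly parallel to the dual lemma.
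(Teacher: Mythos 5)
The paper itself gives no argument for this lemma: it sits in a paragraph that explicitly "formulat[es] statements but omit[s] details," and the intended proof is the Verdier-dual transcription of the preceding lemma. The route you mention in your first sentence — dualizing via Proposition \ref{prop:Verdierduality} together with $\psi(F)\simeq\phi(F^\vee)^\vee$ and the identification $R\Gamma^{[[\c]]}_{X_d^\finsurj,\DR}(X^\finsurj,E^\vee)\simeq(\int_X^{\leq d}E)^*$ established at the end of \S\ref{subsec:coVerdieranddiag} — is in fact a complete proof and is what the paper has in mind; had you carried that out, there would be nothing to object to.

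The direct argument you chose to present instead has a genuine gap at its crux. Steps one and two are fine, and the reduction to the fixed-$d$ comparison
\[
\hocolim_I R\Gamma_\DR\bigl(X^I_d, i_d^!\psi(F)^{(I)}\bigr)\;\simeq\;\holim_K R\Gamma_\DR\bigl(X^K_d, i_d^!F_K\bigr)
\]
is correct. But your justification of this comparison rests on the $d=1$ computation, where it works for a special reason: there $i_1^!\psi(F)^{(I)}\simeq\holim_K\delta_K^!F_K$ is \emph{constant in $I$}, so the outer $\hocolim_I$ collapses by contractibility of $N(\finsurj)$ and no interchange of $\hocolim_I$ with $\holim_K$ is ever needed. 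For $d\geq 2$ this constancy fails: after base change one gets $i_d^!\psi(F)^{(I)}\simeq\holim_K (p')_*(q')^!\,i_d^!F_K$ with $p',q'$ the projections of $\Delta(I,K)\times_{X^I}X^I_d$, and this genuinely depends on $I$. To "collapse the outer $\hocolim_I$" you must (i) commute $\hocolim_I$ past $\holim_K$, and (ii) prove $\hocolim_I(q')_*(q')^!G\simeq G$ on $X^K_d$, which requires the contractibility of $\hocolim_{I}\on{Surj}(I,S)$ fiberwise over each stratum $\Sym^e_{\neq}(X)\subset\Ran_d(X)$ plus a descent argument — not merely the existence of a terminal object in $\finsurj$. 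Point (ii) is fixable, but point (i) is exactly the interchange whose failure for the untruncated Ran space is the reason the diagonal filtration exists at all ($\tau\colon\int_X\psi(F)\to\oint_XF$ is not an equivalence in general), so it cannot be waved through "by the same mechanism stratum by stratum"; it needs a convergence input at fixed $d$ (compare the Mittag--Leffler/amplitude argument the paper is forced to make in the proof of Theorem \ref{thm:affinecase} for a closely analogous interchange). As written, the direct proof establishes the case $d=1$ and asserts the rest.
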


\paragraph {Compatibility with  the  usual (contravariant) Verdier duality.}

We start with a lax coherent  $\Dc^!$-module
$E$ on $X^\finsurj$. As Verdier duality exchanges $i^{[\![*]\!]}$ and $i^!$ and commutes with $i_*$ for a closed immersion $i$, we get

\begin{prop}
    There are canonical equivalences
    \[
    R\Gamma^{[\![\c]\!]} _{X_d^\finsurj\!,\,\DR}(X^\finsurj, E^\vee) \simeq \left(\int_X^{\leq d} E\right)^*, 
    \quad
    R\Gamma _{X_d^\finsurj\!,\,\DR}(X^\finsurj, E^\vee) \simeq \left(\int_X^{[\![\c]\!], \leq d} E\right)^* .
    \]
    Moreover, it is compatible with the transition maps (when $d$ varies) and with the map to
    \[
    \oint_X^{[\![\c]\!]} E^\vee \simeq \left( \int_X E \right)^*, \quad\quad 
    \oint_X E^\vee \simeq \left( \int^{[\![\c]\!]}_X E \right)^*.
    \]
\end{prop}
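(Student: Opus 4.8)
The cleanest route is to recognize the left-hand sides as the factorization (co)homology of a single explicitly dualizable lax module, and then invoke the global duality of Proposition \ref{prop:Verdierduality}. Set $H_d:=(i_d)_*\,i_d^{[[*]]}E$, the lax pro-$\Dc^!$-module on $X^\finsurj$ whose $I$-th component is $(i_d)_*\,i_d^{[[*]]}E^{(I)}$. By the Remark following the definition of the arity-filtered factorization homology, its factorization homologies are exactly $\int_X H_d\simeq\int_X^{\leq d}E$ and $\int_X^{[[\c]]}H_d\simeq\int_X^{[[\c]],\leq d}E$, since $(i_d)_*$ preserves de Rham cohomology for the closed embedding $i_d$. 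So it suffices to identify $\underline{R\Gamma}_{X^\finsurj_d}(E^\vee)$ with the contravariant Verdier dual $H_d^\vee$ and then feed this into Proposition \ref{prop:Verdierduality}.

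The heart of the matter is the termwise identity $(i_d)_*\,i_d^!(E^{(I)})^\vee\simeq\bigl((i_d)_*\,i_d^{[[*]]}E^{(I)}\bigr)^\vee$. This follows from two facts already recorded: $i_d^{[[*]]}=\bigl(i_d^!(-)^\vee\bigr)^\vee$ together with the involutivity of $(-)^\vee$ gives $i_d^!(E^{(I)})^\vee\simeq(i_d^{[[*]]}E^{(I)})^\vee$, and, $i_d$ being proper, Verdier duality commutes with $(i_d)_*$. Hence, componentwise, $\underline{R\Gamma}_{X^\finsurj_d}(E^\vee)^{(I)}=(i_d)_*\,i_d^!(E^\vee)_I\simeq H_d^{\vee\,(I)}$. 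Applying Proposition \ref{prop:Verdierduality} to $H_d$ then yields $\oint_X^{[[\c]]}H_d^\vee\simeq(\int_X H_d)^*$ and $\oint_X H_d^\vee\simeq(\int_X^{[[\c]]}H_d)^*$, which are precisely the two asserted equivalences once the identifications of the previous sentence are substituted.

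For the compatibilities I would argue by functoriality. The transition maps of the diagonal filtration come from the counits $(i_d)_*\,i_d^!\Rightarrow\Id$, and dualizing the counit $\underline{R\Gamma}_{X^\finsurj_d}(E^\vee)\to E^\vee$ produces the unit $E\to(i_d)_*\,i_d^{[[*]]}E=H_d$ of the adjunction $i_d^{[[*]]}\dashv(i_d)_*$, i.e.\ the structure map of the arity filtration. Since $(-)^\vee$ and the duality of Proposition \ref{prop:Verdierduality} are functorial, the squares relating $R\Gamma^{[[\c]]}_{X^\finsurj_d,\DR}(X^\finsurj,E^\vee)\to\oint_X^{[[\c]]}E^\vee$ to $\int_X E\to\int_X^{\leq d}E$ commute, and letting $d$ vary recovers the global statement $\oint_X^{[[\c]]}E^\vee\simeq(\int_X E)^*$, symmetrically for the unadorned version.

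The conceptual engine is thus Proposition \ref{prop:Verdierduality}, and the substantive work is the coherence of the identification $\underline{R\Gamma}_{X^\finsurj_d}(E^\vee)\simeq H_d^\vee$ as lax modules, not merely objectwise: one must check that the structure maps $\beta_g$ of $\underline{R\Gamma}_{X^\finsurj_d}(E^\vee)$ correspond under $(-)^\vee$ to the $\gamma_g$ of $H_d$, with all higher compatibilities in the (op)lax limit. I expect the main obstacle to be the pro/ind bookkeeping hidden in this: because $i_d^{[[*]]}E^{(I)}$ is only pro-coherent, the equivalences live between $\Pro$- and $\Ind$-completed categories, so the commutation of $(-)^\vee$ with $(i_d)_*$, with $R\Gamma_\DR$ versus $R\Gamma_\DR^{[[\c]]}$, and with the non-filtered $\holim_I$/$\hocolim_I$ must each be read inside the correct pro/ind layer; verifying that these interact consistently, and that the holim/hocolim interchange is exactly the one built into the anti-equivalence of Proposition \ref{prop:Verdierduality}, is where I would concentrate the effort.
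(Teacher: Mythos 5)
Your proposal is correct and follows essentially the same route as the paper, whose entire proof is the observation that Verdier duality exchanges $i^{[[*]]}$ with $i^!$ and commutes with $i_*$ for the closed immersion $i_d$, after which one feeds the resulting identification $\underline{R\Gamma}_{X^\finsurj_d}(E^\vee)\simeq\bigl((i_d)_*i_d^{[[*]]}E\bigr)^\vee$ into Proposition \ref{prop:Verdierduality}. Your write-up simply makes explicit the termwise identification, the adjunction unit/counit matching for the transition maps, and the pro/ind bookkeeping that the paper leaves implicit.
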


 \vfill\eject
\section{Factorization algebras}\label{sec:FA}

The concept of algebro-geometric factorization algebras is due to
  \cite{BD}  for curves  and to \cite{FG} in general. 
  The goal of this chapter is to extend to the factorization context  the covariant Verdier duality introduced in \S \ref {subsec:coVerdieranddiag}. 
For this we need several equivalent definitions of factorization algebras
since different features
become clear in different approaches. Similarly to \cite{BD, FG}, the definitions
are given in terms of commutative algebra objects in certain symmetric
monoidal $\oo$-categories.   The $\oo$-categories
 (formed by lax  $\Dc^!$- or  $[\![\Dc]\!]$-modules
on the Ran space) are certain lax limits of categories associated to finite-dimensional
varieties, and the monoidal structures are given by versions of  the Day convolution,
cf.\cite{glasman} \cite[\S2.2.6]{lurie-ha}. So we start with explaining these
concepts in the form we need.

\subsection{Symmetric monoidal \texorpdfstring{$\infty$}{?}-categories}\label{subsec:sym-mon-inf}

\paragraph{Non-unital symmetric monoidal $\oo$-categories and $\oo$-operads.} 
In this paper we will use symmetric monoidal ($\oo$-) categories
which are not necessarily unital, i.e.,  informally, are not required to 
possess a unit object. 
Although the unital case is  more commonly documented in the literature, cf.  
 \cite{glasman} the general theory of  \cite[Ch.2] {lurie-ha} covers
 the non-unital case as well, as we will explain below, see Remark 
 \ref{rems:unit-nonunit}(b).

\vskip .2cm

Let $\finsurj^{*}$ be the category of pointed finite sets with morphisms being pointed surjections. The base point is denoted $*$.

For a pointed set $J$ we denote $J^o = J\-\{*\}$ the complement of the base
point. A morphism $f: J\to K$ in $\finsurj^+$ is called {\em inert}, if 
it induces a bijection between $f^{-1}(K^o)$ and $K^o$, and {\em active},
if $f^{-1}(*)=*$. 
 
In the opposite direction, let $I$ be any non-empty finite set.  
we denote by $I^{*}$ the set $I \amalg \{*\}$ pointed at $*$. For any $i \in I$, we have the inert map  $p_i: I^* \to \{i\}^*$ given by
\[
p_i(j) = \begin{cases} i &\text{if } j = i \\ * & \text{else.} \end{cases}
\]

A {\em non-unital symmetric monoidal structure}  on an $\infty$-category $\Cc$ is the datum of an $\infty$-functor $\underline \Cc \colon \finsurj^* \to \Cat_\infty$ such that $\underline \Cc(\{1\}^*) \simeq \Cc$ and such that for any $I$, the $\infty$-functor
\[
\underline \Cc\left(\prod p_i\right) \colon \underline \Cc(I^*) \to \prod_{i\in I}\underline \Cc(\{i\}^*)
\]
is an equivalence. 

\vskip .2cm

 In this situation,  the tensor product on $\Cc$ is 
 the $\infty$-functor
\[
\otimes: \Cc \times \Cc \simeq \underline \Cc ( \{1,2\}^* ) \lra \underline \Cc( \{1\}^* ) \simeq \Cc
\] 
induced by the map $\{1,2\} \to \{1\}$ mapping both elements to $1$. 
More generally, for any $n\geq 1$ we have the $n$-fold tensor product functor
$\otimes^n: \Cc^n\to\Cc$ obtained by the value of $\ul\Cc$ on the morphism
$\{1, \dots, n\}^*\to \{1\}^*$ induced by $\{1, \dots, n\}\to \{1\}$. 
 The value of $\ul\Cc$ on
an arbitrary pointed surjection is  then  interpreted in a standard way as ``tensor multiplication over the fibers''.

As is done in \cite{lurie-ha}, we encode symmetric monoidal $\infty$-categories by their Grothendieck constructions:
\begin{align*}
    &\Cc^\otimes := \Grothco(\underline \Cc) \to \finsurj^*,  \\
    &\Cc_\otimes := \Groth(\underline \Cc) \to (\finsurj^*)^\op,
\end{align*}
  a coCartesian fibration over  $\finsurj^*$ and a Cartesian fibration over
$(\finsurj^*)^\op$. 

\vskip .2cm

More generally, see  \cite[Ch.2] {lurie-ha}, an {\em (nonunital) $\oo$-operad} 
  is a 
  {\em partially coCartesian }fibration over $ \finsurj^*$,
i.e., an $\oo$-functor $\pi: \Qc^\otimes \to  \finsurj^*$ which
has coCartesian morphisms covering all the inert morphisms (rather than
all morphisms) of  $ \finsurj^*$. In terms of 
the category $\Qc=\pi^{-1}(\{1\}^*)$ this means that for any finite set of
objects $(q_i)_{i\in I}$   the object $\otimes_{i\in I} q_i$ may not exist
 but we know what should be the functor $\Hom(\otimes_{i\in I} q_i,-)$
 corepresented by this object. 
 
 Dually, an {\em (non-unital) $\oo$-cooperad} is a {\em partially Cartesian}
  (in the same sense as above)
 fibration $\rho: \Rc\to  (\finsurj^*)^\op$. In terms of the category
 $\Rc=\rho^{-1}(\{1\}^*)$ this means that for any finite set of objects
 $(r_i)_{i\in I}$ we know what should be the functor
  $\Hom(-, \otimes_{i\in I} r_i)$ represented by the desired object
  $\otimes_{i\in I}r_i$ without that object necessarily existing. 

Thus a symmetric monoidal $\oo$-category is both an $\oo$-operad and
an $\oo$-cooperad. 

\vskip .2cm

 A {\em  (symmetric) monoidal $\infty$-functor} 
 between two  symmetric monoidal $\infty$-categories $\Cc$ and $\Dc$ is,
 by definition,  a natural transformation $\underline \Cc \to \underline \Dc$.
Equivalently, it corresponds to morphism of coCartesian fibrations
 $\Cc^\otimes \to \Dc^\otimes$, i.e.,  an $\infty$-functor over $\finsurj^*$ that preserves all coCartesian morphisms. 
 Alternatively, it corresponds to a morphism of Cartesian fibrations
 $\Cc_\otimes \to \Dc_\otimes$, i.e., 
  to an $\infty$-functor  over $(\finsurj^*)^\op$ that preserves all Cartesian morphisms.

\begin{Defi}\label{def:lax-mo-functor}
Let $\Cc, \Dc$ be (non-unital) symmetric monoidal categories.
\begin{assertions}
 \item A {\em lax monoidal functor} $\Cc \to \Dc$ is a morphism of $\oo$-operads
   $\Cc^\otimes \to \Dc^\otimes$, i.e.,   
     an $\infty$-functor over $\finsurj^*$ such that any coCartesian morphism in $\Cc^\otimes$ lying over an inert morphism,  is mapped to a coCartesian morphism in $\Dc^\otimes$.

 \item A {\em colax monoidal functor} $\Cc \to \Dc$ is a morphism of $\oo$-cooperads
 $\Cc_\otimes\to\Dc_\otimes$, 
i.e.,  an $\infty$-functor over $(\finsurj^*)^\op$ such that any Cartesian morphism in $\Cc_\otimes$ lying over an inert morphism,  is mapped to a Cartesian morphism in $\Dc_\otimes$.
\end{assertions}
\end{Defi}

It follows from the above definitions that a monoidal $\infty$-functor is in particular both a lax and colax monoidal $\infty$-functor.

 A  lax (resp. colax) monoidal  functor $F \colon \Cc \to \Dc$ is  equipped, in particular, with functorial morphisms
\be\label{eq:lax-mon-fun}
\begin{gathered}
 F_{c_1,\dots, c_n}:    F(c_1) \otimes\cdots\otimes  F(c_n) \lra F(c_1 \otimes 
 \cdots\otimes c_n) \\ 
    \text{(resp. }F^{c_1, \dots, c_n}:   F(c_1 \otimes\cdots\otimes c_n) \lra F(c_1) \otimes\cdots \otimes F(c_n)\text{ ). }
    \end{gathered}
\ee
It also contains coherence (associativity)
data for such morphisms for varying values of $n$. 
It is monoidal if those morphisms are equivalences.

\begin{rems}\label{rems:unit-nonunit}~
\begin{statements}
 \item The more familiar definition   of a
  {\em unital symmetric
monoidal $\oo$-category} uses
  instead of 
  category $\finsurj^{*}$, the category $\scrF^*$  formed by all finite pointed sets
  and all pointed maps (so that the functoriality along non-surjective maps
  encodes insertions of the unit object). One has similar concepts of
  {\em unital $\oo$-operads and cooperads}, see \cite[Ch.2] {lurie-ha}. 
  
 \item The non-unital case is covered by a more general concept from 
    \cite[Ch.2] {lurie-ha}:
  that of a {\em  $\Pc$-monoidal $\oo$-category},
   where $\Pc\to\scrF^*$ is any  unital $\oo$-operad. More
   precisely, the inclusion $\finsurj^*\hra \scrF^*$ makes $\finsurj^*$
   into a unital  $\oo$-operad. A non-unital symmetric monoidal
   $\oo$-category is the same as an $\finsurj^*$-monoidal $\oo$-category. 
 Therefore we can  rely on  \cite[Ch.2] {lurie-ha} to deduce various basic facts in the non-unital case. 
\end{statements}
  \end{rems}

\paragraph{Day convolution products.}\label{par:day-convo}
Fix two (non-unital) symmetric monoidal $\infty$-categories $\Cc$ and $\Dc$. 
In fact, we will need only the case when $\Cc$ is an ordinary category, so we assume this
for simplicity. 

Let $\Fun(\Cc, \Dc)$ be the $\oo$-category of $\oo$-functors $\Cc\to\Dc$.
Under certain conditions, one can define the
  {\em left and right Day convolution products} which are two  (non-unital) symmetric monoidal structures
$\lday$, $\rday$ on  $\Fun(\Cc, \Dc)$.
See \cite{glasman} for the unital case and \cite[\S 2.2.6]{lurie-ha}
for a general treatment that covers our non-unital case. 
Before  making precise
the situation when these monoidal structures exist, let us explain
the issues involved. 

\vskip .2cm

At the  lowest level, 
 on objects
 (i.e., on pairs of $\oo$-functors $F,G:\Cc\to\Dc$), these products are 
 defined  by 
\[
F\lday G\,=\, \otimes_! (F\boxtimes G), \quad F\rday G\,=\, \otimes_* (F\boxtimes G). 
\]
Here $\otimes_!, \otimes_*$ are the left and right Kan extensions with respect to
the functor $\otimes: \Cc\times\Cc \to\Cc$, and $F\boxtimes G: \Cc\times\Cc\to\Dc$
is the external tensor product of $F$ and $G$ 
 which on objects is given by
\[
(F\boxtimes G)(c_1, c_2) \,=\, F(c_1) \otimes G(c_2). 
\]
This gives (by the standard explicit formulas for Kan extensions)  the values of the Day convolutions
on objects of $\Cc$  as
\be\label{eq:Day-convs-lim}
\begin{gathered}
(F \lday G)(c) = \hocolim_{(c_1 \otimes c_2 \to c)\in \, \otimes/c} F(c_1) \otimes G(c_2), 
\\
(F \rday G)(c) = \holim_{(c\to c_1 \otimes c_2)\in \,  c/\otimes} 
F(c_1) \otimes G(c_2). 
\end{gathered} 
\ee
Here $\otimes/c$ is the overcategory of $\otimes: \Cc^2\to\Cc$ over $c$.
Its objects  are  triples 
  formed by objects $c_1, c_2\in\Cc$ and a morphism
$\alpha: c_1\otimes c_2\to c$.  A morphism from such a triple to another
triple $(c'_1, c'_2, \alpha': c'_1\otimes c'_2\to c)$ is a pair of morphisms
 $\gamma_1: c_1\to c'_1$, $\gamma_2: c_2\to c'_2$ such that
$\alpha' \circ (\gamma_1\otimes\gamma_2) =\alpha$.  

Dually,  $c/\otimes$ is the undercategory of $\otimes$ under $c$. 
Its objects   are triples $(c_1, c_2, \beta: c\to c_1\otimes c_2)$
with morphisms defined similarly to the above. 

\vskip .2cm

Existence of the full-fledged symmetric monoidal structures
$\lday$, $\rday$  requires therefore existence and good properties
of limits such as in \eqref{eq:Day-convs-lim} (and similar limits
for multiple tensor products). However, as follows from
\cite[\S 2.2.6]{lurie-ha},  $\lday$ always exists at the
level of $\oo$-operads, i.e., we have a non-unital $\oo$-operad
$\Fun(\Cc, \Dc)^{\lday}$.  Similarly,  $\rday$ always exists
at the level of $\oo$-cooperads, i.e., we have a non-unital $\oo$-cooperad
$\Fun(\Cc, \Dc)_{\rday}$. 
This corresponds to the fact that the functors (co)represented by the desired
(co)limits in  \eqref{eq:Day-convs-lim}  always exist without these (co)limits
necessarily existing in $\Dc$.  The main feature of these $\oo$-(co)operads
is as follows. More precisely, this is a particular case of  Th. 2.2.6.2 and Construction 2.2.6.7 of
\cite{lurie-ha}, see  \cite[Prop. 2.12]{glasman} and  \cite[Ex. 2.2.6.9]{lurie-ha}
for discussion of the unital case. 

\begin{thm}~
\begin{statements}
\item A commutative (i.e., $E_\oo$- )algebra in $\Fun(\Cc, \Dc)^{\lday}$
 is tantamount to a lax monoidal functor $\Cc\to\Dc$.
 
\item Dually, a cocommutative (i.e., $E_\oo$-) coalgebra in
 $\Fun(\Cc, \Dc)_{\rday}$
 is tantamount to a colax monoidal functor  $\Cc\to\Dc$.
 \qed
\end{statements}
\end{thm}

 Let us now discuss sufficient conditions for existence of the Day convolutions
 as symmetric monoidal structures, i.e., for the property that
 the $\oo$-operad $\Fun(\Cc, \Dc)^{\lday}$ or the $\oo$-cooperad
 $\Fun(\Cc, \Dc)_{\rday}$ are symmetric monoidal $\oo$-categories. 
 
 Recall that we assume that
 $\Cc$ is a (non-unital) symmetric monoidal ordinary category. 
 For an object $c\in \Cc$ and $n\geq 2$ we denote  by $\otimes^n/c$ and
 $c/\otimes^n$ the over- and undercategory of the functor
 $\otimes^n: \Cc^n\to\Cc$ at $c$. 

 \begin{Defi}~
\begin{statements}
\item A symmetric monoidal category $\Cc$ is called {\em of  finite codecompositions}
 if for each object $c\in\Cc$ and each $n\geq 2$, the category $c/\otimes^n$ 
 contains an $\oo$-coinitial full subcategory equivalent to a finite
 discrete set $\Codec_n(c)$.

 \item We say that $\Cc$ is {\em  of finite decompositions}, if, for each $c\in \Cc$ and
 $n\geq 2$ as above, 
  $\otimes^n/c$ contains an $\oo$-cofinal full subcategory equivalent to a finite
 discrete set $\Dec_n(c)$.
 \end{statements}
\end{Defi}

Note that such a set  $\Codec_n(c)$ or $\Dec_n(c)$ is defined uniquely up to unique isomorphism: it is identified with $\pi_0$ (the set of connected components
of the nerve) of $c/\otimes^n $ or $\otimes^n/c$.
Note also that  $\Cc$ being of finite decomposition is equivalent to  $\Cc^\op$ being of finite codecomposition: the category $c/ \otimes^n$ for $\Cc^\op$
  is opposite to the category $\otimes^n/c$ for $\Cc$. 

  \pagebreak[3]
\begin{exas}~
\begin{statements}[ref=Example \theexas(\alph*)]  
\item \label{ex:finsurj-codecomposition} The category $\finsurj$ of nonempty finite sets and surjections,
with $\amalg$ (disjoint union) as monoidal structure, is of finite codecompositions. 
Indeed, let $I\in \finsurj$ be an object. An object of $I/\amalg^n$
is a surjection $\alpha: I\to J_1\amalg \cdots \amalg J_n$. Let $\Codec_n(I)$ consists
of partitions $I=I_1\amalg\cdots \amalg I_n$ into the union of $n$ numbered disjoint
nonempty subsets.  
Then $\Codec_n(I)\subset I/\amalg^n$
is a discrete subcategory. Further, any $\alpha$ as above
  gives a partition
$I=I_1\amalg\cdots\amalg  I_n$ where $I_i=\alpha^{-1}(J_i)$ and a canonical morphism
in  $I/\amalg^n$
\[
(I_1, \dots, I_n,  \Id: I \to  I_1\amalg \cdots \amalg I_n) \lra  
(J_1, \dots, J_n, \alpha)
\]
given
by the restrictions of $\alpha$ to the $I_i$. This shows that $\Codec_n(c)$
is $\oo$-coinitial.

\item \label{ex:finsurj-decomposition} Further, $\finsurj$ is also of
finite decompositions. Indeed, 
an object of $\amalg^n/I$ is a surjection
 $\beta: J_1\amalg \cdots \amalg J_n\to I$.
Call such a surjection {\em essential}, if $\beta|_{J_i}: J_i\to I$ is
injective for $i=1,\dots, n$.  The full  category in $\amalg^n/I$ spanned by  essential surjections is discrete, equivalent to the set $\Dec_n(I)$
 of $n$-tuples  $(I_1, \dots, I_n)$ of nonempty subsets
of $c$ such that $c= \bigcup_i I_i$ (not necessarily disjoint union). 
Further, any surjection $\beta$ factors uniquely through an 
essential surjection
with $I_i=\beta(J_i)$, so the subcategory of essential surjections
is $\oo$-cofinal.
\end{statements}
\end{exas}

  For $\Cc$ of finite (co)decompositions the (co)limits 
  \eqref{eq:Day-convs-lim} reduce to finite  (co)products.
  This makes natural the following claim which will be sufficient
  for our purposes.

\begin{prop}\label{prop:finiteconvolution}
       
       Suppose  that   $\Dc$ is a stable  $\oo$-category equipped with a symmetric
       monoidal structure $\otimes$ which is exact in each argument.

\begin{assertions}
 \item Let $\Cc$ be a symmetric monoidal category  of finite decompositions.
       Then the left   Day convolution on the $\infty$-category $\Fun(\Cc, \Dc)$
     exists, i.e., the $\oo$-operad $\Fun(\Cc, \Dc)^{\lday}$
     comes from a symmetric monoidal $\oo$-category. 
     
 \item Dually, let  $\Cc$ is of finite codecomposition.    Then the right Day convolution on 
  $\Fun(\Cc, \Dc)$
     exists, i.e., the $\oo$-cooperad $\Fun(\Cc, \Dc)_{\rday}$
     comes from a symmetric monoidal $\oo$-category. 
\end{assertions}
\end{prop}

\begin{proof}
  Let us consider (a), since (b) is dual. 
     The property that  $\Fun(\Cc, \Dc)^{\lday}$
     comes from a symmetric monoidal $\oo$-category means that 
     $\Fun(\Cc, \Dc)^{\lday}
     \to\finsurj^*$ is a coCartesian fibration. A sufficient condition for this is given in 
  \cite[Prop. 2.2.6.16]{lurie-ha} in terms of existence of arbitrary
  ($\kappa$-small) colimits in $\Dc$ and  commutativity of $\otimes^n: \Cc^n\to\Cc$
  with such colimits in each variable. An examination of the argument shows
  that we need only the colimits appearing as the $n$-variable analogs of the 
  colimits in the first line of  \eqref{eq:Day-convs-lim}. These colimits
  are reduced to finite coproducts by our assumption on $\Cc$. Further, the restriction
  of $\otimes^n: \Dc^n\to\Dc$ to each variable (keeping all the other variables fixed)
  gives an $\oo$-functor $\Dc\to\Dc$ which preserves these coproducts (i.e., sums)
  by our assumption on $\Dc$ and $\otimes$. 
\end{proof}         
     

\subsection{Day convolution on (op-)lax limits.}

\paragraph{From functors to sections of the Grothendieck construction.} 
We will need a generalization of the   framework  of \S \ref{subsec:sym-mon-inf} obtained via
 the formalism
of \S \ref{sec:laxmodules}\ref{par:laxlim}
 Let $\Cc$ be a symmetric monoidal category as above and 
 $\Fc: \Cc \to (\Cat_\oo, \times)$ be a lax  monoidal $\oo$-functor
 (here $\Cat_\oo$ is considered with the Cartesian product 
 monoidal structure). We are interested in monoidal structures
 on the $\oo$-categories $\laxlim (\Fc)$, $\oplaxlim(\Fc)$. 
 
 \begin{ex}
 Let $\Dc$ be a symmetric monoidal $\oo$-category.  Then we have
 the {\em constant lax   monoidal $\oo$-functor} $\Fc_\Dc: \Cc\to\Cat_\oo$.
 On objects, $\Fc_\Dc(c)=\Dc $ independently on $c\in\Cc$, and each morphism in $\Cc$
 is sent by $\Fc_\Dc$ to the identity functor of $\Dc$. 
 The lax compatibility morphisms  \eqref{eq:lax-mon-fun}
 \[
 \Fc_\Dc(c_1)\times \Fc_\Dc(c_2) = \Dc\times\Dc\lra 
 \Dc=\Fc_\Dc(c_1\otimes c_2)
 \]
  are given by the monoidal structure $\otimes$ on $\Dc$, and similarly
  for the higher data comprising the concept of a lax monoidal $\oo$-functor. 
 In this case 
 \[
\laxlim(\Fc_\Dc) \,=\, \Fun(\Cc, \Dc), \quad  \oplaxlim(\Fc_\Dc) \,=\,  \Fun(\Cc^\op, \Dc). 
 \]
 \end{ex}
 We now explain how, under certain conditions,  one can generalize the Day convolution 
 $\lday$ from the case of $\Fun(\Cc, \Dc)$
 to the case of more general $\laxlim(\Fc)$. Similarly, one can generalize
 the convolution $\rday$ from the case of $\Fun(\Cc, \Dc)$ to the case of
 more general $\oplaxlim(\Fc)$, where $\Fc$ is a {\em contravariant} functor
 $\Cc \to \Cat_\oo$. 
 
 By Definition \ref{def:lax-oplax-lim},  $\laxlim(\Fc)$, resp. 
$\oplaxlim(\Fc)$ is the category of sections of the coCartesian  Grothendieck
construction $\Grothco (\Fc) \to \Cc$, resp. of the Cartesian Grothendieck
construction  $\Groth (\Fc) \to \Cc^\op$.
We further recall the identifications, see again Definition  \ref{def:lax-oplax-lim}:
  \be\label{eq:groth-cogroth-op}
   \Groth(\Fc)\,  \= \,  \left( \Grothco (\Fc^\circ)  \right) ^\op, \quad  \oplaxlim(\Fc) \, \=  \, \laxlim(\Fc^\circ)^\op. 
 \ee

 \paragraph{Grothendieck construction as a symmetric monoidal $\oo$-category.} 
 Before dealing with sections, we study the Grothendieck constructions themselves.

\begin{prop}\label{prop:grothmonod}
    Let $\Fc \colon \Cc \to \Cat_\infty$ be a non-unital lax monoidal $\infty$-functor. 
    
\begin{assertions}
 \item The coCartesian Grothendieck construction $\Grothco(\Fc)$ is endowed with a natural 
    non-unital symmetric monoidal structure $\otimes$, such that the projection
     $\Grothco (\Fc) 
    \to \Cc$ is a symmetric monoidal $\infty$-functor.

 \item Dually, 
    the Cartesian Grothendieck construction $\Groth (\Fc)$ is also endowed with a 
    natural non-unital symmetric monoidal structure $\otimes$  such that the projection $\Groth(\Fc) 
    \to \Cc^\op$ is a symmetric monoidal $\infty$-functor.
\end{assertions}
    \end{prop}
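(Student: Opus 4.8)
The plan is to realise the desired monoidal structure as the unstraightening of the operad map underlying $\Fc$. Since $\Cat_\infty$ carries the Cartesian symmetric monoidal structure, the lax monoidal functor $\Fc$ is, by the definitions above, a functor $\Fc^\otimes\colon \Cc^\otimes \to \Cat_\infty^\otimes$ over $\finsurj^*$ sending every coCartesian morphism lying over a projection $p_i$ to a coCartesian morphism. Composing $\Fc^\otimes$ with the canonical ``total product'' functor $\Cat_\infty^\otimes \to \Cat_\infty$, $(I^*, (D_i)_{i\in I}) \mapsto \prod_{i\in I} D_i$, I obtain an ordinary functor $\widetilde\Fc\colon \Cc^\otimes \to \Cat_\infty$. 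Using the Segal equivalence $\ul\Cc(I^*)\simeq \Cc^I$ together with the lax condition just recalled, its restriction to the fibre over $I^*$ is the functor $\Cc^I \to \Cat_\infty$, $(c_i)_{i\in I} \mapsto \prod_{i\in I}\Fc(c_i)$; in particular its restriction to $\ul\Cc(\{1\}^*)\simeq \Cc$ is $\Fc$ itself.

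Next I would unstraighten $\widetilde\Fc$ to a coCartesian fibration $\rho\colon \mathcal E := \Grothco\widetilde\Fc \to \Cc^\otimes$ and form the composite
\[
q\colon \mathcal E \xrightarrow{\ \rho\ } \Cc^\otimes \xrightarrow{\ \pi\ } \finsurj^*.
\]
I claim $q$ exhibits $\mathcal E$ as a non-unital symmetric monoidal $\infty$-category, i.e. defines a functor $\ul{\Grothco\Fc}\colon \finsurj^* \to \Cat_\infty$ with $\ul{\Grothco\Fc}(\{1\}^*)\simeq \Grothco\Fc$. Two verifications are needed. First, $q$ is a coCartesian fibration: its coCartesian edges are precisely the $\rho$-coCartesian lifts of the $\pi$-coCartesian edges, and that the composite of two coCartesian fibrations with these edges is again one is the standard composition property of coCartesian fibrations \cite{lurie-htt}. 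Second, the Segal condition holds: the fibre of $q$ over $I^*$ is $\Grothco$ of the fibrewise functor $(c_i)\mapsto \prod_{i}\Fc(c_i)$ on $\Cc^I$, and since the coCartesian Grothendieck construction turns such a pointwise product into the product of total $\infty$-categories, this fibre is canonically $\prod_{i\in I}\Grothco\Fc$, compatibly with the maps induced by the $p_i$.

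With $\ul{\Grothco\Fc}$ in hand, the projection $\rho\colon \mathcal E \to \Cc^\otimes$ is by construction a functor over $\finsurj^*$ carrying $q$-coCartesian edges to $\pi$-coCartesian ones; this is exactly the assertion that $\Grothco\Fc \to \Cc$ is a symmetric monoidal functor, which settles the first statement. For the Cartesian Grothendieck construction I would invoke the identity $\Groth(\Pc) = \Grothco(\Pc^\op)^\op$ recorded earlier. Applying $(-)^\op$ fibrewise to $\Fc$ gives $\Fc^\op\colon \Cc \to \Cat_\infty$, $c \mapsto \Fc(c)^\op$, which is again lax monoidal for the same structure on $\Cc$ (its structure maps are the opposites of those of $\Fc$). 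By the first part $\Grothco(\Fc^\op) \to \Cc$ is symmetric monoidal; passing to opposite categories and using $\Groth(\Fc) = \Grothco(\Fc^\op)^\op$ then yields the symmetric monoidal functor $\Groth\Fc \to \Cc^\op$.

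The main obstacle is the coherent, rather than objectwise, verification of the two points in the second paragraph: producing $\widetilde\Fc$ as a genuine functor (this is where the ``total product'' functor $\Cat_\infty^\otimes \to \Cat_\infty$ must be set up carefully), and checking that the composite $q$ really is a coCartesian fibration whose fibres assemble, through the Segal maps, into the product decomposition $\prod_{i}\Grothco\Fc$ at the level of the straightening. Once these coherences are in place, the remaining identifications, including the monoidality of the projections and the opposite-category duality, are formal.
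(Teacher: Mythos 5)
Your proposal follows essentially the same route as the paper: both convert the lax monoidal functor $\Fc$ into a functor $\tilde\Fc\colon \Cc^\otimes\to\Cat_\infty$ (the paper by citing \cite[Prop.\ 2.4.1.7]{lurie-ha} for the Cartesian monoidal structure on $\Cat_\infty$, you by composing with the total-product functor and checking the Segal condition by hand), unstraighten and compose with $\Cc^\otimes\to\finsurj^*$ to get the monoidal structure on $\Grothco\Fc$, and then deduce the Cartesian case from the identity $\Groth(\Fc)\simeq\Grothco(\Fc^\op)^\op$. The argument is correct; the only difference is that you verify explicitly the coherences the paper delegates to Lurie's proposition.
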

    
    At the lowest level, that  of objects, the symmetric monoidal structure in, say, (a) is given
    as follows. An object of $\Grothco(\Fc)$ is a pair $(c,d)$ where $c\in\Cc$ and
    $d\in\Fc(c)$ are objects. Then, in the notation of \eqref{eq:lax-mon-fun}, 
    \[
    (c_1,d_1) \otimes (c_2, d_2) = 
    (c_1\otimes c_2,  \Fc_{c_1, c_2}(d_1, d_2))
    \] 

 \begin{proof} 
   (a) 
    We consider the Cartesian symmetric monoidal structure on $\Cat_\infty$.
    In particular, the $\infty$-category of lax 
monoidal functors $\Cc \to \Cat_\infty$ is equivalent to a full sub-$\infty$-category of
 $\Fun(\Cc^\otimes, \Cat_\infty)$ 
 (the full subcategory of lax Cartesian structures, see
  \cite[Prop. 2.4.1.7]{lurie-ha}).
        
     Let $\wt \Fc$ denote the image of $\Fc$ under this inclusion.
     We can now apply the coCartesian Grothendieck construction to $\wt\Fc$ and obtain a 
    coCartesian fibration $\pi \colon \Grothco (\wt \Fc) \to \Cc^\otimes$. Composing it  with
    the coCartesian fibration $\Cc^\otimes \to \finsurj^{*}$, we obtain the coCartesian fibration 
   $\Grothco (\wt \Fc) \to \finsurj^{*}$. The fact that $\wt \Fc$ is a lax Cartesian structure implies that this coCartesian fibration defines a non-unital symmetric monoidal structure $\otimes$ on $\Grothco(\Fc)$  such that $\Grothco (\Fc)^{\otimes } = \Grothco (\wt \Fc)$.

   It remains to see that the projection $\Grothco (\Fc) \to \Cc$ is monoidal -- i.e. that the projection $\pi$ preserves morphisms that are coCartesian over $\finsurj^*$. This follows from the following
   \begin{lem}
    Let $p \colon \Cc_1 \to \Cc_2$ and $q \colon \Cc_2 \to \Cc_3$ be two coCartesian fibrations of $\infty$-categories. Let $f$ be a morphism of $\Cc_1$ that is coCartesian over $\Cc_3$. Then $q(f)$ is coCartesian over $\Cc_3$.
   \end{lem}
   \begin{proof}
Let $f \colon x \to y \in \Cc_1$ be coCartesian over $\Cc_3$. Since $q$ is a coCartesian fibration, $q \circ p(f)$ admits a coCartesian lift $f' \colon p(x) \to y' \in \Cc_2$. With $p$ also being a coCartesian fibration, there exists $f'' \colon x \to y'' \in \Cc_1$ lifting $f'$ and coCartesian over $\Cc_2$.
It follows from \cite[Prop. 2.4.1.3 (3)]{lurie-htt} that $f''$ is also coCartesian over $\Cc_3$. By uniqueness of coCartesian lifts, $f$ and $f''$ are canonically homotopic. In particular $p(f) = f'$ is coCartesian over $\Cc_3$. 
   \end{proof}
   This concludes the proof of \autoref{prop:grothmonod} (a). Part (b) follows via
   \eqref{eq:groth-cogroth-op}.  
    \end{proof}
    
     Proposition \ref{prop:grothmonod} leads to the following concepts
 which, for  constant $\Fc$, reduce to the concept of a  lax and colax monoidal functor
 $\Cc\to\Dc$.

 \begin{Defi}~
 \begin{statements}
   \item A  {\em lax monoidal object} of $\laxlim(\Fc)$ or a
      {\em lax monoidal section}
     of $\Grothco(\Fc)$, is  lax monoidal functor $\Cc\to \Grothco(\Fc)$
     which is a section of 
      $\Grothco(\Fc)\to\Cc$.  
  
  \item A {\em colax monoidal object} of $\oplaxlim(\Fc)$, or a {\em colax monoidal
      section} of $\Groth(\Fc)$, is a colax monoidal functor 
      $\Cc^\op\to \Groth(\Fc)$
     which is a section of 
      $\Groth(\Fc)\to\Cc^\op$.
 \end{statements}
 \end{Defi}
 
 Our  next goal is to interpret (co)lax monoidal objects in terms of   symmetric monoidal structures
 given by Day convolutions of sections.

    \paragraph{Day convolution of sections as relative Kan extension.} 
    We recall the notion of {\em relative Kan extensions} which generalize the usual left and right
    Kan extensions to the setting of relative (or parametrized) $\oo$-category theory.
    In that setting, the  main object is, instead of  a single $\oo$-category,
    a (co)Cartesian fibration over a base 
    $\oo$-category.

    More precisely, for 
     a coCartesian fibration $p: \Dc\to\Dc'$ and a commutative square of $\oo$-categories
    on the left of \eqref{eq:rel-kan-ext}, the relative  left Kan extension 
    is an  $\oo$-functor $\lambda = u_!^{\Dc/\Dc'} f$ from $\Ec'$ to $\Dc$, 
    denoted by the dashed arrow, which (when it exists)
    satisfies the relative analog of the standard universal properties and splits the
    square into two triangles: the lower one homotopy commutative and the upper one
    equipped with a natural transformation $f\Rightarrow \lambda u$.
    See \cite[Def. 4.3.2.2]{lurie-htt}
    for the case when $u$ is a full embedding and \cite[\S 10]{shah} for the general case. 
     \be\label{eq:rel-kan-ext}
     \begin{tikzcd}
      \Ec \ar[r, ""{name=U, below, near start}, "f"]  \ar[Rightarrow, from=U, to=U, end anchor={[yshift=-1em]south}] \ar{d}[swap]{u} & \Dc \ar{d}{p} \\ \Ec' \ar{r} \ar[dashed]{ur}[swap]{\lambda} & \Dc'
     \end{tikzcd}
     \hspace{2cm}
     \begin{tikzcd}
      \Cc\times\Cc \ar[r, ""{name=U, below, near start}, "s\boxtimes t"]  \ar[Rightarrow, from=U, to=U, end anchor={[yshift=-1.2em]south}] \ar{d}[swap]{\otimes} & \Grothco(\Fc) \ar{d}{\pi} \\ \Cc \ar{r}[swap]{\Id} \ar[dashed]{ur}[swap]{s\lday t} & \Cc
     \end{tikzcd}
    \ee
    We apply this to the situation on the right of \eqref{eq:rel-kan-ext}. Given two objects
    of $\laxlim(\Fc)$, i.e., two sections $s,t$ of $\Grothco(\Fc)$, we have their
    external tensor product $s\boxtimes t$ as in   \S \ref{subsec:sym-mon-inf}\ref{par:day-convo}
    The left Day convolution of $s$ and $t$ is then defined as  the relative Kan extensions
    (provided the extension exists):
           \[
    s\lday t \,=\, \otimes_!^{\Grothco(\Fc)/\Cc}(s\boxtimes t).
    \]
    As for usual Kan extensions, we have the standard formulas for the values of relative Kan extensions on objects as certain relative
    (co)limits, see \cite[Th.10.3]{shah} . In our case they give
    \begin{equation}\label{eq:left-day-sections}
  (s\lday t)(c) \,=\,\hocolim_{(a: c_1\otimes c_2 \to c)\in \otimes/c}{}^{\hskip -0.7 cm \Fc(c)}\,\,\, 
   \Fc(a)\bigl( \Fc_{c_1, c_2} (s(c_1), t(c_2))\bigr). 
    \end{equation}
    Here $\Fc_{c_1, c_2}$ is the lax monoidal structure data for $\Fc$, while $\Fc(a)$ is the value
    of $\Fc$ on the morphism $a$, and the colimit is taken in the category $\Fc(c)$. 
    
    \vskip .2cm
    
    Dually, let $\Fc: \Cc^\op \to\Cat_\oo$ be an $\oo$-functor. Then
    $\Groth(\Fc)\to \Cc$,  the
    contravariant Grothendieck construction of $\Fc$, is a Cartesian fibration over $\Cc$
    (and not $\Cc^\op$). In this case,
    for two sections $s,t$ of $\Groth(\Fc)$ their right Day convolution and  its
    values on object  are defined as the right relative Kan extension
    \begin{equation}\label{eq:right-day-sections}
    \begin{gathered}
     (s\rday t) \,=\, \otimes_*^{\Groth(\Fc)/\Cc} (s\boxtimes t), 
   \\
    (s\rday t) (c) \,=\, \holim_{(b: c\to c_1\otimes c_2)\in c/\otimes }{}^{\hskip -0.7cm \Fc(c)}
    \,\,\, \Fc(b)\bigl( \Fc_{c_1, c_2} (s(c_1), t(c_2))\bigr).
    \end{gathered}
    \end{equation}
    Here $\Fc(b): \Fc(c_1\otimes c_2)\to\Fc(c)$ is the value of $\Fc$ on the morphism
    $b$. 
    
  \begin{rem}  Note that the usual Kan extension $\otimes_!(s\boxtimes t)$, 
    (i.e., the Day convolution of $s$ and $t$ as functors $\Cc \to\Grothco(\Fc)$)
    need not be a section, and similarly for the usual
   $ \otimes_*(s\boxtimes t)$. 
   Indeed,  the value of $\otimes_!(s\boxtimes t)$ at $c$ would be   the colimit      of the diagram of the $\Fc_{c_1, c_2} (s(c_1), t(c_2))$ which does not lie in a single fiber of $\pi$. 
    \end{rem}
    
    \paragraph{Left Day convolution of sections at the level of $\oo$-operads.}
     Next, we  define  the left Day convolution $\lday$ on $\laxlim(\Fc)$ at the level of $\oo$-operads.      
 That is, we define the $\oo$-operad $\laxlim(\Fc)^{\lday}$
    by applying the {\em norm construction} of \cite[Th. 2.2.6.2]{lurie-ha} to the composable pair
    of coCartesian fibrations of $\oo$-operads
    \[
     \Grothco ( \Fc)^{\otimes} = 
  \Grothco (\wt \Fc) \lra \Cc^\otimes \lra \finsurj^*. 
         \]
    It produces a fibration of $\oo$-operads (i.e., in our case,  simply a non-unital $\oo$-operad)
 
    \be\label{eq:laxlim=norm}
     \laxlim(\Fc)^{\lday} \,\buildrel{\on{def}.}\over =  \, \on{Nm}_{\Cc/\finsurj^*} (  \Grothco  (\Fc)^{\otimes} )
  \buildrel \pi\over  \lra \finsurj^*. 
  \ee
     
    \begin{prop}
    The $\oo$-category of non-unital commutative algebras in $\laxlim(\Fc)^{\lday}$
    (i.e., of operadic sections of $\pi$) is equivalent to that of lax monoidal objects in $\laxlim(\Fc)$.
    \end{prop}
    
\begin{proof}
 By the universal property of the norm  \cite[Def. 2.2.6.1]{lurie-ha}, the category
 of operadic sections of $\pi$ is equivalent to that of operadic sections of   $p: \Grothco(\Fc)\to \Cc$
 which are precisely lax monoidal sections, i.e., lax monoidal objects of $\laxlim(\Fc)$.
\end{proof}

 \paragraph{Day convolutions as symmetric monoidal structures.}
 We now give a criterion, generalizing Proposition \ref{prop:finiteconvolution}, for the $\oo$-operad
  $\laxlim(\Fc)^{\lday}$ to be a symmetric monoidal $\oo$-category.

\begin{prop}\label{prop:convonlaxlim}
    Let $\Fc \colon \Cc \to \Cat_\infty$ be a non-unital lax monoidal $\infty$-functor.
    Assume that:
    \begin{itemize}
    \item[(1)]  $\Cc$ is of finite decompositions.
    
    \item[(2)]  For any $c \in \Cc$, the $\infty$-category $\Fc(c)$ admits finite 
    coproducts.
    
    \item[(3)] For any morphism $a: c_1\otimes\cdots\otimes c_n\to c$ in $\Cc$, the 
    composite functor
    \[
    \Fc(c_1)\times\cdots\times \Fc(c_n)\buildrel \Fc_{c_1,\dots, c_n}\over\lra
      \Fc(c_1\otimes\cdots\otimes c_n) \buildrel  \Fc(a)\over\lra \Fc(c)
    \]
    preserves coproducts in each variable. 
     \end{itemize}
    
  \noindent Then the  $\oo$-operad $\laxlim(\Fc)^{\lday}$ is a non-unital symmetric monoidal $\oo$-category. In other words, the 
   left Day convolution defines a non-unital symmetric monoidal structure $\lday$ on $\laxlim \Fc$. 
 
\end{prop}

\begin{proof}
The argument is analogous to those of \cite[Prop.2.12]{glasman} 
and \cite[Prop.2.2.6.16]{lurie-ha}
so we indicate only the main steps. We need to prove that $\pi: \laxlim(\Fc)^\lday\to\finsurj^*$
is a coCartesian fibration, i.e., that for each morphism $\alpha: I\to J$ of $\finsurj^*$ and
any object $x$ lying over $\alpha$ there is a coCartesian lift. We already know this
when $\alpha$ is inert (as $\laxlim(\Fc)^\lday$ is an $\oo$-operad). To extend
this to all $\alpha$, recall the concept of a {\em locally coCartesian edge} in $\laxlim(\Fc)^\lday$.
This is an edge $f$ projecting to some $\alpha$ which becomes coCartesian
in the pullback fibration over $\Delta^1$ induced via the functor $\Delta^1\to\finsurj^*$
given by $\alpha$, 
see \cite[Def. 2.4.2.6]{lurie-htt}.  
So it suffices
to establish the following two claims:

\vskip .2cm

\noindent {\bf (A)}  
  For any morphism $\alpha: I\to J$ in $\finsurj^*$ and any $x\in \pi^{-1}(I)$ there is a locally
  coCartesian edge lifting $\alpha$.
  
  \vskip .2cm
  
  \noindent {\bf (B)} The composition of two locally coCartesian edges from (A)
 is locally coCartesian. (This will imply our statement by \cite[Prop.2.4.2.8]{lurie-htt}). 
  
  \vskip .2cm
  
  \noindent{\sl Proof of (A):}  we already know the statement for inert morphisms $\alpha$.
  Let us consider only the following family of active morphisms
   $\alpha_n: I:= \{*, 1,\dots, n\}\to J:= \{*,1\}$. By definition, $\alpha_n$
 projecting each $1\leq i\leq n$ to $1$.
  Consideration of more general morphisms is similar: for instance, active
  morphisms amount to external
  coproducts (in the pointed sense)
  of  several of the $\alpha_n$. So let us  show the existence of locally
  coCartesian lifts of $\alpha_n$. 
  
  The pullback of  $\pi$ via $\alpha_n$ (a fibration over $\Delta^1$) is a
   correspondence  between
  $\pi^{-1}(I)$ and $\pi^{-1}(J)$.  For each object of $\pi^{-1}(I)$, i.e., 
  is each $n$-tuple of sections $\ul s=(s_1,\dots, s_n)$ this correspondence gives
  a functor  from $\laxlim(\Fc) = \pi^{-1}(J)$ to spaces
which we  denote  $\Mult(s_1,\dots, s_n; -)$ and call the  {\em functor of multi-operations}. To construct a locally coCartesian lift of $\alpha_n$
  starting from $\ul s$ is the same as to exhibit a representing object for 
   $\Mult(s_1,\dots, s_n; -)$.  We claim the  following natural identifications:
  \be\label{eq:mult}
\begin{gathered}
 \Mult(s_1, \dots, s_n ; t) \,\simeq \,  \Map_{\Fun_{\Cc}(\Cc^n, \Grothco(\Fc))}(\otimes^n \circ \underline s, t \circ \otimes^n ).
 \end{gathered}
\ee
 Given \eqref{eq:mult},  the functor $\Mult(s_1, \dots, s_n ; -)$ is representable as soon as 
$\otimes^n \circ \underline s \colon \Cc^n \to \Grothco(\Fc)^n \to \Grothco(\Fc)$ admits a relative left Kan extension over $\Cc$ along the tensor product functor $\otimes^n: \Cc^n \to \Cc$.
The existence of such relative extension reduces, see 
  \cite[Cor. 4.3.1.11]{lurie-htt} and \cite[Thm. 10.3]{shah},   to the existence of the \emph{relative}
  colimits
  \[
  \hocolim_{(a:c_1\otimes\cdots\otimes c_n\to c)\in \otimes^n/c}\nolimits^{\on{Rel}.} 
  \Fc(a) (\Fc_{c_1,\dots, c_n}(s_1(c_1), \dots, s_n(c_n)))
  \]
  for all objects $c\in \Cc$. By \cite[Prop. 4.3.1.10]{lurie-htt}, such a relative colimit exists as soon as the usual (absolute) colimit exists in $\Fc(c)$ and is preserved by every functor of the form $\Fc(c \to d)$. Since by our assumptions,
  such  absolute colimits reduce to finite coproducts that exist in the $\Fc(c)$
  and the $\Fc(c \to d)$ preserves them,  the representability follows.
  
  \vskip .2cm
  
  The identification  \eqref{eq:mult} is obtained via
  \cite[Prop.2.2.6.6]{lurie-ha} which gives a pullback diagram
  \[
  \begin{tikzcd}
   \Fun_{\mathscr S^*}(\Delta^1, (\laxlim \Fc)^{\otimes}) \ar{r}{\ev_0} \ar{d} \ar[start anchor={[xshift=.5ex,yshift=1ex]south west}, bend right, end anchor={[xshift=3.5ex]north west}]{dd}[swap]{\ev_1} & \overset{\overset{\makebox[0pt][c]{$\Fun_\Cc(\Cc, \Grothco(\Fc))^n$}}{\rotatebox{90}{$=$}}}{(\laxlim \Fc)^n} \ar{d} \\
   \Fun_{\Cc^{\otimes}}\left(\Delta^1 \times_{\mathscr S^*} \Cc^{\otimes}, \Grothco(\Fc)^{\otimes}\right) \ar{r}{\ev'_0} \ar{d}{\ev'_1} & \Fun_{\Cc^n}(\Cc^n, \Grothco(\Fc)^n) \\
   \laxlim \Fc \simeq \Fun_{\Cc^{\otimes}}\left(\Cc, \Grothco(\Fc)^{\otimes}\right) \phantom{\laxlim \Fc \simeq\,}
  \end{tikzcd}
 \]
 
Here the map $\Delta^1\to \finsurj^*$ is given by $\alpha_n$. The arrows
 $\ev_0$ and $\ev_1$ are the evaluations at the two ends of $\Delta^1$,
 so $\ev_0$ takes values in $\pi^{-1}(\{*,1, \dots, n\})= (\laxlim\Fc)^n$ and $\ev_1$
 in $\pi^{-1}(\{*,1\})=\laxlim\Fc$, and similarly for $\ev'_0, \ev'_1$. 
  Using now the equivalence at the very bottom of page 229 in \cite{lurie-ha}, we find another pullback diagram:
\[
 \begin{tikzcd}
  \Fun_{\Cc^{\otimes}}\left(\Delta^1 \times_{\mathscr S^*} \Cc^{\otimes}, \Grothco(\Fc)^{\otimes}\right) \ar{r} \ar{d}{\ev'_1} & \Fun_{\Cc^{\otimes}}\left(\Delta^1 \times \Cc^{n}, \Grothco(\Fc)^{\otimes}\right) \ar{d}{\ev''_1} \\
  \makebox[0pt][r]{$\laxlim \Fc \simeq\,$} \Fun_{\Cc^{\otimes}}\left(\Cc, \Grothco(\Fc)^{\otimes}\right) \ar{r}[swap]{(\otimes^n \colon \Cc^n \to \Cc)^*} & \underset{\overset{\rotatebox{90}{$=$}}{\makebox[0pt][c]{$\Fun_{\Cc}\left(\Cc^{n}, \Grothco(\Fc)\right).$}}}{\Fun_{\Cc^{\otimes}}(\Cc^n, \Grothco(\Fc)^{\otimes})}
 \end{tikzcd}
\]
Here the functor $\Delta^1 \times \Cc^n \to \Cc^{\otimes}$ used in the upper right corner maps $\Delta^1 \times \{(c_1, \dots, c_n)\}$ to the coCartesian morphism of $\Cc^{\otimes}$:
\[
 \Cc^{\otimes} \supset \Cc^n \ni (c_1, \dots, c_n) \to c_1 \otimes \cdots \otimes c_n \in \Cc \subset \Cc^{\otimes}.
\]
In particular, the functor $\Cc^n \to \Cc^{\otimes}$ in the bottom right corner is the composite $ \Cc^n\buildrel \otimes^n\over \to \Cc \subset \Cc^{\otimes}$.

 By definition, the functor $\Mult(s_1, \dots, s_n ; - )$ is classified by the fibration 
 \[
\ev_0^{-1}(s_1, \dots, s_n) \hookrightarrow \Fun_{\mathscr S^*}(\Delta^1, (\laxlim F)^{\otimes})\buildrel \ev_1\over \lra \laxlim F.
 \]
  From the two pullback squares above, to understand $\Mult(s_1, \dots, s_n ; -)$, 
  it suffices to study the fibration
 \[
  {\ev''_0}^{-1}(s_1, \dots, s_n) \hookrightarrow \Fun_{\Cc^{\otimes}}\left(\Delta^1 \times \Cc^{n}, \Grothco(F)^{\otimes}\right) \buildrel {\ev''_1} \over \lra \Fun_{\Cc}\left(\Cc^{n}, \Grothco(F)\right).
 \]
  After unravelling the definitions, we find \eqref{eq:mult}.

  \vskip .2cm

  \noindent{\sl Proof of (B):}   the statement (A) established the existence of iterated 
  tensor products. Given that,
  the statement (B) means that such products are associative. 
 This is a straightforward
  manipulation with the colimits as above, using the fact that  
  each $\Fc_{c_1,\dots, c_n}$ as well as each $\Fc(a)$ preserves 
  finite coproducts in each 
  of their arguments. Further details are similar to those of   \cite[Prop.2.12]{glasman}.
\end{proof}

\begin{cor}\label{cor:day-oplaxlim}
Let $\Fc: \Cc \to \Cat_\oo$ be a {\em contravariant} non-unital lax monoidal $\oo$-functor. Assume that:
 
    \begin{itemize}
    \item[(1)]  $\Cc$ is of finite codecompositions.
    
    \item[(2)]  For any $c \in \Cc$, the $\infty$-category $\Fc(c)$ admits finite direct products.
    
    \item[(3)] For any morphism $b: c\to c_1\otimes\cdots\otimes c_n$ in $\Cc$, the composite functor
    \[
    \begin{tikzcd}
    \Fc(c_1)\times\cdots\times \Fc(c_n) \ar{rr}{\Fc_{c_1,\dots, c_n}} && \Fc(c_1\otimes\cdots\otimes c_n) \ar{r}{\Fc(b)} & \Fc(c)
    \end{tikzcd}
    \] preserves finite direct products in each variable. 
     \end{itemize}
    
   Then the  
   right Day convolution defines a non-unital symmetric monoidal structure $\rday$ on $\oplaxlim \Fc$. 
\end{cor}

\begin{proof} Let $\Fc^\circ: \Cc \to\Cat_\oo$ be the 
(still contravariant)  functor mapping $c\in \Cc$ to
the category $\Fc(c)^\op$, see  \autoref{def:lax-oplax-lim}. The functor
 $(-)^\op: \Cat_\oo \to\Cat_\oo$, being an equivalence,
  is therefore monoidal for the Cartesian monoidal structure. Therefore $\Fc^\circ$ inherits the lax monoidal
  structure from that of $\Fc$. Now, because of \eqref{eq:groth-cogroth-op}, the left Day convolution on
  $\laxlim(\Fc^\circ)$ induces the desired right Day convolution on $\oplaxlim(\Fc)$.
\end{proof}


 \paragraph{Effect of lax monoidal transformations.} 

\begin{prop}\label{prop:laxtransf}
    Given two lax monoidal $\infty$-functors $\Fc, \Gc \colon \Cc \to \Cat_\infty$ and 
    a lax monoidal transformation $\Fc \Rightarrow \Gc$, the induced $\infty$-functors
    \[
    \Grothco \Fc \to \Grothco \Gc \hspace{8mm} \text{and} \hspace{8mm} \Groth \Fc \to \Groth \Gc
    \]
    are lax monoidal. If the natural transformation is (strictly) monoidal, then 
    the induced $\infty$-functors are too.
\end{prop}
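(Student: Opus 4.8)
The plan is to deduce the statement from the straightening equivalence of \cite[Prop.~2.4.1.7]{lurie-ha} that already underlies Proposition \ref{prop:grothmonod}, realizing everything over $\finsurj^*$ and then reading off the coCartesian-edge conditions defining lax and monoidal functors by separating inert from active morphisms. I would treat the coCartesian construction $\Grothco$ first and obtain the Cartesian one by the device $\Groth\Fc \simeq (\Grothco\Fc^\op)^\op$ used in Proposition \ref{prop:grothmonod}.

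Write $\tilde\Fc, \tilde\Gc\colon \Cc^\otimes \to \Cat_\infty$ for the functors obtained by straightening, so that the monoidal structures of Proposition \ref{prop:grothmonod} are $(\Grothco\Fc)^\otimes \simeq \Grothco\tilde\Fc$ and $(\Grothco\Gc)^\otimes \simeq \Grothco\tilde\Gc$, each fibered over $\finsurj^*$ through the composite with $\Cc^\otimes \to \finsurj^*$. The first step is to produce from $\eta\colon\Fc\Rightarrow\Gc$ a functor $F^\otimes\colon \Grothco\tilde\Fc \to \Grothco\tilde\Gc$ over $\Cc^\otimes$ whose restriction over $\{1\}^*$ is the functor $\Grothco\Fc \to \Grothco\Gc$ induced by the underlying transformation. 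Here I would invoke the standard description of functors $\Grothco\tilde\Fc \to \Grothco\tilde\Gc$ over $\Cc^\otimes$ as lax natural transformations $\tilde\Fc \Rightarrow \tilde\Gc$: the components $\eta_{c}$ give the functors on fibers, while the lax structure cells of $\eta$ supply the comparison morphisms, not required to be invertible, attached to the non-inert morphisms of $\Cc^\otimes$.

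The second step is the verification. By the criterion for a composite of coCartesian fibrations, a morphism of $\Grothco\tilde\Fc$ is coCartesian over $\finsurj^*$ iff it is coCartesian over $\Cc^\otimes$ and its image in $\Cc^\otimes$ is coCartesian over $\finsurj^*$. Over an inert projection $p_i\colon I^* \to \{i\}^*$ the relevant coCartesian edges of $\Cc^\otimes$ are projections, not involving $\otimes$, so the comparison cells that $\eta$ attaches to them are equivalences; since $F^\otimes$ lies over $\Cc^\otimes$, it sends such edges to coCartesian edges, and $F^\otimes$ is therefore lax monoidal. Over the active morphisms, which encode $\otimes$, the comparison cell on the coCartesian edge is exactly the lax structure cell $\theta$ of $\eta$, so $F^\otimes$ preserves all coCartesian edges, i.e.\ is monoidal, precisely when every $\theta$ is an equivalence, which is the hypothesis that $\eta$ is strongly monoidal.

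For the Cartesian statement I would run the same argument for the fiberwise opposites: $\eta$ induces $\eta^\op\colon \Fc^\op \Rightarrow \Gc^\op$ with reversed structure cells, so the coCartesian case gives a colax (resp.\ monoidal) functor $\Grothco\Fc^\op \to \Grothco\Gc^\op$, and passing to opposites via $\Groth\Fc \simeq (\Grothco\Fc^\op)^\op$ yields the asserted lax (resp.\ monoidal) functor $\Groth\Fc \to \Groth\Gc$. The main obstacle is the first step: constructing $F^\otimes$ and establishing the coherence of its lax structure $\infty$-categorically, since a genuinely lax transformation is not a natural transformation of the straightened functors. The cleanest route is to encode the lax monoidal transformation itself as a single lax monoidal functor $\Cc \to \Cat_\infty$ with values in arrow categories (equivalently, as an object of an appropriate lax limit), apply Proposition \ref{prop:grothmonod} to it, and recover $F^\otimes$ by restricting to the two endpoints; one then checks that these restrictions return $(\Grothco\Fc)^\otimes$ and $(\Grothco\Gc)^\otimes$ and that the inert edges behave as stated. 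The remaining manipulations are formal properties of coCartesian fibrations.
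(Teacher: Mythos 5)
Your argument is correct and follows essentially the same route as the paper's (very brief) proof: straighten to $\tilde\Fc,\tilde\Gc\colon\Cc^\otimes\to\Cat_\infty$, encode the lax monoidal transformation as a lax natural transformation and hence as a functor $\Grothco\tilde\Fc\to\Grothco\tilde\Gc$ over $\Cc^\otimes$, and read off lax (resp.\ strict) monoidality from the behaviour on coCartesian edges over inert (resp.\ active) morphisms --- you simply make explicit the edge-checking that the paper compresses into ``the result follows.'' The only cosmetic difference is the Cartesian case, which you handle by fiberwise opposites as in Proposition \ref{prop:grothmonod}, whereas the paper invokes the Cartesian/coCartesian duality of \cite{BGS}; either device works.
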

\begin{proof}
    The lax monoidal $\infty$-functors $\Fc$ and $\Gc$ correspond to $\infty$-functors 
    \[
    \wt \Fc, \wt \Gc \colon \Cc^\otimes \to \Cat_\infty.
    \]
    The lax monoidal 
    transformation $\Fc \Rightarrow \Gc$ then correspond to a lax natural 
    transformation $\wt \Fc \Rightarrow \wt \Gc$ and therefore to a 
    commutative diagram
    \[
    \xymatrix@R=1em{
        \displaystyle \Grothco \wt \Fc \ar[rr] \ar[dr] && \displaystyle \Grothco \wt 
        \Gc \ar[dl] \\ & \Cc^\otimes. &
    }
    \]
    The result follows.
    
    \vskip .2cm The case of the Cartesian Grothendieck construction is done using the duality between the Cartesian and the coCartesian constructions described in \cite{BGS}
   and  which we recall. For any $\oo$-functor $\Fc: \Cc\to\Cat_\oo$ 
    (not necessarily lax monoidal) the main result of \cite{BGS} recovers
    the Cartesian fibration $\ol p: \Groth(\Fc)\to\Cc^\op$ from the coCartesian
    fibration $p: \Grothco(\Fc)\to\Cc$. More precisely, $\Groth(\Fc)$
    is identified with $\Grothco(\Fc)_{\on{corr}}$,  the $\oo$-category of correspondences
    in $\Grothco(\Fc)$
    \be\label{eq:corresp}
    x\buildrel f\over\lla z \buildrel y\over\lra y
    \ee
    where $f$ is such that $p(f)$ is an equivalence in $\Cc$ and $g$ is $p$-coCartesian.
    This means that:
    \begin{itemize}
    \item $\Ob(\Grothco(\Fc)_{\on{corr}}) = \Ob(\Groth(\Fc))$.
    
    \item $1$-morphisms of $\Grothco(\Fc)_{\on{corr}}$
     are diagrams \eqref{eq:corresp}
    in $\Grothco(\Fc)$ as above. 
    
    \item ``Composition of correspondences is done in the standard way  by forming pullbacks'', i.e., $n$-simplices of  $\Grothco(\Fc)_{\on{corr}}$
    are defined to be pyramid mesh diagrams consisting of Cartesian squares, see
    \cite[\S1.2]{BGS}. 
    \end{itemize}
    
    \noindent Let now $\Fc$ be lax monoidal. In this case the above construction of
    $\Groth$
    from $\Grothco$ upgrades to a construction of the {\em coCartesian} fibration
    $\pi: \Groth(\Fc)^\otimes \to\finsurj^*$  giving the symmetric monoidal structure
    on $\Groth(\Fc)$ out of the coCartesian fibrations
    \[
    \Grothco(\wt\Fc) = \Grothco(\Fc)^\otimes \buildrel q\over\lra \Cc^\otimes
    \buildrel r\over\lra \finsurj^*
    \]
    giving the symmetric monoidal structures on $\Grothco(\Fc)$ and $\Cc$. 
    More precisely, $ \Groth(\Fc)^\otimes $ is identified with the $\oo$-category
    of the correspondence in $\Grothco(\Fc)^\otimes $ of the form  \eqref{eq:corresp}
    in which $f$ is such that $q(f)$ is $r$-coCartesian and $g$ is $q$-coCartesian
    with $(rq)(g)$ an equivalence. 
    
    This construction of $\Groth(\Fc)^\otimes$  out of $\Grothco(\Fc)^\otimes$
    reduces our proposition to the case of $\Grothco(\Fc)\to\Grothco(\Gc)$. 
\end{proof}

\begin{cor}\label{cor:laxmonoidalonlaxlimits}
    Let $\Fc, \Gc \colon \Cc \to \Cat_\infty$ be lax monoidal $\infty$-functors such that for any $c \in \Cc$ the $\infty$-categories $\Fc(c)$ and $\Gc(c)$ admit finite limits. Assume that $\Cc$ is of finite decomposition. For any lax monoidal transformation $\Fc \Rightarrow \Gc$, the induced $\infty$-functors
    \[
    \laxlim \Fc \to \laxlim \Gc \hspace{8mm} \text{and} \hspace{8mm} \oplaxlim \Fc \to \oplaxlim \Gc
    \]
    are lax monoidal.
\end{cor}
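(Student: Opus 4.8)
The plan is to reduce the statement to two facts already in hand: that the monoidal structures on the (op)lax limits are Day convolutions transported from the symmetric monoidal Grothendieck constructions (Corollary \ref{cor:convonlaxlim}, resting on Proposition \ref{prop:grothmonod}), and that a lax monoidal transformation induces lax monoidal functors on those Grothendieck constructions (Proposition \ref{prop:laxtransf}). Under the standing hypotheses ($\Cc$ of finite decompositions, fibers with finite direct sums), the computation in the proof of Proposition \ref{prop:finiteconvolution} shows that the Day convolution of two sections $s,t$ of $\Grothco\Fc\to\Cc$ is computed in the fiber over each $c$ as the finite biproduct $\bigoplus_{[d_1,d_2]\in S(c)} s(d_1)\otimes t(d_2)$, the tensor product being taken in $\Grothco\Fc$ and the sum indexed by the essentially finite set $S(c)$ of decompositions $c\simeq d_1\otimes d_2$. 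I would treat the oplax case in detail; the lax case is identical after replacing $\Grothco$ by $\Groth$ and $\Cc$ by $\Cc^\op$, using the second halves of Propositions \ref{prop:grothmonod} and \ref{prop:laxtransf}.

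First I would apply Proposition \ref{prop:laxtransf} to $\Fc\Rightarrow\Gc$, producing a lax monoidal functor $\Phi\colon\Grothco\Fc\to\Grothco\Gc$ over $\Cc$; since $\Phi$ lies over $\Cc$, post-composition $\Phi_*$ sends sections to sections and is precisely the functor $\oplaxlim\Fc\to\oplaxlim\Gc$ under study. To equip $\Phi_*$ with a lax monoidal structure I would build, for sections $s,t$ and each $c$, the comparison map in the fiber $\Gc(c)$ as follows: on each summand the lax structure map of $\Phi$ gives $\Phi(s(d_1))\otimes\Phi(t(d_2))\to\Phi\bigl(s(d_1)\otimes t(d_2)\bigr)$, and summing these over the finite set $S(c)$ and then applying the canonical map $\bigoplus_{[d_1,d_2]}\Phi(z_{[d_1,d_2]})\to\Phi\bigl(\bigoplus_{[d_1,d_2]}z_{[d_1,d_2]}\bigr)$ (with $z_{[d_1,d_2]}=s(d_1)\otimes t(d_2)$ and $\Phi$ acting through its fiber component) yields the desired $(\Phi_*s)\otimes(\Phi_*t)\to\Phi_*(s\otimes t)$. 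The second map exists and points the correct way precisely because a finite biproduct is also a coproduct, so it is induced by the coproduct structure maps; this is the one point where the hypothesis of finite direct sums in the fibers is used essentially.

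The hard part will be upgrading this fiberwise recipe to a genuine lax monoidal functor of $\infty$-categories, i.e. supplying all higher coherences rather than just the binary comparison map. Rather than verify these by hand, I would phrase the whole argument at the level of the coCartesian fibrations over $\finsurj^{*}$ that encode the Day convolution monoidal structures: the lax monoidal functor $\Phi$ of Proposition \ref{prop:laxtransf} extends to a map of such fibrations over $\Cc^\otimes$, and the assignment sending a symmetric monoidal fibration over the fixed base $\Cc$ to its category of sections with the Day convolution is itself functorial for lax monoidal functors over $\Cc$. The coherences then follow formally from functoriality of the Grothendieck construction, exactly as Corollary \ref{cor:convonlaxlim} was deduced from Proposition \ref{prop:finiteconvolution} for a single functor; the finite-decomposition hypothesis guarantees that all convolutions are finite biproducts, so no cofinality or convergence issue intervenes and the construction applied to a morphism of input data inherits lax monoidality automatically.
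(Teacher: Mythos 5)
Your argument is correct and is essentially the proof the paper intends: the paper states this corollary without proof as an immediate consequence of Proposition \ref{prop:laxtransf} together with the Day-convolution structure on sections from Corollary \ref{cor:convonlaxlim}, which is exactly the reduction you perform. Your explicit identification of where the finite-direct-sum hypothesis is used — namely that the comparison map $\bigoplus\Phi(z_{[d_1,d_2]})\to\Phi\bigl(\bigoplus z_{[d_1,d_2]}\bigr)$ points the right way only because the finite limit computing the right Day convolution is also a coproduct — is a genuine clarification of a point the paper leaves implicit (and of its slightly imprecise ``finite limits'' hypothesis, which should be read as finite direct sums, as in Corollary \ref{cor:convonlaxlim}).
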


\begin{prop}\label{prop:restrictionlaxmonoidal}
    Consider two $\infty$-functors $\Fc \colon \Cc_1 \to \Cc_2$ and  $\Gc \colon \Cc_2 \to \Cat_\infty$. Assume that $\Fc$ is monoidal, that $\Gc$ is lax monoidal and that both $\Cc_1$ and $\Cc_2$ are of finite decomposition. The restriction $\infty$-functor $\laxlim \Gc \lra \laxlim \Gc \circ \Fc$ is lax monoidal.
\end{prop}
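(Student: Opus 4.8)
The plan is to exhibit both monoidal structures as Day convolutions and then recognise the restriction functor as precomposition along the monoidal functor $\Fc$, for which lax monoidality is a general feature of the convolution. First I would record that $\Gc \circ \Fc$ is again lax monoidal, being the composite of the monoidal $\Fc$ with the lax monoidal $\Gc$, so that Corollary \ref{cor:convonlaxlim} (applied over $\Cc_1$ and over $\Cc_2$, both of finite decomposition, and using that the fibres $\Gc(c)$ admit finite direct sums, which is part of the hypotheses being invoked) equips $\laxlim \Gc$ and $\laxlim \Gc \circ \Fc$ with Day convolution monoidal structures. Because the fibres admit finite biproducts, Proposition \ref{prop:finiteconvolution} makes the left and right convolutions coincide and computes them, at an object $c$ of the base, as the finite direct sum $\bigoplus_{[d_1,d_2]\in S(c)} \mu_{d_1,d_2}(u_{d_1},v_{d_2})$ over the finite set $S(c)$ of decompositions $c \simeq d_1 \otimes d_2$, the maps $\mu$ being the lax structure of the relevant coefficient functor.

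Next I would identify the restriction functor, call it $R$, with the section-pullback of the square
\[
\xymatrix{
\Groth(\Gc \circ \Fc) \ar[r]^-{\tilde\Fc} \ar[d] & \Groth \Gc \ar[d] \\
\Cc_1^\op \ar[r]_{\Fc^\op} & \Cc_2^\op
}
\]
from the reminder in \S\ref{sec:laxmodules}. By Proposition \ref{prop:grothmonod} both Grothendieck constructions are symmetric monoidal over $\Cc_1^\op$ and $\Cc_2^\op$, and since $\Fc$ is monoidal its $\otimes$-lift $\Fc^\otimes$ preserves coCartesian edges; as $\tilde\Fc$ is the base change of $\Fc^\otimes$ along the monoidal projections it preserves Cartesian morphisms, i.e.\ $\tilde\Fc$ is a monoidal functor covering the monoidal $\Fc^\op$. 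The functor $R$ then sends a section $s$ of $\Groth\Gc \to \Cc_2^\op$ to the unique section of the pullback whose composite with $\tilde\Fc$ is $s \circ \Fc^\op$; in other words it is precomposition by $\Fc^\op$ through a square of monoidal functors.

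The core of the argument is then the assertion that precomposition along a monoidal functor is lax monoidal for the Day convolution, which I would take from the universal property of the convolution, \cite[Prop. 2.12]{glasman}. Its components are transparent in the finite-decomposition model: at $c \in \Cc_1$ the structural map $(R u \otimes R v)_c \to R(u \otimes v)_c$ is the canonical inclusion of the summands indexed by the map of finite sets $S_{\Cc_1}(c) \to S_{\Cc_2}(\Fc c)$, $[e_1,e_2] \mapsto [\Fc e_1, \Fc e_2]$, which is well defined precisely because $\Fc$ is monoidal (so $\mu_{e_1,e_2}(u_{\Fc e_1},v_{\Fc e_2})$ is carried to the summand $\mu_{\Fc e_1,\Fc e_2}(u_{\Fc e_1},v_{\Fc e_2})$ of $(u\otimes v)_{\Fc c}$). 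As this index map need not be surjective, the structural maps are genuine inclusions and not equivalences, which is exactly why $R$ is only lax, and not strong, monoidal. The main obstacle, and the reason for routing everything through the Grothendieck constructions, is coherence: one must produce not these pointwise maps but a map of $\infty$-operads $(\laxlim \Gc)^\otimes \to (\laxlim \Gc \circ \Fc)^\otimes$ over $\finsurj^*$. The square of monoidal functors above, together with \cite[Prop. 2.12]{glasman}, supplies this coherent lax structure in one stroke, and the finite-sum formula only serves to identify its components and to confirm that $R$ fails to be strong monoidal.
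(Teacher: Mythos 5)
Your proposal is correct and follows essentially the same route as the paper: both identify the restriction with pullback of sections along the square of Grothendieck constructions and observe that, since $\Fc$ is monoidal, the induced functor $\underline\Fc \colon \Groth(\Gc\circ\Fc)\to\Groth(\Gc)$ is a monoidal functor covering $\Fc$, whence restriction is lax monoidal for the Day convolutions. Your explicit description of the structure maps as summand inclusions indexed by $S_{\Cc_1}(c)\to S_{\Cc_2}(\Fc c)$ is a useful elaboration of why the functor is only lax and not strong monoidal, but it is a refinement of, not a departure from, the paper's argument.
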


\begin{proof}
    Recall the pullback diagram
    \[
    \xymatrix{
        \Grothco(\Gc \circ\Fc) \ar[r]^-{\underline \Fc} \ar[d] & \Grothco(\Gc) \ar[d] \\ \Cc_1 \ar[r]_\Fc & \Cc_2.
    }
    \]
    Since $\Fc$ is monoidal, so is the projection $\underline \Fc$ and the above diagram is a pullback of symmetric monoidal $\infty$-categories. The result follows by  
    applying the left Day convolution and using its universal property. 
    \end{proof}


\subsection{Factorization algebras 
 as modules over \texorpdfstring{$X^\finsurj$}{XS}}\label{subsec:FA-finsurj}
 
Introduced for $X$ a curve by Beilinson and Drinfeld \cite{BD}, factorization algebras are structured $\Dc^!$-modules over $\Ran(X)$. Intuitively, a factorization structure on $E = (E^{(I)})$ is the data of compatible equivalences between $E^{(I)}$ and $\bigboxtimes_{i\in I} E^{(\{i\})}$ once restricted to the complement of the big diagonal in $X^I$.

In \cite{FG}, Francis and Gaitsgory generalized the definition to $X$ of any dimension.
In this subsection, we first recall the definitions for strict $\Dc^!$-modules, and then extend them to lax $\Dc^!$-modules (and lax $[\![\Dc]\!]$-modules) using Day convolutions.

\paragraph{Factorization structures on strict modules.} Recall that  $\DsM(X^\finsurj)$  denotes the category
of strict $\Dc^!$-modules on $X^\finsurj$. 
\begin{Defi}~
\begin{statements}[ref=Definition \theDefi(\alph*)]
 \item Let $\alpha \colon I \dra J$ be a surjection, seen as a partition $I = \coprod_j I_j$ where $I_j = \alpha^{-1}(j)$.
    We denote by $U(\alpha)$ the open subvariety of $X^I$ whose points are families $(x_i)$ such that the sets $\{x_i, i \in I_j\}$ indexed by $j \in J$ are pairwise disjoint.
    Equivalently:
    \[
    U(\alpha) = \{ (x_i)_{i \in I} ~|~ \forall i_1, i_2 \in I, ~ \alpha(i_1) \neq \alpha(i_2) \implies x_{i_1} \neq x_{i_2} \}.
    \]
    We denote by $j_\alpha \colon U(\alpha) \to X^I$ the open immersion.
 \item \label{def:jI1I2} Given two non-empty finite sets $I_1$ and $I_2$, we denote by $\gamma_{I_1,I_2}$ the surjection $I_1 \amalg I_2 \to \{1, 2\}$ mapping points of $I_j$ to $j$, for $j = 1,2$.
We denote by $j_{I_1, I_2} := j_{\gamma_{I_1, I_2}}$  the open immersion
\[
 U(\gamma_{I_1,I_2}) = \bigl\{((x_{i_1})_{i_1 \in I_1}, (x_{i_2})_{i_2 \in I_2}) ~|~
x_{i_1}\neq x_{i_2}, \,\forall i_1\in I_1, i_2\in I_2
 \bigl \} \,\, \hra \,\, X^{I_1} \times X^{I_2}. 
\]
\end{statements}
\end{Defi}

\begin{Defi}(Beilinson-Drinfeld, Francis-Gaitsgory)\label{Defi:factFG}~
\begin{statements}
 \item  We call the {\em chiral tensor product} and denote $\otimes^\mathrm{ch}$
 the non-unital symmetric monoidal structure on the $\oo$-category
  $\DsM(X^\finsurj)$  given at the lowest level of pairs of objects by 
   \[
    \left(E  \otimes^\mathrm{ch} F\right)^{(I)} = \bigoplus_{I = I_1 \amalg I_2} j_* j^* (E^{(I_1)} \boxtimes E^{(I_2)}),
    \quad j = j_{I_1,I_2},
    \]
 with the remaining structures defined in a standard way. 
    
 \item A {\em (strict) factorization algebra} on $X$ is a strict $\Dc^!$-module $E$  endowed with a coalgebra structure $E \to E \otimes^\mathrm{ch} E$ such that for any surjection $\alpha \colon I \dra J$, the morphism induced by the iterated comultiplication
    \[
    j^*_\alpha E^{(I)} \lra j^*_\alpha\biggl( \bigboxtimes_{j \in J} E^{(I_j)}\biggr)
    \]
    is an equivalence.
\end{statements}
\end{Defi}

\paragraph{The chiral tensor product on lax $\Dc^!$-modules via Day convolutions.}
We now consider $\Mod^!_\Dc(X^\finsurj)$, the $\oo$-category of lax $\Dc^!$-modules on the diagram
$X^\finsurj$. Recall (Definition \ref{def:D-mod-diagram}) that it is the op-lax limit
\[
\Mod^!_\Dc(X^\finsurj) \,:=\, \oplaxlim \Dc^!_{X^\finsurj}. 
\]
Here $\Dc^!_{X^\finsurj} \colon \finsurj \to \Cat_\infty$ maps $I$ to  the $\oo$-category $D(\QCoh_{\Dc,X^I})$ and a 
surjection
$\alpha: I \twoheadrightarrow J$ to the !-pullback $\infty$-functor
\[
\delta_\alpha^!: D(\QCoh_{\Dc,X^I}) \lra D(\QCoh_{\Dc,X^J}). 
\]
Since each $\delta_\alpha^!$ admits a left adjoint $\delta_{\alpha *}$, the Cartesian fibration $\Groth(\Dc^!_{X^\finsurj}) \to \finsurj^\op$ is also a coCartesian fibration. It is classified by the functor
$\Dc_*^{X^\finsurj} \colon \finsurj^\op \to \Cat_\oo$ mapping a finite set $I$ to $D(\QCoh_{\Dc,X^I})$ and a surjection 
$\alpha$ as above to the associated $*$-pushforward $\infty$-functor $\delta_{\alpha *}$.
This induces a tautological equivalence 
\[
  \LDsM(X^\finsurj)\, :=\,  \oplaxlim \Dc^!_{X^\finsurj} \,\simeq \, \laxlim \Dc_*^{X^\finsurj}.
\]
Further, it follows from \cite[\S 2.2.3]{FG} that $\Dc^!_{X^\finsurj}$ is endowed with 
the structure of a lax monoidal functor given (at the lowest level of pairs of objects) by
\begin{equation}\label{eq:laxmonoidal-chiral}
j_* j^*( - \boxtimes -) \colon D(\QCoh_{\Dc,X^{I_1}}) \times D(\QCoh_{\Dc,X^{I_2}}) \lra D(\QCoh_{\Dc,X^{I_1 \amalg I_2}})
\end{equation}
for $j = j_{I_1,I_2}$. Therefore, using \autoref{prop:grothmonod}, we get a non-unital symmetric monoidal structure on the Grothendieck construction $\Groth(\Dc^!_{X^\finsurj})$ compatible with the projection
\[
p \colon \Groth(\Dc^!_{X^\finsurj}) \to \finsurj^\op.
\]
It is given on two objects $(I_1, E_1)$ and $(I_2, E_2)$ in $\Groth(\Dc^!_{X^\finsurj})$ by the formula
\begin{equation}\label{eq:monoidal-Groth-D!}
(I_1, E_1) \otimes (I_2, E_2) = (I_1 \amalg I_2, j_*j^*(E_1 \boxtimes E_2)).
\end{equation}
The $\oo$-functor $\Dc^!_{X^\finsurj} \colon \finsurj \to \Cat_\infty$ is covariant. 
Writing $\finsurj$ as $(\finsurj^\op)^\op$, we can view  $\Dc^!_{X^\finsurj} $
as a contravariant functor from $\finsurj^\op$ to $\Cat_\oo$ and apply 
Corollary \ref{cor:day-oplaxlim}. Indeed,  $\finsurj^\op$ is of finite
codecompositions (i.e., $\finsurj$ is of finite decompositions)  by 
\ref{ex:finsurj-decomposition} and each $\oo$-category 
 $D(\QCoh_{\Dc,X^I})$ admits finite direct products (which reduce to direct sums). 
  \autoref{cor:day-oplaxlim} implies that the $\infty$-category $\LDsM(X^\finsurj)$ carries a right Day convolution product $\rday$.
  
 \vskip .2cm 
  
    On the other hand,  the functors \eqref{eq:laxmonoidal-chiral} also define a lax monoidal structure on the functor 
  $\Dc_*^{X^\finsurj}$.
By \autoref{prop:convonlaxlim} and \ref{ex:finsurj-codecomposition}, there is a \emph{left} Day convolution product $\lday$ on $\laxlim \Dc_*^{X^\finsurj}$ (and thus on $\LDsM(X^\finsurj)$). 

\begin{prop}\label{prop:left-right-Day-agree}
 The left and right Day convolution symmetric monoidal structures on $\LDsM(X^\finsurj)$ are equivalent.
\end{prop}

\begin{Defi}\label{Defi:chiral-lax}
   We denote by $\otimes^\mathrm{ch}$ the non-unital symmetric monoidal structure $\rday \simeq \lday$ on $\LDsM(X^\finsurj)$. We call it the \emph{chiral tensor product}.
\end{Defi}

To prove Proposition \ref{prop:left-right-Day-agree}, we analyze the two Day convolutions separately. 

\paragraph{The right Day convolution explicitly.} 
  
\begin{lem}\label{lem:rday-is-chiral}
 The above right Day convolution product $\rday$ is given on two lax $\Dc^!$-modules $E$ and $F$ by the formula
 \[
  \left(E \rday F\right)^{(I)} \simeq \bigoplus_{I = I_1 \amalg I_2} j_* j^* (E^{(I_1)} \boxtimes E^{(I_2)})
 \]
 where $j = j_{I_1,I_2}$ is as in \ref{def:jI1I2}.
\end{lem}
\begin{proof}
The lax monoidal structure on $\Delta$ is given by the same Eq. 
\eqref{eq:laxmonoidal-chiral} as for $\Dc^!_{X^\finsurj}$. 
For notational convenience, denote the monoidal operation in $\finsurj^\op$
by $\amalg^\circ$ (it is still given by disjoint union). 
 Then, for $I\in\Ob(\finsurj^\op)
=\Ob(\finsurj)$ we have
$I/\amalg^\circ = (\amalg/I)^\op$. 
 Applying \eqref{eq:right-day-sections} to $\Delta$, we get a representation 
 of $E\rday F$
as a limit over $I/\amalg^\circ$ which we write as a limit over 
$(\amalg/I)^\op$ in the form 
 \[
  \left(E \rday F\right)^{(I)} \simeq \holim_{(b: J_1 \amalg J_2 \twoheadrightarrow I)\in \amalg/I}
    G_b, \hspace{1cm} G_b\,=\, \delta_b^! j_* j^* (E^{(J_1)} \boxtimes F^{(J_2)}),
 \]
 where $\delta_b \colon X^I \to X^{J_1} \times X^{J_2}$ is the diagonal embedding and 
 $j= j_{J_1,J_2}$ is
 as above. 
 More precisely, 
   note that the diagram $X^\finsurj$, i.e., the functor $I\mapsto X^I$,
 is labelled by the category $\finsurj^\op$. Therefore, taking into
 account the definition of $\Dc^!$-modules on a diagram,
 see Remark \ref{rems:*!-D-modules}(b), 
 we see that the correspondence $b\mapsto G_b$ is a  $\oo$-functor from
  $(\amalg/I)^\op$ to  $D(\QCoh_{\Dc,X^I})$.   
This means  that
  \ref{ex:finsurj-decomposition} which gives a cofinal discrete subset in 
  $\amalg/I$ (coinitial in $(\amalg/I)^\op$), identifies    
  the above homotopy limit  with:
  \[
   \left(E \rday F\right)^{(I)} \simeq \bigoplus_{I = I_1 \cup I_2} \delta^! j_* j^* (E^{(I_1)} \boxtimes F^{(I_2)}).
  \]
 Here the limit is  taken over pairs $(I_1, I_2)$ of nonempty subsets of $I$
 such that $I=I_1\cup I_2$ (not necessarily disjoint) and 
  $\delta \colon X^I = X^{I_1 \cup I_2} \hookrightarrow X^{I_1} \times X^{I_2}$
  is the diagonal embedding associated to the surjection $I_1\amalg I_2\to I$. Now, if $I_1 \cap I_2 \neq \varnothing$, then we see
  from the definition of $j$ that 
  the image of $\delta$ lies in the closed complement of the image of $j$. In this case $\delta^! j_* \simeq 0$. The above  sum therefore simplifies into the announced formula over  disjoint $(I_1, I_2)$. 
\end{proof}

\paragraph{The left Day convolution and comparison.} 
 Using Eq.\eqref{eq:left-day-sections}    and  \ref{ex:finsurj-codecomposition}, we find directly the formula
\begin{equation}\label{eq:formula-lday}
  \left(E \lday F\right)^{(I)} \simeq \bigoplus_{I = I_1 \amalg I_2} j_* j^* \left( E^{(I_1)} \boxtimes F^{(I_2)}\right).  
\end{equation}
This is identical to what is given by Lemma \ref{lem:rday-is-chiral} for $E\rday F$
and will be the main ingredient in the proof below. 

\vskip .2cm

\begin{proof}[Proof of \autoref{prop:left-right-Day-agree}]
 We construct a non-unital symmetric monoidal structure on the equivalence 
 $\xi: \oplaxlim \Dc^!_{X^\finsurj}\buildrel \simeq \over\to\laxlim \Dc_*^{X^\finsurj}$. 
 That is, we extend it to an equivalence 
$\Xi: (\oplaxlim \Dc^!_{X^\finsurj})^\rday \buildrel \simeq \over\to
(\laxlim \Dc_*^{X^\finsurj})^\lday$ of coCartesian fibrations over $\finsurj^*$,
which means simply an equivalence of $\oo$-categories over $\finsurj^*$
(which will then automatically send coCartesian morphisms in the source to
coCartesian morphisms in the target). 

Let us first  understand what is involved in constructing $\xi$  as a {\em lax} monoidal functor.
By Definition \ref{def:lax-mo-functor},  this amounts to constructing  an $\oo$-functor
(not  claimed to be an equivalence)
 $\Xi$ as above which is a morphism of $\oo$-operads, i.e., 
  sends coCartesian morphisms in the source lying over
 inert morphisms in $\finsurj^*$, to coCartesian morphisms in the target. 
 By definition \eqref{eq:laxlim=norm}
 \[
 (\laxlim \Dc_*^{X^\finsurj})^\lday\,=\, \Nm_{(\finsurj^\op)^{\amalg^\op}/\finsurj^*}
 \bigl(\Grothco(\Dc_*^{X^\finsurj})^\otimes\bigr). 
 \]
 Here $(\finsurj^\op)^{\amalg^\op}\to\finsurj^*$,  is the coCartesian fibration classifying the symmetric monoidal category
 $(\finsurj^\op,\amalg^\op)$, while  resp. 
 $(\Grothco(\Dc_*^{X^\finsurj})^\otimes \to(\finsurj^\op)^{\amalg^\op}$ is the categorical fibration
 representing the monoidal functor 
$(\Grothco(\Dc_*^{X^\finsurj}), \otimes) \to (\finsurj^\op, \amalg^\op)$
whose underlying $\oo$-functor $\Grothco(\Dc_*^{X^\finsurj})\to \finsurj$
is the coCartesian fibration classifying the $\oo$-functor
$\Dc_*^{X^\finsurj}: \finsurj^\op\to\Cat_\oo$.

 Let us apply   the universal property of the left Day convolution 
 as formulated in \cite[Def. 2.2.6.1]{lurie-ha} to
 \[
 \begin{gathered}
 \Cc^\otimes = (\finsurj^\op)^{\amalg^\op}, \,\,\, \Oc^\otimes =\finsurj^*,\,\, \,
 \wt\Cc^\otimes = \Grothco(\Dc_*^{X^\finsurj})^\otimes, \\
  \wt\Oc^\otimes= (\laxlim \Dc_*^{X^\finsurj})^\lday,\,\,\,
   \Oc'{}^\otimes =  (\oplaxlim \Dc^!_{X^\finsurj})^\rday. 
 \end{gathered}
 \]
 It gives an equivalence of $\oo$-categories
 \[
 \Alg_{\Oc'/\Oc}(\wt\Oc) \buildrel\simeq\over\lra \Alg_{\Oc'\times_\Oc\Cc /\Cc}(\wt\Cc). 
 \]
 Here, by definition, the source $\oo$-category consists of morphisms of 
 $\oo$-operads
 $\Oc'{}^\otimes\to\wt\Oc^\otimes$ compatible with the projection to $\Oc^\otimes$,
 which in our situation amount to data
 $\Xi: (\oplaxlim \Dc^!_{X^\finsurj})^\rday  \to
(\laxlim \Dc_*^{X^\finsurj})^\lday$  of a lax monoidal functor which we seek. 
 The target $\oo$-category consists, in a similar interpretation, of
  lax monoidal structures on the evaluation functor
 \[
  \theta \colon \oplaxlim \Dc^!_{X^\finsurj} \times \finsurj 
   \lra \Grothco(\Dc_*^{X^\finsurj})\simeq \Groth(\Dc^!_{X^\finsurj})
 \]
 that commute with the monoidal structures of the projections to $\finsurj$.
 
 Now, the monoidal structure $\rday$ on the $\oo$-category $\oplaxlim \Dc^!_{X^\finsurj}$, i.e., the $\oo$-category $(\oplaxlim \Dc^!_{X^\finsurj})^\rday$, is defined by the norm
 construction for $\oo$-cooperads dual to that for $\oo$-operads. 
 In particular, we have the morphism, corresponding to $\alpha$ in 
 \cite[Def. 2.2.6.1]{lurie-ha}, which exhibits $(\oplaxlim \Dc^!_{X^\finsurj})^\rday$
 as the norm. After examination, we find that this $\alpha$ is nothing but
  a \emph{colax} monoidal structure  on $\theta$, which we denote $\Theta$. 
  Formally, $\Theta$ is an $\oo$-functor between Cartesian fibrations over $\finsurj^*$. 
  
  We now modify $\Theta$ to a lax monoidal structure $\Theta'$.  This 
  will imply a construction of a lax monoidal  
  structure on $\xi$, i.e., a morphism  $\Xi$  of coCartesian fibrations as above.  
    
  To modify $\Theta$  we first consider its action on  Cartesian edges
  lying over the arrow $\{1,2\}^*\to \{1\}^*$, i.e., its effect on pairs of objects
  $(E,I)$ and $(F,J)$, with  $E,F \in \oplaxlim \Dc^!_{X^\finsurj}$ and $I, J \in \finsurj$. 
  It is given by the projection
  \begin{multline}\label{eq:proj-dir-summand}
  \theta(E \rday F, I \amalg J) = (E \rday F)^{(I \amalg J)} \simeq \bigoplus_{I \amalg J = I_1 \amalg I_2} j_* j^* \left( E^{(I_1)} \boxtimes F^{(I_2)}\right) \\ \lra j_* j^* \left( E^{(I)} \boxtimes F^{(J)}\right)
 \end{multline}
  on a direct summand. This projection has an obvious canonical section   (embedding of a summand into the sum).
 
 Similarly, the action of $\Theta$ on  Cartesian edges lying over $\{1,\cdots, n\}^*\to\{1\}^*$
 corresponds to the obvious $n$-fold analog of the projection \eqref{eq:proj-dir-summand}, 
 for $n$ pairs $(E_1, I_1),\cdots, (E_n, I_n)$ and has a similar canonical section.

 The action of $\Theta$ on an arbitrary Cartesian edge in $\finsurj^*$ is obtained as an exterior product of the above actions, so it acquires a canonical section as well. These canonical sections
 define $\Theta'$ as required.
 
 Now,  \autoref{lem:rday-is-chiral} together with equation \eqref{eq:formula-lday} shows that the obtained lax monoidal structure on the tautological equivalence $\oplaxlim \Dc^!_{X^\finsurj} \simeq \laxlim \Dc_*^{X^\finsurj}$ is actually monoidal.
 \end{proof}

 \paragraph{The $*$-tensor product.}
The box-product
\[
  - \boxtimes - \colon D(\QCoh_{\Dc,X^{I_1}}) \times D(\QCoh_{\Dc,X^{I_2}}) \lra D(\QCoh_{\Dc,X^{I_1 \amalg I_2}})
\]
defines another lax monoidal structure on $\Dc_*^{X^\finsurj}$ and, by the same procedure, another left convolution on $\LDsM(X^\finsurj)$.
\begin{Defi}\label{Defi:*-product}
We denote by $\otimes^*$ and call the $*$-product the non-unital symmetric monoidal structure constructed above:
\[
  \left(E \otimes^* F\right)^{(I)} \simeq \bigoplus_{I = I_1 \amalg I_2} E^{(I_1)} \boxtimes F^{(I_2)}.
\]
\end{Defi}
The $*$-tensor product can be compared with the chiral tensor product:
\begin{lem}\label{lem:*-to-chiral}
     The identity functor $(\LDsM(X^\finsurj),\otimes^*) \to (\LDsM(X^\finsurj), \otimes^\mathrm{ch})$ carries a canonical colax monoidal structure given by
     \begin{multline*}
      \left(E \otimes^* F\right)^{(I)} \simeq \bigoplus_{I = I_1 \amalg I_2} E^{(I_1)} \boxtimes F^{(I_2)} \\ \lra \bigoplus_{I = I_1 \amalg I_2} (j_{I_1,I_2})_* j_{I_1,I_2}^* \left( E^{(I_1)} \boxtimes F^{(I_2)} \right) \simeq \left( E \otimes^\mathrm{ch} F \right)^{(I)}.
     \end{multline*}
\end{lem}
\begin{proof}
Denote by $(\Dc_*^{X^\finsurj},*)$ the functor $\Dc_*^{X^\finsurj}$ equipped with its lax monoidal structure coming from the box product, and by $(\Dc_*^{X^\finsurj}, \mathrm{ch})$ the same $\infty$-functor but equipped with the lax monoidal structure induced by equation \eqref{eq:laxmonoidal-chiral}.
The adjunction units $\Id \to j_* j^*$ (associated to open immersions of the form $j \colon U(\alpha) \to X^I$ for $\alpha \colon I \twoheadrightarrow J$) define a colax monoidal structure on the identical natural transformation
\[
  (\Dc_*^{X^\finsurj},*) \Rightarrow (\Dc_*^{X^\finsurj},\mathrm{ch}).
\]
By \autoref{cor:laxmonoidalonlaxlimits}, it endows the identity of $\LDsM(X^\finsurj)$ with the announced colax monoidal structure.
\end{proof}

\paragraph{Lax $\Dc^!$-factorization algebras:}

\begin{Defi}\label{Defi:factlax}
    A lax pre-$\Dc^!$-factorization algebra is a coalgebra for the chiral tensor product in the $\infty$-category $\LDsM(X^\finsurj)$.
    A lax $\Dc^!$-factorization algebra is a lax pre-$\Dc^*$-factorization algebra such that such that for any surjection $\alpha \colon I \dra J$, the induced morphism
    \[
    j^*_\alpha E^{(I)} \to j^*_\alpha\biggl( \bigboxtimes_{j \in J} E^{(I_j)}\biggr)
    \]
    is an equivalence.
    
    We denote by $\FA^!_\Dc(X)$ the $\infty$-category of lax $\Dc^!$-factorization algebras over $X$. We denote by $\strict\FA^!_\Dc(X)$ the full 
    $\infty$-subcategory of $\FA^!_\Dc(X)$ spanned by lax $\Dc^!$-factorization algebras on $X$ whose underlying lax $\Dc^!$-module is strict.
\end{Defi}

\begin{rems}~
 \begin{statements}[ref=Remark \therems (\alph*)]
  \item The $\infty$-category $\strict\FA^!_\Dc(X)$ is exactly that of factorization algebras in the sense of \autoref{Defi:factFG}.
  \item \label{rem:FA-colax-monoidal-sections} Since by definition, the chiral tensor product is a right Day convolution, it follow from the universal property of Day convolutions that a coalgebra structure amounts to a colax monoidal structure on the corresponding section.
  
  In particular, a lax pre-$\Dc^!$-factorization algebra is simply a colax monoidal section of the canonical projection $\Groth(\Dc^!_{X^\finsurj}) \to \finsurj^\op$ (where the monoidal structure on $\Groth(\Dc^!_{X^\finsurj})$ is that of equation \eqref{eq:monoidal-Groth-D!}).
 \end{statements}
\end{rems}

\paragraph{Lax $[\![\Dc]\!]$-factorization algebras:}
The constructions and definitions in this case are simply dual to those in the case of $\Dc^!$-modules. Since we will no formally need the various tensor structures in this case, we give only a definition following the idea of \ref{rem:FA-colax-monoidal-sections}.

The $\infty$-functor $\Dc^{[\![*]\!]}_{X^\finsurj}$ mapping $I$ to $\Pro(\Perf_{\Dc, X^I})$ admits a lax monoidal structure given by the formula $j_{[\![!]\!]} j^{[\![*]\!]}( - \boxtimes -)$:
\[
\Pro(\Perf_{\Dc, X^{I_1}}) \times \Pro(\Perf_{\Dc, X^{I_2}}) \lra \Pro(\Perf_{\Dc, X^{I_1 \amalg I_2}})
\]
with $j = j_{I_1,I_2}$. \autoref{prop:grothmonod} shows that the (coCartesian) Grothendieck construction $\Grothco(\Dc^{[\![*]\!]}_{X^\finsurj})$ admits a symmetric monoidal structure compatible with the projection
\[
q \colon \Grothco(\Dc^{[\![*]\!]}_{X^\finsurj}) \to \finsurj
\]

\begin{Defi}
    A lax $[\![\Dc]\!]$-factorization algebra is a lax $[\![\Dc]\!]$-module, seen as a section of $q$, endowed with a lax monoidal structure, such that for any surjection $\alpha \colon I \dra J$, the induced morphism
    \[
    j^{[\![*]\!]}_\alpha\left( \bigboxtimes_{j \in J} E^{(I_j)}\right) \to j^{[\![*]\!]}_\alpha E^{(I)}
    \]
    is an equivalence.
    
    We denote by $\FA_{[\![\Dc]\!]}(X)$ (resp. $\strict\FA_{[\![\Dc]\!]}(X)$) the $\infty$-category of lax (resp. strict) $[\![\Dc]\!]$-factorization algebras.
\end{Defi}

The following is obvious:
\begin{prop}
    Verdier duality induces equivalences
    \[
    \FA_{[\![\Dc]\!]}(X)^\op \simeq \FA^!_\Dc(X) \hspace{2em} \text{and} \hspace{2em} \strict\FA_{[\![\Dc]\!]}(X)^\op \simeq \strict\FA^!_\Dc(X)
    \] 
    compatible with the equivalence of \autoref{prop:Verdierduality}.
\end{prop}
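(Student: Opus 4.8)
The statement is essentially formal once the module-level equivalence of Proposition~\ref{prop:Verdierduality} is upgraded to a \emph{monoidal} one. The plan is: (i) promote $(-)^\vee$ to a monoidal natural equivalence between the lax monoidal functors that define the two chiral tensor products; (ii) pass to Grothendieck constructions to obtain a monoidal equivalence exchanging $\Groth(\Dc^!_{X^\finsurj})$ and $\Grothco(\Dc^{[[*]]}_{X^\finsurj})$; and (iii) conclude that (co)algebra structures and the factorization axiom are interchanged.

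First I would record that on each smooth $X^I$ Verdier duality already intertwines the elementary functors: from the defining formulas $f^{[[*]]}=(f^!(-)^\vee)^\vee$ and $f_{[[!]]}=(f_*(-)^\vee)^\vee$ one gets $\delta_g^{[[*]]}(\Mc^\vee)\simeq(\delta_g^!\Mc)^\vee$, which is exactly the statement that $(-)^\vee$ is a natural equivalence of functors $(\Dc^!_{X^\finsurj})^\op\xrightarrow{\sim}\Dc^{[[*]]}_{X^\finsurj}$ underlying Proposition~\ref{prop:Verdierduality}. The new input is monoidality. For the \emph{open} immersion $j=j_{\gamma_{I_1,I_2}}$ one has $j^!\simeq j^*$, together with the three standard compatibilities of duality: $(\Mc\boxtimes\Nc)^\vee\simeq\Mc^\vee\boxtimes\Nc^\vee$, $(j^*\,-)^\vee\simeq j^{[[*]]}\bigl((-)^\vee\bigr)$, and $(j_*\,-)^\vee\simeq j_{[[!]]}\bigl((-)^\vee\bigr)$. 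Chaining these yields a natural equivalence
\[
\bigl(j_*j^*(\Mc\boxtimes\Nc)\bigr)^\vee \;\simeq\; j_{[[!]]}\,j^{[[*]]}\bigl(\Mc^\vee\boxtimes\Nc^\vee\bigr),
\]
so that $(-)^\vee$ carries the lax monoidal structure $j_*j^*(-\boxtimes-)$ on $\Dc^!_{X^\finsurj}$ to the lax monoidal structure $j_{[[!]]}j^{[[*]]}(-\boxtimes-)$ on $\Dc^{[[*]]}_{X^\finsurj}$, and is thus a (strictly) monoidal natural equivalence $(\Dc^!_{X^\finsurj})^\op\simeq\Dc^{[[*]]}_{X^\finsurj}$.

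Next I would feed this into the formalism of \S\ref{subsec:sym-mon-inf}. By Proposition~\ref{prop:grothmonod} each of $\Groth(\Dc^!_{X^\finsurj})$ and $\Grothco(\Dc^{[[*]]}_{X^\finsurj})$ is symmetric monoidal over $\finsurj^\op$ resp.\ $\finsurj$, and the identity $\Groth\Fc\simeq(\Grothco\Fc^\op)^\op$ used in its proof, applied to $\Fc=\Dc^!_{X^\finsurj}$ together with the monoidal equivalence of step~(i), gives a symmetric monoidal equivalence $\Groth(\Dc^!_{X^\finsurj})^\op\simeq\Grothco(\Dc^{[[*]]}_{X^\finsurj})$ over $\finsurj$; Proposition~\ref{prop:laxtransf} ensures this descends to a monoidal equivalence on sections, i.e.\ is compatible with the two Day convolution products $\chotimes$. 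Passing to opposites then turns a section of $p$ with its colax ($=$ coalgebra) structure into a section of $q$ with a lax ($=$ algebra) structure and conversely, via the dictionary between (co)algebras and (co)lax monoidal functors (\cite[Prop.~2.12]{glasman}). Finally the factorization axiom is preserved: for $\alpha\colon I\dra J$ it reads that $j_\alpha^*E^{(I)}\to j_\alpha^*\bigl(\bigboxtimes_{j}E^{(I_j)}\bigr)$ be an equivalence, and since $j_\alpha$ is open ($j_\alpha^*\simeq j_\alpha^!$) duality sends it to the equivalence $j_\alpha^{[[*]]}\bigl(\bigboxtimes_{j}(E^{(I_j)})^\vee\bigr)\to j_\alpha^{[[*]]}(E^{(I)})^\vee$ required of the $[[\Dc]]$-factorization algebra $E^\vee$, duality sending equivalences to equivalences. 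This gives the first equivalence; the strict case follows by restricting to Cartesian sections, using the second equivalence of Proposition~\ref{prop:Verdierduality} and the Lemma identifying Definitions~\ref{Defi:factFG} and~\ref{Defi:factlax}.

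The only genuinely non-bookkeeping point, and the reason the statement is nonetheless ``obvious'', is step~(i): one must know that $(-)^\vee$ upgrades from a pointwise equivalence to a coherent monoidal natural transformation. This is precisely what Propositions~\ref{prop:grothmonod} and~\ref{prop:laxtransf} buy us, reducing the infinitely many higher coherences to the single pointwise identity displayed above, itself an instance of the standard exchange relations among $(-)^\vee$, $\boxtimes$, $j_*/j_{[[!]]}$ and $j^*/j^{[[*]]}$ established in Chapter~\ref{sec:Dmod}.
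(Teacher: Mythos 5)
Your argument is correct, and in fact the paper offers no proof at all: the statement is introduced with ``The following is obvious'' and left at that. Your write-up is the natural expansion of what the authors take for granted — the pointwise exchange relations $(\delta_g^!\Mc)^\vee\simeq\delta_g^{[[*]]}(\Mc^\vee)$, $(j_*-)^\vee\simeq j_{[[!]]}((-)^\vee)$, $(j^*-)^\vee\simeq j^{[[*]]}((-)^\vee)$ for $j$ open, and compatibility with $\boxtimes$ are exactly the reason the two chiral tensor structures are interchanged, and the passage through $\Groth\Fc\simeq(\Grothco\Fc^\op)^\op$ together with the (co)algebra/(co)lax dictionary is how the paper itself organizes such arguments in \S\ref{subsec:sym-mon-inf}. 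The only place where you are slightly more optimistic than the formalism strictly delivers is the final remark that Propositions \ref{prop:grothmonod} and \ref{prop:laxtransf} reduce \emph{all} higher coherences to the single displayed pointwise identity: one still has to know that Verdier duality assembles into a genuinely monoidal natural equivalence of the two lax monoidal functors $(\Dc^!_{X^\finsurj})^\op$ and $\Dc^{[[*]]}_{X^\finsurj}$ (all higher compatibilities included), which those propositions consume rather than produce. This is, however, exactly the level of rigor at which the paper operates, so it is not a gap relative to the source.
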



\subsection{Definition in terms of arrow categories}\label{subsec:straightarrows}

In the above definitions, factorization structures for $\Dc^!$- and $[\![\Dc]\!]$-modules are dual to one another. The factorization structure is in one case a coalgebra structure, and in the other case an algebra structure.
In order to prove that covariant Verdier duality preserves the factorization structures, we will need alternative models for factorization algebras that do not make a choice between algebras and coalgebras.

\begin{Defi}
    We denote by $\finsurj^1$ the category of arrows in $\finsurj$. A morphism $\sigma$ from $\alpha \colon I \dra J$ to $\beta \colon S \dra T$ in $\finsurj^1$ is a 
    commutative diagram
    \[
    \xymatrix{
        I \ar@{->>}[r]^{\overrightarrow{\scriptstyle \sigma}} \ar@{->>}[d]_\alpha & S \ar@{->>}[d]^\beta \\ J \ar@{->>}[r]_{\underrightarrow{\scriptstyle \sigma}} & T \ar@{}[ul]|{\displaystyle\sigma}.
    }
    \]
    Disjoint union makes $\finsurj^1$ into a symmetric monoidal category.
    
    A morphism $\sigma$ as above is called an {\em open} (resp. {\em closed}) morphism if $\overrightarrow{\sigma}$ (resp. $\underrightarrow{\sigma}$) is a bijection.
\end{Defi}

\begin{lem}\label{lem:S1-finite-dec-codec}
 The category $\finsurj^1$ is both of finite decomposition and finite codecomposition. Moreover, for $\alpha \colon I \twoheadrightarrow J$, we have
 \begin{gather*}
  \Dec_n(\alpha) \simeq \{ (\alpha_1, \dots, \alpha_n) ~|~ \alpha_i \colon I_i \twoheadrightarrow J_i,~ I = I_1 \cup \cdots \cup I_n~, J = J_1 \cup \cdots \cup J_n \},
  \\
  \Codec_n(\alpha) \simeq \{ (\alpha_1, \dots, \alpha_n) ~|~ \alpha_i \colon I_i \twoheadrightarrow J_i,~ I = I_1 \amalg \cdots \amalg I_n~, J = J_1 \amalg \cdots \amalg J_n \}.
 \end{gather*}
\end{lem}
\begin{proof}
 Let $\beta_1, \dots, \beta_n \in \finsurj^1$ and write $\beta_i \colon S_i \twoheadrightarrow T_i$. A morphism $\alpha \to \beta_1 \amalg \cdots \amalg \beta_n$ is a commutative diagram
 \[
  \xymatrix{
   I \ar@{->>}[d]_{\alpha} \ar@{->>}[r] & S_1 \amalg \cdots \amalg S_n \ar@{->>}[d]^{\beta_1 \amalg \cdots \amalg \beta_n} \\
   J \ar@{->>}[r] & T_1 \amalg \cdots \amalg T_n.
  }
 \]
 Setting $I_i$ (resp. $J_i$) as the pre-image of $T_i$ in $I$ (resp. in $J$), we find the announced explicit description of $\Codec_n(\alpha)$.
 
 Similarly, a morphism $\beta_1 \amalg \cdots \amalg \beta_n \to \alpha$ is a commutative diagram
 \[
  \xymatrix{
   S_1 \amalg \cdots \amalg S_n \ar@{->>}[d]_{\beta_1 \amalg \cdots \amalg \beta_n} \ar@{->>}[r] & I \ar@{->>}[d]^{\alpha} \\
   \ar@{->>}[r] T_1 \amalg \cdots \amalg T_n & J.
  }
 \]
 Setting $I_i$ (resp. $J_i$) to be the image of $S_i$ in $I$ (resp. in $J$), we get the announced description on $\Dec_n(\alpha)$.
\end{proof}

Fix a commutative diagram $\sigma$ as above.
By definition, we have an open immersion $U(\alpha) \to U(\underrightarrow \sigma \alpha)$. Moreover, the closed immersion $\delta_{\overrightarrow{\scriptstyle \sigma}} \colon X^S \to X^I$ restricts to a closed immersion $U(\beta) \to U(\underrightarrow \sigma \alpha)$.
We denote by $\widehat U(f)$ the pullback
\begin{equation}\label{eq:hati-hatj}
    \vcenter{
        \xymatrix{
            \hatU(f) \ar[r]^{\hatj(\sigma)} \ar[d]_{\hati(\sigma)} \ar@{-}[]+(6,-2);[]+(6,-6)\ar@{-}[]+(2,-6);[]+(6,-6) & U(\beta) \ar[d] \\ U(\alpha) \ar[r] & U(\underrightarrow\sigma\alpha).
        }
    }
\end{equation}
Note that the horizontal maps are open immersions and the vertical ones are closed immersions.
If $\sigma$ is an open (resp. a closed) morphism, then $\hati(\sigma)$ (resp. $\hatj(\sigma)$) is an isomorphism.

If it exists, let $\gamma$ be a surjection $S \dra J$ such that $\gamma \overrightarrow \sigma = \alpha$. Since $\overrightarrow \sigma$ is surjective, such a map $\gamma$ is unique if it exists, and automatically satisfies $\underrightarrow\sigma \gamma = \beta$. We then have
\[
\hatU(f) := \begin{cases} U(\gamma) & \text{if } \gamma \text{ exists} \\ \emptyset & \text{else.} \end{cases}
\]

The assignment $\alpha \mapsto U(\alpha)$, $\sigma \mapsto (U(\beta) \leftarrow \hatU(\sigma) \to U(\alpha))$ defines a functor
\[
\hatU \colon (\finsurj^1)^\op \to \Var_\k^\corr.
\]
Recall the functor $\Dcorr \colon \Var_\k^\corr \to \Cat_\infty$ mapping a variety $Y$ to $D(\QCoh_{\Dc,Y})$ and a correspondence $Y_1 \underset a\leftarrow Z \underset b\to Y_2$ to $b_* a^!$.

\begin{Defi}
    We denote by $\Dcorr_{\hatU}$ the composite $\infty$-functor
    \[
    \Dcorr_{\hatU} := \Dcorr \circ \hatU \colon (\finsurj^1)^\op \to \Cat_\infty.
    \]
\end{Defi}

The functor $\hatU$ admits a lax-monoidal structure, given by the open immersions $U(\alpha_1 \amalg \alpha_2) \to U(\alpha_1) \times U(\alpha_2)$.
Composed with the lax monoidal structure on $\Dcorr$, we get a lax monoidal structure on $\Dcorr_\hatU$ and hence a monoidal structure on $\Grothco(\Dcorr_\hatU)$.

\begin{Defi}
    We say that a section of $\Grothco(\Dcorr_\hatU) \to (\finsurj^1)^\op$ is openly coCartesian if it sends every open morphisms in $\finsurj^1$ to a coCartesian morphism.
    We denote by $\Mod^\corr_\Dc(\hatU)$ the full sub-$\infty$-category of $\laxlim \Dcorr_\hatU$ spanned by openly coCartesian sections.
\end{Defi}
    
Notice that for any morphism $\sigma \colon \alpha \to \beta \in \finsurj^1$, the functor $\Dcorr_\hatU(\sigma) = \hati(\sigma)_* \hatj(\sigma)^*$ admits a right adjoint $\hatj(\sigma)_* \hati(\sigma)^!$. Denote by $\ol \Dc {}^{\on{corr}}_\hatU$ the corresponding functor $\finsurj^1 \to \Cat_\oo$ so that 
\[
 \laxlim \Dcorr_\hatU = \oplaxlim \ol \Dc {}^{\on{corr}}_\hatU.
\]
Since $\finsurj^1$ is of finite codecompositions (see \autoref{lem:S1-finite-dec-codec}), we get using \autoref{cor:day-oplaxlim} a right Day convolution structure on $\oplaxlim \ol \Dc {}^{\on{corr}}_\hatU$. This tensor structure preserves openly coCartesian sections and thus defines a monoidal structure on $\Mod^\corr_\Dc(\hatU)$.

\begin{prop}\label{prop:monod-!-arrows}
    Restriction along the functor $\eta \colon \finsurj \to \finsurj^1$ given by $I \mapsto (I \dra *)$ induces a symmetric monoidal equivalence
    \[
    B \colon \Mod^\corr_\Dc(\hatU) \simeq \LDsM(X^\finsurj).
    \]
\end{prop}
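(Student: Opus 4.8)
The plan is to realize $B$ as restriction of sections along $\eta$ followed by a lax-to-op-lax comparison, and then to prove it is an equivalence by exploiting the reflective-localization structure of $\eta$ inside the arrow category $\finsurj^1$.

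\textbf{Unwinding the restriction.} First I would compute $\Dcorr_{\hatU}\circ\eta^\op$. Since the disjointness condition defining $U(\alpha)$ is vacuous for a one-block surjection, $U(I\dra *) = X^I$, so the value at $\eta(I)$ is $D(\QCoh_{\Dc,X^I})$. For $g\colon I\dra J$ the morphism $\eta(g)$ is closed: its top arrow is $g$ and its bottom arrow is $\mathrm{id}_*$, a bijection. Hence $\hatj(\eta(g))$ is an isomorphism, $\hatU(\eta(g)) = X^J$, and the attached correspondence is $X^J \xleftarrow{\,\mathrm{id}\,} X^J \xrightarrow{\,\delta_g\,} X^I$, so $\Dcorr_{\hatU}(\eta(g)) = (\delta_g)_*$. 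Thus restriction along $\eta^\op$ produces exactly the data of a lax $\Dc^!$-module in dual form: objects $E^{(I)}$ together with structure maps $(\delta_g)_* E^{(J)}\to E^{(I)}$. Using the adjunction $(\delta_g)_*\dashv\delta_g^!$ — valid because $\delta_g$ is a closed, hence proper, immersion (Proposition \ref{prop:base1}(a)) — these correspond to maps $E^{(J)}\to\delta_g^! E^{(I)}$; the resulting coherent exchange of op-lax and lax structures along adjoint functors, packaged by the base-change $(\infty,2)$-functor $\Dcorr$ of \cite{GR}, gives a canonical equivalence $\oplaxlim(\Dcorr_{\hatU}\circ\eta^\op)\simeq\laxlim(\Dc^!_{X^\finsurj}) = \LDsM(X^\finsurj)$. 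This both defines $B$ and shows it is well defined.

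\textbf{Equivalence via reflective localization.} To see $B$ is an equivalence I would use the source functor $\mathrm{src}\colon\finsurj^1\to\finsurj$, $(I\dra J)\mapsto I$, $\sigma\mapsto\overrightarrow\sigma$. One checks that $\Hom_{\finsurj^1}(\alpha,\eta(K))$ is naturally the set of surjections $\mathrm{src}(\alpha)\dra K$, i.e. $\Hom_{\finsurj}(\mathrm{src}(\alpha),K)$, so $\mathrm{src}\dashv\eta$; the counit $\mathrm{src}\circ\eta = \mathrm{id}$ shows $\eta$ is fully faithful, and the unit $\pi_\alpha\colon\alpha\to\eta(\mathrm{src}\,\alpha)$ has $\overrightarrow{\pi_\alpha} = \mathrm{id}$, hence is open. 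Since a morphism is open precisely when $\mathrm{src}$ inverts it, this exhibits $\eta$ as the inclusion of the reflective localization $\finsurj\simeq\finsurj^1[\mathrm{open}^{-1}]$. I would then invoke the general principle that restricting sections of a coCartesian fibration over $(\finsurj^1)^\op$ along the inclusion of such a localization is an equivalence onto the sections that are coCartesian over the inverted (here: open) morphisms; the quasi-inverse is the relative Kan extension along $\eta^\op$, computed pointwise by coCartesian transport along the units $\pi_\alpha$. Concretely, $\Dcorr$ applied to $\hatU(\pi_\alpha)$ gives $j_\alpha^!$, so the openly coCartesian condition forces $\tilde E_\alpha\simeq j_\alpha^!\,\tilde E_{\eta(\mathrm{src}\,\alpha)}$; this is exactly the reconstruction $E\mapsto(\alpha\mapsto j_\alpha^! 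E^{(\mathrm{src}\,\alpha)})$, yielding essential surjectivity and full faithfulness of $B$.

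\textbf{Monoidality.} Both categories carry Day convolution products (Corollary \ref{cor:convonlaxlim}), and $B$ is lax monoidal by Proposition \ref{prop:restrictionlaxmonoidal}, after observing that $\mathrm{src}$ is strictly monoidal and the lax monoidal structure on $\hatU$ is given by the open immersions $U(\alpha_1\amalg\alpha_2)\to U(\alpha_1)\times U(\alpha_2)$. Since $B$ is already an equivalence, to upgrade to a strong monoidal equivalence it suffices to check that the structure maps are equivalences, which I would test on each $X^I$. There the computation is local: for a partition $I = I_1\amalg I_2$ the object $\eta(I_1)\amalg\eta(I_2) = (I_1\amalg I_2\dra\{1,2\})$ carries a universal open morphism to $\eta(I)$ whose associated open immersion is $j = j_{\gamma_{I_1,I_2}}$, and the fiberwise tensor of $\hatU$ along $U(\eta(I_1)\amalg\eta(I_2))\hookrightarrow X^{I_1}\times X^{I_2}$ is $j^*(-\boxtimes-)$. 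Combined with the reconstruction formula, summing over partitions reproduces the chiral tensor product $\bigoplus_{I=I_1\amalg I_2} j_* j^*(E^{(I_1)}\boxtimes F^{(I_2)})$ of Definition \ref{Defi:factFG}, so $B$ is symmetric monoidal.

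\textbf{Main obstacle.} The hard part will be the $\infty$-categorical step in the second paragraph: promoting the $1$-categorical reflective-localization picture to an honest equivalence of $\infty$-categories of sections of $\Grothco(\Dcorr_{\hatU})$, i.e. proving that openly coCartesian sections are precisely the relative Kan extensions along $\eta^\op$ with all higher coherences, and doing so compatibly with the op-lax-to-lax adjunction swap of the first paragraph. This is exactly where the full force of the \cite{GR} base-change formalism is needed, so that the adjunctions $(\delta_g)_*\dashv\delta_g^!$ and the base-change identities along the squares of Proposition \ref{prop:Delta-cart} hold coherently and the pointwise formula $\alpha\mapsto j_\alpha^! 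E^{(\mathrm{src}\,\alpha)}$ genuinely assembles into a quasi-inverse to $B$.
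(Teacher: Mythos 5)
Your proof is correct and follows essentially the same route as the paper's: both arguments first trade the lax $\Dc^!$-structure for an op-lax $*$-pushforward structure via the adjunction $(\delta_g)_*\dashv\delta_g^!$ (your first paragraph is the paper's identification $\laxlim \Dc^!_{X^\finsurj}\simeq\oplaxlim\Dc_*^{X^\finsurj}$), and both reconstruct a section over $\finsurj^1$ from its restriction along $\eta$ by the same formula $\alpha\mapsto j_\alpha^!\,E^{(\mathrm{src}\,\alpha)}$, using the source functor. The one genuine difference is in how the inverse is justified: the paper builds the quasi-inverse $A$ explicitly as a composite $\LDsM(X^\finsurj)\to\oplaxlim(\Dc_*^{X^\finsurj}\circ s)\to\Mod^\corr_\Dc(\hatU)$ (restriction along $\mathrm{src}$ followed by the natural transformation $j^*_\bullet$) and then simply asserts that $B$ inverts it, whereas you identify the structural reason this works, namely that $\mathrm{src}\dashv\eta$ with unit lying in the open morphisms, so that $\eta$ exhibits $\finsurj$ as the localization of $\finsurj^1$ at opens and openly coCartesian sections are exactly the relative Kan extensions along $\eta^\op$. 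This buys a cleaner explanation of why the "openly coCartesian" condition is precisely the right one, at the cost of having to invoke the $\infty$-categorical Kan-extension machinery you correctly flag as the technical crux; the paper avoids naming that machinery but is correspondingly terse on why $A$ and $B$ are mutually inverse. On monoidality your appeal to Proposition \ref{prop:restrictionlaxmonoidal} really applies to restriction along the monoidal functor $\mathrm{src}$ (i.e.\ to the paper's $A$) rather than to $B$ itself, since $\eta$ is not monoidal ($\eta(I_1)\amalg\eta(I_2)=(I_1\amalg I_2\dra\{1,2\})\neq\eta(I_1\amalg I_2)$); but since $B$ is inverse to the monoidal $A$, and your direct check at $\eta(I)$ recovers the chiral product, this does not affect the argument.
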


\begin{proof}
    Let $s$ be the source functor $\finsurj^1 \to \finsurj$, mapping a surjection $\alpha$ to its domain.
    It is a monoidal functor, and thus induces a colax monoidal $\infty$-functor (by \autoref{prop:restrictionlaxmonoidal})
    \[
    \oplaxlim \Dc^*_{X^\finsurj} \to \oplaxlim (\Dc^*_{X^\finsurj} \circ s).
    \]
    The canonical immersions $j_\alpha \colon U(\alpha) \to X^I$ define a monoidal natural transformation $j^*_\bullet \colon \Dc^!_{X^\finsurj} \circ s \to \ol \Dc {}^{\on{corr}}_\hatU$.
    In particular, it defines a monoidal $\infty$-functor (see \autoref{cor:laxmonoidalonlaxlimits})
    \[
    \oplaxlim (\Dc^!_{X^\finsurj} \circ s) \to \oplaxlim \ol \Dc {}^{\on{corr}}_\hatU
    \]
    whose image lies in $\on{Mod}^\corr_\Dc(\hatU)$.
    We find a monoidal $\infty$-functor
    \[
    A \colon \LDsM(X^\finsurj) \simeq  \oplaxlim \Dc^!_{X^\finsurj} \to \oplaxlim (\Dc^!_{X^\finsurj} \circ s) \to \on{Mod}^\corr_\Dc(\hatU).
    \]
    The restriction along $\eta \colon \finsurj \to \finsurj^1$ gives an $\infty$-functor
    \[
    B \colon \Mod^\corr_\Dc(\hatU) \to \LDsM(X^\finsurj)
    \]
    which is inverse to $A$.
\end{proof}

\begin{Defi}
    Let $\FA^\corr_\Dc(\hatU)$ denote the $\infty$-category of sections in $\Mod^\corr_\Dc(\hatU)$ endowed with symmetric monoidal structure.
\end{Defi}

\begin{cor}\label{cor:shriekvsstraightarrows}
    The functor $\finsurj \to \finsurj^1$ mapping $I$ to $I \dra *$ induces an 
    equivalence
    \[
    B \colon \FA^\corr_\Dc(\hatU) \simeq \FA_\Dc^!(X).
    \]
\end{cor}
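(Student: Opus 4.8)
The plan is to deduce the corollary from the symmetric monoidal equivalence $B \colon \Mod^\corr_\Dc(\hatU) \simeq \LDsM(X^\finsurj)$ of Proposition \ref{prop:monod-!-arrows} by checking that, under $B$, the two notions of factorization structure correspond. Since $B$ is symmetric monoidal (for the Day convolution on the source and the chiral tensor structure $\chotimes$ on the target), it preserves and reflects the relevant (co)monoidal section structures; the real content is therefore to match the \emph{openly coCartesian} plus strong-monoidality condition cutting out $\FA^\corr_\Dc(\hatU)$ with the factorization equivalence condition of Definition \ref{Defi:factlax} cutting out $\FA^!_\Dc(X)$. Recall that $B$ is literally restriction along $\eta \colon \finsurj \to \finsurj^1$, so $B(s)^{(I)} = s(\eta(I)) = s(I \dra *)$.

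First I would fix a surjection $\alpha \colon I \dra J$, viewed as a partition $I = \coprod_{j\in J} I_j$, and record two elementary facts inside $\finsurj^1$. On the one hand, disjoint union of the targets gives a canonical identification $(I \dra J) \simeq \bigotimes_{j\in J}(I_j \dra *) = \bigotimes_{j\in J}\eta(I_j)$. On the other hand, collapsing the target produces an \emph{open} morphism $\sigma \colon (I \dra J) \to (I \dra *)$, since its upper arrow $\overrightarrow\sigma = \Id_I$ is a bijection; under $\hatU$ its associated correspondence is computed by $\Dcorr_\hatU(\sigma) = (\hati(\sigma))_* (\hatj(\sigma))^! = j_\alpha^*$, the restriction along the open immersion $j_\alpha \colon U(\alpha) \hookrightarrow X^I$ (the closed leg $\hati(\sigma)$ being an isomorphism because $\sigma$ is open).

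Now let $s$ be a section defining an object of $\FA^\corr_\Dc(\hatU)$, and write $E_\alpha = s(I \dra J)$ and $E^{(I)} = s(\eta(I))$. The openly coCartesian condition applied to $\sigma$ yields an equivalence $j_\alpha^* E^{(I)} \overset{\sim}\lra E_\alpha$, while strong monoidality of $s$ together with the lax monoidal structure of $\hatU$ (the open immersions $U(\alpha)\hookrightarrow \prod_j X^{I_j}$) gives $E_\alpha \simeq j_\alpha^*\bigl(\bigboxtimes_{j\in J} E^{(I_j)}\bigr)$. Combining the two identifications produces exactly the factorization equivalence
\[
 j^*_\alpha E^{(I)} \ \overset{\sim}{\lra}\ j^*_\alpha\Bigl( \bigboxtimes_{j\in J} E^{(I_j)}\Bigr)
\]
of Definition \ref{Defi:factlax}; conversely, these same two identifications show that if $B(s)$ satisfies this factorization condition, then $s$ is forced to be openly coCartesian and strongly monoidal. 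Moreover, since $\eta$ is lax monoidal, restriction along it equips the underlying lax $\Dc^!$-module $B(s)$ with precisely the colax (coalgebra for $\chotimes$) structure demanded in Definition \ref{Defi:factlax}, which agrees with the comonoidal structure transported by the symmetric monoidal functor $B$. Thus $B$ restricts to the asserted equivalence $\FA^\corr_\Dc(\hatU) \simeq \FA^!_\Dc(X)$.

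The main obstacle is not any single identification above but the coherence underlying them: one must verify that the objectwise equivalences assemble into an equivalence of the \emph{$\infty$-categories} of structured sections, i.e. that the openly coCartesian and strong-monoidality conditions agree with the factorization condition not merely on objects but compatibly with all higher simplices of the Day-convolution coalgebra structure. This is exactly where one leans on Proposition \ref{prop:monod-!-arrows}, which already supplies the equivalence of the ambient symmetric monoidal $\infty$-categories; the remaining task is the homotopy-coherent comparison of the full subcategories cut out by conditions that we have just matched pointwise, and it is this bookkeeping of the monoidal data (rather than any geometric input) that constitutes the substance of the proof.
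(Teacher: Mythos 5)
Your proposal is correct and follows essentially the same route as the paper: both deduce the corollary from the symmetric monoidal equivalence of Proposition \ref{prop:monod-!-arrows}, identify coalgebras for the Day convolution with colax monoidal sections, and match the factorization condition of Definition \ref{Defi:factlax} with the strong monoidality (plus openly coCartesian) condition on sections over $\finsurj^1$. Your explicit unpacking of the open morphism $(I\dra J)\to(I\dra *)$ and the decomposition $(I\dra J)\simeq\coprod_j(I_j\dra *)$ is just a more detailed spelling-out of the paper's final sentence ``the factorizing property of $B(\Ac)$ corresponds to the fact that $\Ac$ is (strictly) monoidal.''
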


\begin{proof}
    The equivalence $B$ of \autoref{prop:monod-!-arrows} being monoidal, it preserves coalgebras. Coalgebras for the (right) Day convolution are colax monoidal $\infty$-functor (see \cite[Prop. 2.12]{glasman}).
    Let $\Ac \in \FA^\corr_\Dc(\hatU)$. It is a section with a monoidal structure, and thus $B(\Ac)$ is is a colax monoidal section.
    The factorizing property of $B(\Ac)$ corresponds to the fact that $\Ac$ is (strictly) monoidal.
\end{proof}

The dual statements obviously hold for $[\![\Dc]\!]$-factorization algebras.
Consider the $\infty$-functor $\Dc^{[\![\corr]\!]}_V \colon (\finsurj^1)^\op \to \Cat_\infty$ mapping $\alpha$ to $\Pro (\Perf_{\Dc,U(\alpha)})$ and $\sigma \colon \alpha \to \beta$ to $\hati(\sigma)_{[\![*]\!]} \circ \hatj(\sigma)^{[\![*]\!]}$ (using the notations of Eq. \eqref{eq:hati-hatj}).
It is naturally lax monoidal and $\Groth(\Dc^{[\![\corr]\!]}_V)$ inherits a symmetric monoidal structure.

\begin{Defi}
    A section of $\Groth(\Dc^{[\![\corr]\!]}_V) \to \finsurj^1$ is called openly Cartesian if it maps open morphisms of $\finsurj^1$ to Cartesian morphisms.
    
    Denote by $\FA_{[\![\Dc]\!]}^\corr(\hatU)$ the $\infty$-category of openly Cartesian symmetric monoidal sections of $\Groth(\Dc^{[\![\corr]\!]}_V) \to \finsurj^1$.
\end{Defi}

\begin{prop}\label{prop:DfactoverS1}
    Restriction along $\eta$ induces an equivalence
    \[
    \FA_{[\![\Dc]\!]}^\corr(\hatU) \simeq \FA_{[\![\Dc]\!]}(X)
    \]
    compatible through Verdier duality with the equivalence of \autoref{cor:shriekvsstraightarrows}.
\end{prop}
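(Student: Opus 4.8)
The plan is to derive this proposition as the Verdier dual of Corollary~\ref{cor:shriekvsstraightarrows}, following step by step the construction of §\ref{subsec:straightarrows} but with the functorialities of §\ref{subsec:nonstand} in place of the standard ones. The engine is that Verdier duality $(-)^\vee$ intertwines the two packages of functorialities: by construction $f^{[[*]]}\Mc=(f^!\Mc^\vee)^\vee$ and $f_{[[!]]}\Mc=(f_*\Mc^\vee)^\vee$, which one rewrites as $(f^!\Nc)^\vee\simeq f^{[[*]]}(\Nc^\vee)$ and $(f_*\Nc)^\vee\simeq f_{[[!]]}(\Nc^\vee)$. Applying this fibrewise over $(\finsurj^1)^\op$ to the correspondence $U(\beta)\xleftarrow{\hatj(\sigma)}\hatU(\sigma)\xrightarrow{\hati(\sigma)}U(\alpha)$, and using that $\hati(\sigma)$ is a closed immersion (so its formal and ordinary direct images agree), I would obtain a natural equivalence of functors $(\finsurj^1)^\op\to\Cat_\infty$
\[
\Dc^{[[\corr]]}_V \;\simeq\; (\Dcorr_{\hatU})^\op,
\]
where $(-)^\op$ denotes the fibrewise opposite in the sense of §\ref{sec:laxmodules}.

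First I would promote this to the level of Grothendieck constructions and sections. By the identity $\Grothco(\Pc)=\Groth(\Pc^\op)^\op$ recalled in §\ref{sec:laxmodules}, the displayed equivalence gives $\Groth(\Dc^{[[\corr]]}_V)\simeq\Grothco(\Dcorr_{\hatU})^\op$ over $\finsurj^1$, and hence, on passing to sections, $\laxlim\Dc^{[[\corr]]}_V\simeq(\oplaxlim\Dcorr_{\hatU})^\op$. Since $(-)^\vee$ is an anti-equivalence intertwining the functorialities, it exchanges Cartesian with coCartesian morphisms; the openly Cartesian sections on the left therefore match the openly coCartesian sections on the right, so the equivalence restricts to $\Mod^\corr_{[[\Dc]]}(\hatU)\simeq\Mod^\corr_\Dc(\hatU)^\op$, where $\Mod^\corr_{[[\Dc]]}(\hatU)$ denotes the category of openly Cartesian sections of $\Groth(\Dc^{[[\corr]]}_V)$. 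Combining this with Proposition~\ref{prop:monod-!-arrows} and Proposition~\ref{prop:Verdierduality} yields the module-level equivalence
\[
\Mod^\corr_{[[\Dc]]}(\hatU)\;\simeq\;\Mod^\corr_\Dc(\hatU)^\op\;\simeq\;\LDsM(X^\finsurj)^\op\;\simeq\;\LDM(X^\finsurj),
\]
again realized by restriction along $\eta$.

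Next I would make this monoidal. Verdier duality carries the chiral (Day-convolution) structure on the $\Dc^!$-side to the one on the $[[\Dc]]$-side, because $(-)^\vee$ exchanges $j_*j^*(-\boxtimes-)$ with $j_{[[!]]}j^{[[*]]}(-\boxtimes-)$ and is compatible with the external product; this is the content of the ``obvious'' Proposition at the end of §\ref{subsec:FA-finsurj} identifying $\FA_{[[\Dc]]}(X)$ with $\FA^!_\Dc(X)^\op$. As $B$ of Proposition~\ref{prop:monod-!-arrows} is symmetric monoidal, its Verdier dual is symmetric monoidal too, so restriction along $\eta$ is a symmetric monoidal equivalence $\Mod^\corr_{[[\Dc]]}(\hatU)\simeq\LDM(X^\finsurj)$. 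Finally, I would conclude exactly as in Corollary~\ref{cor:shriekvsstraightarrows}, with algebras replacing coalgebras: for the right Day convolution an algebra is a lax monoidal functor (\cite[Prop.~2.12]{glasman}), so a symmetric monoidal section of $\Groth(\Dc^{[[\corr]]}_V)$ corresponds under $B$ to a lax $[[\Dc]]$-module equipped with a lax monoidal (algebra) structure, and the strict monoidality of the section translates into the factorization condition in the definition of $\FA_{[[\Dc]]}(X)$. This gives $\FA^\corr_{[[\Dc]]}(\hatU)\simeq\FA_{[[\Dc]]}(X)$, and since every identification above was made through $(-)^\vee$, compatibility with Corollary~\ref{cor:shriekvsstraightarrows} is built in.

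The step I expect to be the main obstacle is the consistent bookkeeping of variance: Verdier duality reverses limits and colimits, lax and oplax limits, Cartesian and coCartesian morphisms, algebras and coalgebras, and left and right adjoints simultaneously, and one must verify that these reversals are mutually coherent, so that the openly Cartesian and factorization conditions on the $[[\Dc]]$-side are exactly the duals of the openly coCartesian and factorization conditions on the $\Dc^!$-side. The most delicate part is checking that $(-)^\vee$ is genuinely symmetric monoidal for the two chiral structures --- not merely fibrewise compatible --- which requires tracing the duality through the monoidal Grothendieck constructions of §\ref{subsec:sym-mon-inf} (Proposition~\ref{prop:grothmonod} and Corollary~\ref{cor:laxmonoidalonlaxlimits}). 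A more hands-on alternative, which avoids part of this, would be to rerun the proof of Proposition~\ref{prop:monod-!-arrows} verbatim with $(f_{[[!]]},f^{[[*]]})$ in place of $(f_*,f^!)$, using the adjunction $f^{[[*]]}\dashv f_*$ for closed immersions to rewrite $\Grothco(\Dc^{[[*]]}_{X^\finsurj})$ as the Cartesian construction of the $*$-pushforwards; but the duality route has the advantage of delivering the stated compatibility for free.
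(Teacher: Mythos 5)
Your proposal is correct and matches the paper's intent exactly: the paper gives no argument beyond the remark that ``the dual statements obviously hold,'' and the dualization you spell out --- using $f^{[[*]]}=((-)^\vee\circ f^!\circ(-)^\vee)$ and $f_{[[!]]}=((-)^\vee\circ f_*\circ(-)^\vee)$ fibrewise, the identity $\Grothco(\Pc)=\Groth(\Pc^\op)^\op$, and the exchange of (openly) Cartesian with coCartesian sections and of algebras with coalgebras for the Day convolution --- is precisely the intended proof. Your variance bookkeeping is coherent, and the observation that $\hati(\sigma)_{[[!]]}\simeq\hati(\sigma)_*$ for the closed immersion $\hati(\sigma)$ is exactly what makes the two chiral structures match under $(-)^\vee$.
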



\subsection{Definition in terms of twisted arrows}\label{subsec:twistedarrows}

\begin{Defi}
    We denote by $\TwS$ the category of twisted arrows in $\finsurj$. Its objects are morphisms in $\finsurj$, and its morphisms from $\alpha \colon I \dra J$ to $\beta \colon S \dra T$ are commutative diagrams
    \[
    \xymatrix{
        I \ar@{->>}[d]_\alpha \ar@{}[dr]|{\displaystyle \tau} & S \ar@{->>}[l]_{\overleftarrow{\scriptstyle \tau}} \ar@{->>}[d]^\beta \\ J \ar@{->>}[r]_{\underrightarrow{\scriptstyle \tau}} & T
    }
    \]
    in $\finsurj$. For any surjections $\alpha_1 \colon I_1 \to J_1$ and $\alpha_2 \colon I_2 \to J_2$, we set \[
    \alpha_1 \amalg \alpha_2 \colon I_1 \amalg I_2 \twoheadrightarrow J_1 \amalg J_2.
    \]
    It induces on $\TwS$ a symmetric monoidal structure.
\end{Defi}

\begin{Defi}
    Let $\tau \colon \alpha \to \beta$ be a morphism in $\TwS$ corresponding to a diagram as above.
    We say that $\tau$ is {\em open} if the map $\overleftarrow\tau$ is a bijection. We say that $\tau$ is 
    {\em closed} if the map $\underrightarrow\tau$ is a bijection.
\end{Defi}

\begin{lem}\label{lem:TwS-finite-dec-codec}
 The category $\TwS$ is both of finite decompositions and finite codecompositions. Moreover, for any $\alpha \colon I \twoheadrightarrow J$, we have
 \begin{gather*}
  \Dec_n(\alpha) \simeq \{ (\alpha_1, \dots, \alpha_n) ~|~ \alpha_i \colon I_i \twoheadrightarrow J_i,~ I = I_1 \amalg \cdots \amalg I_n~, J = J_1 \cup \cdots \cup J_n \},
  \\
  \Codec_n(\alpha) \simeq \{ (\alpha_1, \dots, \alpha_n) ~|~ \alpha_i \colon I_i \twoheadrightarrow J_i,~ I = I_1 \amalg \cdots \amalg I_n~, J = J_1 \amalg \cdots \amalg J_n \}.
 \end{gather*}

\end{lem}

\begin{proof}
 Let $\beta_1, \dots, \beta_n \in \TwS$ and write $\beta_i \colon S_i \twoheadrightarrow T_i$. A morphism $\alpha \to \beta_1 \amalg \cdots \amalg \beta_n$ is a commutative diagram
 \[
  \xymatrix{
   I \ar@{->>}[d]_{\alpha} & S_1 \amalg \cdots \amalg S_n \ar@{->>}[d]^{\beta_1 \amalg \cdots \amalg \beta_n} \ar@{->>}[l] \\
   J \ar@{->>}[r] & T_1 \amalg \cdots \amalg T_n.
  }
 \]
 Setting $I_i$ (resp. $J_i$) as the pre-image of $T_i$ in $I$ (resp. in $J$), we find the announced explicit description of $\Codec_n(\alpha)$.
 
 Similarly, a morphism $\beta_1 \amalg \cdots \amalg \beta_n \to \alpha$ is a commutative diagram
 \[
  \xymatrix{
   S_1 \amalg \cdots \amalg S_n \ar@{->>}[d]_{\beta_1 \amalg \cdots \amalg \beta_n} & I \ar@{->>}[d]^{\alpha} \ar@{->>}[l] \\
   \ar@{->>}[r] T_1 \amalg \cdots \amalg T_n & J.
  }
 \]
 Setting $I_i$ to be the pre-image of $T_i$ in $I$, and $J_i$ to be its image in $J$, we find the announced description on $\Dec_n(\alpha)$.
\end{proof}

Consider the diagram $U_\Tw \colon \TwS \to \Var_\k$ mapping $\alpha \colon I \to J$ to $U(\alpha)$. It maps a commutative diagram $\tau$ as above to the natural immersion $U(\alpha) \to U(\beta)$.
Note that it maps open (resp. closed) morphisms in $\TwS$ to open (resp. closed) immersions of varieties.
The $\infty$-functor $\Dc^!_{U_\Tw} \colon \TwS^\op \to \Cat_\infty$ (recall the notation from Section \ref{sec:laxmodules} \S \ref{par:modulesondiagrams}) has a lax monoidal structure, given by
\[
j^*(- \boxtimes -) \colon D(\QCoh_{\Dc,U(\alpha_1)}) \times D(\QCoh_{\Dc,U(\alpha_2)}) \to D(\QCoh_{\Dc, U(\alpha_1 \amalg \alpha_2)}).
\]
with $j \colon U(\alpha_1 \amalg \alpha_2) \to U(\alpha_1) \times U(\alpha_2)$ the open embedding.
We get from \autoref{prop:grothmonod} a symmetric monoidal structure on $\Groth(\Dc^!_{U_\Tw})$.

\begin{Defi}
    A section of $\Groth(\Dc^!_{U_\Tw}) \to \TwS$ is called openly Cartesian if it maps open morphisms in $\TwS$ to Cartesian morphisms.
    We denote by $\Mod^{!o}_\Dc(U_\Tw)$ the full sub-$\infty$-category of $\oplaxlim \Dc^!_{U_\Tw}$ spanned by openly Cartesian sections. Since $\TwS$ is of finite decomposition (see \autoref{lem:TwS-finite-dec-codec}), \autoref{cor:day-oplaxlim} defines on $\oplaxlim \Dc^!_{U_\Tw}$ a right Day convolution. The full sub-$\infty$-category $\Mod^{!o}_\Dc(U_\Tw)$ is stable by this tensor product, and thus inherits a symmetric monoidal structure.
\end{Defi}

\begin{Defi}
    We denote by $\FA_{\Dc}^!(U_\Tw)$ the $\infty$-category of openly Cartesian symmetric monoidal sections of $\Groth(\Dc^!_{U_\Tw})$. Forgetting the monoidal structure defines an $\infty$-functor
    \[
    \FA_{\Dc}^!(U_\Tw) \lra \Mod^{!o}_\Dc(U_\Tw).
    \]
\end{Defi}

\begin{prop}\label{prop:twistedshriek}
    Restriction along the functor $\bar \eta \colon I \mapsto (I \dra *)$ induces a symmetric monoidal equivalence of $\infty$-categories
    \[
    \Mod_{\Dc}^{!o}(U_\Tw) \simeq \LDsM(X^\finsurj).
    \]
    It induces an equivalence
    \[
    \FA_{\Dc}^!(U_\Tw) \simeq \FA^!_\Dc(X^\finsurj).
    \]
\end{prop}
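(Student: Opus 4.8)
The plan is to follow the pattern of Proposition \ref{prop:monod-!-arrows} and Corollary \ref{cor:shriekvsstraightarrows}, replacing the arrow category $\finsurj^1$ and the diagram $\hatU$ by the twisted arrow category $\TwS$ and the diagram $U_\Tw$. The whole argument rests on one combinatorial observation about $\TwS$: every object $\alpha\colon I\dra J$ admits a canonical \emph{open} morphism $o_\alpha\colon \alpha \to (I\dra *)$, given by $\overleftarrow{o_\alpha}=\Id_I$ and $\underrightarrow{o_\alpha}=(J\dra *)$, whose target lies in the image of $\bar\eta$. Under $U_\Tw$ this open morphism is sent precisely to the canonical open immersion $j_\alpha\colon U(\alpha)\hookrightarrow U(I\dra *)=X^I$. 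Dually, a surjection $g\colon I\dra J$ gives the \emph{closed} morphism $\bar\eta(g)\colon (J\dra *)\to(I\dra *)$ with backward leg $g$, which $U_\Tw$ sends to the diagonal $\delta_g\colon X^J\to X^I$. Thus the image of $\bar\eta$ carries exactly the data $X^I$ together with the $\delta_g$, while the open morphisms $o_\alpha$ exhibit every remaining value $U(\alpha)$ as an open restriction of some $X^I$.

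First I would construct the functor opposite to restriction. Let $s$ be the source functor on $\TwS$, $\alpha\mapsto \on{dom}(\alpha)$; it is symmetric monoidal for disjoint union and restricts to the identity along $\bar\eta$ (up to the evident op). Pulling back lax sections along $s$ is lax monoidal by Proposition \ref{prop:restrictionlaxmonoidal}, giving $\LDsM(X^\finsurj)=\laxlim\Dc^!_{X^\finsurj}\to\laxlim(\Dc^!_{X^\finsurj}\circ s)$. The open immersions $j_\alpha$ then assemble into a monoidal natural transformation whose source is $\Dc^!_{X^\finsurj}\circ s$ and whose target is $\Dc^!_{U_\Tw}$, and which by Corollary \ref{cor:laxmonoidalonlaxlimits} induces a lax monoidal functor $\laxlim(\Dc^!_{X^\finsurj}\circ s)\to\laxlim\Dc^!_{U_\Tw}$ landing in openly Cartesian sections (for an open $\tau$ the transition is a pure $!$-restriction along an open immersion, hence Cartesian). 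Composing yields a symmetric monoidal functor $A\colon \LDsM(X^\finsurj)\to \Mod^{!o}_\Dc(U_\Tw)$, the monoidality coming from Corollary \ref{cor:convonlaxlim}.

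The core of the proof is then to check that $B$, restriction along $\bar\eta$, is quasi-inverse to $A$. In one direction $B\circ A\simeq\Id$ is immediate from $s\circ\bar\eta\simeq\Id$ together with $j_{(I\dra *)}=\Id_{X^I}$, and one reads off that $B(\text{section})$ is the lax $\Dc^!$-module with values $E^{(I)}$ and transition maps $\beta_g\colon E^{(J)}\to\delta_g^! E^{(I)}$ supplied by the closed morphisms $\bar\eta(g)$. In the other direction $A\circ B\simeq\Id$ is where the open morphisms $o_\alpha$ do the work: for an openly Cartesian section $E$ the Cartesian condition at $o_\alpha$ forces $E(\alpha)\simeq j_\alpha^!\,E(I\dra *)\simeq j_\alpha^*\,(BE)^{(\on{dom}\alpha)}$, so $E$ is entirely recovered from $BE$. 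I expect the main obstacle to be packaging this reconstruction coherently in the $\infty$-categorical setting: one must verify that ``openly Cartesian'' is exactly the Kan-extension condition making $\bar\eta$-restriction an equivalence, and track the symmetric monoidal coherences through the Grothendieck construction and the Day convolution (Proposition \ref{prop:grothmonod}, Corollary \ref{cor:convonlaxlim}), rather than the essentially formal identifications of values.

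Finally, since $B$ is symmetric monoidal it transports symmetric monoidal sections to factorization structures: an openly Cartesian symmetric monoidal section of $\Groth(\Dc^!_{U_\Tw})$ corresponds under $B$ to a lax $\Dc^!$-module whose factorization equivalences $j^*_\alpha E^{(I)}\to j^*_\alpha\bigl(\bigboxtimes_{j\in J} E^{(I_j)}\bigr)$ are precisely the strict monoidality of the section over $\TwS$ (the open immersions $j_\alpha$ being built into $U_\Tw$). This yields the induced equivalence $\FA^!_\Dc(U_\Tw)\simeq\FA^!_\Dc(X^\finsurj)$, compatible through Verdier duality with Corollary \ref{cor:shriekvsstraightarrows}, exactly as in the straight-arrow case.
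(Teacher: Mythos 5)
Your proposal is correct and follows essentially the same route as the paper: construct the monoidal functor $\LDsM(X^\finsurj) \to \laxlim(\Dc^!_{X^\finsurj}\circ \bar s) \to \Mod^{!o}_\Dc(U_\Tw)$ from the source functor and the monoidal natural transformation given by the $j_\alpha$, then check that restriction along $\bar\eta$ is inverse to it. Your observation that every $\alpha\colon I\dra J$ carries a canonical open morphism to $(I\dra *)$, so that the openly Cartesian condition forces $E(\alpha)\simeq j_\alpha^! E(I\dra *)$, is exactly the mechanism the paper leaves implicit when it asserts that restriction gives an inverse.
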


\begin{proof}
    Let us denote by $\bar s \colon \TwS \to \finsurj^\op$ the monoidal functor mapping a surjection to its domain.
    The canonical open immersions $j_\alpha \colon U(\alpha) \to X^I$ (for $\alpha \colon I \dra J$) induce a monoidal natural transformation $j^!_\bullet \colon \Dc^!_{X^\finsurj} \circ \bar s \to \Dc^!_{U_\Tw}$. We find a monoidal $\infty$-functor
    \[
    \LDsM(X^\finsurj) := \oplaxlim \Dc^!_{X^\finsurj} \to \oplaxlim \Dc^!_{X^\finsurj} \circ \bar s \to \oplaxlim \Dc^!_{U_\Tw}
    \]
    whose image lie in $\Mod^{!o}_\Dc(U_\Tw)$:
    \[
    C \colon \LDsM(X^\finsurj) \lra \Mod^{!o}_\Dc(U_\Tw).
    \]
    Restriction along $\bar \eta$ gives an inverse $\infty$-functor $D \colon \Mod_\Dc^{!o}(U_\Tw) \to \LDsM(X^\finsurj)$ to $C$.
    This equivalence preserves the factorization structures.
\end{proof}


\subsection{Coherent factorization algebras}
Recall that a coherent (lax) $[\![\Dc]\!]$-modules is a lax $[\![\Dc]\!]$-module 
$(E^{[I)})$ such that $E^{(I)}$ is a coherent $\Dc$-module over 
$X^I$, for any $I$.

Equivalently, a coherent $[\![\Dc]\!]$-module can be seen as an object in the 
oplax-limit of the $\infty$-functor $\Coh_{*}^{X^\finsurj}$ mapping $I$ to $\Coh_{\Dc, 
    X^I}$ and a surjection $\alpha \colon I \to J$ to the associated pushforward 
$\infty$-functor (which preserves coherent $\Dc$-modules).

\begin{Defi}
    A lax $[\![\Dc]\!]$-factorization algebra is called lax if its underlying lax $[\![\Dc]\!]$-module is coherent. We denote by $\FA^{\Coh}_{[\![\Dc]\!]}(X)$ the full sub-$\infty$-category of $\FA_{[\![\Dc]\!]}(X)$ spanned by coherent factorization algebras.
\end{Defi}

Fix a morphism $g \colon Y \to Z$ of varieties. 
\begin{itemize}
    \item If $g$ is an open immersion, then the $\infty$-functors $g^*$ and $g^{[\![*]\!]}$ both preserve coherent $\Dc$-modules and they coincide on coherent $\Dc$-modules.
    \item If $g$ is proper, then the $\infty$-functors $g_*$ and $g_{[\![*]\!]}$ both preserve coherent $\Dc$-modules and coincide on coherent $\Dc$-modules.
\end{itemize}

In particular, the lax monoidal $\infty$-functors $\Dcorr_{\hatU}$ and $\Dc^{[\![\corr]\!]}_\hatU$ admit a common lax monoidal full sub-$\infty$-functor $\Coh^{\corr}_\hatU$ mapping a surjection $\alpha$ to $\Perf_{\Dc,U(\alpha)}$ and a morphism $\sigma \colon \alpha \to \beta$ in $\finsurj^1$ to the $\infty$-functor $\hati(\sigma)_* \circ \hatj(\sigma)^* \simeq \hati(\sigma)_{[\![*]\!]} \circ \hatj(\sigma)^{[\![*]\!]}$:
\[
\xymatrix{
    {}\overbrace{\vphantom{\coprod^d}D(\QCoh_{\Dc,U(\beta)})}^{\displaystyle \Dcorr_\hatU} \ar[d]_{\hati(\sigma)_* \circ \hatj(\sigma)^*} & {}\overbrace{\vphantom{\coprod^d}\Perf_{\Dc,U(\beta)}}^{\displaystyle \Coh^{\corr}_\hatU} \ar[d] \ar[l] \ar[r] \ar[]+(7,12.3);[r]+(-7,12.3) \ar[]+(-7,12.3);[l]+(6,12.3) & {}\overbrace{\vphantom{\coprod^d}\Pro (\Perf_{\Dc,U(\beta)})}^{\displaystyle\Dc^{[\![\corr]\!]}_\hatU} \ar[d]^{\hati(\sigma)_{[\![*]\!]} \circ \hatj(\sigma)^{[\![*]\!]}}
    \\
    D(\QCoh_{\Dc,U(\alpha)}) & \Perf_{\Dc,U(\alpha)} \ar[l] \ar[r] & \Pro (\Perf_{\Dc,U(\alpha)}).
}
\]
Applying the Grothendieck construction, we get symmetric monoidal and fully faithful $\infty$-functors
\[
\Groth(\Dcorr_\hatU) \lla \Groth(\Coh^{\corr}_\hatU) \lra \Groth(\Dc^{[\![\corr]\!]}_\hatU)
\]
over $\finsurj^1$. We find using \autoref{prop:DfactoverS1}:
\begin{prop}\label{prop:coherentfact}
    A coherent lax $[\![\Dc]\!]$-factorization algebra is tantamount to any of the following equivalent datum.
    \begin{enumerate}
        \item An openly Cartesian symmetric monoidal section of $\Groth(\Dc^{[\![\corr]\!]}_\hatU) \to \finsurj^1$ mapping any $\alpha \in \finsurj^1$ to a coherent $\Dc$-module.
        \item An openly Cartesian symmetric monoidal section of $\Groth(\Coh^{\corr}_\hatU) \to \finsurj^1$.
        \item An openly Cartesian symmetric monoidal section of $\Groth(\Dcorr_\hatU) \to \finsurj^1$ mapping any $\alpha \in \finsurj^1$ to a coherent $\Dc$-module.
    \end{enumerate}
\end{prop}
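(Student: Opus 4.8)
The plan is to deduce the proposition from the arrow-category description of Proposition \ref{prop:DfactoverS1} together with the fully faithful embeddings of the common subfunctor $\Coh^{\corr}_\hatU$ recorded just above its statement. Everything is formal once these ingredients are in place; no new geometric input is required.

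First I would use Proposition \ref{prop:DfactoverS1} to identify $\FA_{[[\Dc]]}(X)$ with the category of openly Cartesian symmetric monoidal sections of $\Groth(\Dc^{[[\corr]]}_\hatU) \to \finsurj^1$, and then cut out the coherent ones. The underlying lax $[[\Dc]]$-module of such a section is its restriction along $\eta \colon I \mapsto (I \dra *)$, so its coherence means exactly that the section takes a coherent value on each object of the form $I \dra *$ (where $U(I \dra *) = X^I$). The point to check is that this is equivalent to the section taking coherent values on \emph{every} $\alpha \colon I \dra J$: indeed, for such an $\alpha$ the openly Cartesian symmetric monoidal structure identifies the value on $U(\alpha)$ with the open restriction of $\bigboxtimes_{j\in J} E^{(I_j)}$, and both the exterior product of coherent $\Dc$-modules and restriction along an open immersion preserve coherence. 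Thus coherence propagates from the objects $I \dra *$ to all of $\finsurj^1$, and coherent lax $[[\Dc]]$-factorization algebras are identified with the datum (1).

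Next I would invoke the two fully faithful symmetric monoidal functors over $\finsurj^1$
\[
 \Groth(\Dcorr_\hatU) \lla \Groth(\Coh^{\corr}_\hatU) \lra \Groth(\Dc^{[[\corr]]}_\hatU)
\]
induced by the common full subfunctor $\Coh^{\corr}_\hatU$. Because $\Coh^{\corr}_\hatU(\alpha)$ is precisely the coherent subcategory of both $\Pro D^b(\Coh_{\Dc,U(\alpha)})$ and $D(\QCoh_{\Dc,U(\alpha)})$, a section of either ambient Grothendieck construction landing in coherent objects factors essentially uniquely through $\Groth(\Coh^{\corr}_\hatU)$; this yields the bijections between the data (1), (2), (3) at the level of objects. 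The transition functors of the three functors agree on coherent modules — $\hatj(\sigma)^{[[*]]}$ and $\hatj(\sigma)^*$ coincide for the open immersion $\hatj(\sigma)$, and $\hati(\sigma)_{[[*]]}$ and $\hati(\sigma)_*$ coincide for the proper (closed) immersion $\hati(\sigma)$ — so full faithfulness over $\finsurj^1$ guarantees that a morphism of the subcategory is (co)Cartesian if and only if its image is. Hence the openly Cartesian condition transports across all three descriptions, and likewise the symmetric monoidal (factorization) structure: the Day convolution on each (op)lax limit is computed objectwise from the box-product functors, which preserve coherence, so a symmetric monoidal section through $\Coh^{\corr}_\hatU$ is the same datum as a symmetric monoidal section of either ambient functor with coherent values, the higher coherences being determined uniquely by full faithfulness.

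Combining these steps yields the asserted equivalences. The only genuine subtlety — and hence the step I would be most careful about — is the coherence-propagation argument of the second paragraph, which is what makes the two a priori different coherence conditions (imposed on $I \dra *$ versus on all $\alpha$) coincide; the remaining transport statements are formal consequences of full faithfulness together with the matching of the $*$- and $[[*]]$-functors on coherent $\Dc$-modules.
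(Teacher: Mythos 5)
Your proposal is correct and follows essentially the same route as the paper, which states this proposition without a separate proof as an immediate consequence of Proposition \ref{prop:DfactoverS1} together with the common full lax monoidal subfunctor $\Coh^{\corr}_\hatU$ and the induced fully faithful symmetric monoidal functors $\Groth(\Dcorr_\hatU) \leftarrow \Groth(\Coh^{\corr}_\hatU) \rightarrow \Groth(\Dc^{[[\corr]]}_\hatU)$. Your coherence-propagation step (which can also be seen directly from the openly Cartesian condition applied to the open morphism $\alpha \to (I \dra *)$, identifying the value at $\alpha$ with $j_\alpha^*$ of a coherent module on $X^I$) is exactly the detail the paper leaves implicit, and the rest is the formal transport along full faithfulness that the paper also relies on.
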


\begin{Defi}
    We denote by $\widebar\FA_\corr^\Dc(\hatU)$ the $\infty$-category of openly Cartesian symmetric monoidal sections of $\Groth(\Dcorr_\hatU)$.
    The above proposition gives a fully faithful $\infty$-functor
    \[
    \FA^\Coh_{[\![\Dc]\!]}(X) \lra \widebar\FA_\corr^\Dc(\hatU).
    \]
\end{Defi}

We shall now give another model for coherent lax $[\![\Dc]\!]$-factorization algebras. 
Consider the $\infty$-functor $\Dc_*^{U_\Tw} \colon \TwS \to \Cat_\infty$ mapping a surjection $\alpha$ to $D(\QCoh_{\Dc,U(\alpha)})$ and a morphism of twisted arrows $\tau \colon \alpha \to \beta$ to the pushforward $\infty$-functor $D(\QCoh_{\Dc,U(\alpha)}) \lra D(\QCoh_{\Dc,U(\beta)})$.
It has a lax monoidal structure, given by
\[
j^*(- \boxtimes -) \colon D(\QCoh_{\Dc,U(\alpha_1)}) \times D(\QCoh_{\Dc,U(\alpha_2)}) \to D(\QCoh_{\Dc, U(\alpha_1 \amalg \alpha_2)}).
\]
with $j \colon U(\alpha_1 \amalg \alpha_2) \to U(\alpha_1) \times U(\alpha_2)$ the open embedding.
Its Cartesian Grothendieck construction thus admits a symmetric monoidal structure.

Fix a section $E \colon \TwS^\op \to \Groth(\Dc_*^{U_\Tw})$ and an open morphism $\tau \colon \alpha \to \beta$ in $\TwS$. The transition morphism $E(\beta) \to U_\Tw(\tau)_* E(\alpha)$ induces by adjunction a morphism
\[
U_\Tw(\tau)^* E(\beta) \to E(\alpha).
\]

\begin{Defi}
    Let $\wb \Mod_*^\Dc(U_\Tw)$ denote the full sub-$\infty$-category of $\oplaxlim \Dc_*^{U_\Tw}$ spanned by sections $E$ such that for any open morphism $\tau \colon \alpha \to \beta$ in $\TwS$, the induced morphism $U_\Tw(\tau)^* E(\beta) \to E(\alpha)$ is an equivalence.
    
    Let $\wb \FA_*^\Dc(U_\Tw)$ denote the $\infty$-category of symmetric monoidal sections of $\Groth(\Dc_*^{U_\Tw})$ that belong to $\wb \Mod_*^\Dc(U_\Tw)$.
\end{Defi}

Arguments similar to those used in sections \ref{subsec:straightarrows} and \ref{subsec:twistedarrows} give:
\begin{prop}\label{prop:simplerCohFA}
    The $\infty$-categories $\widebar \FA_*^\Dc(U_\Tw)$ and $\widebar\FA_\corr^\Dc(\hatU)$ are equivalent. In particular, there is a fully faithful $\infty$-functor
    \[
    \mu \colon \FA^\Coh_{[\![\Dc]\!]}(X) \lra \widebar \FA_*^\Dc(U_\Tw)
    \]
    whose image consists of sections mapping every $\alpha \in \TwS$ to a coherent $\Dc$-module.
\end{prop}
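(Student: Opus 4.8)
The plan is to prove the equivalence $\widebar\FA_*^\Dc(U_\Tw)\simeq\widebar\FA_\corr^\Dc(\hatU)$ by showing that both bar-models restrict, along the functors $\eta$ and $\bar\eta$ sending $I$ to $(I\dra *)$, to one and the same lax module category on $X^\finsurj$, after which the equivalence is formal; the functor $\mu$ and the description of its image then follow by composing with the fully faithful functor $\FA^\Coh_{[[\Dc]]}(X)\to\widebar\FA_\corr^\Dc(\hatU)$ of Proposition \ref{prop:coherentfact}. The key point is that $U(I\dra *)=X^I$, so that on the diagonal both the correspondence transition functors of $\Dcorr_\hatU$ and the pushforward transitions of $\Dc_*^{U_\Tw}$ reduce to $*$-pushforward along the diagonal embeddings $\delta_g$; hence each restriction lands in the $*$-pushforward model $\oplaxlim\Dc_*^{X^\finsurj}\simeq\LDsM(X^\finsurj)$, the last identification being the adjunction $\Groth(\Dc^!_{X^\finsurj})\simeq\Grothco(\Dc_*^{X^\finsurj})$ already used in Proposition \ref{prop:monod-!-arrows}.

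First I would build the two comparison functors and their quasi-inverses exactly as in Propositions \ref{prop:monod-!-arrows} and \ref{prop:twistedshriek}. For the twisted side, the monoidal source functor $\bar s\colon\TwS\to\finsurj^\op$ induces, via Proposition \ref{prop:restrictionlaxmonoidal}, a monoidal restriction functor, while the open immersions $j_\alpha\colon U(\alpha)\to X^I$ supply a monoidal natural transformation (Corollary \ref{cor:laxmonoidalonlaxlimits}); together they give a functor out of $\LDsM(X^\finsurj)$ whose image lies in $\widebar\Mod_*^\Dc(U_\Tw)$, with restriction along $\bar\eta$ as quasi-inverse. For the straight side, the source functor $s\colon\finsurj^1\to\finsurj$ with the same open restrictions gives the analogous comparison functor for $\widebar\FA_\corr^\Dc(\hatU)$, with $\eta$-restriction as quasi-inverse. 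Composing the two equivalences yields $\widebar\FA_*^\Dc(U_\Tw)\simeq\widebar\FA_\corr^\Dc(\hatU)$.

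Next I would check that the equivalence respects all the structure in sight. Monoidality of $s$ and $\bar s$ is disjoint union, and the open-restriction transformations are monoidal because $j^*(-\boxtimes-)$ is compatible with the Day convolution; thus symmetric monoidal sections correspond to symmetric monoidal sections. The open-Cartesian (resp. open-morphism) conditions cutting out the factorizing objects match because an open morphism of $\finsurj^1$ or $\TwS$ is sent by $\hatU$, $U_\Tw$ to an isomorphism on the relevant chart, so the transition functors become equivalences simultaneously on both sides. For the final assertion, $\mu$ is the composite of the above equivalence with the fully faithful functor of Proposition \ref{prop:coherentfact}; since open restrictions and the finite pushforwards $(\delta_g)_*$ preserve coherence, coherent-valued sections are carried to coherent-valued sections, so the image of $\mu$ is exactly the sections sending every $\alpha\in\TwS$ to a coherent $\Dc$-module.

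The hard part will not be the manipulation of limits but the coherence of these comparisons at the $(\infty,2)$-categorical level. Concretely, one must verify that passing between the $*$-pushforward presentation $\Dc_*^{U_\Tw}$ and the correspondence presentation $\Dcorr_\hatU$ is governed, uniformly in $\sigma$ and compatibly with composition, by the adjunctions $\hatj(\sigma)^!\dashv\hatj(\sigma)_*$ and $\hati(\sigma)_*\dashv\hati(\sigma)^!$ together with the proper base-change $2$-morphisms packaged into the $(\infty,2)$-functor $\Dcorr$ of \S\ref{subsec:nonstand}, and that this datum intertwines the two Day convolution products of \S\ref{subsec:sym-mon-inf}. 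Once this monoidal base-change compatibility is established, both the equivalence of the two models and the description of $\mu$ follow formally.
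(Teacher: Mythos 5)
Your proposal follows essentially the route the paper intends: its own proof is the single sentence ``Arguments similar to those used in sections \ref{subsec:straightarrows} and \ref{subsec:twistedarrows}'', and your plan — restrict both bar-models along $\eta$ and $\bar\eta$ to a common category of sections over $\finsurj$, build quasi-inverses from the source functors and the open restrictions $j_\alpha$, match the monoidal and openly Cartesian conditions, and obtain $\mu$ by composing with Proposition \ref{prop:coherentfact} — is precisely that argument written out. One recurring mislabel worth fixing: for these barred (Cartesian-section) models the common restriction is $\laxlim \Dc_*^{X^\finsurj}$, i.e.\ the dual-form presentation of lax $[[\Dc]]$-modules (consistent with $\mu$ having source $\FA^\Coh_{[[\Dc]]}(X)$), not $\oplaxlim \Dc_*^{X^\finsurj} \simeq \LDsM(X^\finsurj)$, which is the identification relevant to the unbarred models of Proposition \ref{prop:monod-!-arrows}; this does not affect the structure of your argument.
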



\subsection{Covariant Verdier duality}\label{subsec:CVD-FA}

We can now prove the following
\begin{thm}\label{thm:coVerdier-fact}
    The covariant Verdier duality $\infty$-functor
    \[
    \psi \colon \Coh_{[\![\Dc]\!]}(X^\finsurj) \to \DsM(X^\finsurj)
    \]
    preserves factorization structures. In other words, it extends to an $\infty$-functor
    \[
    \psi \colon \FA^\Coh_{[\![\Dc]\!]}(X) \to \FA^!_\Dc(X).
    \]
\end{thm}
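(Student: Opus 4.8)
The plan is to deduce the theorem from the arrow-category presentations of Section \ref{subsec:straightarrows} together with the coherent comparison of the preceding subsection, whose very purpose is to describe $\Dc^!$- and $[[\Dc]]$-factorization algebras in a way that does not prejudge the algebra/coalgebra dichotomy. First I would recall that, by Proposition \ref{prop:coherentfact}, a coherent lax $[[\Dc]]$-factorization algebra is the same datum as an openly Cartesian symmetric monoidal section of $\Groth(\Coh^{\corr}_\hatU)\to\finsurj^1$, where $\Coh^{\corr}_\hatU$ is the common coherent subfunctor of $\Dcorr_\hatU$ and $\Dc^{[[\corr]]}_\hatU$; composing with the inclusion $\Coh^{\corr}_\hatU\hookrightarrow\Dcorr_\hatU$ realizes such an $F$ as an object of $\wb\FA_\corr^\Dc(\hatU)$. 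On the other side, Corollary \ref{cor:shriekvsstraightarrows} presents $\FA^!_\Dc(X)$ as $\FA^{\corr}_\Dc(\hatU)$, the category of openly coCartesian symmetric monoidal sections of $\Grothco(\Dcorr_\hatU)\to(\finsurj^1)^\op$. Thus both sides are assembled from one and the same correspondence functor $\Dcorr_\hatU$, the only difference being Cartesian sections of $\Groth$ versus coCartesian sections of $\Grothco$.

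The core of the argument is to realize $\psi$ as the passage from the former presentation to the latter. Starting from the openly Cartesian section $s$ attached to $F$, I would produce an openly coCartesian section by taking, for each object $\alpha$ of $\finsurj^1$, a limit of $s$ over the correspondences emanating from $\alpha$, the structure maps and their coherences being supplied by the base-change equivalences packaged in the $(\infty,2)$-functor $\Dcorr$ of \cite{GR}. Unwinding the equivalence $B$ of Proposition \ref{prop:monod-!-arrows} and evaluating the resulting coCartesian section at $(I\dra *)$, where $U(I\dra *)=X^I$, reproduces precisely the defining formula $\psi(F)^{(I)}=\holim_K (p_{IK})_*\, q_{IK}^!\, F_K$: the homotopy limit over $K$ is exactly the right Kan extension converting the Cartesian transition maps $s(\alpha)\to\hati(\sigma)_*\hatj(\sigma)^* s(\beta)$ into coCartesian ones, while finiteness of $p_{IK}$ and $q_{IK}$ guarantees that the $*$- and $[[!]]$-pushforwards agree, so that $\Coh^{\corr}_\hatU$ computes the same transition functors on both sides (cf. the discussion following the definition of $\phi$). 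Matching this conversion with $\psi$ as originally defined on $X^\finsurj$ uses Proposition \ref{prop:Delta-cart} and the base change it encodes.

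Granting this identification, preservation of the factorization structure is formal. Both Day convolutions are computed by the finite products indexed by the finite-decomposition sets of $\finsurj^1$ (Proposition \ref{prop:finiteconvolution}, using that $\finsurj^1$ is of finite decomposition), and the two ingredients of the conversion — the monoidal inclusion $\Coh^{\corr}_\hatU\hookrightarrow\Dcorr_\hatU$ and the Cartesian-to-coCartesian passage — are (lax) monoidal by Propositions \ref{prop:laxtransf}, \ref{prop:restrictionlaxmonoidal} and Corollary \ref{cor:laxmonoidalonlaxlimits}; hence symmetric monoidal sections go to symmetric monoidal sections, i.e. factorization algebras to factorization algebras. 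Finally, over the open morphisms of $\finsurj^1$ the map $\hati(\sigma)$ is an isomorphism, so the conversion is essentially the identity there; consequently the openly Cartesian condition defining $F$ translates directly into the openly coCartesian (factorizing) condition for $\psi(F)$, yielding the asserted functor $\psi\colon\FA^\Coh_{[[\Dc]]}(X)\to\FA^!_\Dc(X)$, with image in the strict subcategory since $\psi(F)$ is strict by construction.

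The main obstacle, and where the genuine work lies, is the second step: constructing the Cartesian-to-coCartesian conversion as a bona fide functor and identifying it with $\holim_K (p_{IK})_*\, q_{IK}^!$. This is exactly the difficulty flagged in the introduction, namely that covariant Verdier duality is not visibly defined on the naive diagram $X^\finsurj$, so one is forced into the correspondence and $(\infty,2)$-categorical bookkeeping: the base-change equivalences of Proposition \ref{prop:Delta-cart}, promoted to a coherent system of higher homotopies by the functoriality of $\Dcorr$, are precisely what allow the defining limit to assemble into a coCartesian — hence factorizing — section. Verifying that these coherences are compatible with the monoidal (not merely with the underlying module) structure is the most delicate point, and is what the arrow- and twisted-arrow presentations of Sections \ref{subsec:straightarrows}--\ref{subsec:twistedarrows} were engineered to make tractable.
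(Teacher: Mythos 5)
You have correctly identified the overall strategy (pass to the arrow/twisted-arrow presentations so that the algebra/coalgebra dichotomy disappears, build a monoidal ``conversion'' functor, and match it with $\psi$), but the core of your argument is missing the one genuinely new ingredient that makes the conversion exist. You propose to obtain the coCartesian section by ``taking a limit of $s$ over the correspondences emanating from $\alpha$'', with coherences supplied by base change inside $\Dcorr$, and you assert that this right Kan extension along the diagram of transition functors $\hati(\sigma)_*\hatj(\sigma)^*$ reproduces $\holim_K (p_{IK})_*\,q_{IK}^!\,F_K$. It does not: the correspondences $\hatU(\sigma)$ appearing in the arrow-category diagram are open/closed immersions between the $U(-)$'s, whereas the formula for $\psi$ involves the finite correspondences $U(\alpha)\leftarrow\Delta(\alpha,\beta)\to U(\beta)$ with $\Delta(\alpha,\beta)=(U(\alpha)\times U(\beta))\cap\Delta(I,S)$, which are \emph{not} part of that diagram. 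The paper's proof has to introduce these $\Delta(\alpha,\beta)$ as a separate, lax monoidal natural transformation $\UTwc\circ\rho_1\Rightarrow\wb\UTwc\circ\rho_2$ of functors $\TwS\times\TwS^\op\to\Var_\k^\corr$ (checking naturality via the Cartesian squares of Proposition \ref{prop:Delta-cart} and lax monoidality via an explicit $2$-morphism in $\Var_\k^\corr$), and only then applies the limit over the second variable (Lemma \ref{lem:rightKan}) to land in $\laxlim\Dc^!_{U_\Tw}$. Without this transformation there is no functor to Kan-extend along, and no identification with $\psi$.

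A second, related gap is your treatment of the factorization property of $\psi(F)$. In these presentations the factorizing condition is encoded by the section being \emph{strictly} (not just lax) monoidal; the ``openly Cartesian/coCartesian'' condition is a separate locality constraint. Your argument that ``over open morphisms $\hati(\sigma)$ is an isomorphism, so the conversion is essentially the identity there'' addresses only the latter and cannot yield strict monoidality. The paper proves strictness by the geometric fact that for $\alpha=\alpha_1\amalg\alpha_2$ the variety $\Delta(\alpha,\beta)$ decomposes, over $U(\alpha)$, as the disjoint union over decompositions $\beta=\beta_1\amalg\beta_2$ of the products $\Delta(\alpha_1,\beta_1)\times\Delta(\alpha_2,\beta_2)$, so that the limit over $\beta$ splits as an exterior product of the limits over $\beta_1$ and $\beta_2$. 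This computation is the substance of the claim that $\psi$ sends factorization algebras to factorization algebras, and it is absent from your proposal.
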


By Propositions \ref{prop:simplerCohFA} and \ref{prop:twistedshriek}, we are reduced to building an $\infty$-functor
\[
\widebar \FA_*^\Dc(U_\Tw) \lra \FA^!_\Dc(U_\Tw)
\]
that coincides with $\psi$ once restricted to coherent factorization algebras.

\vspace{2mm}

Recall the $2$-category $\Var_\k^{\on{corr}}$ of correspondences between 
$\k$-varieties (see \S \ref{subsec:corr}).
It naturally contains (as a non-full subcategory) a copy of (the 
$1$-category) $\Var_\k$ and a copy of $\Var_\k^\op$, through the functors:
\begin{align*}
    \Var_\k &\to \Var_\k^{\on{corr}} &  \Var_\k^\op &\to \Var_\k^{\on{corr}} \\
    X &\mapsto X & X &\mapsto X \\
    (X \to Y) & \mapsto (X \underset{=}{\leftarrow} X \to Y) & (X \leftarrow Y) 
    & \mapsto (X \leftarrow Y \underset{=}{\to} Y).
\end{align*}

\begin{Defi}
    We denote by $\UTwc$ and $\wb \UTwc$ the composite functors
    \begin{align*}
        \UTwc &\colon \TwS \overset{U_\Tw}\lra \Var_\k \lra \Var_\k^\corr \\[1mm]
        \wb\UTwc &\colon \TwS^\op \underset{U_\Tw}\lra \Var_\k^\op \lra \Var_\k^\corr.
    \end{align*}
    They both admit a lax monoidal structure given by the correspondences
    \[
    U(\alpha_1) \times U(\alpha_2) \lla U(\alpha_1 \amalg \alpha_2) \overset{=}\lra U(\alpha_1 \amalg \alpha_2).
    \]
\end{Defi}

For any two surjections $\alpha \colon I \to J$ and $\beta \colon S \to T$, we 
denote by $\Delta(\alpha,\beta)$ the subvariety of $X^I \times X^S$ 
obtained by intersecting $U(\alpha) \times U(\beta)$ with $\Delta(I,S)$ (recall 
that $\Delta(I,S) \subset X^I \times X^S$ is the closed subvariety spanned by 
those families $((x_i),(x_s))$ such that $\{x_i,i\in I\} = \{x_s,s\in S\}$).

\begin{prop}\label{prop:transformationT}
    The data of the $\Delta(\alpha,\beta)$'s define a lax monoidal natural transformation
    \[
    \xymatrix{
        \TwS \times \TwS^\op \phantom{\TwS} \ar@/^20pt/[rr]^{\UTwc \circ \rho_1} \ar@/_20pt/[rr]_{\wb\UTwc \circ \rho_2} & \ar@{=>}[]+(-9,6);[]+(-9,-6)^T & \Var_\k^\corr
    }
    \]
    where $\rho_1$ and $\rho_2$ are the projections.
\end{prop}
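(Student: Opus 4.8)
The plan is to construct, for each pair of twisted arrows $(\alpha,\beta)$, a correspondence in $\Var_\k^\corr$ from $U(\alpha)\times U(\beta)$ to $U(\alpha\amalg\beta)$ (after suitable restrictions) whose apex is the variety $\Delta(\alpha,\beta)$, and then verify that these correspondences assemble into a $2$-natural transformation $T$ between the two composite functors, and that $T$ respects the lax monoidal structures. First I would spell out what $T$ must be on objects: given $(\alpha,\beta)\in\TwS\times\TwS^\op$, the source functor $\UTwc\circ\rho_1$ produces $U(\alpha)$ viewed in $\Var_\k^\corr$, and $\wb\UTwc\circ\rho_2$ produces $U(\beta)$; the component $T_{\alpha,\beta}$ is the correspondence
\[
U(\alpha)\;\xleftarrow{\;p_{\alpha\beta}\;}\;\Delta(\alpha,\beta)\;\xrightarrow{\;q_{\alpha\beta}\;}\;U(\beta),
\]
where $p_{\alpha\beta},q_{\alpha\beta}$ are the restrictions of the finite projections $p_{IS},q_{IS}$ of $\Delta(I,S)$ to the locally closed piece $\Delta(\alpha,\beta)=\Delta(I,S)\cap\bigl(U(\alpha)\times U(\beta)\bigr)$.

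The core of the proof is the naturality square. A morphism in $\TwS\times\TwS^\op$ is a pair $(\tau,\upsilon)$ where $\tau\colon\alpha\to\alpha'$ lies in $\TwS$ and $\upsilon\colon\beta'\to\beta$ lies in $\TwS^\op$. Under $\UTwc\circ\rho_1$ the arrow $\tau$ becomes the \emph{covariant} image $U(\alpha)\to U(\alpha')$ in $\Var_\k$ pushed into $\Var_\k^\corr$, while under $\wb\UTwc\circ\rho_2$ the arrow $\upsilon$ becomes the \emph{contravariant} image pushed in. I would check that for each such $(\tau,\upsilon)$ the two ways of composing $T$ with the functorial images agree up to the required $2$-cell, and that this $2$-cell is \emph{proper} (as demanded by the definition of a transformation in $\Var_\k^\corr$). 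Here the key geometric inputs are Propositions \ref{prop:Delta-colim} and \ref{prop:Delta-cart}: the Cartesian squares built from the $\Delta(I,K)$ furnish exactly the base-change identifications needed to identify the two composite correspondences, and the fact that all the projections $p_{IJ},q_{IJ}$ are \emph{finite} (hence proper) guarantees that the comparison $2$-morphisms are proper transformations in $\Var_\k^\corr$. I would treat open morphisms and closed morphisms of twisted arrows separately — for an open $\tau$ the relevant map is an open immersion intersected with a diagonal, for a closed $\tau$ it is a genuine diagonal $\delta_g$ — and then reduce a general morphism to a composite of the two, using that every morphism in $\finsurj^1$ factors accordingly.

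Finally I would verify the lax monoidal compatibility. The lax monoidal structures on $\UTwc$ and $\wb\UTwc$ are both given by the correspondences $U(\alpha_1)\times U(\alpha_2)\lla U(\alpha_1\amalg\alpha_2)\overset{=}\lra U(\alpha_1\amalg\alpha_2)$, and the claim is that $T$ is a morphism of lax monoidal functors. Concretely this amounts to producing, for pairs $(\alpha_1,\beta_1)$ and $(\alpha_2,\beta_2)$, a compatibility $2$-cell relating $T_{\alpha_1\amalg\alpha_2,\,\beta_1\amalg\beta_2}$ to $T_{\alpha_1,\beta_1}\otimes T_{\alpha_2,\beta_2}$, which reduces to the evident identification
\[
\Delta(\alpha_1\amalg\alpha_2,\beta_1\amalg\beta_2)\;\cap\;\bigl(U(\alpha_1\amalg\alpha_2)\times U(\beta_1\amalg\beta_2)\bigr)
\;\simeq\;\Delta(\alpha_1,\beta_1)\times\Delta(\alpha_2,\beta_2)
\]
over the relevant open loci, together with coherence of these identifications. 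The main obstacle I anticipate is not any single computation but the bookkeeping of the $2$-categorical coherence: one must check that the comparison $2$-cells satisfy the pentagon/compatibility axioms so that $T$ is genuinely a lax monoidal $2$-natural transformation rather than merely a pointwise family of correspondences. I expect to handle this by appealing to the functoriality already packaged in $\Dcorr$ and $\Dc^{[[\corr]]}$ via \cite{GR} — i.e. by checking the cells at the level of the underlying varieties and invoking the $(\infty,2)$-functoriality established there to promote them automatically to the category of correspondences, so that the base-change coherences come for free rather than being verified by hand.
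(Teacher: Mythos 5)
Your components $T_{\alpha,\beta} = \bigl(U(\alpha)\leftarrow\Delta(\alpha,\beta)\to U(\beta)\bigr)$ and your treatment of naturality via the pullback squares of Proposition \ref{prop:Delta-cart} are exactly what the paper does. The genuine problem is in the monoidal compatibility step: the displayed ``evident identification''
\[
\Delta(\alpha_1\amalg\alpha_2,\beta_1\amalg\beta_2)\;\simeq\;\Delta(\alpha_1,\beta_1)\times\Delta(\alpha_2,\beta_2)
\]
(pulled back to $U(\alpha_1\amalg\alpha_2)$) is false. Already for $X=\AAA^1$ and $\alpha_i=\beta_i=(\pt\dra\pt)$, the left-hand side is a disjoint union of \emph{two} copies of $X^2\setminus\Delta$, corresponding to the two matchings $(y_1,y_2)=(x_1,x_2)$ and $(y_1,y_2)=(x_2,x_1)$, while the pullback of $\Delta(\alpha_1,\beta_1)\times\Delta(\alpha_2,\beta_2)$ is only the first of them. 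In general that pullback is one clopen component of $\Delta(\alpha_1\amalg\alpha_2,\beta_1\amalg\beta_2)$, the other components being indexed by the other decompositions of $\beta_1\amalg\beta_2$. The compatibility $2$-cell is therefore the resulting \emph{proper closed immersion} $g$, not an isomorphism; this non-invertibility is precisely why $T$ is lax monoidal rather than monoidal, and it is what later produces the direct sum over decompositions $\beta=\beta_1\amalg\beta_2$ in the proof that $\psi_\Tw$ preserves factorization structures. As written, your argument would yield a strictly monoidal transformation, which misidentifies the structure being built.

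Two smaller points. First, you cannot get the coherence of $T$ ``for free'' from the $(\infty,2)$-functoriality of $\Dcorr$: $T$ is a transformation between $\Var_\k^\corr$-valued functors and must be shown coherent \emph{before} $\Dcorr$ is applied; the coherences are checked in $\Var_\k^\corr$ itself, where they follow from the universal property of the fiber products involved. Second, the properness demanded by the definition of $2$-morphisms in $\Var_\k^\corr$ concerns the comparison map between the apexes of the two correspondences (an isomorphism in the naturality squares, the closed immersion $g$ above in the monoidal ones), not the legs $p_{IJ},q_{IJ}$ themselves, so finiteness of the latter is not the relevant input.
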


\begin{proof}
    To any pair $(\alpha,\beta)$ we associate the correspondence
    \[
    \UTwc(\alpha) = U(\alpha) \leftarrow \Delta(\alpha,\beta) \to U(\beta) = \wb\UTwc(\beta).
    \]
    We start by showing it defines a natural transformation. Let $\tau \colon \alpha \to \alpha'$ and $\xi \colon \beta \to \beta'$ be morphisms in $\TwS$. Consider the following commutative diagram:
    \[
    \xymatrix{
        U(\alpha) & \Delta(\alpha,\beta') \ar[r] \ar[l] & U(\beta') \\
        U(\alpha) \ar[u]^{=} \ar[d] &
        \Delta(\alpha,\beta) \ar[r] \ar[l] \ar[u] \ar[d] \ar@{}[dl]|{\displaystyle \mathrm{(1)}} \ar@{}[ur]|{\displaystyle \mathrm{(2)}} &
        U(\beta) \ar[u] \ar[d]^{=} \\
        U(\alpha') & \Delta(\alpha',\beta) \ar[r] \ar[l] & U(\beta).
    }
    \]
    It follows from \autoref{prop:Delta-cart} that the squares (1) and (2) are pullbacks. We have thus indeed defined a natural transformation.
    
    To show it is lax monoidal, we have to provide, for any $\alpha_1, \alpha_2, \beta_1,\beta_2$ in $\TwS$, a transformation in the $2$-category $\Var_\k^{\on{corr}}$
    \[
    \xymatrix{
        U(\alpha_1) \times U(\alpha_2) \ar[r]^{T} \ar[d] &  U(\beta_1) \times 
        U(\beta_2)
        \ar[d] 
        \\ U(\alpha_1 \amalg \alpha_2) \ar[r]_{T} \ar@{<=}[ur] 
        & U(\beta_1 \amalg \beta_2).
    }
    \]
    Unfolding the definition of this $2$-category, we have to find a commutative 
    diagram
    \begin{equation}\label{eq:laxmonoidal-UTw}
    \vcenter{\hbox{\xymatrix{
        U(\alpha_1) \times U(\alpha_2) & \Delta(\alpha_1, \beta_1) \times 
        \Delta(\alpha_2,\beta_2) \ar[r] \ar[l] & U(\beta_1) \times 
        U(\beta_2)
        \\
        U(\alpha_1 \amalg \alpha_2) \ar[d]^-{=} \ar[u] & Z \ar[r] \ar[u] \ar[l]^-f 
        \ar[d]^g
        \ar@{}+(0,0);[ur]+(0,0)|-{\displaystyle (1)} & U(\beta_1 \amalg \beta_2) 
        \ar[d]_-{=} 
        \ar[u]
        \\
        U(\alpha_1 \amalg \alpha_2) & \Delta(\alpha_1 \amalg \alpha_2, \beta_1 
        \amalg \beta_2) \ar[r] \ar[l] & U(\beta_1 \amalg \beta_2)
    }}}
    \end{equation}
    in which the square $(1)$ is a pullback and $g$ is proper. We therefore pick 
    $Z$ such that $(1)$ is a pullback. The image of $Z$ into $U(\alpha_1) \times U(\alpha_2)$ lies in $U(\alpha_1 \amalg \alpha_2)$ and the map $f$ is thus canonically defined.
    Finally, the map $g$ is the natural closed immersion: $Z$ sits in $\Delta(\alpha_1 \amalg 
    \alpha_2, \beta_1 \amalg \beta_2)$, as a closed subvariety of $U(\alpha_1 \amalg \alpha_2) \times U(\beta_1 \amalg \beta_2)$.
    In particular, $g$ is proper and the above diagram indeed defines a 
    transformation in $\Var_\k^{\on{corr}}$.
    
    We then check those transformations behave coherently so that they define a lax 
    symmetric monoidal structure on the transformation.
\end{proof}

\begin{proof}[Proof of \autoref{thm:coVerdier-fact}]

    We first observe the equalities 
    \[
        \Dc_*^{U_\Tw} = \Dcorr_{\UTwc} \hspace{1em} \text{and} \hspace{1em} \Dc^!_{U_\Tw} = \Dcorr_{\wb\UTwc}.
    \]
    By composing the transformation $T$ of \autoref{prop:transformationT} with the lax monoidal $\infty$-functor $\Dcorr$, we get a lax monoidal natural transformation
    \[
    \begin{tikzcd}[column sep=large]
     \Tw(\finsurj) \times \Tw(\finsurj)^\op \ar[start anchor=north east, end anchor=north west, bend left, ""{name=U, below}]{r}{\Dc_*^{U_\Tw} \circ \rho_1} \ar[start anchor=south east, end anchor=south west, bend right, ""{name=D, above}]{r}[swap]{\Dc^!_{U_\Tw} \circ \rho_2} & \Cat_\oo \ar[Rightarrow, from=U, to=D]
    \end{tikzcd}
    \]
    By \autoref{prop:grothmonod}, it induces a lax monoidal $\infty$-functor $\Groth(\Dc_*^{U_\Tw} \circ \rho_1) \to \Groth(\Dc^!_{U_\Tw} \circ \rho_2)$. Using the equivalences $\Groth(\Dc_*^{U_\Tw} \circ \rho_1) \simeq \Groth(\Dc_*^{U_\Tw}) \times \Tw(\finsurj)$ and $\Groth(\Dc^!_{U_\Tw} \circ \rho_2) \simeq \Tw(\finsurj)^\op \times \Groth(\Dc^!_{U_\Tw})$, we find a commutative diagram of lax monoidal $\infty$-functors
    \[
        \begin{tikzcd}[column sep=-3em]
        \Groth(\Dc_*^{U_\Tw}) \times \Tw(\finsurj) \ar{rr}{T'} \ar{dr}[swap, near start]{\tau_1 = (\pi_1, \Id)} && \Tw(\finsurj)^\op \times \Groth(\Dc^!_{U_\Tw}) \ar{dl}[near start]{\tau_2 = (\Id,\pi_2)}
        \\
        & \Tw(\finsurj)^\op \times \Tw(\finsurj)
        \end{tikzcd}
    \]
    (where $q_1$ and $q_2$ are actually monoidal).
    Denoting by $\Sect^\otimes_\mathrm{lax}(\mathrm{pr})$ the $\infty$-category of lax monoidal sections of $\mathrm{pr}$, for $\mathrm{pr} = \pi_1, \tau_1, \tau_2$ or $\pi_2$, we get by composing with $T'$:
    \[
    \Sect^\otimes_\mathrm{lax}(\pi_1) \lra \Sect^\otimes_\mathrm{lax}(\tau_1) \lra \Sect^\otimes_\mathrm{lax}(\tau_2).
    \]
    Composing with the projection $\Tw(\finsurj)^\op \times \Groth(\Dc^!_{U_\Tw}) \to \Groth(\Dc^!_{U_\Tw})$ and taking limits over $\Tw(\finsurj)^\op$ yields a functor $\Sect^\otimes_\mathrm{lax}(\tau_2) \to \Sect^\otimes_\mathrm{lax}(\pi_2)$.
    Let us denote by $\psi_\Tw \colon \Sect^\otimes_\mathrm{lax}(\pi_1) \to \Sect^\otimes_\mathrm{lax}(\pi_2)$ the composite functor.
    
    It is by construction given by the announced formula of \autoref{def:coVerdier-psi}.
    Arguments similar to those of section \ref{subsec:coVerdieranddiag} prove that the image of $\psi_\Tw$ lies in $\DsM(U_\Tw)$.
    It remains to prove that it maps $\widebar \FA_*^\Dc(U_\Tw)$ to $\FA^!_\Dc(U_\Tw)$.
    
    Let thus $E \in \widebar \FA_*^\Dc(U_\Tw)$. We only have to check that $\psi_\Tw(E)$ is actually monoidal.    
    Let $\alpha_1 \colon I_1 \to J_1$ and $\alpha_2 \colon I_2 \to J_2$ be 
    surjections. We set $\alpha := \alpha_1 \amalg \alpha_2$. We denote by $j \colon 
    U(\alpha) \to U(\alpha_1) \times U(\alpha_2)$ the open immersion.
    For any surjection $\beta \colon S \to T$, the variety $\Delta(\alpha, \beta)$ 
    is the disjoint union, over all decompositions $\beta = \beta_1 \amalg \beta_2$, 
    of the product $\Delta(\alpha_1 , \beta_1) \times \Delta(\alpha_2, \beta_2)$ 
    (pulled back along $j$).
    We get
    \begin{multline*}
        \psi_\Tw(E)^{(\alpha)} = \lim_{\beta} p_* q^! E^{(\beta)} \simeq 
        j^* \lim_{\beta_1, \beta_2} (p_{1*} q^!_1 E^{(\beta_1)} \boxtimes p_{2*} q_2^! 
        E^{(\beta_2)})\\ \simeq j^*(\psi_\Tw(E)^{(\alpha_1)} \boxtimes \psi_\Tw(E)^{(\alpha_2)})
    \end{multline*}
    where
    \begin{align*}
        U(\alpha) \overset{p}{\lla} \Delta(\alpha&, \beta) \overset{q}{\lra} U(\beta) \\
        U(\alpha_1) \overset{p_1}{\lla} \Delta(\alpha_1&, \beta_1) \overset{q_1}{\lra} U(\beta_1) \\
        U(\alpha_2) \overset{p_2}{\lla} \Delta(\alpha_2&, \beta_2) \overset{q_2}{\lra} U(\beta_2)
    \end{align*}
    are the projections. It follows that the lax monoidal structure of $\psi_\Tw(E)$ is actually monoidal.
\end{proof}


\section{Gelfand-Fuchs cohomology in algebraic geometry}\label{sec:GFAG}

Recall that  $X$ is a  fixed smooth algebraic variety over $\k$.

\subsection{Chevalley-Eilenberg factorization algebras}\label{subsec:CEFA}

\paragraph{Homological $\Dc^!$-module.}

Let $L$ be a local Lie algebra on $X$, i.e., a vector bundle with a Lie algebra structure on the sheaf of sections  given by a bi-differential operator.
Then the right $\Dc_X$-module $\Lc = L\otimes_{\Oc_X}\Dc_X$ is  a Lie$^*$-algebra, see \cite{BD} \S 2.5. 
Using the determinantal factor $\det(\k^2)$, 
we can write the antisymmetric Lie$^*$-bracket in $\Lc$ as  a permutation equivariant morphism of $\Dc$-modules on $X\times X$
\be\label{eq:*-bracket} 
\eta: (\Lc\boxtimes \Lc)  \otimes_\k  \det(\k^2) \lra \delta_* \Lc.
\ee
Here $\delta: X\to X\times X$ is the diagonal embedding. Let us list the most important examples.

\begin{exas}~
    \begin{statements}
        \item $L=T_X$ is the tangent bundle of $X$.  
        \item Let $G$ be an algebraic group over $\k$ with Lie algebra $\gen$ and $P$ be a principal $G$-bundle on $X$. 
        The data of $P$ gives rise to two local Lie algebras on $X$. First, we have
        the $\Oc_X$-linear Lie algebra
        $P^{\Ad}$ (infinitesimal symmetries of $P$).
        Second, we have the {\em Atiyah Lie algebroid} $\At(P)$ (infinitesimal symmetries of the pair $(X,P)$) fitting into a short
        exact sequence
        \[
        0\to P^\Ad \lra \At(P) \lra T_X \to 0. 
        \]
        For $P=X\times G$ the trivial bundle, $P^\Ad= \gen\otimes\Oc_X$ and $\At(P)$ is the semi-direct product of $T_X$ acting on $\gen\otimes \Oc_X$
        via the second factor.
    \end{statements}
\end{exas}

Given a local Lie algebra $L$, we have the dg-Lie algebra $\len = R\Gamma(X,L)$ and we are interested in its Lie algebra cohomology with constant coefficients. 
It is calculated by the (reduced) Chevalley-Eilenberg chain complex of $\len$ which we denote by
\[
\CE_\bullet(\len) \,\,=\,\,\bigl( \Sym^{\bullet \geq 1} (\len[1]), d_\CE\bigr). 
\]
Applying the K\"unneth formula, we see that 
\[
\CE_\bullet(\len) \,\,=\,\,\Tot\biggl\{ \cdots \to R\Gamma(X^3, L^{\boxtimes 3})_{-S_3} \to R\Gamma(X^2, L^{\boxtimes 2})_{-S_2} \to R\Gamma(X,L)\biggr\} 
\]
is the total complex of the   obvious double complex with horizontal grading ending in degree $(-1)$. Here $S_p$ is the symmetric group and the
subscript ``$-S_p$'' means the space of (graded) coinvariants under the skew-symmetric action of $S_p$ on $R\Gamma(X^p, L^{\boxtimes p})$.

\vskip .2cm

Following \cite{BD} we represent $\CE_\bullet(\len)$ as the factorization homology of an appropriate lax $\Dc^!$-module $\Cc_\bullet$ on $X^\finsurj$. 
We first define a lax $\Dc^!$-module $\Cc_1$ by putting $\Cc_{1}^{(I)} = (\delta_{I})_* \, \Lc$
for any nonempty finite set $I$.  Here $\delta_I: X\to X^I$ is the diagonal embedding. Given
a surjection $g: I\to J$ with the corresponding diagonal embedding $\delta_g: X^J\to X^I$, we define structure map 
(in the dual form)
$(\delta_g)_* (\delta_J)_* \, \Lc \to (\delta_I)_*\, \Lc$ to be the canonical isomorphism arising from the equality $\delta_g\circ\delta_J=\delta_I$. 

\begin{rems}~
    \begin{statements}
        \item After passing to the colimit,  $\Cc_1$  becomes the pushforward of $\Lc$ under the embedding of $X$ into $\Ran(X)$. 
        
        \item Note that the structure maps for $\Cc_1$ in the form
        $\Cc_{1, J} \to \delta_g^! \, \Cc_{1}^{(I)}$ are not, in general, isomorphisms, so $\Cc_1$ is not a strict $\Dc^!$-module.
    \end{statements}
\end{rems}

Recall that the $\infty$-category  $\LDsM(X^\finsurj)$ has a symmetric monoidal structure $\otimes^*$
(see \autoref{Defi:*-product} above and \cite[4.2.5]{BD}). The fact that $\Lc$ is a Lie$^*$-algebra means that $\Cc_1$ is a Lie algebra with respect to $\otimes^*$.

\begin{Defi}
    We define
    \[
    \Cc_\bullet \,\ = \,\, \bigl(\Sym^{\bullet \geq 1}_{\otimes^*} (\Cc_1[1]), d_\CE\bigr)
    \]
    to be the Chevalley-Eilenberg complex of $\Cc_1$ as a Lie algebra in $(\LDsM(X^\finsurj), \otimes^*)$.   
\end{Defi}

For convenience of the reader let us describe $\Cc_\bullet$ more explicitly.
From the formula for $\otimes^*$ given in \autoref{Defi:*-product} above
we obtain directly
\begin{equation}\label{eq:sum-EQ}
    \Cc_{q}^{(I)} = 
    \bigoplus_{R\in\Eq_q(I)} (\delta_R)_* \bigl( \Lc^{\boxtimes (I/R)} \otimes \det(\k^{I/R})\bigr).
\end{equation}
Here $\Eq_q(I)$ is the set of equivalence relations $R$ on $I$ with exactly $q$ equivalence classes, i.e., such that $|I/R|=q$, and
$\delta_R: X^{I/R}\to X^I$ is the diagonal embedding. In particular, $\Cc_q$ is concentrated in degree $0$. 

Given a surjection $g: I\to J$ and any surjection $f: J\to Q$ with $|Q|=q$, we have the surjection $fg: I\to Q$ with 
$\delta_g\circ\delta_f =\delta_{fg}$, and so we have an identification
\[
(\delta_g)_* (\delta_f)_* (\Lc^{\boxtimes Q} \otimes\det(\k^Q)) \lra (\delta_{fg})_*\, ( \Lc^{\boxtimes Q} \otimes\det(\k^Q))
\]
of (an arbitrary) term 
of the colimit for $(\delta_g)_*\, \Cc_{q}^{(J)}$ with a certain term  
of the colimit for $\Cc_{q}^{(I)}$. The structure map (in the dual form) $(\delta_g)_*\,  \Cc_{q}^{(J)} \to \Cc_{q}^{(I)}$ is induced by these identifications. 

\vskip .2cm
Next, we have the differential $d=d_\CE: \Cc_{q}\to\Cc_{q-1}$ defined as follows. 
Let $g \colon Q \twoheadrightarrow S$ be a surjection between finite sets such that  that $|Q|  = |S|+1 = q$,
so $g$ has exactly one fiber of cardinality $2$, all other fibers being of cardinality $1$. Applying the bracket 
\eqref {eq:*-bracket} to this fiber and substituting copies of the identity for the other fibers, we get a map
\[
\eta^g \colon  \Lc^{\boxtimes Q} \otimes \det(\k^Q) \lra (\delta_g)_*\left( \Lc^{\boxtimes S} \otimes \det(\k^S)\right).
\]
We define  $d \colon \Cc_q \to \Cc_{q-1}$  by summing,  as $g$ varies,  the induced maps
\[
(\delta_f)_* \left(( \Lc^{\boxtimes Q} \otimes \det(\k^Q)\right) \to  (\delta_{gf})_*\left( \Lc^{\boxtimes S} \otimes \det(\k^S)\right).
\]
The differential squares to zero by the Jacobi identity.

\begin{Defi}
    Let $\Cc^{(I)}$ be the complex of $\Dc$-modules on $X^I$ given by $\Cc_q^{(I)}$ in homological degree $q$ and the above differential.
    We denote by $\Cc_{\leq q}^{(I)}$ its truncation $\bigoplus_{p\leq q} \Cc_{p}^{(I)}[p]$ (with the same differential).
    The differential is compatible with the transition maps, and we get lax $\Dc^!$-modules   $\Cc_{\leq q}$ on $X^{\finsurj}$.
\end{Defi}

\begin{prop}
    We have
    \[
    \int_X \Cc_q \simeq \Lambda^q(\len), \quad   \int_X \Cc_{\leq q}  \simeq \CE_{\leq q}(\len), \quad\int_X \Cc \simeq \CE_\bullet(\len).
    \]
\end{prop}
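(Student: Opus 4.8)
The plan is to deduce all three identifications from a single structural principle: that factorization homology $\int_X(-) = \hocolim_{I\in\finsurj} R\Gamma_{\DR}(X^I,(-)^{(I)})$ carries the $*$-monoidal structure $\otimes^*$ on $\LDsM(X^\finsurj)$ to the tensor product $\otimes_\k$ on complexes, and therefore commutes with the formation of (anti)symmetric powers and with the Chevalley--Eilenberg differential. Granting this, since $\Cc_q = \Lambda^q_{\otimes^*}(\Cc_1)$ and $\Cc_\bullet = (\Sym^{\bullet\geq 1}_{\otimes^*}(\Cc_1[1]), d_\CE)$, while $\CE_\bullet(\len) = (\Sym^{\bullet\geq 1}(\len[1]), d_\CE)$ with $\len = \int_X\Cc_1$, applying $\int_X$ gives $\int_X\Cc_q \simeq \Lambda^q_{\otimes_\k}(\int_X\Cc_1) = \Lambda^q(\len)$; the $\det(\k^q)$-twist is exactly the Koszul sign of the graded antisymmetrization. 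The truncated and full statements then follow from the same monoidal identity, the shifts in $\Sym^{\bullet\geq 1}(\len[1])$ reproducing the placement of $\Lambda^p(\len)$ in homological degree $p$, matching the Künneth description of $\CE_\bullet(\len)$ recalled just before the proposition.

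First I would record the base case $\int_X\Cc_1 \simeq \len$. Since $\Cc_1^{(I)} = (\delta_I)_*\Lc$ and the de Rham functor is compatible with $(\delta_I)_*$, one gets $R\Gamma_{\DR}(X^I, \Cc_1^{(I)}) \simeq R\Gamma(X, \DR(\Lc)) = R\Gamma(X, L) = \len$ for every $I$, all transition maps being identities. As $\finsurj$ has the one-point set as a terminal object, its classifying space is contractible, so the homotopy colimit of this constant diagram is $\len$.

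The computational inputs for the monoidal statement are the Künneth isomorphism $R\Gamma_{\DR}(X^{I_1}\times X^{I_2}, \Mc\boxtimes\Nc) \simeq R\Gamma_{\DR}(X^{I_1},\Mc)\otimes_\k R\Gamma_{\DR}(X^{I_2},\Nc)$ together with $\DR(\Lc^{\boxtimes Q}) = L^{\boxtimes Q}$, which give $R\Gamma_{\DR}(X^I, \Cc_q^{(I)}) \simeq \bigoplus_{R\in\Eq_q(I)} \len^{\otimes(I/R)}\otimes\det(\k^{I/R})$. To verify directly that the colimit produces $\Lambda^q(\len)$, I would insert the colimit presentation $\Cc_q^{(I)} = \hocolim_{f\colon I\twoheadrightarrow Q,\,|Q|=q}(\delta_f)_*(\Lc^{\boxtimes Q}\otimes\det(\k^Q))$, interchange the two homotopy colimits, and identify the index category as the category $\mathcal E_q$ of surjections onto $q$-element sets. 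The projection $\mathcal E_q \to \Sigma_q$ to the groupoid of $q$-element sets has fibers equivalent to products of copies of $\finsurj$ (one per point of $Q$), hence contractible; by Quillen's Theorem A it is cofinal, so the colimit reduces to $\hocolim_{\Sigma_q}(\len^{\otimes q}\otimes\det(\k^q)) = (\len^{\otimes q}\otimes\det(\k^q))_{hS_q}$. In characteristic $0$ homotopy orbits coincide with strict coinvariants, which are precisely $\Lambda^q(\len)$.

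The main obstacle will be this monoidality/cofinality step --- equivalently, that $\hocolim_{\finsurj}$ converts the $\boxtimes$-Day convolution defining $\otimes^*$ into $\otimes_\k$ --- which demands care with the variance of the diagram over $\finsurj$ and with the precise comma categories entering Theorem A; the contractibility of the fibers of $\mathcal E_q\to\Sigma_q$ again rests on the terminal object of $\finsurj$, exactly as in the base case. Once this is in hand, compatibility with $d_\CE$ is automatic from the naturality of the Lie$^*$-bracket $\eta$, the identification $\int_X\Cc_{\leq q}\simeq\CE_{\leq q}(\len)$ follows by restricting to the evident subcomplex $\bigoplus_{p\leq q}\Cc_p[p]$, and $\int_X\Cc\simeq\CE_\bullet(\len)$ follows by passing to the colimit over $q$, using that $\int_X$ preserves $\Cc = \hocolim_q\Cc_{\leq q}$.
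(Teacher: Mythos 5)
Your proposal is correct and follows essentially the same route as the paper: a termwise K\"unneth computation of $R\Gamma_{\DR}(X^I,\Cc_q^{(I)})$ using the decomposition \eqref{eq:sum-EQ}, followed by evaluation of the homotopy colimit over $\finsurj$, with the truncated and full statements obtained by compatibility with the differential and passage to the colimit in $q$. The only difference is one of detail: where the paper simply asserts that the colimit of $I\mapsto\bigoplus_{R\in\Eq_q(I)}(\cdots)$ is $\Lambda^q(\len)$, you justify it via cofinality of the projection from the category of surjections onto $q$-element sets to the groupoid $\Sigma_q$ (using contractibility of $\finsurj$, i.e., its terminal object) and the agreement of homotopy with strict $S_q$-coinvariants in characteristic $0$ --- a correct expansion of the step the paper leaves implicit.
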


\begin{proof} 
    We show the first identification, the compatibility with the differentials will be clear. 
    For any $I \in \finsurj$, we have, by \eqref{eq:sum-EQ} and \eqref{eq:DR-Induced}:
    \begin{multline*} 
        R\Gamma_{\DR}(X^I, \Cc_{q}^{(I)})\,= \bigoplus_{R\in\Eq_q(I)} R\Gamma_\DR\bigl(X^{I/R}, \Lc^{\boxtimes (I/R)} \otimes\det(\k^{I/R})\bigr)
        \\
        =    \bigoplus_{R\in\Eq_q(I)} R\Gamma\bigl(X^{I/R}, L^{\boxtimes (I/R)} \otimes\det(\k^{I/R})\bigr) \,\,=  \bigoplus_{R\in\Eq_q(I)} \Lambda^{|I/R|} (\len).
    \end{multline*} 
    Now, $\int_X \Cc_q$ is the $\hocolim$ of this over $I$ in $\finsurj$, and so is identified with $\Lambda^q(\len)$.
\end{proof}

\begin{prop}
    The lax $\Dc^!$-module $\Cc_\bullet$ is factorizing (i.e. admits a factorization structure). 
\end{prop}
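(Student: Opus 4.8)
The plan is to produce the factorization structure in two stages: first a coalgebra (colax monoidal) structure on $\Cc_\bullet$ for the chiral tensor product $\chotimes$, obtained formally from the Chevalley-Eilenberg construction, and then the verification that this structure is \emph{factorizing} in the sense of Definition \ref{Defi:factlax}, i.e.\ that for every surjection $\alpha\colon I\dra J$ the comultiplication induces an equivalence $j_\alpha^*\Cc^{(I)}_\bullet \to j_\alpha^*\bigl(\bigboxtimes_{j\in J}\Cc^{(I_j)}_\bullet\bigr)$ on the disjoint locus $U(\alpha)$.

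For the first stage, recall that $\Cc_\bullet=(\Sym^{\bullet\geq 1}_{\otimes^*}(\Cc_1[1]),d_\CE)$ is the intrinsic Chevalley-Eilenberg complex of the Lie algebra object $\Cc_1$ in $(\LDsM(X^\finsurj),\otimes^*)$. As for ordinary Lie algebras in characteristic zero, the symmetric coalgebra $\Sym^{\bullet}_{\otimes^*}(\Cc_1[1])$ (which shares its underlying object with the symmetric algebra) carries a canonical cocommutative comultiplication for which $d_\CE$ is a coderivation; thus $\Cc_\bullet$ is a non-counital cocommutative coalgebra in $(\LDsM(X^\finsurj),\otimes^*)$. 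The adjunction unit $\Id \to (j_{\gamma_{I_1,I_2}})_*(j_{\gamma_{I_1,I_2}})^*$ assembles into a natural transformation of monoidal structures $\otimes^*\Rightarrow\chotimes$ (compare Remark \ref{rem:starproduct} with Definition \ref{Defi:factFG}). Postcomposing the comultiplication $\Cc_\bullet\to\Cc_\bullet\otimes^*\Cc_\bullet$ with $\Cc_\bullet\otimes^*\Cc_\bullet\to\Cc_\bullet\chotimes\Cc_\bullet$ yields a coalgebra structure for $\chotimes$, equivalently a colax monoidal section, which is the desired prefactorization structure.

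For the second stage I would argue from the explicit description \eqref{eq:sum-EQ}. Restricting $\Cc_q^{(I)}=\bigoplus_{R\in\Eq_q(I)}(\delta_R)_*\bigl(\Lc^{\boxtimes(I/R)}\otimes\det(\k^{I/R})\bigr)$ to $U(\alpha)$ annihilates every summand whose equivalence relation $R$ has a class meeting two distinct blocks $I_{j_1},I_{j_2}$: the diagonal $\delta_R(X^{I/R})$ then forces two points lying in different blocks to coincide, hence is disjoint from $U(\alpha)$. The surviving relations are exactly those of the form $R=\coprod_j R_j$ with $R_j\in\Eq(I_j)$, and the determinant line splits, $\det(\k^{I/R})\simeq\bigotimes_j\det(\k^{I_j/R_j})$ up to the Koszul sign. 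This gives a degreewise identification $j_\alpha^*\Cc_\bullet^{(I)}\simeq j_\alpha^*\bigl(\bigboxtimes_j\Cc_\bullet^{(I_j)}\bigr)$. It then remains to match differentials: $d_\CE$ is assembled from the $*$-bracket $\eta$ of \eqref{eq:*-bracket}, supported on the diagonal $\delta_*\Lc$, so any bracket merging points from two different blocks produces a term living on a cross-block diagonal, again disjoint from $U(\alpha)$, and vanishes after $j_\alpha^*$. Hence $d_\CE$ restricts precisely to the Leibniz (tensor-product) differential of $\bigboxtimes_j\Cc^{(I_j)}_\bullet$, and the factorization map is an equivalence of complexes.

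I expect the main obstacle to lie not in these pointwise computations but in the $\infty$-categorical \emph{coherence}: in showing that the comultiplications and the equivalences above organize into a genuine colax monoidal section, with all higher homotopies, rather than a merely levelwise statement. I would handle this inside the twisted-arrow model $\wb\Mod_*^\Dc(U_\Tw)$ of \S\ref{subsec:twistedarrows}, applying $\Sym_{\otimes^*}$ functorially there and invoking Proposition \ref{prop:twistedshriek}; in that model the Künneth splitting above is exactly the assertion that the resulting symmetric monoidal section is \emph{strictly} monoidal, which by Corollary \ref{cor:shriekvsstraightarrows} is the same as the factorizing condition. Throughout, the determinantal sign bookkeeping must be tracked carefully to ensure the comultiplication is cocommutative with the correct signs.
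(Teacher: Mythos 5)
Your proposal is correct and takes essentially the same approach as the paper, whose proof consists of the single observation that $\Cc_\bullet$ is by construction the free symmetric algebra on $\Cc_1[1]$ under $\otimes^*$ (with the Chevalley--Eilenberg differential) and hence factorizing. Your two-stage write-up — the coalgebra structure via $\otimes^*\Rightarrow\chotimes$, and the explicit check on $U(\alpha)$ that cross-block summands of \eqref{eq:sum-EQ} and cross-block bracket terms die upon restriction — simply supplies the details the paper leaves implicit.
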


\begin{proof}
    By construction, $\Cc_\bullet$ is the homological Chevalley-Eilenberg complex of $\Cc_1$ for the $*$-tensor product. It is therefore a coalgebra in the non-unital symmetric monoidal $\infty$-category $(\LDsM(X^\finsurj), \otimes^*)$. By \autoref{lem:*-to-chiral}, it is also a coalgebra in $(\LDsM(X^\finsurj), \otimes^\mathrm{ch})$, and thus a pre-$\Dc^!$-factorization algebra.
    We see using equation \eqref{eq:sum-EQ} that $\Cc_\bullet$ is actually factorizing.
\end{proof}

\paragraph{Strictification of $\Cc_\bullet$ and chiral envelopes.}\label{par:chiral-envelops}
While the components $C_q^{(I)}$ of the lax $\Dc^!$-module $\Cc_\bullet$ are very simple $\Dc_{X^I}$-modules constructed
out of $\Lc^{\boxtimes q}$, the strictification $\strict{\Cc_\bullet}$ is highly non-trivial. More precisely, let $\pt$
denote the $1$-element set, so $X^\pt=X$.
For $x\in X$ let $\wh x = \Spec \wh\Oc_{X,x}$ be the formal disk around $x$ and $\wh x^\circ = \wh x - \{x\}$ the punctured
formal disk. 

Consider the left $\Dc_X$-module $\omega_X^{-1}\otimes_{\Oc_X} \strict{\Cc_\bullet}^{(\pt)}$ and its $\Oc_X$-module fiber
\[
\bigl(\omega_X^{-1}\otimes_{\Oc_X} \strict{\Cc_\bullet}^{(\pt)}\bigr)_x \,\,=\,\,  \bigl(\omega_X^{-1}\otimes_{\Oc_X} \strict{\Cc_\bullet}^{(\pt)}\bigr)
\otimes_{\Oc_X} \k_x. 
\]
\begin{prop}
    One has a canonical identification
    \[
    \bigl(\omega_X^{-1}\otimes_{\Oc_X} \strict{\Cc_\bullet}^{(\pt)}\bigr)_x \,\,\simeq \,\,\Ind_{\Gamma(\wh x, L)} ^{R\Gamma(\wh x^\circ, L)} \k
    \]
    where on the right hand side we have the vacuum (dg-)module of the dg-Lie algebra $R\Gamma(\wh x^\circ, L)$.
    In other words, $\omega_X^{-1}\otimes_{\Oc_X} \strict{\Cc_\bullet}^{(\pt)}$ is the chiral envelope of the Lie$^*$-algebra $\Lc$,
    see \cite[\S 1.2.4]{G} and \cite[\S 3.7.1]{BD}. 
\end{prop}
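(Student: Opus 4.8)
The computation is entirely local near $x$, so the plan is to unwind the strictification formula of Definition~\ref{def:!-strictif} at the one-element set $I=\pt$ and then match the outcome with the Beilinson--Drinfeld chiral envelope. For $I=\pt$ the variety $\Delta(\pt,K)\subset X\times X^K$ is the image of the total diagonal, whence $p_{\pt K}\colon\Delta(\pt,K)\to X$ is an isomorphism and $q_{\pt K}=\delta_K\colon X\to X^K$. Thus
\[
\strict{\Cc_\bullet}^{(\pt)}\,\simeq\,\hocolim_{K\in\finsurj}\,\delta_K^!\,\Cc_\bullet^{(K)}.
\]
First I would simplify each term: writing $\delta_K=\delta_R\circ\delta_{K/R}$ for $R\in\Eq_q(K)$ and combining the functoriality of $(-)^!$ with Kashiwara's equivalence $\delta_R^!(\delta_R)_*\simeq\Id$ and the formula \eqref{eq:sum-EQ}, one obtains
\[
\delta_K^!\,\Cc_q^{(K)}\,\simeq\,\bigoplus_{R\in\Eq_q(K)}\delta_{K/R}^!\bigl(\Lc^{\boxtimes(K/R)}\otimes\det(\k^{K/R})\bigr),
\]
so the basic building block is $\Lc^{(q)}:=\delta_q^!(\Lc^{\boxtimes q})$, with $\delta_q\colon X\to X^q$ the total diagonal.

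The heart of the argument is the fiber computation. Set $\gen=R\Gamma(\wh x^\circ,L)$ and $\gen_+=\Gamma(\wh x,L)$; the local-cohomology triangle on the formal disk gives $\gen/\gen_+\simeq R\Gamma_{\{x\}}(\wh x,L)[1]$, a complex concentrated in one degree. Using the description of $\delta^!$ on induced $\Dc$-modules in terms of local cohomology (Example~\ref{ex:f!}(b) and Proposition~\ref{prop:loc-coh-dif}) together with Serre duality on $\wh x$, I would identify the $\Oc$-fiber at $x$ of $\omega_X^{-1}\otimes_{\Oc_X}\Lc^{(q)}$ with $(\gen/\gen_+)^{\otimes q}$, the determinantal factor $\det(\k^{K/R})$ accounting for the ordering. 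The case $q=1$ already reads $\omega_X^{-1}\otimes\Lc$ has fiber $\gen/\gen_+$ via the residue/local-cohomology identification.

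Next I would take the homotopy colimit over $\finsurj$. For fixed $q$ the functor $K\mapsto\bigoplus_{R\in\Eq_q(K)}\Lc^{(q)}\otimes\det(\k^{K/R})$ is a bar-type diagram indexed by partitions into $q$ blocks; its $\hocolim$ over $\finsurj$ computes the homotopy anti-coinvariants for the $S_q$-action, producing $\Sym^q(\gen/\gen_+)$ in the super sense dictated by the shift in $\Cc_\bullet=\bigl(\Sym^{\bullet\ge1}_{\otimes^*}(\Cc_1[1]),d_\CE\bigr)$. Assembling over $q$, the Chevalley--Eilenberg differential $d_\CE$, built from the Lie$^*$-bracket \eqref{eq:*-bracket}, turns the graded object $\bigoplus_q\Sym^q(\gen/\gen_+)$ into the standard complex computing $\Ind_{\gen_+}^{\gen}\k=U(\gen)\otimes_{U(\gen_+)}\k$, whose PBW associated graded is exactly $\Sym(\gen/\gen_+)$.

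The hard part is the last step: promoting the graded identification $\bigoplus_q\Sym^q(\gen/\gen_+)$ to an isomorphism of $\gen$-modules carrying the full vacuum structure, rather than merely of the underlying complexes. This is precisely the content of the chiral-envelope computation of Beilinson--Drinfeld \cite[\S 3.7.1]{BD} and Gaitsgory \cite[\S 1.2.4]{G}. Accordingly, the plan is to verify that our $\otimes^*$-symmetric-algebra model for $\strict{\Cc_\bullet}$ agrees, term by term and differential by differential, with their construction of the chiral envelope $U(\Lc)$; the stated fiber identification then follows from their theorem. The compatibility of the colimit over $\finsurj$ with the $\Dc$-module transition maps, checked on the base-change squares of Proposition~\ref{prop:Delta-cart}, is what ensures the match respects the chiral (hence vacuum-module) structure and not just the additive one.
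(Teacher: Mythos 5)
First, a caveat: the paper does not actually prove this proposition --- immediately after the remark the authors write that the result is not needed and ``leave its proof to the reader'' --- so there is no argument of theirs to compare yours against, and I can only assess your proposal on its own terms. Your opening reductions are sound: $\Delta(\pt,K)\cong X$ with $p_{\pt K}=\mathrm{id}$ and $q_{\pt K}=\delta_K$, so $\strict{\Cc_\bullet}^{(\pt)}\simeq\hocolim_K\delta_K^!\,\Cc_\bullet^{(K)}$; base change reduces each summand of $\delta_K^!\Cc_q^{(K)}$ to $\delta_q^!(\Lc^{\boxtimes q})$; writing $\gen=R\Gamma(\wh x^\circ,L)$ and $\gen_+=\Gamma(\wh x,L)$, the fiber of $\omega_X^{-1}\otimes\delta_q^!(\Lc^{\boxtimes q})$ at $x$ is indeed the iterated local cohomology $H^{qn}_{\{x\}}(L^{\boxtimes q})\cong(\gen/\gen_+)^{\otimes q}$ up to shift; and the colimit over $\finsurj$ does compute $S_q$-(anti)coinvariants, exactly as in the paper's computation of $\int_X\Cc_q$.

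The genuine gap is your last step. There is no ``standard complex'' with terms $\Sym^q(\gen/\gen_+)$ and a bracket-induced differential computing $\Ind_{\gen_+}^{\gen}\k$: the PBW associated graded is the graded of a \emph{filtration} on the module, not the terms of a complex, and your write-up conflates the two. Concretely, after applying $\delta_K^!$ the summand coming from $\Cc_q[q]$ is concentrated in cohomological degree $(q-1)n-q$ (all pieces in degree $-1$ when $n=1$), so the Chevalley--Eilenberg differential $\Cc_q\to\Cc_{q-1}$ is transformed by $\delta^!$ into a connecting map, i.e.\ an $\Ext^1$-class gluing the graded pieces into an iterated extension, not a differential on a direct sum. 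Identifying that iterated extension with the vacuum module is the entire content of the proposition, and it is not supplied: one must either (i) construct the action of $R\Gamma(\wh x^\circ,L)$ on the fiber and invoke the universal property of induction, checking on associated graded that the resulting map is an isomorphism --- which is also where your unverified $\Sym$-versus-$\Lambda$ sign bookkeeping must be done, since the twist $\det(\k^{K/R})$ has to cancel against the orientation of the conormal bundle appearing in $H^{(q-1)n}_X(L^{\boxtimes q})$ (Proposition \ref{prop:loc-coh-dif}) and the Koszul signs of the shifts --- or (ii) choose explicit Cousin-type models so that the local cohomology becomes an honest complex and then prove directly that its cohomology is $\Ind_{\gen_+}^{\gen}\k$, which is what \cite[\S 3.7.1]{BD} actually does. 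Finally, your fallback plan to ``verify that our model agrees term by term and differential by differential with BD's construction'' is not a shortcut: the Beilinson--Drinfeld envelope is built from $j_*j^*$ of external tensor powers, not from the $\otimes^*$-symmetric algebra, and matching the two constructions is itself a nontrivial comparison (essentially chiral Koszul duality, \cite{FG}). As written, the step where all the difficulty lives is deferred rather than carried out.
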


Since we will not need this result in the present paper, we leave its proof to the reader. 
In this way one can see the validity of Theorem 4.8.1.1 of \cite{BD} (the chiral homology of the chiral envelope is the same
as  $H_\bullet^\Lie(\len)$) for any smooth variety $X$, not necessarily 1-dimensional or proper.

\paragraph{Cohomological $[\![\Dc]\!]$-module and diagonal filtration.}

Using Verdier duality, we get $[\![\Dc]\!]$-modules 
\[
\wc \Cc ^q = (\Cc_q)^\vee, \quad \wc\Cc^{\leq q} = (\Cc_{\leq q})^\vee, \quad \wc \Cc^\bullet = (\Cc_\bullet)^\vee
\]
on $X^\finsurj$ such that 

\[
\oint_X^{[\![\c]\!]} \, \wc \Cc^\bullet \,\, \simeq \,\, \CE^\bullet( \len).
\]
This complex comes with the {\em diagonal filtration} (cf. section \ref{subsec:coVerdieranddiag} or \cite[Ch.2 \S4]{fuks})  which is a sequence of complexes and morphisms given by
\[
R\Gamma^{[\![\c]\!]}_{X_1^\finsurj\!,\,\DR}(X^\finsurj, \wc \Cc^\bullet) \to \cdots \to 
R\Gamma^{[\![\c]\!]}_{X_d^\finsurj\!,\,\DR}(X^\finsurj, \wc \Cc^\bullet) \to \cdots \quad  \to \oint^{[\![\c]\!]}_X \wc \Cc^\bullet, 
\]
where 
\[
R\Gamma^{[\![\c]\!]}_{X_d^\finsurj\!,\,\DR}(X^\finsurj, \wc \Cc^\bullet)\,\, =\,\,  \oint_X^{[\![\c]\!]}\,  (i_d)_*  i_d^!\, \wc\Cc^\bullet ~:=~ \holim_I R\Gamma^{[\![\c]\!]}_\DR(X^I_d, i^!_d\, \wc\Cc^{(I)})
\]
with $i_d\colon X_d^\finsurj \to X^\finsurj$ the pointwise closed immersion.

\vskip .2cm

For future use we introduce the following.

\begin{Defi}\label{def:diagonal}
    We call the {\em diagonal $\Dc$-module} associated to $L$ the complex of $\Dc$-modules $\wc \Cc^\bullet_\Delta = i_1^!\wc \Cc^\bullet$ on $X$:
    \[
    \wc \Cc^\bullet_\Delta ~=~ i_1^!\wc \Cc^\bullet ~:=~ \holim_J (i_1^{(J)})^!\,\wc \Cc^\bullet_J \simeq \psi(\wc \Cc^\bullet)^{(\pt)},
    \]
    where $i_1^{(J)} \colon X \to X^J$ is the diagonal embedding.
    
    The de Rham complex of $\wc \Cc^\bullet_\Delta$ will be denoted by $\Fc^\bullet_\Delta$ and called 
    {\em the diagonal complex} of $L$. 
    We will denote its compactly supported cohomology
    \[
    H^\bullet_\Delta (L) \,=\, H^\bullet_{[\![\c]\!]} (X, \Fc^\bullet_\Delta) \,=\,  R\Gamma^{[\![\c]\!]}_{X_1^\finsurj}(\wc \Cc^\bullet)
    \]
    and call it the {\em diagonal cohomology} of $\len$. It comes with a canonical map $H^\bullet_\Delta (L)\to H^\bullet_\Lie(\len)$. 
\end{Defi} 

Explicitly,
\be\label{eq:complex-F}
\Fc^\bullet_\Delta  \,=\, \biggl\{ L^\vee \to \ul H^n_X((L^\vee)^{\boxtimes 2})^{\Sigma_2} \to 
\ul  H^{2n}_X((L^\vee)^{\boxtimes 3})^{\Sigma_3} \to \cdots\biggr\},
\ee
with grading normalized so that $L^\vee$ is in degree $1-n$. 

\begin{rem}
    One can see
    $\Fc^\bullet_\Delta$   as  a sheafified, algebro-geometric
    version of the diagonal complex of
    Gelfand-Fuchs \cite{gelfand-fuks} \cite{fuks}, see also \cite{CG2} \S 4.2. 
    Instead of distributions supported on the diagonal $X\subset X^p$, as in the $C^\infty$-case,
    our construction involves coherent cohomology of $X^p$ with support in $X$ which is a well known
    analog  of the space of distributions  (``holomorphic hyperfunctions'').  
    
\end{rem}


\subsection{The diagonal filtration in the affine case}\label{subsec:diag-aff}

In this section, we assume that $X$ is a smooth affine variety. 
As before, $L$ is a local Lie algebra on $X$ and $\len = \Gamma(X, L)$.
Recall that $\Cc = \Cc_L$ is the factorizing $!$-sheaf computing $\CE_\bullet(\len)$. 
We will prove the following theorem.

\begin{thm}\label{thm:affinecase}
    The canonical map $\int_X \Cc \to \oint_X \phi(\Cc) \simeq \holim_d \int^{\leq d}_X \Cc$ is an equivalence.
\end{thm}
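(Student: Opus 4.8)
The plan is to reduce the statement, via the Lemma just proved, to showing that the topological approximation map $\tau\colon \int_X \Cc \to \oint_X\phi(\Cc)\simeq\holim_d \int_X^{\leq d}\Cc$ is an equivalence, and then to control the tower $\{\int_X^{\leq d}\Cc\}_d$ by a connectivity estimate coming from the affineness of $X$. The basic input is that for smooth affine $X$ the dg-Lie algebra $\len=R\Gamma(X,L)=\Gamma(X,L)$ is concentrated in cohomological degree $0$, so that $\int_X\Cc\simeq\CE_\bullet(\len)=\bigl(\Sym^{\bullet\geq1}(\len[1]),d_\CE\bigr)$ has its weight-$q$ summand $\Lambda^q(\len)$ placed in homological degree exactly $q$.

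First I would record that $\tau$ is a map of towers: for each $d$ there is a compatible map $\tau_d\colon\int_X\Cc\to\int_X^{\leq d}\Cc$ with $\tau=\holim_d\tau_d$. It therefore suffices to prove that the fibre $\operatorname{fib}(\tau_d)$ becomes more and more connected as $d\to\infty$, for then $\holim_d\operatorname{fib}(\tau_d)\simeq0$ and $\tau$ is an equivalence. To estimate this fibre I would pass to the associated graded of the diagonal filtration, whose $d$-th graded piece $\operatorname{gr}_d=\operatorname{fib}\bigl(\int_X^{\leq d}\Cc\to\int_X^{\leq d-1}\Cc\bigr)$ is supported on the locally closed stratum $\Ran_d(X)\smallsetminus\Ran_{d-1}(X)=\Sym^d_{\neq}(X)$. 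On this stratum the factorization property of $\Cc$ trivialises it into an external product of $d$ copies of $\Cc_1$; since $\int_X\Cc_1\simeq\len$ is concentrated in degree $0$, the piece $\operatorname{gr}_d$ is computed by the ($\Sigma_d$-anti-invariant) de Rham cohomology of the $nd$-dimensional variety $\Sym^d_{\neq}(X)$ with coefficients built out of $L$.

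The crux --- and the place where affineness is essential --- is the resulting connectivity bound. Because $X$ is affine of dimension $n$, each $X^I$ and each open configuration stratum is affine (or quasi-affine) of dimension growing linearly in the number of points, so Artin's vanishing theorem forces the relevant (compactly supported) de Rham cohomology of $\Sym^d_{\neq}(X)$ to be concentrated in degrees tending to infinity with $d$. Combined with the weight grading --- the lowest weight contributing to $\operatorname{gr}_d$ is $q=d$, already in homological degree $d$ --- this shows that $\operatorname{gr}_d$, and hence $\operatorname{fib}(\tau_d)$, is $c(d)$-connected with $c(d)\to\infty$. Thus in each fixed total degree only finitely many strata contribute, the diagonal filtration is complete, and $\tau=\holim_d\tau_d$ is an equivalence. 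I expect this convergence step to be the main obstacle: it is exactly the interchange of the homotopy colimit over $\finsurj$ with the homotopy limit over the diagonal degree that is hidden in the definition of $\tau$, and it is precisely what fails for non-affine $X$ (e.g. projective), where $\len$ acquires higher cohomology, the weight pieces spread over infinitely many degrees, and $\tau$ ceases to be an isomorphism, as anticipated in the Introduction.
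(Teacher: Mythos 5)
Your overall strategy---reduce to a convergence statement for the diagonal tower and use affineness for a vanishing estimate---points in the right direction, and you correctly locate where affineness enters and why the statement fails for projective $X$. But there are two genuine gaps. First, the inference ``$\operatorname{gr}_d$ is highly connected, hence $\operatorname{fib}(\tau_d)$ is highly connected'' does not follow. Writing $A=\int_X\Cc$ and $B_d=\int_X^{\leq d}\Cc$, the fibre sequence $\operatorname{fib}(\tau_d)\to\operatorname{fib}(\tau_{d-1})\to\operatorname{fib}(B_d\to B_{d-1})$ shows only that the fibres $\operatorname{fib}(\tau_d)$ \emph{stabilize} in each degree as $d\to\infty$; to conclude that the stable value vanishes you would already need to know that $A\to\holim_d B_d$ is an equivalence in that range, which is exactly what is being proved. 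The missing ingredient is an independent comparison of $A$ with the tower, and the paper supplies it with the \emph{weight} filtration $\Cc_{\leq q}$: for each fixed $q$ both $\int_X\Cc_q$ and $\oint_X\phi(\Cc_q)$ are computed explicitly to be $\Lambda^q\len$ (the tower in $d$ stabilizes at $d=q$ because $\Cc_q^{(I)}$ is supported on the $q$-fold diagonal), after which the whole theorem reduces to interchanging $\hocolim_q$ with $\holim_d$.

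Second, your vanishing input is the wrong one. The complexes $\int_X^{\leq d}\Cc$ are built from $R\Gamma_{\DR}$ of \emph{induced, non-holonomic} $\Dc$-modules on formal neighbourhoods of the closed skeleta $X^I_d$, i.e.\ from quasicoherent sheaf cohomology; Artin vanishing concerns constructible coefficients and does not apply, and the open strata $\Sym^d_{\neq}(X)$ never enter (the functor is $i_d^{[[*]]}$ on closed skeleta, not compactly supported sections over their complements). The estimate that actually works, and that the paper proves, is: since the infinitesimal neighbourhoods $Y^{(s)}$ of $X^I_d$ in $X^I$ are affine and $\Cc_q^{(I)}$ is induced from a quasicoherent sheaf in degree $0$, each $R\Gamma_{\DR}(Y^{(s)},(i^{(s)})^*\Cc_q^{(I)})$ lives in cohomological degrees $\leq 0$, and by Mittag--Leffler so does $R\Gamma_{\DR}(X^I_d,i_d^{[[*]]}\Cc_q^{(I)})$; after the shift $[q]$ the weight-$q$ piece therefore sits in degrees $\leq -q$ \emph{uniformly in $d$}. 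The connectivity comes from the weight grading alone, not from the growing dimension of the configuration strata. With these two corrections your outline essentially becomes the paper's proof.
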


Recall (see equation \eqref{eq:sum-EQ}) that $\Cc$ comes with a natural filtration $\Cc_{\leq q}$ where $\Cc_q = \hocofib(\Cc_{\leq q-1} \to \Cc_{\leq q})[-q]$ is given by
\[
\Cc_{q}^{(I)} = 
\bigoplus_{R\in\Eq_q(I)} (\delta_R)_* \bigl( \Lc^{\boxtimes (I/R)} \otimes \det(\k^{I/R})\bigr).
\]
\begin{lem}
    The canonical map $\int_X \Cc_q \to \oint_X\phi(\Cc_q)$ is an equivalence.
\end{lem}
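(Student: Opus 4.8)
The plan is to reduce the statement to a finite-group (co)homology comparison, where the equivalence becomes the norm map for a symmetric group and is forced by characteristic zero. First I would use the Lemma of \S\ref{subsec:coVerdieranddiag} to rewrite the right-hand side through the diagonal filtration: $\oint_X\phi(\Cc_q)\simeq\holim_d\int_X^{\leq d}\Cc_q$, where $\int_X^{\leq d}\Cc_q=\holim_I R\Gamma_\DR(X^I_d, i_d^{[[*]]}\Cc_q^{(I)})$. The structural feature of the graded piece, visible from \eqref{eq:sum-EQ}, is that $\Cc_q^{(I)}=\bigoplus_{R\in\Eq_q(I)}(\delta_R)_*(\Lc^{\boxtimes(I/R)}\otimes\det(\k^{I/R}))$ is set-theoretically supported on the $q$-fold diagonal $X^I_q$.

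The next step is to show that this support forces the diagonal tower to stabilize. For $d\geq q$ the support of $\Cc_q^{(I)}$ is contained in $X^I_d$, so by the Kashiwara-type identity $i_d^{[[*]]}(\delta_R)_*\simeq(\delta_R)_*$ for a closed embedding factoring through $i_d$ (using the compatibility of $i_d^{[[*]]}$ with $\DR$ from \S\ref{subsec:nonstand}) one gets $R\Gamma_\DR(X^I_d, i_d^{[[*]]}\Cc_q^{(I)})\simeq R\Gamma_\DR(X^I,\Cc_q^{(I)})$, and the transition maps $\int_X^{\leq d+1}\Cc_q\to\int_X^{\leq d}\Cc_q$ are equivalences for $d\geq q$. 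Hence the tower is eventually constant and $\oint_X\phi(\Cc_q)\simeq\holim_I R\Gamma_\DR(X^I,\Cc_q^{(I)})$. On the other side $\int_X\Cc_q=\hocolim_I R\Gamma_\DR(X^I,\Cc_q^{(I)})$, and the topological approximation map is exactly the canonical comparison \eqref{eq:can-twolimits} from this homotopy colimit to the corresponding homotopy limit of the same objectwise diagram (with dual transition maps).

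Finally I would analyze this common diagram $I\mapsto R\Gamma_\DR(X^I,\Cc_q^{(I)})$ over $\finsurj$. Since $X$ is affine, Künneth gives $R\Gamma_\DR(X^{I/R},\Lc^{\boxtimes(I/R)}\otimes\det)\simeq\len^{\otimes q}\otimes\det(\k^q)$ concentrated in degree $0$, so the diagram is the linearization of the functor $I\mapsto\Eq_q(I)$ twisted by the coefficient $\len^{\otimes q}\otimes\det(\k^q)$. The category of pairs $(I,R)$ with $R\in\Eq_q(I)$ (the Grothendieck construction of $I\mapsto\Eq_q(I)$) maps to the groupoid of $q$-element sets by $(I,R)\mapsto I/R$, with fibers having a terminal object $(Q,\Id_Q)$; hence this map is a homotopy equivalence onto $BS_q$. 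Consequently both $\int_X\Cc_q$ and $\oint_X\phi(\Cc_q)$ are computed over $BS_q$, as the homotopy orbits $(\len^{\otimes q}\otimes\det)_{hS_q}$ and the homotopy fixed points $(\len^{\otimes q}\otimes\det)^{hS_q}$ respectively, and the topological approximation map becomes the $S_q$-norm map. The main point, and the only place the hypotheses really bite, is that $S_q$ is finite and $\k$ has characteristic zero, so $\k[S_q]$ is semisimple (ambidexterity): the norm map is an equivalence identifying both sides with the anti-coinvariants $\Lambda^q\len$. The expected obstacle is making the first two reduction steps precise in the pro-$\Dc$-module setting, namely checking the Kashiwara-type identification for $i_d^{[[*]]}$ at the singular subvarieties $X^I_d$ and verifying that the topological approximation map indeed restricts to the norm map under the identification with $BS_q$.
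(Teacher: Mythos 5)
Your proposal is correct and is essentially the paper's argument: the paper's entire proof is ``we compute explicitly both sides and find $\Lambda^q\Gamma(X,L)$,'' and your reductions (support of $\Cc_q^{(I)}$ in $X^I_q$ forcing the diagonal tower to stabilize, cofinality of $BS_q$ in the Grothendieck construction of $I\mapsto \Eq_q(I)$, and the identification of the comparison with the $S_q$-norm map, invertible in characteristic zero) are exactly the details that one-liner suppresses. The only point to tidy is the ``Kashiwara-type identity'' at the singular subvariety $X^I_d$: the cleanest justification is the localization triangle $j_{[[!]]}j^*\Mc\to\Mc\to (i_d)_*i_d^{[[*]]}\Mc$ from \S\ref{subsec:nonstand}, which for $\Mc$ supported on $X^I_d$ gives $\Mc\simeq (i_d)_*i_d^{[[*]]}\Mc$ directly.
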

\begin{proof}
    We compute explicitly both sides and find $\Lambda^q \Gamma(X,L) = \Lambda^q \mathfrak{l}$.
\end{proof}

From this, we deduce by induction on $q$:
\begin{lem}
    The canonical map $\int_X \Cc_{\leq q} \to \oint_X \phi(\Cc_{\leq q})$ is an equivalence.
\end{lem}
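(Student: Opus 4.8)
The statement to establish is the second Lemma in \S\ref{subsec:diag-aff}: that the canonical map $\int_X \Cc_{\leq q} \to \oint_X \phi(\Cc_{\leq q})$ is an equivalence, proved by induction on $q$. The preceding Lemma already gives the case of the associated graded pieces, namely that $\int_X \Cc_q \to \oint_X \phi(\Cc_q)$ is an equivalence for each $q$. The plan is to run a standard d\'evissage argument: leverage the fibre sequences relating the filtration terms $\Cc_{\leq q}$ to their subquotients $\Cc_q$, and use that both functors $\int_X(-)$ and $\oint_X \phi(-)$ are exact functors between stable $\infty$-categories, so that they send cofibre sequences to cofibre sequences.

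First I would set up the base case $q=1$: here $\Cc_{\leq 1} = \Cc_1[1]$, so the claim is exactly (a shift of) the previous Lemma, which handles $\Cc_1$. For the inductive step, recall from the definitions in \S\ref{subsec:CEFA} that the truncation $\Cc_{\leq q}^{(I)} = \bigoplus_{p \leq q} \Cc_p^{(I)}[p]$ fits, as noted just before the Lemma, into the cofibre sequence
\[
 \Cc_{\leq q-1} \lra \Cc_{\leq q} \lra \Cc_q[q]
\]
in $\LDsM(X^\finsurj)$ (this is the content of $\Cc_q = \hocofib(\Cc_{\leq q-1} \to \Cc_{\leq q})[-q]$). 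Applying the exact functor $\int_X$ yields a cofibre sequence $\int_X \Cc_{\leq q-1} \to \int_X \Cc_{\leq q} \to \int_X \Cc_q[q]$ in $C(\k)$. The covariant Verdier dual $\phi$ is itself exact, and $\oint_X$ is an exact functor between stable $\infty$-categories (as recorded in \S\ref{sec:laxmodules}); hence $\oint_X \phi(-)$ is exact and produces a parallel cofibre sequence $\oint_X \phi(\Cc_{\leq q-1}) \to \oint_X \phi(\Cc_{\leq q}) \to \oint_X \phi(\Cc_q)[q]$. The topological approximation map $\tau$ is natural in its argument, so it furnishes a map of cofibre sequences between these two triangles.

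At this point the argument is purely formal: by the inductive hypothesis the map $\tau$ on the left-hand terms ($\Cc_{\leq q-1}$) is an equivalence, and by the previous Lemma the map on the right-hand terms ($\Cc_q$, up to the shift $[q]$) is an equivalence. Since in a stable $\infty$-category a map of cofibre sequences that is an equivalence on two out of three terms is automatically an equivalence on the third (the two-out-of-three property for cofibre sequences), the middle map $\int_X \Cc_{\leq q} \to \oint_X \phi(\Cc_{\leq q})$ is an equivalence, completing the induction.

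\textbf{Main obstacle.} The genuinely substantive input is not in this d\'evissage but is already discharged by the two cited Lemmas, so the only points requiring care here are bookkeeping ones: verifying that the canonical map in question really is the component of the natural topological approximation transformation $\tau$ (so that naturality across the filtration cofibre sequence holds on the nose and the square of maps commutes up to coherent homotopy), and confirming that $\phi$ and $\oint_X$ genuinely preserve the relevant cofibre sequences. The first is a matter of unwinding the construction of $\tau$ from \S\ref{subsec:coVerdieranddiag} and checking compatibility with the filtration maps $\Cc_{\leq q-1} \to \Cc_{\leq q}$; the second follows from exactness of $\phi$ (Verdier duality is exact on each $X^I$, hence termwise, hence on $\laxlim$) together with the stated exactness of the factorization-homology functors. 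I expect no difficulty of principle, only the need to assemble these naturality statements carefully so that affineness of $X$ — which entered decisively in the graded Lemma via the vanishing of higher cohomology making $\int_X \Cc_q \simeq \Lambda^q \len$ — is correctly inherited at each stage of the filtration.
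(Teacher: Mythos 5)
Your proposal is correct and is exactly the argument the paper intends: the paper's own justification is the single line ``From this, we deduce by induction on $q$,'' and your d\'evissage via the cofibre sequences $\Cc_{\leq q-1}\to\Cc_{\leq q}\to\Cc_q[q]$, exactness of $\int_X$ and $\oint_X\phi(-)$, naturality of $\tau$, and two-out-of-three in a stable $\infty$-category is the standard way to fill in that induction. No discrepancies to report.
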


\begin{proof}[Proof of \autoref{thm:affinecase}]
    Since $\hocolim_q \int_X \Cc_{\leq q} \simeq \int_X \Cc$, it is now enough to prove that the map
    \[
    \hocolim_q \oint_X \phi(\Cc_{\leq q}) \to \oint_X \phi(\Cc)
    \]
    is an equivalence. Rephrasing with the diagonal filtration, we get
    \[
    \hocolim_q \holim_d \int^{\leq d}_X \Cc_{\leq q} \to \holim_d \hocolim_q \int^{\leq d}_X \Cc_{\leq q}.
    \]
    Fix an integer $p$. It is enough to prove that for $q$ big enough (independently of $d$), the map $H^p_{\DR}(\int^{\leq d}_X \Cc_{\leq q}) \to H^p_{\DR}(\int^{\leq d}_X \Cc_{\leq q+1})$ is an isomorphism. This amounts to proving that $H^{p+q}_{\DR}(\int_X^{\leq d} \Cc_q)$ vanishes for $q$ big enough.
    
    By definition, we have $\int_X^{\leq d} \Cc_q = \hocolim_I R\Gamma_{\DR}(X^I_d, i_d^{[\![*]\!]} \Cc_q^{(I)})$. Let us fix $I$. For any positive integer $s$, we denote by $Y^{(s)}$ the $s^\mathrm{th}$ infinitesimal neighborhood of $X^I_d$ in $X^I$ and by $i^{(s)} \colon Y^{(s)} \to X^I$ the canonical inclusion. We get
    \[
    R\Gamma_{\DR}(X^I_d, i^{[\![*]\!]}_d \Cc_q^{(I)}) \simeq \holim_s R\Gamma_{\DR}(Y^{(s)}, (i^{(s)})^* \Cc_q^{(I)}).
    \]
    Since $Y^{(s)}$ is affine (because $X$ is) and $\Cc_q^{(I)}$ is induced from a quasicoherent sheaf concentrated in degree $0$, the complex $R\Gamma_{\DR}(Y^{(s)}, (i^{(s)})^*\Cc_q^{(I)})$ only has cohomology in degrees lower or equal to $0$. The homotopy limit indexed by $s$ satisfies the Mittag-Leffler condition. We deduce that the cohomology of $R\Gamma_{\DR}(X^I_d, i_d^{[\![*]\!]} \Cc_q^{(I)})$ is concentrated in degree lower or equal to $0$. It follows that $H^{p+q}_{\DR}(\int_X^{\leq d} \Cc_q)$ vanishes for $q \geq 1 - p$.
    This concludes the proof of \autoref{thm:affinecase}.
\end{proof}
\vskip 1em
Applying Verdier duality to \autoref{thm:affinecase}, we get:

\begin{cor}\label{cor:affine-L}
    For a smooth affine variety $X$, the diagonal filtration on Chevalley-Eilenberg cohomology is complete, i.e.:
    \[
    \CE^\bullet(\len) \,  \simeq\,  \oint^{[\![\c]\!]}_X \wc \Cc^\bullet  \,\simeq \, \int^{[\![\c]\!]}_X \psi(\wc \Cc^\bullet) 
    \,\simeq\,  \hocolim_d R \Gamma^{[\![\c]\!]}_{X^\finsurj_d\!,\,\DR}(X^\finsurj, \wc\Cc^\bullet).
    \]
\end{cor}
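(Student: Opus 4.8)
The plan is to obtain the corollary as the Verdier dual of Theorem \ref{thm:affinecase}, so that all the analytic content (affineness, the Mittag--Leffler vanishing) stays inside the theorem and the remaining work is pure bookkeeping of dualities. First I would record the two inputs that require no new argument: the identification $\CE^\bullet(\len)\simeq\oint^{[[\c]]}_X\wc\Cc^\bullet$, which is built into the definition $\wc\Cc^\bullet=(\Cc_\bullet)^\vee$ together with $\int_X\Cc\simeq\CE_\bullet(\len)$; and the equivalence $\int^{[[\c]]}_X\psi(\wc\Cc^\bullet)\simeq\hocolim_d R\Gamma^{[[\c]]}_{X^\finsurj_d\!,\,\DR}(X^\finsurj,\wc\Cc^\bullet)$, which is the Lemma of \S\ref{subsec:coVerdieranddiag} applied to $F=\wc\Cc^\bullet$. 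These two reduce the corollary to the single equivalence
\[
\oint^{[[\c]]}_X\wc\Cc^\bullet\;\simeq\;\int^{[[\c]]}_X\psi(\wc\Cc^\bullet).
\]

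The core step is to dualize the chain $\int_X\Cc\simeq\oint_X\phi(\Cc)\simeq\holim_d\int^{\leq d}_X\Cc$ of Theorem \ref{thm:affinecase}. I would apply the anti-equivalence $(-)^*$ between $\Ind(\Perf_\k)$ and $\Pro(\Perf_\k)$ and invoke the two identities of Proposition \ref{prop:Verdierduality}: the source $(\int_X\Cc)^*$ becomes $\oint^{[[\c]]}_X\Cc^\vee=\oint^{[[\c]]}_X\wc\Cc^\bullet$, while on the target $(-)^*$ exchanges $\holim_d$ with $\hocolim_d$, producing $\hocolim_d(\int^{\leq d}_X\Cc)^*$. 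By the compatibility proposition of \S\ref{subsec:coVerdieranddiag} each dual term $(\int^{\leq d}_X\Cc)^*$ is identified with $R\Gamma^{[[\c]]}_{X^\finsurj_d\!,\,\DR}(X^\finsurj,\wc\Cc^\bullet)$, compatibly with the transition maps, so the colimit is exactly $\hocolim_d R\Gamma^{[[\c]]}_{X^\finsurj_d\!,\,\DR}(X^\finsurj,\wc\Cc^\bullet)$. Chaining these identifications with the Lemma cited above then delivers the displayed equivalence, hence the corollary.

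An equivalent, more hands-on route for the central step is to verify the termwise compatibility $\phi(\Cc)^\vee\simeq\psi(\Cc^\vee)=\psi(\wc\Cc^\bullet)$ of the two covariant Verdier dualities directly: contravariant Verdier duality on each $X^I$ exchanges $\hocolim_K\leftrightarrow\holim_K$ and $q_{IK}^{[[*]]}\leftrightarrow q_{IK}^{!}$ in the defining formulas for $\phi$ and $\psi$, and commutes with $(p_{IK})_*$ because $p_{IK}$ is finite, so that $*$- and $!$-pushforward agree and are Verdier self-dual; one then checks these termwise dualities respect the transition maps. With this in hand, $(\oint_X\phi(\Cc))^*\simeq\int^{[[\c]]}_X\phi(\Cc)^\vee\simeq\int^{[[\c]]}_X\psi(\wc\Cc^\bullet)$ by the second identity of Proposition \ref{prop:Verdierduality}, giving the same conclusion.

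The only genuine obstacle is Theorem \ref{thm:affinecase} itself, which I am entitled to assume; granting it, the statement is formal. The point to watch is that $(-)^*$ must remain an honest anti-equivalence throughout: $\int_X\Cc$ lives in $\Ind(\Perf_\k)=C(\k)$ and its dual $\oint^{[[\c]]}_X\wc\Cc^\bullet$ in $\Pro(\Perf_\k)$, so the interchange of $\holim$ and $\hocolim$ and the passage from the arity filtration to the diagonal filtration must be carried out in the pro/ind-finite-dimensional setting in which Proposition \ref{prop:Verdierduality} and the compatibility proposition are stated.
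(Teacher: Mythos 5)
Your proposal is correct and is essentially the paper's own argument: the paper derives Corollary \ref{cor:affine-L} precisely by applying Verdier duality to Theorem \ref{thm:affinecase}, using Proposition \ref{prop:Verdierduality} and the compatibility statements of \S\ref{subsec:coVerdieranddiag} exactly as you spell out. The details you supply (the exchange of $\holim_d$ and $\hocolim_d$ under $(-)^*$, the identification of $(\int_X^{\leq d}\Cc)^*$ with the $d$-fold-support cohomology, and the final lemma giving $\int_X^{[[\c]]}\psi(\wc\Cc^\bullet)$) are the correct bookkeeping the paper leaves implicit.
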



\section{Relation to the topological picture}\label{sec:top-pic}

\subsection{\texorpdfstring{$C^\infty$}{C∞} factorization
    algebras in \texorpdfstring{$\infty$}{∞}-categories}\label{subsec:C-oo-infty}
    
We extend here the notion of $C^\infty$ factorization algebras introduced in Chapter \ref{sec:C-inf-GF} to 
$\infty$-categorical contexts. The content of this section is by no means original and can be found for instance in \cite{lurie-ha} or \cite{ginot}.

\begin{Defi}[{see \cite[5.5.2.1]{lurie-ha}}]
Let $M$ be a $C^\infty$-manifold. 
 Denote by $\Uc(M)^\otimes$ the (non-unital) colored operad
  whose colors are open subsets of $M$ and such that for $U, U_1, \dots, U_n$ open subset of $M$, we have
 \[
  \on{Mult}(U_1, \dots, U_n ; U) = \begin{cases} * & \text{if } U_i \subset U,\,  \forall i \text{ and } U_i \cap U_j = \varnothing,
  \, \forall i \neq j; 
   \\ \varnothing & \text{otherwise.} \end{cases}
 \]
 In particular, the full subcategory $\Uc(M) \subset \Uc(M)^\otimes$ of colors is the category of open subsets of $M$.
\end{Defi}

\begin{Defi}\label{Defi:FAinftyCat}
 Let $\Cc$ be  a (non-unital) symmetric monoidal $\infty$-category.
 A $\Cc$-valued $C^\oo$ {\em factorization algebra} over a $C^\oo$-manifold 
 $M$ is a $\Uc(M)^\otimes$-algebra $\Fc$ in $\Cc$ 
 (i.e., a morphism of $\oo$-operads $\Fc: \Uc(M)^\otimes\to \Cc^\otimes$)
 whose restriction $\Uc(M) \to \Cc$ sends factorizing covers (see \autoref{Defi:factorizing-cover}) to the realizations of their multiplicative \v Cech complexes (as in \eqref{eq:codescent}).
 The factorization homology of $\Fc$ is by definition
 \[
  \int_M \Fc := \Fc(M) \,\in \, \Ob \Cc. 
 \]
\end{Defi}

\begin{rem}
 The above definition is a direct extension of its counterpart in Chapter \ref{sec:C-inf-GF}.
\end{rem}

\subsection{From factorizing \texorpdfstring{$\Dc^!$}{D!}-modules to \texorpdfstring{$C^\infty$}{C∞} factorization
    algebras}\label{subsec:D!-e2n}

In this section we assume $\k=\CC$. Thus $X_\an := X(\CC)$ is a complex analytic manifold. 

For any complex  analytic manifold $M$ 
we denote by $M^\finsurj$ the Ran diagram of complex manifolds $M^I$
and diagonal embeddings $\delta_g: M^J\to M^I$. 

\paragraph{Holonomic regular factorization algebras}

\begin{Defi}
 A lax $\Dc^!$-module $\Ec$ over $X^\finsurj$ is called holonomic regular (h.r.) if each $\Dc$-module $\Ec^{(I)}$ has holonomic regular cohomology over $X^I$.
\end{Defi}

\begin{prop}\label{prop:e_1hr-ehr}
    Suppose $\Ec$ is factorizable. Then, $\Ec$ is h.r. if and  only if the $\Dc$-module over $X$ has holonomic regular cohomology.
\end{prop}

\begin{proof}
    Suppose  $\Ec^{(\pt)}$ is h.r. on $X$. We prove, by induction on $|I|$, that $\Ec^{(I)}$ is h.r. on $X^I$. 
    First of all, the restriction of $\Ec^{(I)}$ to the open subset $X^I_\neq$ (complement to all the diagonals) is, by the factorization
    structure, identified with the restriction to $X^I_\neq$ of the h.r. module $(\Ec^{(\pt)})^{\boxtimes I}$. Further, the complement
    $X^I - X^I_\neq$ is stratified into locally closed subvarieties isomorphic to $X^J_{\neq}$ with $2\leq |J| < |I|$, or to $X^{\pt} = X $,
    if $|J|=1$. So our statement follows by stability of h.r. modules under extensions.
\end{proof}

\autoref{prop:twistedshriek} allows for another description of h.r. $!$-factorization algebras. Denote by $\Dc^{!,\hr}_{U_\Tw}$ the $\infty$-functor $\Tw(\finsurj)^\op \to \Cat_\oo$ that maps $\alpha \in \Tw(\finsurj)$ to the $\infty$-category of complexes of $\Dc$-modules with holonomic regular cohomology over $U(\alpha)$, and morphisms of twisted arrows to the corresponding $!$-pull-back functor.
It is a subfunctor of $\Dc^!_{U_\Tw}$. Moreover, since a tensor product of holonomic regular $\Dc$-modules is also holonomic regular, the lax monoidal structure of $\Dc^!_{U_\Tw}$ restricts to such a structure on $\Dc^{!,\hr}_{U_\Tw}$.
From \autoref{prop:twistedshriek}, a h.r. $!$-factorization algebra amounts to a symmetric monoidal and Cartesian section of the projection
\[
 \Groth(\Dc^{!,\hr}_{U_\Tw}) \to \Tw(\finsurj)
\]

Further, the Riemann-Hilbert correspondence yields an equivalence of lax-monoidal $\infty$-functors between $\Dc^{!,\hr}_{U_\Tw}$ and the $\infty$-functor $\mathrm{Sh}_{U_\Tw^\an}^{!,\mathrm{const}} \colon (\Tw \finsurj)^\op \to \Cat_\oo$ that maps $\alpha$ to the $\infty$-category of complexes of sheaves on the analytification $U(\alpha)_\an$ that are constructible for the diagonal filtration.
This implies
\begin{prop}\label{prop:RHmonoidalsections}
 The $\infty$-category of h.r. $!$-factorization algebras is equivalent to that of symmetric monoidal and Cartesian sections of
 \[
  \Groth(\mathrm{Sh}_{U_\Tw^\an}^{!,\mathrm{const}}) \to \Tw(\finsurj).
 \]
 We denote this equivalence by $\mathrm{RH} \colon \FA^!(X) \simeq \Sect_\mathrm{Cart}^\otimes(\Groth(\mathrm{Sh}_{U_\Tw^\an}^{!,\mathrm{const}}))$.
 \qed
\end{prop}

\paragraph{Associated $C^\oo$ factorization algebra}

Consider now the following functor
\[
 R\Gamma_\mathrm{c} \colon \Uc(X_\an) \times \Groth(\mathrm{Sh}_{U_\Tw^\an}^{!,\mathrm{const}}) \lra C(\k)
\]
that associates to $V \subset X_\an$ and $(\alpha \colon I \twoheadrightarrow J,\Fc)$ (with $\Fc$ a (constructible) complex of sheaves over $U(\alpha)_\an$) the complex $R\Gamma_\mathrm{c}(U(\alpha) \cap V^I, \Fc_\alpha)$.
To simplify the notations, we set
\[
 U^V_\alpha := U(\alpha) \cap V^I \subset X_\an^I.
\]
Fix a family $V_1, \dots, V_n$ of pairwise disjoint open subsets of $X_\an$ and $\alpha_1, \dots, \alpha_n \in \Tw(\finsurj)$. Since the $V_p$'s are pairwise disjoint, we have
\[
 U^{V_1}_{\alpha_1} \times \cdots \times U^{V_n}_{\alpha_n} \subset U^{V_1 \amalg \cdots \amalg V_n}_{\alpha_1 \amalg \cdots \amalg \alpha_n}.
\]
Fixing now $\Fc_1, \dots, \Fc_n$ with $\Fc_p$ constructible over $U(\alpha_p)$, we get a natural extension by $0$ morphism
\begin{multline*}
 R\Gamma_c\left(U^{V_1}_{\alpha_1}, \Fc_1\right) \otimes \cdots \otimes R\Gamma_c\left(U_{\alpha_n}^{V_n}, \Fc_n\right) \simeq R\Gamma_\mathrm{c}\left(U^{V_1}_{\alpha_1} \times \cdots \times U^{V_n}_{\alpha_n}, \Fc_1 \boxtimes \cdots \boxtimes \Fc_n\right) 
 \\
 \lra R\Gamma_\mathrm{c}\left(U^{V_1 \amalg \cdots \amalg V_n}_{\alpha_1 \amalg \cdots \amalg \alpha_n}, \Fc_1 \boxtimes \cdots \boxtimes \Fc_n\right).
\end{multline*}
Altogether, those morphisms extend the functor $R\Gamma_\mathrm{c}$ to a morphism of $\oo$-operads 
\[
 \Uc(X_\an)^\otimes \times_{\finsurj^*} \Groth(\mathrm{Sh}_{U_\Tw^\an}^{!,\mathrm{const}})^\otimes
 \lra C(\k)^\otimes.
\]
 Let $\Ec$ be a h.r. $!$-factorization algebra over $X$.
 Let
 $
q \colon \Uc(X_\an)^\otimes \times_{\finsurj^*} (\Tw \finsurj)^\amalg \to  \Uc(X_\an)^\otimes
 $
be the projection.
We denote by $\Ac_\Ec$ the $q$-free $\Uc(X_\an)^\otimes$-algebra (see \cite[3.1.3.1]{lurie-ha}) generated by the composite
 \[
   \Uc(X_\an)^\otimes \times_{\finsurj^*} (\Tw \finsurj)^\amalg \overset {\mathrm{RH}(\Ec)}\lra \Uc(X_\an)^\otimes \times_{\finsurj^*} \Groth(\mathrm{Sh}_{U_\Tw^\an}^{!,\mathrm{const}})^\otimes
 \lra C(\k)^\otimes.
 \]
 The  $\Uc(X_\an)^\otimes$-algebra  $\Ac_\Ec$ is in particular a morphism of $\oo$-operads
 which we denote by the same symbol
\[
 \Ac_\Ec \colon \Uc(X_\an)^\otimes \lra C(\k)^\otimes.
\]
The definition of  $\Ac_\Ec$ as a  $q$-free $\Uc(X_\an)^\otimes$-algebra
automatically takes care of all the higher coherences. The following proposition
connects this definition with the more immediate and intuitive approach,
in which such coherences may not be obvious a priori.

\begin{prop}\label{prop:h-r-fact-anal}
Let $\Ec$ be a h.r. $!$-factorization algebra and $\Ac_\Ec$ as above.
\begin{assertions}
 \item\label{ass:AE-value-on-V} For  $V \subset X_\an$ open, we have $\Ac_\Ec(V) \simeq \hocolim_I R\Gamma_\mathrm{c}\left(V^I, \DR(\Ec^{(I)})_\an\right)$.
 \item\label{ass:AE-is-FA} $\Ac_\Ec$ is a $C^\oo$-factorization algebra in the sense of \autoref{Defi:FAinftyCat}.
 \item\label{ass:FH-of-AE} We have
        \[
        \int_X^{[\![\c]\!]} \Ec \,\,\simeq \,\, \int_{X_\an} \Ac_\Ec.
        \]
\end{assertions}
\end{prop}
\begin{proof}
 We start by proving \ref{ass:AE-value-on-V}. Let $V$ be an open subset of $X_\an$.
 Let $\Uc^\otimes_{\mathrm{act}/V}$ denote the (ordinary)
 category whose objects are
 pairs  $(I \in \finsurj^*, (V_i)_{i \in I^o})$ where the $V_i$'s are pairwise disjoint open subsets of $V$.
 By definition, morphisms $((I, V_i)) \to (J, (W_j))$  in  $\Uc^\otimes_{\mathrm{act}/V}$  are active surjections $f \colon I \twoheadrightarrow J$ (so that $f^{-1}(*) = \{*\}$) such that $\forall i \in I^o$
  we have $V_i \subset W_{f(i)}$. We denote by $\Cc_V$ the fiber product $\Uc^\otimes_{\mathrm{act}/V} \times_{\finsurj^*} (\Tw \finsurj)^\amalg$. Thus  objects $\Cc_V$ are triples
  $(I, (V_i), (\alpha_i))$, where the pair $(I, (V_i))$ is an object of $\Uc^\otimes_{\mathrm{act}/V}$
  as before and $(\alpha_i)_{i\in I^o }$ is a system of arrows in $\finsurj$ (which are objects
  of $\Tw\finsurj$) labelled by $I^o$.

Unwinding the definition of $q$-free algebra, see \cite[3.1.3.1]{lurie-ha}, we have\footnote{Technically speaking, the colimit in \eqref{eq:formulaAEV} should be an operadic colimit \cite[\S3.1.1]{lurie-ha}. However, in virtue of \cite[3.1.1.16]{lurie-ha} and because the tensor product of complexes preserves all small colimits in each variable, the standard homotopy colimit is operadic.} for any open subset $V \subset X_\an$:
 \begin{equation}\label{eq:formulaAEV}
  \Ac_\Ec(V) = \hocolim_{(I, (V_i), (\alpha_i)) \in \Cc_V}
  \bigotimes_{i \in I^o}R\Gamma_\mathrm{c}\left(U_{\alpha_i}^{V_i}, \DR(\Ec^{(\alpha_i)})_\an\right).
 \end{equation}
 We then observe that the inclusion 
 \[
 \finsurj \hookrightarrow \Cc_V~;~J \mapsto \bigl(\{*,1\}, V_1 = V, \alpha_1 \colon J \twoheadrightarrow *
 \bigr)
 \]
  is cofinal, so that $\Ac_\Ec(V) \simeq \hocolim_J R\Gamma_\mathrm{c}\left(V^J, \DR(\Ec^{(J)})_\an\right)$.
  This proves (a). 
 
 We now prove \ref{ass:AE-is-FA}. Let $V$ and $W$ be disjoint open subsets of $X_\an$. There is an equivalence
 \[
  \begin{aligned}
   \Cc_V \times \Cc_W & \to \Cc_{V \amalg W} \\
  \bigl(  (I, (V_i)_{i\in I^o} , (\alpha_i)_{i\in I^o} ),(J,(V_j)_{j\in J^o},(\alpha_j)_{j\in J^o})\bigr) & \mapsto (I \amalg_* J, (V_i)_{i \in I^o \amalg J^o}, (\alpha_i)_{i \in I^o \amalg J^o}).
  \end{aligned}
 \]
 Its inverse decides whether each $V_i$ is a subset of $V$ or of $W$. This equivalence allows us to split in two the colimit of \eqref{eq:formulaAEV}. It yields
 \begin{equation}\label{eq:AE-multiplicative}
  \Ac_\Ec(V  \amalg W)
  \simeq \Ac_\Ec(V) \otimes \Ac_\Ec(W).
 \end{equation}
 Let now $\Uen = (U_i)_{i\in I}$ be a factorizing cover of an open subset $W \subset X_\an$. Using \eqref{eq:AE-multiplicative}, the multiplicative nerve (see \eqref{eq:codescent}) of $\Uen$ becomes
 \[
\Cen_\bullet(\Uen, \Ac_\Ec) \,\simeq\, \left\{
 \xymatrix@C=2em{
 \cdots     
          \ar@<.6ex>[r] \ar@<-.6ex>[r] \ar[r] & \displaystyle
          \bigoplus_{\alpha,\beta \in P\Uen} \Ac_\Ec\biggl(  \coprod_{U\in\alpha, V\in\beta} U\cap V\biggr)
     \ar@<.4ex>[r] \ar@<-.4ex>[r] & \displaystyle
   \bigoplus_{\alpha\in P\Uen} \Ac_\Ec \biggl( \coprod_{U\in\alpha}U \biggr)
   }
   \!\right\}.
\]
As in the proof of Theorem \ref{thm:CE-fact}, 
this identifies with the  usual \v Cech nerve of the cover $\Ven := (V_\alpha)_{\alpha \in P\Uen}$ of $W$ where $V_\alpha := \coprod_{U \in \alpha} U$:
\[
 \Cen_\bullet(\Uen, \Ac_\Ec) \simeq \Nc_\bullet(\Ven, \Ac_\Ec).
\]
Observe now that $\Ven^I := (V_\alpha^I)_{\alpha \in P\Uen}$ is a cover of $W^I$ for every $I \in \finsurj$.
We get
\[
 |\Nc_\bullet(\Ven, \Ac_\Ec)| \simeq \Ac_\Ec(W)
\]
by using \ref{ass:AE-value-on-V} and the fact that $\DR(\Ec^{(I)})_\an$ is a sheaf on $W^I$ for every $I$. This shows \ref{ass:AE-is-FA}.

We now prove \ref{ass:FH-of-AE}. By assumption, each $\Ec^{(I)}$ is holonomic regular over $X^I$. By the Riemann-Hilbert correspondence, we have $R\Gamma_\mathrm{c}\left((X_\an)^I, \DR(\Ec^{(I)})_\an\right) \simeq R\Gamma_\DR^{[\![\c]\!]}\left(X^I, \Ec^{(I)}\right)$
In particular, we get using \ref{ass:AE-value-on-V}
\[\Ac_\Ec(X^\an) \simeq \hocolim_I R\Gamma_\mathrm{c}\left((X_\an)^I, \DR(\Ec^{(I)})_\an\right) \simeq \hocolim_I R\Gamma_\DR^{[\![\c]\!]}\left(X^I, \Ec^{(I)}\right) = \int_X \Ec.
\]
\end{proof}

\subsection{The case of the tangent bundle}\label{subsec:case-tan}

We now specialize  the considerations of Chapter \ref{sec:GFAG} and \S \ref{subsec:D!-e2n} to $\Ec = \psi(\wc \Cc^\bullet)$
being  the covariant Verdier dual to the cohomological Chevalley-Eilenberg $*$-sheaf $\wc\Cc^\bullet = \wc\Cc^\bullet_L$.
We further specialize  the local Lie algebra  to    $L=T_X$,    the tangent bundle of $X$. 

\paragraph{The diagonal $\Dc$-module.} Recall from \autoref{def:diagonal} the diagonal $\Dc$-module $\wc\Cc_\Delta^\bullet$ on $X$. 

\begin{lem}\label{lem:C-delta-rh}
    $\wc\Cc_\Delta^\bullet$ is regular holonomic. 
\end{lem}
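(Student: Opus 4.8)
The statement is local and $\Dc$-module-theoretic on $X$, so I would work throughout with the explicit de Rham incarnation $\Fc^\bullet_\Delta$ from \eqref{eq:complex-F}, whose $p$-th term is the local cohomology sheaf $\ul H^{(p-1)n}_X((L^\vee)^{\boxtimes p})^{\Sigma_p}$ along the small diagonal $X\hookrightarrow X^p$. The first point to record is that Lemma \ref{lem:C-delta-rh} cannot be proved term by term: by Proposition \ref{prop:loc-coh-dif} each $\ul H^{(p-1)n}_X((L^\vee)^{\boxtimes p})$ is an infinite "order of poles" union of vector bundles, so the associated quasi-induced $\Dc$-module is not holonomic (its de Rham sheaf has infinite-dimensional stalks). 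The finiteness responsible for holonomicity must therefore be produced by the Chevalley--Eilenberg differential, and any viable argument has to treat the total complex globally.

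The plan is to identify $\wc\Cc^\bullet_\Delta$ with the localization, in the sense of Gelfand--Kazhdan formal geometry, of the Chevalley--Eilenberg complex $\CE^\bullet(W_n)$ of the Lie algebra $W_n=\Der\,\k[[z_1,\dots,z_n]]$ of formal vector fields. Concretely, I would first compute the fiber: dualizing and $!$-restricting to the full diagonal replaces the punctured formal disk occurring in the chiral-envelope computation (the Proposition of \S\ref{subsec:CEFA} identifying $\omega_X^{-1}\otimes\strict{\Cc_\bullet}^{(\pt)}$ with the chiral envelope) by the \emph{unpunctured} formal disk, so that the fiber of $\wc\Cc^\bullet_\Delta$ at $x\in X$ becomes $\CE^\bullet(\Der\wh\Oc_{X,x})\simeq\CE^\bullet(W_n)$. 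For $L=T_X$ the tangent bundle equips $X$ with its canonical $\Aut(\wh\Oc)$-torsor of formal coordinate systems, and the tautological flat connection of formal geometry assembles the fiberwise $\CE^\bullet(W_n)$ into exactly the complex $\wc\Cc^\bullet_\Delta$; the $GL_n\subset\Aut(\wh\Oc)$-equivariant structure of $\CE^\bullet(W_n)$ recorded in \S\ref{subsec:clas-GF} is what makes this association $\Dc$-linear.

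Granting this identification, regular holonomicity would follow from the classical Gelfand--Fuchs finiteness. By \cite{fuks} the cohomology $H^\bullet_\Lie(W_n)\simeq H^\bullet(Y_n)$ is finite-dimensional in each degree, and relatively the computation $\CE^\bullet(W_n,\gl_n)\simeq\CC[e_1,\dots,e_n]/(\deg>2n)$ expresses everything through the Chern classes $e_i\mapsto c_i(T_X)$. Hence the cohomology $\Dc$-modules of $\wc\Cc^\bullet_\Delta$ are coherent over $\Oc_X$, i.e. vector bundles carrying an integrable connection, so their characteristic variety is the zero section and they are holonomic. Regularity I would obtain from the geometric origin of the connection: it is built out of $T_X$ and its Chern--Weil data by the $GL_n$-equivariant Gelfand--Kazhdan construction, and on the associated graded of the pole-order filtration of Proposition \ref{prop:loc-coh-dif} the subquotients are subquotients of symmetric and exterior powers of $T_X$, all of which carry their natural regular connections.

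The main obstacle is the transition between the second and third steps: rigorously transferring the Gelfand--Fuchs finiteness, a statement about the single Lie algebra $W_n$, into an $\Oc_X$-coherence statement for the cohomology of $\wc\Cc^\bullet_\Delta$ \emph{uniformly over $X$}. Technically this amounts to controlling the homotopy limit over $\finsurj$ defining $\psi(\wc\Cc^\bullet)^{(\pt)}$ simultaneously with the order-of-poles filtration, and showing that the resulting spectral sequence degenerates to the finite Gelfand--Fuchs answer with $\Oc_X$-coherent $E_\infty$-term. A secondary difficulty, once holonomicity is secured, is the clean verification of regularity of the connection, which I would reduce to the regularity of the natural connections on the tensor powers of $T_X$ appearing in the associated graded.
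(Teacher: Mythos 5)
Your holonomicity step is, at its core, the same as the paper's: compute the fiber of the diagonal module at a point $x$, identify it (up to Verdier duality) with the Chevalley--Eilenberg complex of the topological Lie algebra $W_x=\Der(\wh\Oc_{X,x})$ of formal vector fields, and invoke the Gelfand--Fuchs finiteness of $H^\bullet_\Lie(W_n)$. However, the ``main obstacle'' you flag --- upgrading the pointwise finiteness to an $\Oc_X$-coherence statement for the cohomology modules, uniformly over $X$ and compatibly with the homotopy limit over $\finsurj$ and the pole-order filtration --- is not actually the route the paper takes, and you leave it unfilled. The paper avoids any global coherence/spectral-sequence analysis: it argues that it suffices to check that every $!$-fiber $i_x^!\wc\Cc^\bullet_\Delta$ has bounded, finite-dimensional cohomology, passes to the dual $[[*]]$-fiber, identifies that with the (completed) homological complex $\CE_\bullet(W_x)$, and concludes directly. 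So for holonomicity you have the right idea but your proposed implementation is the harder of the two available paths, and you have not carried it out.

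The genuine gap is in the regularity step. Regularity is a condition on behavior along divisors at infinity (or, equivalently, after compactification), and your argument --- that the connection has ``geometric origin,'' is built from $T_X$ by Chern--Weil/Gelfand--Kazhdan constructions, and that the subquotients of the pole-order filtration of Proposition \ref{prop:loc-coh-dif} are symmetric and exterior powers of $T_X$ --- does not engage with that condition. The individual terms $F_d\,\ul H^m_X(\cdot)$ of the pole filtration are not holonomic (as you yourself note at the outset), so ``regular'' is not even defined for them, and one cannot deduce regularity of an infinite limit from properties of an associated graded in this way. The paper's actual argument is short and of a completely different nature: writing $N_X=\wc\Cc^\bullet_\Delta$, one observes the functoriality $N_X=j^*N_{X'}$ for an open embedding $j\colon X\to X'$, chooses a smooth compactification $X'$, notes that $N_{X'}$ is $\Oc$-coherent with flat connection (a local system) on a proper variety and hence regular by Deligne's theorem, and concludes that $N_X$ is regular as a pullback of a regular module. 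If you want to salvage your approach you would need to replace the Chern--Weil heuristic with exactly this kind of compactification argument, since nothing intrinsic to $X$ alone certifies regularity.
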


\begin{proof}
    We first show that $\wc\Cc_\Delta^\bullet$ is   holonomic. 
    Let $x\in X$ be any point with the embedding $i_x:\{x\}\to X$. It suffices to show that for any $x$
    the $!$-fiber $i_x^! \wc\Cc_\Delta^\bullet$ (a complex of $\Dc$-modules on $\{x\}$, i.e., of
    vector spaces) has bounded and finite-dimensional cohomology. 
    
    \vskip .2cm

    Consider the Verdier dual complex  to $\wc\Cc^\bullet_\Delta$ and denote it $\Cc_\bullet^\Delta$. Then the $[\![*]\!]$-fiber
    $i_x^{[\![*]\!]}\Cc_\bullet^\Delta$ is dual to $i_x^! \wc\Cc_\Delta^\bullet$ and so it suffices to prove finite-dimensionality of
    the cohomology of all such fibers. Now, unravelling the definitions
    and using Example \ref{ex:f^*=completion} shows that $i_x^{[\![*]\!]} \Cc^\Delta_\bullet\,=\, \CE_\bullet (W_x)$
    is the homological Chevalley-Eilenberg complex of the topological Lie algebra $W_x = \Der (\wh\Oc_{X,x})$
    of formal vector fields near $x$. Here the Chevalley-Eilenberg complex  is understood in the completed sense. 
    
    Since the homology of $W_x$ is finite-dimensional  (Theorem \ref{thm:GF-Wn}), the holonomicity follows.

    \vskip .2cm
    
    Next, we show that  $ \wc \Cc^\bullet_\Delta $ is regular. For this, we denote this $\Dc_X$-module by $N_X$ and study its dependence on $X$.
    That is, if $j: X\to X'$ is an open embedding of smooth algebraic varieties, then $N_X = j^* N_{X'}$. Now, if $X'$ is compact,
    then $N_{X'}$, being a local system on $X'$, is regular. Embedding any $X$ into a smooth proper $X'$ we see
    that $N_X$ is also regular, being the pullback of a regular $\Dc$-module. 
    The lemma is proved.
\end{proof}

\begin{rem}
    \autoref{lem:C-delta-rh} shows that  the diagonal complex $\Fc^\bullet_\Delta = \DR(\wc \Cc^\bullet_\Delta)$
    is the {\em local system of the Gelfand-Fuchs cohomology}. 
    
\end{rem}

\paragraph{Comparison with analytification.}
Recall that  $\k=\CC$.

\begin{thm}\label{thm:reghol}~
    \begin{statements}
        \item The factorization algebra $ \Ac_{\psi(\wc \Cc^\bullet)}$ on $X_\an$ is locally constant.
        \item The canonical map
        \[
        \int_X^{[\![\c]\!]} \psi(\wc\Cc^\bullet ) \lra  R\Gamma^{\c}\left(X_\an^\finsurj , \, \DR(\psi(\wc\Cc^\bullet))_\an\right) \,=\, \int_{X_\an} \Ac_{\psi(\wc \Cc^\bullet)}
        \]
        is an equivalence.
    \end{statements}
\end{thm}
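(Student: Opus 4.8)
The strategy is to verify the hypotheses of Proposition \ref{prop:h-r-fact-anal} for $\Ec = \psi(\wc\Cc^\bullet)$ and then invoke it directly: statements (a) and (b) of the theorem are precisely statements (a) and (b) of that proposition once the hypotheses are in place. Three things must be established: that $\Ec$ is a factorizing $\Dc^!$-module, that it is holonomic regular, and that each $\Ec^{(I)}$ is constructible with respect to the diagonal stratification of $X^I$.

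The first point is immediate from Theorem \ref{thm:coVerdier-fact}: since $\wc\Cc^\bullet = \wc\Cc^\bullet_{T_X}$ is a coherent $[[\Dc]]$-factorization algebra, its covariant Verdier dual $\psi(\wc\Cc^\bullet)$ is a (strict) factorizing $\Dc^!$-module. For the second point I would use Proposition \ref{prop:e_1hr-ehr}, which reduces holonomic regularity of a factorizable $\Dc^!$-module to that of its value on a one-element set. By Definition \ref{def:diagonal} we have $\Ec^{(\pt)} = \psi(\wc\Cc^\bullet)^{(\pt)} = \wc\Cc^\bullet_\Delta$, the diagonal $\Dc$-module, and Lemma \ref{lem:C-delta-rh} asserts exactly that $\wc\Cc^\bullet_\Delta$ is regular holonomic. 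Hence $\Ec$ is holonomic regular.

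The remaining, and most delicate, point is the constructibility of each $\Ec^{(I)}$ with respect to the diagonal stratification. I would argue by induction on $|I|$, following the structure of the proof of Proposition \ref{prop:e_1hr-ehr}. The key additional input beyond mere holonomicity is that $\DR(\Ec^{(\pt)})_\an = \Fc^\bullet_\Delta$ is not only regular holonomic but a genuine local system on $X_\an$ in each cohomological degree; this is what the proof of Lemma \ref{lem:C-delta-rh} yields, by exhibiting $\wc\Cc^\bullet_\Delta$ as the pullback under an open embedding $X \hookrightarrow X'$ (with $X'$ smooth proper) of an $\Oc$-coherent $\Dc$-module with flat connection. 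Granting this, the factorization equivalences $\nu$ identify the restriction of $\Ec^{(I)}$ to the open stratum $X^I_\neq$ with that of the external tensor power $(\Ec^{(\pt)})^{\boxtimes I}$, which is locally constant there; and strictness of $\psi(\wc\Cc^\bullet)$ gives $\delta_g^!\Ec^{(I)} \simeq \Ec^{(J)}$ along each diagonal $\delta_g\colon X^J \to X^I$, so that restriction to the deeper stratum indexed by $J$ is again a local system by the same reasoning applied to the smaller set $J$. This yields constructibility along the diagonal stratification for every $I$.

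With all three hypotheses in hand, Proposition \ref{prop:h-r-fact-anal}(a) gives local constancy of $\Ac_{\psi(\wc\Cc^\bullet)}$, proving (a), and Proposition \ref{prop:h-r-fact-anal}(b) gives the quasi-isomorphism $\int_X^{[[\c]]}\psi(\wc\Cc^\bullet) \simeq \int_{X_\an}\Ac_{\psi(\wc\Cc^\bullet)}$, proving (b). The main obstacle is the constructibility step: upgrading the abstract regular holonomicity of the diagonal module to the concrete fact that its de Rham realization is locally constant, and then propagating this local-system property coherently through the factorization structure to all the $\Ec^{(I)}$.
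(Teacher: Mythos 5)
Your proof follows exactly the paper's route: factorizability via Theorem \ref{thm:coVerdier-fact}, holonomic regularity via Lemma \ref{lem:C-delta-rh} and Proposition \ref{prop:e_1hr-ehr}, and then an appeal to Proposition \ref{prop:h-r-fact-anal}. You are in fact slightly more careful than the paper, which leaves the diagonal-constructibility hypothesis of Proposition \ref{prop:h-r-fact-anal} implicit; your stratum-by-stratum verification (local system on $X^I_{\neq}$ via factorization, reduction to smaller $J$ on deeper strata via strictness) is the correct way to fill that in.
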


\begin{proof} 
    We know that $\wc\Cc^\bullet$ is a factorizing coherent $[\![\Dc]\!]$-module on $X^\finsurj$. By \autoref{thm:coVerdier-fact},
    $\psi(\wc\Cc^\bullet)$ is factorizable. Now 
    \autoref{lem:C-delta-rh} implies that $\wc\Cc^\bullet_\Delta \simeq \psi(\wc\Cc^\bullet)^{(\pt)}$ is holonomic regular.
    \autoref{prop:e_1hr-ehr} implies that $\psi(\wc\Cc^\bullet)$ is a h.r. factorizing  $\Dc^!$-module on $X^\finsurj$.
    After this the theorem becomes an application  of 
    \autoref{prop:h-r-fact-anal}.
\end{proof}

Combining \autoref {cor:affine-L}
with \autoref{thm:reghol}, we find:

\begin{cor}\label{cor:CET}
    For $X$ a smooth affine variety over $\k = \CC$, we have
    \[
    \CE^\bullet(T(X)) \, \simeq\,   \int_{X_\an} \Ac_{\psi(\wc \Cc^\bullet)}. 
    \]
\end{cor}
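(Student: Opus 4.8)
The plan is to obtain the stated equivalence by concatenating the two principal results established above, once it is checked that their hypotheses hold simultaneously for the local Lie algebra $L = T_X$ over $\k = \CC$. The genuine mathematical content has already been carried by Corollary \ref{cor:affine-L} and Theorem \ref{thm:reghol}; what remains is a short verification and a composition of equivalences.

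First I would record the identification $\len = T(X)$. By definition $\len = R\Gamma(X, L)$ with $L = T_X$; since $X$ is affine and $T_X$ is a coherent (indeed locally free) sheaf, its higher cohomology vanishes, so $\len \simeq \Gamma(X, T_X) = \Der\, \k[X] = T(X)$ as a dg-Lie algebra concentrated in degree $0$. Consequently $\CE^\bullet(\len) = \CE^\bullet(T(X))$, and the objects $\Cc_\bullet$, $\wc\Cc^\bullet$, $\psi(\wc\Cc^\bullet)$ attached to $L$ are exactly the ones appearing on the right-hand side of the corollary.

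Next I would invoke Corollary \ref{cor:affine-L}, which applies verbatim since $X$ is smooth affine: it yields the chain of equivalences $\CE^\bullet(T(X)) \simeq \oint^{[[\c]]}_X \wc\Cc^\bullet \simeq \int^{[[\c]]}_X \psi(\wc\Cc^\bullet)$, expressing the purely algebraic Chevalley-Eilenberg cohomology as the compactly supported factorization homology of the covariant Verdier dual $\Dc^!$-module. This is the step where affineness is essential, as it underlies the completeness of the diagonal filtration.

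Finally I would apply Theorem \ref{thm:reghol}(b), whose hypotheses hold in the present setting: we work over $\k = \CC$ with $L = T_X$, so $\psi(\wc\Cc^\bullet)$ is a holonomic regular factorizing $\Dc^!$-module by Lemma \ref{lem:C-delta-rh} together with Proposition \ref{prop:e_1hr-ehr}, and Proposition \ref{prop:h-r-fact-anal} then gives the equivalence $\int^{[[\c]]}_X \psi(\wc\Cc^\bullet) \simeq \int_{X_\an} \Ac_{\psi(\wc\Cc^\bullet)}$. Composing the two equivalences proves the claim. There is no real obstacle here; the only point requiring care is that the hypotheses of Corollary \ref{cor:affine-L} (smooth affine $X$) and of Theorem \ref{thm:reghol} (base field $\CC$ and $L = T_X$) must both be imposed at once, which is precisely the hypothesis of the corollary, so the two cited statements may be chained without further work.
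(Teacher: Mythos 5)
Your proof is correct and follows exactly the paper's own route: the corollary is obtained by chaining Corollary \ref{cor:affine-L} with Theorem \ref{thm:reghol}(b), and your preliminary check that $\len \simeq T(X)$ for affine $X$ and that the hypotheses of both cited results hold simultaneously is the only verification needed.
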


\paragraph{The  structure of the factorization algebra  $\Ac$.}
Denote $\Ac = \Ac_X$ the locally constant factorization algebra $\Ac_{\psi(\wc \Cc^\bullet)}$. 
We note, first of all, that $\Ac$ is naturally a factorization algebra with values in $\cdga$. 
This is because all the steps in constructing $\Ac$ can be done in $\cdga$. 
So by the original version of \autoref{prop:fact-cdga}
due to Ginot \cite[Prop.48]{ginot},  $\Ac$ is a cosheaf of cdga's on $X_\an$. 

\vskip .2cm

The complex manifold $X_\an$ can be seen as a $C^\infty$-manifold of dimension $2n$ with $GL_n(\CC)$-structure in the sense of 
\autoref{def:G-str}. Let $\GL_{n,\CC}$ be the algebraic group $GL_n$  with field of definition $\CC$. 
Any cdga $A$ with a $\GL_{n,\CC}^*$-action 
(\autoref{def:G*}) has a BL-action of  the Lie group
$GL_n(\CC)$  and so gives rise to a locally constant cosheaf of cdga's $ \ul A_{X_\an}$ on $X_\an$,
see \autoref{prop:assoc-alg}. 

\vskip .2cm

We recall from \S \ref{subsec:clas-GF}C that the cdga $\CE^\bullet(W_n(\CC))$ 
has a natural $\GL_{n,\CC}^*$-action
and so we have the cosheaf of cdga's $\ul{ \CE^\bullet(W_n(\CC))}_{X_\an}$ on $X_\an$.

\begin{prop}\label{prop:A-X-A-n}
    The cosheaf of cdga's $\Ac_X$ on $X_\an$ is weakly equivalent to $\ul{ \CE^\bullet(W_n(\CC))}_{X_\an}$.
    In particular,
    \[
    \int_{X_\an} \Ac_X \,\simeq \, \int_{X_\an} (\CE^\bullet(W_n(\CC)))
    \] 
    is the factorization homology of the complex manifold $X_\an$ with coefficients in $\CE^\bullet(W_n(\CC))$. 
\end{prop}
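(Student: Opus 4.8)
The plan is to identify, stalk by stalk, the cosheaf of cdga's $\Ac_X = \Ac_{\psi(\wc\Cc^\bullet)}$ with the cosheaf $\ul{\CE^\bullet(W_n(\CC))}$ associated to the $\GL_{n,\CC}^*$-cdga $\CE^\bullet(W_n(\CC))$ via its BL-action of $GL_n(\CC)$. Both are locally constant cosheaves of cdga's on $X_\an$, and by Proposition \ref{prop:sh-cosh-lc} a locally constant cosheaf is determined by its inverse locally constant sheaf, hence by its restriction to the poset of disks $D(X_\an)$ together with the $GL_n(\CC)$-equivariance data. So it suffices to exhibit, for each disk $U$, a natural quasi-isomorphism between the two sections, compatible with co-restriction for $U_1 \subset U_0$ and with the $GL_n(\CC)$-structure.

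First I would compute the costalk of $\Ac_X$ at a point $x \in X_\an$, i.e.\ its value on a small disk $U$. By Corollary \ref{cor:CET} and the very construction of $\Ac_X$ as $\hocolim_I R\Gamma_\c(U^I, \DR(\psi(\wc\Cc^\bullet)^{(I)})_\an)$, the restriction to a disk reduces the factorization homology to the $\Dc$-module on the single point $X^\pt = X$, namely the diagonal $\Dc$-module $\wc\Cc^\bullet_\Delta = \psi(\wc\Cc^\bullet)^{(\pt)}$. By Lemma \ref{lem:C-delta-rh} this is holonomic regular, and its Verdier dual $\Cc_\bullet^\Delta$ has $[[*]]$-fiber at $x$ identified in the proof of that lemma with $\CE_\bullet(W_x)$, the (completed) homological Chevalley-Eilenberg complex of the topological Lie algebra $W_x = \Der(\wh\Oc_{X,x})$ of formal vector fields near $x$. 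Dualizing back, the costalk of $\Ac_X$ at $x$ is $\CE^\bullet(W_x) \simeq \CE^\bullet(W_n(\CC))$, the isomorphism depending on a choice of formal coordinate at $x$.

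The second step is to upgrade this pointwise identification to an identification of cosheaves carrying the correct $GL_n(\CC)$-structure. The point is that the $\GL_{n,\CC}$-action on $\k[[z_1,\dots,z_n]]$ — and hence on $W_n(\CC)$ and on $\CE^\bullet(W_n(\CC))$ — encodes precisely how the costalk varies as $x$ moves and the formal coordinate is changed by a linear transformation, which is governed by the tangent bundle $T_X$ and its frame bundle $P_{X_\an}$. Concretely, the diagonal $\Dc$-module $N_X := \wc\Cc^\bullet_\Delta$ is, as shown in Lemma \ref{lem:C-delta-rh}, a local system on $X_\an$ whose monodromy/descent data along $X_\an$ is exactly the $\GL_{n,\CC}^*$-action sheafified via $\gamma_{T_X}$ through Proposition \ref{thm:assoc-alg} and the sheafification functor \eqref{eq:sheafify}. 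I would make this precise by running, almost verbatim, the argument of Proposition \ref{prop:TM-Wn} and Proposition \ref{prop:[CE]-stalk} from the $C^\infty$ case: for each disk $U$ and each point $x \in U$ together with a formal identification $\phi$ of $(T_xX,0)$ with $(X,x)$ (the space of such $\phi$ being contractible), one builds a chain of quasi-isomorphisms as in \eqref{eq:maps=qU} between the section $[\CE^\bullet(W_n(\CC))]_{X_\an}(U)$ of the associated sheaf and the costalk of $\Ac_X$, and checks that for $U_1 \subset U_0$ a common choice of $(x,\phi)$ makes the relevant square commute on the nose, so the comparison is natural up to contractible ambiguity.

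The main obstacle, and the step deserving the most care, is verifying that the $GL_n(\CC)$-equivariant structure matches — that is, that the local-system descent data on $N_X$ coming from covariant Verdier duality and analytification agrees with the BL-action of $GL_n(\CC)$ on $\CE^\bullet(W_n(\CC))$ produced by the sheafification functor \eqref{eq:sheafify}. This is where the algebro-geometric construction (Verdier dual of $\wc\Cc^\bullet$, restricted to the diagonal, analytified) must be reconciled with the purely representation-theoretic $\GL_{n,\CC}^*$-action of \S\ref{subsec:clas-GF}. I expect this to follow from the naturality of all constructions in $X$ (used already in the regularity argument of Lemma \ref{lem:C-delta-rh}, via pullback along open embeddings into a compactification) together with the functoriality of $\psi$ established in Theorem \ref{thm:coVerdier-fact}, but tracking the $GL_n(\CC)$-structure through the homotopy limits defining $\psi(\wc\Cc^\bullet)^{(\pt)}$ is the delicate point. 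Once the identification of cosheaves is established, the displayed equality of factorization homologies is immediate by taking global cosections, proving the proposition.
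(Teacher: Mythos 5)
Your proposal follows essentially the same route as the paper: collapse the colimit over $I$ on a disk to the $(\pt)$-component (the paper does this by citing Lurie's Theorem 5.5.4.14 rather than Corollary \ref{cor:CET}, which only concerns global sections for affine $X$), identify the resulting $R\Gamma_c(U,\Fc^\bullet_\Delta)$ with $\CE^\bullet(W_x)$ via Lemma \ref{lem:C-delta-rh}, and then rerun the contractible-family-of-quasi-isomorphisms argument of Proposition \ref{prop:TM-Wn}. The $GL_n(\CC)$-equivariance you flag as the main delicate point is not treated as a separate verification in the paper: it is absorbed into the construction of the compatible family $q_U$ over all disks via the stalk identification with $\CE^\bullet(W_{T_xX})$, exactly as in Propositions \ref{prop:TM-Wn} and \ref{prop:[CE]-stalk}.
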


\begin{proof}  
    Let $U\subset X_\an$ be a disk.
    Applying Theorem 5.5.4.14 of \cite{lurie-ha}, we see that the natural arrow
    \[
    R\Gamma_c\bigl(U, \DR(\psi(\wc \Cc^\bullet)^{(\pt)}_\an)\bigr) \lra \Ac(U)  := \,
    \hocolim_{I\in\finsurj} 
    R\Gamma_c\bigl(U^I, \DR(\psi(\wc \Cc^\bullet)^{(I)}_\an)\bigr)
    \]
    is a quasi-isomorphism.  Further, by definition, 
    \[
    R\Gamma_c\bigl(U, \DR(\psi(\wc \Cc^\bullet)^{(\pt)}_\an)\bigr)\,=\, R\Gamma_c(U, \Fc_\Delta^\bullet)
    \]
    is the compactly supported cohomology with coefficients in the diagonal complex, see \eqref{eq:complex-F}. 
    
    From this point on the proof proceeds similarly to that  of \autoref{prop:TM-Wn}. 
    Let $A=\CE^\bullet(W_n(\CC))$. 
    Our cosheaf $\ul A_{X_\an}$ is the inverse of the locally constant sheaf of cdga's
    $\gamma_{TX}^{-1}|\Sigma_{GL}|$, where $\gamma_{TX}: X_\an\to BGL_n(\CC) = |N_\bullet GL_n(\CC)|$ is the classifying map for $TX$ and $\Sigma_{GL}$ is the locally constant sheaf on $N_\bullet GL_n(\CC)$
    associated to the $\GL_{n,\CC}^*$-action on $A$. This action comes from a $\JJ_n^*$-action,
    where $\JJ_n=\ul\Aut\, \CC[\![z_1,\dots, z_n]\!]$, see Example \ref {ex:group-J}.
    So we have a locally constant sheaf $\Sigma_J$ on $N_\bullet\JJ_n(\CC)$ associated to it.
    We further have the Gelfand-Kazhdan scheme $p: SX\to X$ of formal coordinate systems on $X$ which is
    a principal $\JJ_n$-bundle equipped with a formally  flat connection. So we have a classifying map
    $\gamma_{SX}: X_\an\to B\JJ_n(\CC)$. The homotopy equivalence $GL_n(\CC)\to \JJ_n(\CC)$
    and homotopy invariance of BL-actions give an identification $\ul A_{X\an}\= \gamma_{SX}^{-1}|\Sigma_J|$. 
    
    On the other hand, by definition, 
    $\Fc^\bullet_\Delta \= \ul\Omega^\bullet_{X_\an} \otimes(\ul\CEW^\bullet_{X})_\an$,
    where $\ul\CEW_X^\bullet$ is the (dg-)vector bundle on $X$ associated to $SX$ via the
    $\JJ_n$-action on $A$ and equipped with the associated connection. Equivalently,
    the fiber of $\ul\CEW_X^\bullet$ at any $x\in X$ is $\CE^\bullet(W_x)$. 
    
    Now, representing $\gamma_{SX}$ as the natural map from the simplicial  quotient $SX(\CC)/\!/ \JJ_n(\CC)
    \buildrel \=\over\to X$
   to $N_\bullet \JJ_n(\CC)$, we compare $\ul\Omega^\bullet_{X_\an} \otimes(\ul\CEW^\bullet_{X})_\an$
   with $\gamma_{SX}^{-1}|\Sigma_J|$ directly as in the proof of Proposition \ref{prop:gamma_SM=CEW}. 
   \end{proof}


\subsection {Main result}\label{subsec:main-res}

Let now $X$ be a smooth variety over $\CC$ of complex dimension $n$. Using the $GL_n(\CC)$-action on $Y_n$, we form the
{\em holomorphic Gelfand-Fuchs fibration} $\ul Y_X \to X_\an$ with fiber $Y_n$.  Note that $Y_n$ is $2n$-connected,
and so the  non-Abelian Poincar\'e duality \autoref{thm:NAPD} applies to $\ul Y_X\to X_\an$ (the real dimension of $X_\an$
is also $2n$).

Combining \autoref{thm:W-Y} with \autoref {prop:A-X-A-n}
and \autoref{thm:NAPD}, we obtain:

\begin{thm}\label{thm:top-approx}~
    \begin{statements}
        \item Let $X$ be any smooth algebraic variety over $\CC$. Then we have identifications
        \[
        \int_X^{[\![\c]\!]} \psi(\wc\Cc^\bullet) \,\simeq \, \int_{X_\an} \Ac_{\psi(\wc\Cc^\bullet)} \,\simeq \,
        H^\bullet_\top(\Sect(\ul Y_X/X_\an), \CC),
        \]
        where on the right we have the space of continuous sections of $\ul Y_X$ over $X_\an$ (considered as just a topological space). 
        \item In particular, the canonical arrow $\int_x^{[\![\c]\!]} \psi(\wc\Cc^\bullet) \to \int_x^{[\![\c]\!]} \wc\Cc^\bullet$ gives rise to a natural morphism
        of commutative algebras
        \[
        \tau_X: H^\bullet_\top(\Sect(\ul Y_X/X_\an),\CC)\lra H^\bullet_\Lie R\Gamma(X, T_X),
        \]
        compatible with pullbacks under \'etale maps $X'\to X$. 
    \end{statements}
\end{thm}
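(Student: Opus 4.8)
The plan is to assemble Theorem \ref{thm:top-approx} almost entirely from results already established, so the main work is identifying which theorem supplies each equivalence and verifying compatibility of the identifications, rather than proving anything genuinely new at this stage.

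For part (a), I would read off the chain of equivalences left-to-right. The first equivalence, $\int_X^{[[\c]]} \psi(\wc\Cc^\bullet) \simeq \int_{X_\an} \Ac_{\psi(\wc\Cc^\bullet)}$, is precisely Theorem \ref{thm:reghol}(b); its hypotheses were checked there using Lemma \ref{lem:C-delta-rh} (regular holonomicity of the diagonal $\Dc$-module for $L=T_X$) together with Proposition \ref{prop:e_1hr-ehr} to propagate h.r.\ from the one-point value to all of $X^\finsurj$. For the second equivalence I would invoke Proposition \ref{prop:A-X-A-n}, which identifies the cosheaf of cdga's $\Ac_X = \Ac_{\psi(\wc\Cc^\bullet)}$ with $\ul{\CE^\bullet(W_n(\CC))}$, the cosheaf associated to the $GL_n(\CC)$-equivariant cdga $\CE^\bullet(W_n(\CC))$; hence $\int_{X_\an}\Ac_X \simeq \int_{X_\an}(\CE^\bullet(W_n(\CC)))$. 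Now I would apply Theorem \ref{thm:W-Y}, which identifies $\CE^\bullet(W_n(\CC))$ with $\Th^\bullet(Y_n)$ as cdga's with a BL-action of $GL_n(\CC)$, to rewrite the factorization homology as $\int_{X_\an}(\Th^\bullet(Y_n))$. Finally, since $Y_n$ is $2n$-connected (so the connectivity hypothesis of Non-Abelian Poincar\'e Duality is met, the real dimension of $X_\an$ being $2n$), Theorem \ref{thm:NAPD} gives
\[
\int_{X_\an}(\Th^\bullet(Y_n)) \,\simeq\, \Th^\bullet(\Sect(\ul Y_X/X_\an)) \,\simeq\, H^\bullet_\top(\Sect(\ul Y_X/X_\an),\CC),
\]
where $\ul Y_X$ is the associated fibration \eqref{eq:YM} for the $GL_n(\CC)$-action on $Y_n$. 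Chaining these four equivalences yields part (a).

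For part (b), the point is that the topological approximation map $\tau\colon \int_X \psi(F)\to \oint_X F$ of \S\ref{subsec:coVerdieranddiag}, in its compactly supported form $\tau_c\colon \int_X^{[[\c]]}\psi(\wc\Cc^\bullet)\to \oint_X^{[[\c]]}\wc\Cc^\bullet$, composed with the identification $\oint_X^{[[\c]]}\wc\Cc^\bullet \simeq \CE^\bullet(\len) = \CE^\bullet(R\Gamma(X,T_X))$ from \S\ref{subsec:CEFA}, produces exactly the desired arrow. Combining with the identification of the source from part (a) gives
\[
\tau_X\colon H^\bullet_\top(\Sect(\ul Y_X/X_\an),\CC)\lra H^\bullet_\Lie R\Gamma(X,T_X).
\]
I would note that $\tau_X$ is a morphism of commutative algebras because all the identifications are compatible with the cdga structures (the whole construction of $\Ac_\Ec$ in \S\ref{subsec:D!-e2n} and of $\psi$ was carried out monoidally, and Theorem \ref{thm:coVerdier-fact} guarantees $\psi$ respects factorization structures). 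Compatibility with \'etale pullback $X'\to X$ follows since every ingredient---the Ran diagram, the $\Dc^!$-modules $\wc\Cc^\bullet$, covariant Verdier duality, and the associated cosheaves---is natural under \'etale maps.

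\textbf{The main obstacle} I anticipate is not any single hard estimate but the bookkeeping of \emph{compatibility}: one must check that the cdga/commutative-algebra structure transported along the four equivalences in part (a) matches the commutative structure on $H^\bullet_\Lie R\Gamma(X,T_X)$ coming from the cup product on Lie algebra cohomology, and that the approximation map $\tau_c$ is a map of algebras for these structures. This requires tracking the monoidal (Day convolution / chiral) structures through Propositions \ref{prop:coVerdiermonoidal}, \ref{prop:A-X-A-n} and Theorems \ref{thm:W-Y}, \ref{thm:NAPD} simultaneously. Since each of those results was stated with its monoidal compatibility built in, the verification should be a matter of assembling them coherently rather than establishing new compatibilities; nonetheless this is where care is genuinely needed, and I would organize the proof so that the algebra-structure claim is reduced to the already-monoidal statements cited above.
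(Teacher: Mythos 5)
Your proposal is correct and follows exactly the paper's own route: the paper proves part (a) by chaining Theorem \ref{thm:reghol}(b), Proposition \ref{prop:A-X-A-n}, Theorem \ref{thm:W-Y} and Theorem \ref{thm:NAPD} in precisely the order you describe, and part (b) by composing the topological approximation map $\tau_c$ with the identification $\oint_X^{[[\c]]}\wc\Cc^\bullet\simeq\CE^\bullet(\len)$. Your additional remarks on tracking the multiplicative structures are a sensible elaboration of what the paper leaves implicit.
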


Applying now \autoref{cor:CET}, we obtain our main result:

\begin{thm}\label{thm:main}~
    \begin{statements}
        \item Let $X$ be a smooth affine variety over $\CC$. Then $\tau_X$ is an isomorphism, i.e., 
        we have a commutative algebra isomorphism 
        \[
        H^\bullet_\Lie(T(X)) \, \simeq \, H^\bullet_\top(\Sect(\ul Y_X/X_\an), \CC). 
        \]
        
        \item In particular, $H^\bullet_\Lie(T(X))$ 
        is finite-dimensional in each degree and is an invariant of $n$, of  the rational homotopy type of $X_\an$ 
        and of the  Chern classes $c_i(T_X)\in H^{2i}(X, \QQ)$. \qed
    \end{statements}
\end{thm}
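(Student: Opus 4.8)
The plan is to assemble Theorem \ref{thm:main} directly from the two immediately preceding results, so the work is almost entirely bookkeeping about what has already been proven. First I would recall that Theorem \ref{thm:top-approx}(a) gives, for \emph{any} smooth variety $X$ over $\CC$, the chain of equivalences
\[
\int_X^{[[\c]]} \psi(\wc\Cc^\bullet) \,\simeq\, \int_{X_\an} \Ac_{\psi(\wc\Cc^\bullet)} \,\simeq\, H^\bullet_\top(\Sect(\ul Y_X/X_\an), \CC),
\]
while Theorem \ref{thm:top-approx}(b) packages the comparison map $\tau_X$ as the image under $H^\bullet_\Lie$ of the canonical arrow $\int_X^{[[\c]]}\psi(\wc\Cc^\bullet) \to \int_X^{[[\c]]}\wc\Cc^\bullet$. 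The entire content of part (a) of the main theorem is then that this canonical arrow is an equivalence when $X$ is affine. That is precisely Corollary \ref{cor:affine-L}, which asserts completeness of the diagonal filtration: for smooth affine $X$,
\[
\CE^\bullet(\len) \,\simeq\, \oint_X^{[[\c]]}\wc\Cc^\bullet \,\simeq\, \int_X^{[[\c]]}\psi(\wc\Cc^\bullet).
\]
Taking $L = T_X$, so that $\len = R\Gamma(X,T_X) = T(X)$ (here $X$ affine means $T(X)$ is just the Lie algebra of global vector fields, concentrated in degree $0$), I would combine Corollary \ref{cor:CET} with Theorem \ref{thm:top-approx}(a) to conclude that $\tau_X$ is an isomorphism of commutative algebras, giving part (a).

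For part (b), the finiteness and invariance claims follow by inspecting the right-hand side $H^\bullet_\top(\Sect(\ul Y_X/X_\an),\CC)$. The fibration $\ul Y_X \to X_\an$ is associated to the tangent bundle $T_X$ via the $GL_n(\CC)$-action on the Gelfand-Fuchs skeleton $Y_n$, and $Y_n$ is $2n$-connected with finite-dimensional rational cohomology in each degree by the classical Gelfand-Fuchs theorem. Since $X_\an$ has real dimension $2n$ and $Y_n$ is $2n$-connected, non-abelian Poincaré duality (Theorem \ref{thm:NAPD}) applies, and the space of sections $\Sect(\ul Y_X/X_\an)$ has cohomology computable by the factorization-homology (equivalently, rational homotopy) machinery: it depends only on the rational homotopy type of $X_\an$, on $n$, and on the classifying data of the fibration, which is determined by the rational Chern classes $c_i(T_X) \in H^{2i}(X,\QQ)$. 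Finite-dimensionality in each degree is then a consequence of the degreewise finiteness of $H^\bullet(Y_n)$ together with the convergent spectral sequence (or Haefliger's section-space model) computing $H^\bullet_\top(\Sect(\ul Y_X/X_\an))$.

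The genuinely substantive step is already done for us: it is the affineness hypothesis entering through Corollary \ref{cor:affine-L}, whose proof rests on Theorem \ref{thm:affinecase} (the vanishing argument using that infinitesimal neighborhoods $Y^{(s)}$ of the diagonal skeleta are affine, forcing $R\Gamma_\DR(Y^{(s)},-)$ to be concentrated in nonpositive degrees, and a Mittag-Leffler estimate to commute the relevant $\holim$ and $\hocolim$). In the write-up of Theorem \ref{thm:main} itself I would not reprove this; I would merely cite it. The one point demanding genuine care, rather than citation, is the \emph{compatibility of the identifications with the multiplicative (cdga) structure}: Theorem \ref{thm:top-approx}(b) already states $\tau_X$ is a morphism of commutative algebras, so I would verify that the equivalences of Corollary \ref{cor:affine-L} and Corollary \ref{cor:CET} respect this structure, so that the resulting isomorphism in part (a) is indeed an algebra isomorphism and not merely a quasi-isomorphism of complexes. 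The hardest conceptual obstacle — showing that the holomorphic, non-locally-constant object $\wc\Cc^\bullet$ and its locally constant covariant Verdier dual $\psi(\wc\Cc^\bullet)$ have the same factorization homology — has been entirely absorbed into the affine-case completeness theorem, so at the level of Theorem \ref{thm:main} there is no remaining analytic difficulty, only the assembly just described.
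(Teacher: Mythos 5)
Your proposal is correct and follows essentially the same route as the paper: Theorem \ref{thm:main} is obtained there by combining Theorem \ref{thm:top-approx} with Corollary \ref{cor:CET}, which in turn rests on Corollary \ref{cor:affine-L} (completeness of the diagonal filtration in the affine case) exactly as you describe. Your additional remarks on multiplicativity and on why part (b) follows from the topological description of the section space match the paper's intent.
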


\begin{exas} Let us give two counterexamples in the case of non-affine (projective) varieties.
    \begin{assertions}
        \item For $X = \PP^1$, we have
        \begin{align*}
            & H^i_\Lie(T(\PP^1)) \simeq H^i_\Lie(\mathfrak{sl}_2(\CC)) \simeq 
            \begin{cases}
                \CC & \text{if }i = 0, 1 \\ 0 & \text{otherwise;}
            \end{cases}
            \\
            & H^i_\top(\Sect(\ul Y_{\PP^1}/\CC\PP^1), \CC) \simeq H^i_\Lie(\gl_2(\CC)) \simeq
            \begin{cases}
                \CC & \text{if }i = 0, 1, 3, 4 \\ 0 & \text{otherwise;}
            \end{cases}
        \end{align*}
        Indeed, recall that $Y_1$ is the $3$-sphere $S^3$. By the Smale conjecture (proved by Hatcher \cite{hatcher}), $\operatorname{Diff}(S^3) \simeq O(4)$. In particular
        \[
        \pi_1(\operatorname{Diff}(S^3)) \simeq \pi_1(O(4)) \simeq \ZZ/2.
        \]
        The bundle $\ul Y_{\PP^1}$ on $\CC\PP^1$ is induced from the tangent bundle $T = \Oc(2)$ along the morphism $U(1) \to \operatorname{Diff}(S^3)$. It is therefore trivial (as $2 = 0$ in $\pi_1(\operatorname{Diff}(S^3))$. We get $\ul Y_{\PP^1} \simeq Y_1 \times \CC\PP^1 \simeq S^3 \times \CC\PP^1$ and thus
        \begin{align*}
            H^\bullet_\top(\Sect(\ul Y_{\PP^1}/\CC\PP^1), \CC) &\simeq H^\bullet(\mathrm{Map}(S^2, S^3)) 
            \\ &\simeq H^\bullet(S^3, \mathbb C) \otimes H^\bullet(\Omega^2 S^3, \mathbb C)
            \\ &\simeq (\mathbb C \oplus \mathbb C[-3]) \otimes (\mathbb C \oplus \mathbb C[-1])
            \\ &\simeq \mathbb C \oplus \mathbb C[-1] \oplus \mathbb C[-3] \oplus \mathbb C[-4]
            \\ &\simeq H^\bullet_\mathrm{Lie}(\gl_2(\mathbb C)).
        \end{align*}
        \item Let $X$ be an elliptic curve. The tangent bundle $T_X$ is trivial, so $\ul Y_X = X_\an \times Y_1$.
        Further, $Y_1$ is is homotopy equivalent to the $3$-sphere 
        $S^3$, while $X_\an$ is homeomorphic to $S^1\times S^1$. 
        So $\Sect(\ul Y_X/X_\an)$ is homotopy equivalent to $\Map(S^1\times S^1, S^3)$ and has cohomology
        in infinitely many degrees but finite-dimensional in each given degree. On the other hand, 
        \[
        R\Gamma(X, T_X) \, \simeq \, \CC \oplus \CC[-1] 
        \]
        is an abelian dg-Lie algebra and so $H^0_\Lie R\Gamma(X, T_X) = \CC[\![q]\!]$ is only pro-finite. 
        This shows that $\tau_X$ cannot be an isomorphism.
    \end{assertions}
\end{exas}


\subsection{Examples of explicit calculations of \texorpdfstring{$H^\bullet_\Lie(T(X))$}{H*(T(X))}}\label{subsec:explicit}

\paragraph{Curves: Krichever-Novikov algebras.} Let $X$ be a smooth affine curve. 
Assume that $X$ is   of genus $g\geq 0$ with $m\geq 1$ punctures.
The Lie algebra $T(X)$ is known
as a {\em Krichever-Novikov algebra}, see \cite{krichever-novikov}  \cite{schlich-book}. 

\vskip.2cm

\autoref{thm:main}  in this case gives the following. The Gelfand-Fuchs skeleton $Y_1$ is homotopy equivalent to the $3$-sphere 
$S^3$. The space $X_\an$ is homotopy equivalent to a bouquet of $\nu = 2g+m-1$ circles, and so 
the 
complex tangent bundle $T_X$ is topologically trivial. Therefore the fibration $\ul Y_X\to X$ is trivial, identified,
up to homotopy equivalence,  with $X\times S^3$.
The space of sections $\Sect(\ul Y_X/X)$ is therefore identified with the mapping space $\Map(X, S^3)$
and 
we obtain:
\be\label{eq:ex-curve}
H^\bullet_\Lie (T(X)) \,\simeq \, H^\bullet_\top\biggl(\Map\bigl (\bigvee_{i=1}^\nu S^1, S^3\bigr), \CC\biggr).
\ee
The analytic version of this statement (involving all analytic vector fields and their continuous cohomology) has been proved earlier in \cite{kawazumi}.

\paragraph{Complexifications.} An interesting class of examples is obtained by considering $n$-dimensional complex affine 
varieties $X$ which are in fact defined over $\RR$ so that the space of $\RR$-points $M= X(\RR)$ is a smooth compact $C^\infty$-manifold
of dimension $n$, homotopy equivalent to $X_\an$. In such cases the algebro-geometric cohomology $H^\bullet_\Lie(T(X))$
is, by \autoref{thm:main}, identified with the $C^\infty$ cohomology $H^\bullet_\Lie(\Vect(M))\otimes_\RR \CC$. 
Examples include:
\begin{assertions}
    \item$X= \AAA^1-\{0\}$, $M=S^1$.
    
    \item $X= GL_n$, $M= U(n)$.
    
    \item $X$ is the affine quadric $\sum_{i=0}^n z_i^2=1$, $M$ is the sphere $S^n$.
\end{assertions}

\paragraph{$\PP^{n}$ minus a hypersurface.} Suppose $X=\PP^{n}-Z$ where $Z$ is a smooth hypersurface of degree $d$. 
In this case we have, first of all:

\begin{prop}
    The Chern classes of $T_X$ vanish rationally. 
\end{prop}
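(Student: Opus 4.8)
The plan is to exploit that $X$ is a Zariski-open subset of $\PP^n$, so that $T_X$ is simply the restriction $T_{\PP^n}|_X$ and hence $c_i(T_X) = j^*c_i(T_{\PP^n})$, where $j\colon X\hookrightarrow\PP^n$ is the open embedding. Writing $h\in H^2(\PP^n,\QQ)$ for the hyperplane class, the Euler sequence gives $c(T_{\PP^n}) = (1+h)^{n+1}$, so each $c_i(T_{\PP^n})$ is a rational multiple of $h^i$. Because $j^*$ is a ring homomorphism, it will therefore suffice to show that $j^*(h)=0$ in $H^2(X,\QQ)$; once this is known, $c_i(T_X)=\binom{n+1}{i}(j^*h)^i=0$ for every $i\geq 1$.

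To prove $j^*(h)=0$ I would invoke the Gysin sequence of the smooth divisor $Z\subset\PP^n$ with open complement $X$. Writing $i\colon Z\hookrightarrow\PP^n$ for the closed embedding, its relevant segment reads
\[
H^0(Z,\QQ)\xrightarrow{\ i_*\ }H^2(\PP^n,\QQ)\xrightarrow{\ j^*\ }H^2(X,\QQ).
\]
The Gysin pushforward $i_*$ carries the unit class to the fundamental cohomology class $[Z]$, which for a hypersurface of degree $d$ equals $d\cdot h$. As $d\geq 1$ this class is nonzero, and since $H^2(\PP^n,\QQ)=\QQ\cdot h$ is one-dimensional we obtain $\Im(i_*)=H^2(\PP^n,\QQ)$. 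Exactness then forces $\Ker(j^*)=\Im(i_*)=H^2(\PP^n,\QQ)$, so $j^*$ annihilates all of $H^2(\PP^n,\QQ)$ and in particular $j^*(h)=0$. Combined with the first paragraph, this yields the rational vanishing of every $c_i(T_X)$ with $i\geq 1$.

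I do not expect a genuine obstacle: the whole argument is a short consequence of the Gysin sequence together with the standard value $c(T_{\PP^n})=(1+h)^{n+1}$. The only point deserving attention is the identification $i_*(1)=[Z]=d\cdot h$, i.e.\ that the Poincar\'e dual of a degree-$d$ hypersurface is $d$ times the hyperplane class; it is precisely the hypothesis $d\geq 1$ (so that $d\neq 0$ rationally) that forces $j^*$ to kill $h$, and hence all of its powers.
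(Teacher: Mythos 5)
Your proof is correct, and its overall shape matches the paper's: both reduce to the fact that the Chern classes of $T_X$ are restrictions of those of $T_{\PP^n}$, which are polynomials in the hyperplane class $h$, and then show that $h$ restricts to zero on $X$ rationally. The only difference is the mechanism used for that last step. The paper argues that $dh = c_1(\Oc(d))$ dies on $X$ because $Z$ is the zero locus of a section of $\Oc(d)$ --- so $\Oc(d)|_X$ has a nowhere-vanishing section, hence is trivial --- and then divides by $d$ in rational cohomology. You instead invoke the Gysin sequence $H^0(Z,\QQ)\xrightarrow{i_*}H^2(\PP^n,\QQ)\xrightarrow{j^*}H^2(X,\QQ)$ together with $i_*(1)=[Z]=d\cdot h$ and the one-dimensionality of $H^2(\PP^n,\QQ)$. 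These are two standard routes to the same fact; the paper's is marginally more self-contained (no appeal to the Thom isomorphism or the exactness of the Gysin sequence, just ``a trivial line bundle has vanishing $c_1$''), while yours is purely topological and would apply verbatim to any closed oriented codimension-two submanifold representing a nonzero multiple of $h$, without reference to it being an algebraic divisor. Either way the hypothesis $d\geq 1$ enters only through the invertibility of $d$ in $\QQ$, exactly as you note.
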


\begin{proof} 
    Indeed, they are the restrictions of the Chern classes of $T_{\PP^{n}}$ which lie
    in
    \[
    H^*(\PP^{n}_\an, \CC) = \CC[h]/h^{n+1},    h=c_1(\Oc(1)).
    \]
    
    Now, $dh = c_1 (\Oc(d))$ vanishes on $X$ since $Z$ is the zero locus of a section of $\Oc(d)$.
    Therefore $h|_X =0$ as well,  and similarly for all powers of $h$.
\end{proof}

Now, all the information about the fibration $\ul Y_X\to X_\an$ which we use, is contained in the Chern classes of $T_X$,
as we are dealing with rational homotopy types. 
Therefore \autoref{thm:main} gives that
\[
H^\bullet_\Lie (T(X)) \,=\, H^\bullet_\top (\Map(X_\an, Y_n), \CC). 
\]
Let us now identify the rational homotopy type of $X_\an$. Let us think of $\PP^{n}$ as the projectivization of $\CC^{n+1}$  and let
$f(x_0,\dots, x_n)$ be the homogeneous polynomial of degree $d$ defining $Z$. 
Without loss of generality, we can take $f= x_0^d +\cdots + x_n^d$. 
Let $W\subset \CC^{n+1}$ be given by $f=1$.
We then have the  Galois covering $p:W\to X_\an$ with Galois group $\ZZ/d$ of $d^\mathrm{th}$ roots of $1$ acting diagonally on $\CC^{n+1}$. 

Now $W$ is the ``Milnor fiber" for the isolated singularity $f=0$.
(We could define $W$ by $f=\eps$ for any small  $\eps$ with the same effect).
Therefore by Milnor's theorem \cite{milnor}, $W$  is homotopy equivalent to a bouquet of
$\mu$ spheres $S^{n}$.  Here $\mu=(d-1)^{n+1}$  is the Milnor number of the singularity.
So the cohomology of the space $W$ with values $\CC$ is  $\CC^\mu$ in degree $n$ and $0$ elsewhere
(except $H^0=\CC$). 

Further, the vanishing of  the  higher cohomology of the group $\ZZ/d$ with coefficients in any $\CC$-module
and the Leray spectral sequence of the Galois covering $p$, combined with the theory of rational homotopy type,
imply:

\begin{prop}
    The rational homotopy type of $X_\an$ is that of a bouquet of $\nu$ spheres, where $\nu$ is the dimension of
    the invariant subspace $H^{n}(W)^{\ZZ/d}$. \qed
\end{prop}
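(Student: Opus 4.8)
The plan is to build everything out of the free action on $W$ of the group $G := \ZZ/d$ of $d$-th roots of unity acting diagonally on $\CC^{n+1}$, which restricts to a free action on $W = \{f = 1\}$ with quotient the Galois covering $p \colon W \to X_\an$, $X_\an = \PP^n - Z$. Since $W$ is, by Milnor, a bouquet of $n$-spheres, it is $(n-1)$-connected; in particular it is simply connected for $n \geq 2$, so that $\pi_1(X_\an) \simeq G$ is a nontrivial but finite group. It is precisely this finiteness that will make the fundamental group rationally invisible and allow a \emph{simply connected} wedge of spheres as the final answer, provided we route the entire argument through the simply connected cover $W$ and descend to $G$-invariants only at the end.

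First I would compute $H^\bullet(X_\an, \CC)$. Because $G$ is finite and the coefficients $\CC$ have characteristic $0$, we have $H^{>0}(G, V) = 0$ for every $\CC[G]$-module $V$. Hence the Cartan--Leray spectral sequence $H^p(G, H^q(W, \CC)) \Rightarrow H^{p+q}(X_\an, \CC)$ of the covering $p$ collapses onto the row $p = 0$ and yields $H^\bullet(X_\an, \CC) \simeq H^\bullet(W, \CC)^G$. By Milnor's theorem $H^\bullet(W, \CC)$ is $\CC$ in degree $0$, $\CC^\mu$ in degree $n$ (with $\mu = (d-1)^{n+1}$), and $0$ in all other positive degrees; taking $G$-invariants leaves $\CC$ in degree $0$ and $H^n(W, \CC)^G \cong \CC^\nu$ in degree $n$. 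This simultaneously defines $\nu$ and shows that $H^\bullet(X_\an, \CC)$ is concentrated in degrees $0$ and $n$.

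Next I would promote this to a statement about the cdga $\Th^\bullet(X_\an)$, which is what governs the rational homotopy type used throughout the paper. For the free $G$-action in characteristic $0$, invariant Thom--Sullivan cochains compute the base: taking $G$-invariants is exact, so $\Th^\bullet(X_\an) \simeq \Th^\bullet(W)^G$, and this has cohomology $H^\bullet(W, \CC)^G$ as above. It then remains to observe that a cdga with $H^0 = \CC$ and cohomology otherwise concentrated in the single degree $n$ is formal: every product of two positive-degree classes lands in degree $2n > n$ and hence vanishes, so the cohomology algebra is the square-zero algebra $\CC \oplus H^n(W,\CC)^G$, which is intrinsically formal. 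Consequently $\Th^\bullet(X_\an)$ is quasi-isomorphic to $H^\bullet\bigl(\bigvee_{i=1}^\nu S^n, \CC\bigr)$, i.e. $X_\an$ has the rational homotopy type of a bouquet of $\nu$ copies of $S^n$.

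The hard part is the final descent, namely reconciling the simply connected conclusion with the fact that $X_\an$ itself has the nontrivial finite fundamental group $G$. The clean way around this is to carry out the formality argument \emph{$G$-equivariantly on $W$}, where formality of a wedge of equidimensional spheres is automatic, and only then pass to invariants. Concretely, one checks that the formality zig-zag for $W$ can be chosen in the category of $G$-cdga's over $\CC$ --- this is again an instance of intrinsic formality for a cohomology algebra concentrated in two degrees, now applied $G$-equivariantly, the relevant obstruction groups being $G$-invariants of groups that already vanish for degree reasons. Applying $(-)^G$ to the resulting equivariant quasi-isomorphism produces the asserted formal model for $\Th^\bullet(X_\an)$ and completes the proof.
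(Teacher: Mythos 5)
Your argument is correct and is the same one the paper compresses into a single sentence: the Cartan--Leray spectral sequence of the finite Galois covering $p\colon W\to X_\an$ collapses because $H^{>0}(\ZZ/d,-)$ vanishes on $\CC$-modules, Milnor's theorem puts the reduced cohomology of $W$ (hence of $X_\an$) in the single degree $n$, and intrinsic formality of a graded algebra concentrated in degrees $0$ and $n$ identifies the rational homotopy type with a bouquet of $\nu=\dim H^n(W)^{\ZZ/d}$ spheres. Your final paragraph on lifting the formality zig-zag $G$-equivariantly on $W$ is an unnecessary detour: since the relevant notion of rational homotopy type here is the quasi-isomorphism class of the cdga $\Th^\bullet(X_\an)\simeq\Th^\bullet(W)^{\ZZ/d}$, whose cohomology you have already shown to be concentrated in degrees $0$ and $n$, intrinsic formality applies directly to that cdga and no equivariant refinement is needed.
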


The number $\nu$ can be found explicitly by using the  fact, standard in the theory of singularities \cite{wall},  that  the space
$H^n(W)$ (the space of vanishing cycles for $f$)   has the same $\ZZ/d$-character
as the Jacobian quotient of the module of volume forms
\[
\Omega^{n+1} (\AAA^{n+1})\bigl/ \bigl( \partial f/\partial x_i\bigr)_{i=0}^n \,=\, \det(\CC^{n+1})\otimes
\CC[x_0, \dots, x_n] \bigl/ (x_0^{d-1}, \dots, x_n^{d-1}).
\]
So $\nu$ is equal to the number of monomials 
\[
x_0^{i_0} \cdots x_n^{i_n}, \quad 0\leq i_k\leq d-2, \quad \sum i_k \equiv - n-1 \mod d.
\]
We get a statement of the form similar to  \eqref{eq:ex-curve}:

\begin{cor}
    \[
    H^\bullet_\Lie (T(X)) \,\simeq \, H^\bullet\biggl( \Map\bigl(\bigvee_{i=1}^\nu S^n, Y_n\bigr), \CC\biggr). 
    \]
\end{cor}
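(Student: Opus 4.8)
The plan is to assemble the Corollary from two facts already established in this subsection, together with one standard input from rational homotopy theory. The first established fact is the identification displayed just above,
\[
H^\bullet_\Lie(T(X)) \,\simeq\, H^\bullet_\top(\Map(X_\an, Y_n), \CC),
\]
obtained by feeding the rational vanishing of the Chern classes $c_i(T_X)$ into Theorem \ref{thm:main}: since only the rational homotopy type of the Gelfand-Fuchs fibration $\ul Y_X \to X_\an$ enters, and that fibration is controlled (rationally) by the $c_i(T_X)$, it becomes rationally trivial, so the section space $\Sect(\ul Y_X/X_\an)$ may be replaced by the mapping space $\Map(X_\an, Y_n)$. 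The second established fact is the Proposition identifying the rational homotopy type of $X_\an$ with that of the bouquet $\bigvee_{i=1}^\nu S^n$, where $\nu = \dim H^n(W)^{\ZZ/d}$ is computed via the Milnor fibre $W$ and the Jacobian ring.

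First I would isolate the one genuinely new point: for complex (equivalently rational) cohomology, the mapping space $\Map(T, Y_n)$ is insensitive to replacing the source $T$ by a rationally equivalent space. This rests on the hypotheses that $Y_n$ is $2n$-connected, hence simply connected and of finite rational type, and that $X_\an$ (as well as the bouquet) is a finite complex. Under these assumptions rationalization commutes with mapping out of a finite complex, so $\Map(T, Y_n)_\QQ \simeq \Map(T, (Y_n)_\QQ)$, and the Sullivan--Haefliger model of the right-hand side is built functorially from the commutative cochain algebra $A_{PL}(T)$ of the source. Since $A_{PL}$ sends rational equivalences to quasi-isomorphisms of cdga's, the zig-zag of rational equivalences $X_\an \simeq_\QQ \bigvee_{i=1}^\nu S^n$ supplied by the Proposition induces an isomorphism on the complex cohomology of the two mapping spaces.

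Combining the three ingredients, I would substitute the bouquet for $X_\an$ in the first displayed identification to obtain
\[
H^\bullet_\Lie(T(X)) \,\simeq\, H^\bullet\biggl( \Map\bigl(\bigvee_{i=1}^\nu S^n, Y_n\bigr), \CC\biggr),
\]
which is exactly the assertion of the Corollary.

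The hard part will be the invariance of mapping-space cohomology under rational equivalence of the source: one must genuinely invoke the theory of rational models of function spaces (Haefliger, Sullivan, and their finiteness hypotheses) rather than a naive homotopy invariance, since $X_\an$ and the bouquet are only rationally equivalent and not homotopy equivalent. Once that is in hand, the remaining steps are bookkeeping that draws only on results already in place.
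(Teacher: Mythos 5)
Your proposal is correct and follows essentially the same route as the paper: vanishing of the Chern classes trivializes the fibration rationally so that $\Sect(\ul Y_X/X_\an)$ becomes $\Map(X_\an, Y_n)$, and then the source is replaced by the bouquet using its rational equivalence with $X_\an$. The paper compresses your Sullivan--Haefliger step into the blanket statement of Theorem \ref{thm:main}(b) that $H^\bullet_\Lie(T(X))$ depends only on $n$, the rational homotopy type of $X_\an$ and the rational Chern classes, so your explicit justification of the invariance of $H^\bullet(\Map(-,Y_n),\CC)$ under rational equivalence of a finite source (using that $Y_n$ is $2n$-connected of finite type) is a faithful unpacking of what the paper takes for granted.
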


\begin{ex}
    Let $d=2$, i.e., $Z$ is a smooth quadric hypersurface.
    Then $W$ 
    (complex affine quadric) is homotopy equivalent to $S^{n}$ and so 
    $X_\an= \CC\PP^{n}-Z_\an$ is homotopy equivalent to $\RR\PP^{n}$.   The rational homotopy type of $\RR\PP^{n}$ is that of a point, if $n$ is even and is that of $S^{n}$,  if $n$ is odd.
    This means that $\nu$ is equal to $0$ or $1$ in the corresponding cases. Accordingly
    \[
    H^\bullet_\Lie(T(\PP^{n}-Z)) = \begin{cases}
        H^\bullet(Y_n,\CC), & \text{ if } n \text{ is even};
        \\
        H^\bullet(\Map(S^{n}, Y_{n}),\CC), & \text{ if } n \text{ is odd}.
    \end{cases}
    \]
\end{ex}


\addcontentsline{toc}{section}{References}

\vskip 1cm

\begin{description}
\item[B.H.:]
Universit\'e Paris-Saclay, Laboratoire de math\'ematiques d'Orsay,\\
91405 Orsay, France.\\
Email:
{\tt benjamin.hennion@universite-paris-saclay.fr}

\item[M.K.:]Kavli IPMU, 5-1-5 Kashiwanoha, Kashiwa, Chiba, 277-8583 Japan. Email: 
{\tt mikhail.kapranov@protonmail.com}
\end{description}

\ed